\documentclass[11pt]{amsart}

\ExplSyntaxOn
\msg_redirect_name:nnn { kernel } { variant-same-as-base } { info }
\ExplSyntaxOff

\usepackage{amsmath,amssymb,amsthm,url,scalerel}
\usepackage[utf8]{inputenc}
\usepackage[T1]{fontenc}
\usepackage{libertine}
\usepackage[libertine,cmintegrals,cmbraces]{newtxmath}
\usepackage{verbatim}
\usepackage[shortlabels]{enumitem}
\usepackage{stmaryrd}
\usepackage{mathtools}
\usepackage{microtype}
\usepackage{tabto}
\usepackage{xcolor}
\usepackage{tikz}
\usepackage{tikz-cd}
\tikzcdset{arrow style=tikz,
           diagrams={>=Straight Barb}
           }
\usepackage{nicefrac}
\usepackage{tabularx}
\usepackage{dsfont}
\usepackage{bbold}
\usepackage{todonotes}
\usepackage{quiver}
\usepackage{a4wide}
\usepackage{euscript}
\usepackage[all]{xy}
\usepackage{mathrsfs,comment}
\numberwithin{equation}{subsection}
\usepackage{adjustbox}
\usepackage{multicol}
\usepackage{listings}
\usepackage[table]{xcolor}
\usepackage[dvipsnames]{xcolor}
\usepackage{BOONDOX-calo}
\usepackage{xfrac}
\usepackage{booktabs}
\usepackage{float}
\usepackage{epigraph}

\usepackage[pagebackref]{hyperref}
  
\hypersetup{%
  bookmarksnumbered=true,%%
  colorlinks=true,%
  linkcolor=black,%
  citecolor=black,%
  filecolor=blue,%
  menucolor=black,%
  urlcolor=blue,%
  pdfnewwindow=true,%
  pdfstartview=FitBH}
\usepackage[capitalise]{cleveref}

\usepackage{etoolbox}

\makeatletter
\apptocmd{\thebibliography}{%
  \item[]\hspace*{-\leftmargin}%
  \begin{minipage}{\dimexpr\linewidth+\leftmargin\relax}
  \small
  \textbf{Note on publication data.}
  In the spirit of the \textit{Cost of Knowledge} campaign, journal information is included in this bibliography only as metadata identifying published versions of the cited works. No credit is intended to accrue to the journals themselves: the research cited here is due to the authors, and the labour of peer review belongs to the research community.
  \end{minipage}
  \vspace{1em}
}{}{}
\makeatother

\newcommand{\fib}{\mathsf{fib}}

\DeclareMathOperator{\id}{\mathrm{Id}}

\newtheorem{thm}{Theorem}[subsection]
\newtheorem*{thmA}{Theorem A}
\newtheorem*{thmB}{Theorem B}
\newtheorem*{thmC}{Theorem C}
\newtheorem{prop}[thm]{Proposition}
\newtheorem{lem}[thm]{Lemma}
\newtheorem{cor}[thm]{Corollary}
\newtheorem{conj}[thm]{Conjecture}

\newtheorem*{thm*}{Theorem}

\theoremstyle{definition}
\newtheorem{definition}[thm]{Definition}
\newtheorem{ex}[thm]{Example}

\newtheorem{rem}[thm]{Remark}
\newtheorem{ques}[thm]{Question}

\title{An abelian model for the Goodwillie tower of the circle}
\author{Marco Nervo}
\date{}

\begin{document}

\begin{abstract}
We study the Goodwillie tower of the identity evaluated at the circle. Our main result is the \(p\)-local equivalence
\[
\Omega P_{p^k}I(S^1)
\simeq
\mathbb{Z} \times \Omega^{\infty+2k}\tau_{>0}\mathbb{Sp}^{p^k}
\]
obtained from the work of Behrens and Kuhn on the Whitehead conjecture. In particular, this identifies the Goodwillie approximations of the circle as infinite loop spaces, a phenomenon which does not hold in general. The equivalence also reduces the computation of their homotopy groups to a stable problem, making them accessible to tools such as the Adams spectral sequence and to computer calculations. We place these computations in a more general framework by developing a Goodwillie-calculus analogue of the Gray sequence, which relates the homotopy groups of successive stages of the tower. 
\end{abstract}

\maketitle
\tableofcontents

\section*{Introduction}

\begin{flushright}
\begin{minipage}{0.65\textwidth}
\small\itshape
Let's say the word for circle is: O. This language has a melody system to illustrate comparatives. We'll represent this by the diacritical marks: \v{ }, \={ }, and \^{ }, respectively, mean smallest, ordinary, and biggest. So what would \v{O} mean?

\medskip
\hfill --- Samuel R. Delany, \textit{Babel-17}
\end{minipage}
\end{flushright}

\medskip

As stereotypical unstable homotopy theorists, our ultimate goal is to compute the homotopy groups of spheres. A first simplification of this problem is provided by the \emph{EHP sequence}, which reduces computations for even spheres to computations for odd spheres. A second simplification is to work one prime at a time, and from now on everything will be implicitly localized at a fixed prime $p$. Once these reductions are made, one can follow two different approaches: a classical one, and a more modern one, which we advocate in this paper.

The classical approach begins with the loop-suspension filtration

% https://q.uiver.app/#q=WzAsNyxbMCwwLCJcXE9tZWdhIFNeMSJdLFsxLDAsIlxcY2RvdHMiXSxbMiwwLCJcXE9tZWdhXnsybi0xfVNeezJuLTF9Il0sWzMsMCwiXFxPbWVnYV57Mm4rMX1TXnsybisxfSJdLFszLDEsIlxcT21lZ2FeezJuLTJ9V157Mm4tMX0iXSxbNCwwLCJcXGNkb3RzIl0sWzUsMCwiXFxPbWVnYV57Mk4rMX1TXnsyTisxfSJdLFswLDFdLFsxLDJdLFsyLDNdLFszLDRdLFszLDVdLFs1LDZdXQ==
\[\begin{tikzcd}
	{\Omega S^1} & \cdots & {\Omega^{2n-1}S^{2n-1}} & {\Omega^{2n+1}S^{2n+1}} & \cdots & {\Omega^{2N+1}S^{2N+1}} \\
	&&& {\Omega^{2n-2}W^{2n-1}}
	\arrow[from=1-1, to=1-2]
	\arrow[from=1-2, to=1-3]
	\arrow[from=1-3, to=1-4]
	\arrow[from=1-4, to=1-5]
	\arrow[from=1-4, to=2-4]
	\arrow[from=1-5, to=1-6]
\end{tikzcd}\]
where $W^{2n-1}$ is the fiber of the double suspension map $S^{2n-1}\to \Omega^2S^{2n+1}$. Gray \cite{Gray1989} proved that this fiber fits into a fiber sequence
$$
\Omega^2S^{2np-1} \longrightarrow W^{2n-1} \longrightarrow \Omega^3 S^{2np+1}
$$
and from this one obtains a spectral sequence
\[\begin{tikzcd}
	\pi_*\begin{array}{c} 
        \begin{Bmatrix}
         \Omega S^1 \\
         \Omega^{2n}S^{2pn-1} \\
         \Omega^{2n+1}S^{2pn+1}
     \end{Bmatrix}_{n \leq N} 
     \end{array} 
     & 
     \pi_*\begin{array}{c} 
        \begin{Bmatrix}
         \Omega S^1 \\
         \Omega^{2n-2}W^{2n-1}
     \end{Bmatrix}_{n \leq N} 
     \end{array}
     &\pi_*{\Omega^{2N+1}S^{2N+1}}
	\arrow[Rightarrow, from=1-1, to=1-2]
    \arrow[Rightarrow, from=1-2, to=1-3]
\end{tikzcd}\]
The attractive feature of this approach is that the initial term is simple, since $\Omega S^1 \simeq \mathbb{Z}$. Its drawback is that Gray's sequence relates $S^{2n+1}$ to $S^{2np+1}$, so computations do not proceed by direct induction on the dimension. Nevertheless, this framework already underlies classical work of Toda \cite{todaoriginal} and others.

The modern approach is to replace spheres by more accessible approximations. Let $P_mI$ denote the $m$-th polynomial approximation of the identity functor in the sense of Goodwillie calculus \cite{GoodwillieIII}, and define the \emph{$k$-th approximation $n$-sphere} by
$$
S^n_k := P_{p^k}I(S^n)
$$
There is then a natural comparison map
$$
S^n \longrightarrow S^n_k
$$
Our first main result is a generalization of the Freudenthal suspension theorem (thm. \ref{genfreudenthal}).

\begin{thmA}[Generalized Freudenthal theorem]
The map $S^n \to S^n_k$ is an equivalence on $\pi_*$ for
\[
* <
\begin{cases}
(n+1)p^{k+1} - 2k - 3 & p=2 \text{ or $n$ odd} \\
2np^k - 2k - 1 & p>2 \text{ and $n$ even}
\end{cases}
\]
\end{thmA}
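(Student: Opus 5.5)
The plan is to compare $S^n$ with $S^n_k=P_{p^k}I(S^n)$ through the layers of the Goodwillie tower. Since Goodwillie calculus converges on simply connected spheres, and also $p$-locally after looping for $S^1$, the map $S^n\to S^n_k$ is $\pi_*$-injective and surjective in a range exactly when the fiber $\mathrm{fib}(S^n\to P_{p^k}I(S^n))$ is highly connected. By convergence, this fiber is built from the layers $D_mI(S^n)$ with $m>p^k$. So the statement reduces to a connectivity estimate for these layers, together with a check that the first nontrivial one is the only constraint.

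\textbf{Step 1: only prime-power layers matter.} By the work of Arone--Mahowald, $p$-locally the layers $D_mI(S^n)$ vanish unless $m=p^j$, and for odd $n$ one has
\[
D_{p^j}I(S^n)\simeq \Omega^\infty\Sigma^{n-j}L(j)_n,
\]
where $L(j)_n$ is the Steinberg summand spectrum. Consequently $P_{p^k}I(S^n)\simeq P_{p^{k+1}-1}I(S^n)$, and the fiber of $S^n\to S^n_k$ is a limit of extensions whose bottom piece is $D_{p^{k+1}}I(S^n)$. Each later layer $D_{p^{j}}$ with $j>k+1$ has strictly higher connectivity.

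\textbf{Step 2: compute the connectivity of $D_{p^{k+1}}I(S^n)$.} For $n$ odd, or $p=2$, I would use the known bottom cell of $\Sigma^{n-j}L(j)_n$. The spectrum $L(j)_n$ is the Steinberg summand of the Thom spectrum of $n\bar\rho_j$ over $B(\mathbb{Z}/p)^j$, and its bottom cell lies in dimension $n(p^j-1)-(j-1)$ plus corrections of the form $2(p^j-1)-j$. With $j=k+1$, the fiber is then $\bigl((n+1)p^{k+1}-2k-3\bigr)$-connected after the appropriate shift, which yields a $\pi_*$-isomorphism for $*$ below that number. For even $n$ with $p$ odd, I would instead use the EHP/James fibration $S^n\to\Omega S^{2n-1}\dots$ in its calculus form. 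Concretely, $P_mI(S^{n})$ fits in a fiber sequence with $P_mI(S^{n-1})$ and $P_{m/p}$-type terms of $S^{2n-1}$, or more directly one uses Arone--Mahowald's description of the even layers in terms of $S^{2n-1}$. This gives the bound $2np^k-2k-1$, coming from the first layer $D_{p^k}$-type contribution of $\Omega S^{2n-1}$ being attached one stage early.

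\textbf{Step 3: $n=1$ and the limit.} For $n=1$ I would apply the same estimates after looping, using that $\Omega P_{p^k}I(S^1)$ is computed by the stated abelian model. I would also pass from the layer estimates to the whole fiber by Milnor $\lim^1$ vanishing, since the connectivities tend to infinity.

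The main obstacle is Step 2: getting the precise bottom-cell dimension of $L(k+1)_n$, with the exact constants $-2k-3$ and $-2k-1$. This is where off-by-one errors are likely, especially in the even case, where the layers are not directly Steinberg summands of spheres. I would first verify the constants against $k=0$, where the statement should recover a Freudenthal/Serre-type range: $D_pI(S^n)$ starts near degree $(n+1)p-3$ for $n$ odd.
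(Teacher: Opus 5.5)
Your overall architecture matches the paper's: convergence of the tower, the fiber sequences $\Omega^\infty S^n(j+1)\to S^n_{j+1}\to S^n_j$, connectivity of the layers, and EHP for even spheres. The gap is that the one non-formal input, the connectivity of the first layer above $p^k$, is never established, and the bottom-cell formula you sketch does not give the claimed bound. Read literally, after the shift by $n-j$ it puts the bottom of $\mathbb{D}_{p^j}I(S^n)$ in degree $(n+2)p^j-3j-1$. That already contradicts your own check $(n+1)p-3$ at $j=1$.

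The missing computation is the one the paper takes from Arone--Mahowald (Prop.~\ref{cohomS}). For $n=2m-1$ one has $\mathbb{F}_p^*S^n(j)\cong\Sigma^{n-2j}(\mathcal{A}/\mathcal{A}\beta)^{\geq m}_{=j}$. Its lowest basis element $P^{p^{j-1}m}\cdots P^{pm}P^m$ has degree $2m(p^j-1)=(n+1)(p^j-1)$. Hence $S^n(j)$ has its first homotopy group in degree $c(n,j)=(n+1)p^j-2j-1$; in your normalization, $L(j)_n$ has bottom cell $(n+1)(p^j-1)-j$. Since $c(n,j)$ is strictly increasing in $j$, each $S^n_{j+1}\to S^n_j$ with $j\geq k$ is a $\pi_*$-isomorphism for $*<c(n,k+1)=(n+1)p^{k+1}-2k-3$, and the Milnor sequence finishes.

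On the off-by-one you were worried about: the fiber of $S^n\to S^n_k$ is in general only $(c(n,k+1)-1)$-connected, not $c(n,k+1)$-connected. Otherwise $S^n_k$ would always converge early, which fails whenever the Whitehead element $w_n^{k+1}$ is nonzero.

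Step 1 is false as stated for $p$ odd and $n$ even. There $\mathbb{D}_{2p^j}I(S^n)\simeq S^{2n-1}(j)$ is nonzero, so $P_{p^k}I(S^n)\not\simeq P_{p^{k+1}-1}I(S^n)$. This intermediate layer is exactly what produces the smaller bound $2np^k-2k-1$. Your Step 2 idea for that case is right but needs to be made precise as in Prop.~\ref{EHPcalc}:
\begin{itemize}
\item The relevant sequence is $S^{n-1}\to\Omega S^n\to\Omega S^{2n-1}$ (not $S^n\to\Omega S^{2n-1}$), and it splits \emph{functorially} on odd spheres for $p>2$.
\item Lemmas~\ref{calcspheres} and~\ref{calcpowers} then give $\Omega S^n_k\simeq S^{n-1}_k\times\Omega S^{2n-1}_{k-1}$. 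The truncation is $P_{p^k/2}=P_{p^{k-1}}$, coming from the quadratic term, not a $P_{m/p}$.
\item The range is $\min\bigl(c(n-1,k+1)+1,\,c(2n-1,k)\bigr)$, and you must check that the first term does not bind: $c(n-1,k+1)+1-c(2n-1,k)=np^k(p-2)-1>0$.
\end{itemize}
For even $n$ at $p=2$, the odd-sphere formula does not apply directly. Either quote Arone--Mahowald's computation for even spheres, or do as the paper does: use $\Omega^2S^{2n+1}_{k-1}\to S^n_k\to\Omega S^{n+1}_k$ with the five lemma.

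Finally, Step 3 is unnecessary. The tower converges on $S^1$ as well, so the odd case already covers $n=1$. Appealing to Theorem~\ref{model} there is not circular, just a detour.
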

Thus, $\pi_* S^n$ can be recovered in any fixed range from $\pi_* S^n_k$ for $k$ large enough.

These approximation spheres retain the same formal structure as ordinary spheres. In particular, one can again form the loop-suspension filtration

% https://q.uiver.app/#q=WzAsNyxbMCwwLCJcXE9tZWdhIFNeMV9rIl0sWzEsMCwiXFxjZG90cyJdLFsyLDAsIlxcT21lZ2FeezJuLTF9U157Mm4tMX1fayJdLFszLDAsIlxcT21lZ2FeezJuKzF9U157Mm4rMX1fayJdLFszLDEsIlxcT21lZ2FeezJuLTJ9V157Mm4tMX1fayJdLFs0LDAsIlxcY2RvdHMiXSxbNSwwLCJcXE9tZWdhXnsyTisxfVNeezJOKzF9X2siXSxbMCwxXSxbMSwyXSxbMiwzXSxbMyw0XSxbMyw1XSxbNSw2XV0=
\[\begin{tikzcd}
	{\Omega S^1_k} & \cdots & {\Omega^{2n-1}S^{2n-1}_k} & {\Omega^{2n+1}S^{2n+1}_k} & \cdots & {\Omega^{2N+1}S^{2N+1}_k} \\
	&&& {\Omega^{2n-2}W^{2n-1}_k}
	\arrow[from=1-1, to=1-2]
	\arrow[from=1-2, to=1-3]
	\arrow[from=1-3, to=1-4]
	\arrow[from=1-4, to=1-5]
	\arrow[from=1-4, to=2-4]
	\arrow[from=1-5, to=1-6]
\end{tikzcd}\]
where $W^{2n-1}_k$ denotes the fiber of the double suspension $S^{2n-1}_k \to \Omega^2S^{2n+1}_k$. Our next result shows that these fibers fit into a calculus analogue of Gray's sequence (thm. \ref{graysequence}).

\begin{thmB}[Calculus version of Gray's sequence]
There is a fiber sequence
\[
\Omega^2S^{2np-1}_{k-1} \longrightarrow W^{2n-1}_k \longrightarrow \Omega^3S^{2np+1}_{k-1}
\]
\end{thmB}

This leads to a new spectral sequence
\[\begin{tikzcd}
	\pi_*\begin{array}{c} 
        \begin{Bmatrix}
         \Omega S^1_k \\
         \Omega^{2n}S^{2pn-1}_{k-1} \\
         \Omega^{2n+1}S^{2pn+1}_{k-1}
     \end{Bmatrix}_{n \leq N} 
     \end{array} 
     & 
     \pi_*\begin{array}{c} 
        \begin{Bmatrix}
         \Omega S^1_k \\
         \Omega^{2n-2}W^{2n-1}_k
     \end{Bmatrix}_{n \leq N} 
     \end{array}
     &\pi_*{\Omega^{2N+1}S^{2N+1}_k}
	\arrow[Rightarrow, from=1-1, to=1-2]
    \arrow[Rightarrow, from=1-2, to=1-3]
\end{tikzcd}\]
At this point the picture changes substantially. As in the classical setting, Gray's sequence relates spheres in larger and larger dimensions, so induction on the dimension is not available. In the calculus setting, however, the index $k$ decreases, and one can therefore induct on the degree of approximation. The remaining problem is to understand the initial term $\Omega S^1_k$, which is no longer simple. Our main theorem resolves exactly this point by giving an explicit description of it (thm. \ref{model}).

\begin{thmC}[Model for the approximation circles]
There is an equivalence
\[
\Omega S^1_k \simeq \mathbb{Z} \times \Omega^{\infty+2k}\tau_{>0}\mathbb{Sp}^{p^k}
\]
\end{thmC}

Here $\mathbb{Sp}^m$ denotes the $m$-th symmetric product spectrum (def. \ref{symmprod}). This identifies the initial term of the inductive procedure with a classical family of spectra whose properties are already well understood (props. \ref{symmprodfp} and \ref{symmprodkn}). Combined with theorem A and theorem B, this yields an inductive framework for studying the homotopy groups of spheres.

We use these results mainly for explicit calculations. For example, when $n$ and $p$ are odd, one obtains (ex. \ref{todasn})
\begin{align*}
\pi_{n+*}S^n \cong 
\begin{cases}
\mathbb{Z} & * = 0\\
\pi_*\Sigma^\infty B\Sigma_p^{(p-1)(n-1)} \ /\  \mathbb{F}_p & * = 2(p^2-2)\\
\pi_*\Sigma^\infty B\Sigma_p^{(p-1)(n-1)} & * < 4(p^2-2), \text{otherwise}
\end{cases}
\end{align*}

where $B\Sigma_p^m$ denotes the $m$-skeleton of $B\Sigma_p$. To appreciate the computational effectiveness of the method, the reader may try to recover the first $27$ stems of $S^3$ at $p=3$ starting only from this statement and the homotopy groups of the sphere spectrum $\mathbb{S}$ (ex. \ref{metastableS3}).

Beyond computations, the calculus version of Gray's sequence also allows us to revisit the work of Arone--Mahowald \cite{AroneMahowald} and explore its relation to the classical Cohen--Moore--Neisendorfer theorem \cite{CMN}.

The paper is organized as follows.

In the first section we construct the approximation spheres and, using the calculations of Arone--Mahowald \cite{AroneMahowald}, prove the generalized Freudenthal theorem. We then establish the calculus version of Gray's sequence and discuss several of its applications.

The second section reviews the main properties of the symmetric product spectra and proves theorem C, relying on the work of Behrens \cite{Behrens2011} and Kuhn \cite{Kuhn2014} on the Whitehead conjecture.

In the third section we apply the previous results to explicit computations: we revisit some of the calculations of Behrens \cite{Behrens2010} and Mahowald \cite{Metastable}, and we prove a general statement about the homotopy groups of $S^3$.

The final section collects ideas for further applications and possible directions for future research.

Finally, the paper contains two appendices. Appendix A collects some technical material on the Steenrod algebra used in the section on the approximation circles, while Appendix B provides a short script for generating Adams spectral sequence charts.

\subsection*{Acknowledgments}

I would like to thank my supervisor Gijs Heuts for his support, trust, and for giving me the freedom to pursue my own ideas and interests. I am also deeply grateful to Gregory Arone, both mathematically and personally, for introducing me to calculus and symmetric products, and for everything he taught me.

This work builds heavily on ideas of Mark Behrens and Nick Kuhn. I also thank Niall Taggart for reading parts of the manuscript at an earlier stage and for his useful feedback, as well as my colleagues at Utrecht University and in the broader homotopy theory community around Utrecht, Amsterdam, and Nijmegen.

This research was supported by the European Research Council through the grant \textit{Chromatic homotopy theory of spaces} (grant no.~950048). It also benefited from Hood Chatham's Steenrod algebra and Adams spectral sequence calculators, from Nathanael Arkor's \textit{Quiver}, and from tools developed by OpenAI, which helped improve the exposition of the paper.

\section{Approximation spheres}

The goal of this section is to study the \emph{approximation spheres}. These are the Goodwillie approximations of the identity functor evaluated on spheres. We estimate the range of convergence of the comparison maps, thereby generalizing the classical Freudenthal suspension theorem. We then prove a version of the Gray sequence for the approximation spheres, which allows us to express each approximation in terms of earlier ones. Finally, we discuss the phenomenon of early convergence.

In the first subsection, we review Goodwillie calculus.

In the second subsection, we construct the approximation spheres, review the Arone--Dwyer models \cite{AroneDwyer} for their associated graded, and recall the Arone--Mahowald computations \cite{AroneMahowald} of their $\mathbb{F}_p$-cohomology. Using these results, we determine their connectivity and prove the generalized Freudenthal theorem. We also recall the classical EHP sequence and Behrens' calculus version of it \cite{Behrens2010}.

In the third subsection, we introduce the fiber of the double suspension map and its approximations. We review the Gray sequence \cite{Gray1989} and explain how, in the case $p=2$, it can be deduced from the EHP sequence. We then prove a calculus version of the Gray sequence, using a functorial construction due to Richter \cite{Richter2014}. We conclude by explaining how this calculus version could be used to recover the results of Arone--Mahowald and may provide an alternative approach to the Cohen--Moore--Neisendorfer theorem \cite{CMN}.

In the final subsection, we study the phenomenon of early convergence of the approximations, describe its connections with classical problems in homotopy theory, and present some illustrative examples.

\subsection{Goodwillie calculus}

Goodwillie's calculus of functors \cite{GoodwillieIII} provides a systematic way to approximate functors by polynomial ones, in analogy with Taylor series in ordinary calculus. In this paper, the relevant setting is that of functors defined on the category of pointed spaces $\mathcal{S}_*$ or on the category of spectra $\mathcal{Sp}$. To such a functor $F$, Goodwillie calculus associates a tower
$$
F \longrightarrow \cdots \longrightarrow P_mF \longrightarrow P_{m-1}F \longrightarrow \cdots \longrightarrow P_0F
$$
where each $P_mF$ is the universal approximation to $F$ by an $m$-excisive functor. Recall that a functor is \textit{$m$-excisive} if it sends strongly cocartesian $(m+1)$-cubes to cartesian cubes. Informally, this means that it behaves like a polynomial of degree at most $m$.

The fiber
$$
D_m F := \mathrm{fib}(P_mF \longrightarrow P_{m-1}F)
$$
is called the \textit{$m^\mathrm{th}$ homogeneous layer} of $F$ and plays the role of the $m^\mathrm{th}$ homogeneous term in the Taylor expansion. Homogeneous functors of degree $m$ admit an infinite delooping $\mathbb{D}_m F$, and they are classified by spectra $\partial_m F$ with a $\Sigma_m$-action, namely
$$
D_m F(X) \simeq \Omega^\infty \mathbb{D}_m F(X) \simeq \Omega^\infty \big( \partial_m F \otimes X^{\otimes m}\big)_{h\Sigma_m}
$$
The spectrum $\partial_m F$ is called the \textit{$m^\mathrm{th}$ derivative} of $F$, and this description allows one to import tools from stable homotopy theory into the study of unstable functors.

A natural question is whether the tower $\{P_mF\}$ converges, meaning that the canonical map
$$
F(X) \longrightarrow \mathrm{lim}_m P_m F(X)
$$
is an equivalence. This depends in general on both the functor and the input. A functor is said to be \textit{$\rho$-analytic} if, roughly, its tower converges on $\rho$-connective spaces, and it is said to be \textit{analytic} if it is $\rho$-analytic for some $\rho$.

We will need the following two properties of analytic functors. Although analyticity is probably not essential, we could not find a reference in the literature that avoids assuming it.

\begin{lem}[{\cite[prop. 4.6]{AroneMahowald}, \cite[proof of lem. 2.1.2]{Behrens2010}}]
	\label{calcspheres}
	If $F \to G$ is a map between reduced analytic functors that is an equivalence on (all, even or odd) spheres, then $P_mF \to P_mG$ and $D_mF \to D_mG$ are equivalences on (all, even or odd) spheres. 
\end{lem}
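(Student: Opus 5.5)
We argue by induction on $m$, proving simultaneously that $P_mF \to P_mG$ and $D_mF \to D_mG$ are equivalences on the relevant spheres. The base case is $m=0$: since both functors are reduced, $P_0F \simeq * \simeq P_0G$.

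Now suppose the claim holds for $P_{m-1}$. Consider the fiber sequences $D_m \to P_m \to P_{m-1}$. If we show that $D_mF \to D_mG$ is an equivalence on the relevant spheres, then the map $P_mF \to P_mG$ is an equivalence on those spheres by the five lemma. The only care needed is on $\pi_0$ and $\pi_1$. For that we use the infinite delooping: there is a fiber sequence $P_mF \to P_{m-1}F \to \Omega^{\infty}\Sigma\mathbb{D}_mF$. This reduces the problem to the homogeneous layers.

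To get at $D_m$, use the classification $D_mF(X) \simeq \Omega^\infty(\partial_mF\otimes X^{\otimes m})_{h\Sigma_m}$. It suffices to show that $\partial_mF \to \partial_mG$ is an equivalence of spectra, since the map on $D_m$ is induced by the map on derivatives. Here analyticity enters. For a $\rho$-analytic functor, the tower converges on spaces of connectivity $>\rho$, and the maps $F(X) \to P_mF(X)$ are $(m+1)(\operatorname{conn}X - c)$-connected for some constant $c$. Take spheres $S^n$ with $n$ large and of the allowed parity. The given equivalence $F(S^n)\simeq G(S^n)$, together with the inductive hypothesis on $P_{m-1}$, then forces $D_mF(S^n) \to D_mG(S^n)$ to be highly connected. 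Concretely, compare the fibers of $F(S^n) \to P_{m-1}F(S^n)$ and of the corresponding map for $G$. In the range below $(m+1)(n-c)$, each fiber agrees with $D_m(S^n)$. The fibers agree with each other because $F\to G$ is an equivalence and, by induction, so is $P_{m-1}F \to P_{m-1}G$.

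Now $D_mF(S^n) \simeq \Omega^\infty\big(\Sigma^{mn}\partial_mF\big)_{h\Sigma_m}$, where $\Sigma_m$ acts on $S^{mn}$ by permutation. When $n$ is even, this action is orientation-preserving on the representation sphere of $\mathbb{R}^{n}\otimes\rho$ up to a twist. Let $C$ be the cofiber of $\partial_mF\to\partial_mG$. We get that $(\Sigma^{mn}C)_{h\Sigma_m}$ is $(m+1)(n-c)$-connected for infinitely many $n$ of a fixed parity. Hence $(\Sigma^{mn}C)_{h\Sigma_m}$ vanishes after desuspending, in the limit.

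The main obstacle is converting "homotopy orbits vanish" into "$C\simeq 0$". Homotopy orbits do not detect equivalences in general. The trick (as in Arone--Mahowald) is to use the sequence of spheres: restrict along $S^{n}\to S^{n+2}$ and use the James periodicity and Thom isomorphism of the $\Sigma_m$-representation spheres. This shows that $C_{h\Sigma_m}$ twisted by $S^{2k\bar\rho}$ vanishes for all $k$. Passing to the colimit over $k$ recovers the geometric fixed point and Tate-type data, which together with the underlying spectrum detect $C$. I would instead first try the simpler route: the lowest homotopy group of $C$, say $\pi_d C$, injects into $\pi_{mn+d}(\Sigma^{mn}C)_{h\Sigma_m}$ as coinvariants twisted by a sign. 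Since $n$ has fixed parity, the sign character is constant along the sequence. The coinvariants $\pi_dC_{\Sigma_m,\pm}$ must then vanish, which is not enough by itself. So one needs the full Arone--Mahowald argument, or alternatively we conclude the equivalence only on $D_m(S^n)$ for the allowed spheres directly. That weaker statement is all the lemma asserts: it suffices to show $D_mF(S^n)\to D_mG(S^n)$ is an equivalence for every allowed $n$. For this we use that $D_m(S^n)\simeq \Omega^\infty(\Sigma^{mn}\partial_m)_{h\Sigma_m}$, together with the periodicity, to transfer connectivity from large $n$ down to all $n$ of the same parity.
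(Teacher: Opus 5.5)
The paper does not prove this lemma; it cites Arone--Mahowald and Behrens. So your argument has to stand on its own, and it does not yet. The outer structure is fine: induction on $m$, the delooped fiber sequence $P_m\to P_{m-1}\to\Omega^\infty\Sigma\mathbb{D}_m$ for the $P_m$ step, and analyticity showing that $D_mF(S^n)\to D_mG(S^n)$ is roughly $(m+1)(n-c)$-connected for large $n$ of the allowed parity. The gap is the one non-formal step: upgrading this to an equivalence for \emph{every} $n$ of that parity is never carried out, and the routes you try do not work.

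\begin{itemize}
\item Proving $\partial_mF\simeq\partial_mG$ is impossible in general. Let $HW$ be the rational Eilenberg--MacLane spectrum of the $2$-dimensional irreducible $\Sigma_3$-representation $W$. The functor $X\mapsto\Omega^\infty(HW\wedge X^{\wedge 3})_{h\Sigma_3}$ vanishes on every sphere, because rationally $S^{n\rho}$ is $S^{3n}$ twisted by $\mathrm{sgn}^n$ and $W\otimes\mathrm{sgn}\cong W$ has no coinvariants. Yet its third derivative is nonzero, so the cofiber $C$ cannot be ``detected''.
\item $\mathrm{colim}_k(C\wedge S^{k\bar\rho})_{h\Sigma_m}$ is always contractible, since $S^{\infty\bar\rho}$ is non-equivariantly contractible. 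It therefore carries no geometric fixed point or Tate information.
\item The bottom homotopy group of $C$ maps \emph{onto}, not into, the twisted coinvariants.
\end{itemize}

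Your last sentence appeals to ``the periodicity'' without saying which one or why it holds, and that is exactly the content of the lemma.

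The missing ingredient is James periodicity over finite skeleta of $B\Sigma_m$. Put $Y_n:=\Sigma^{-n(m-1)}(C\wedge S^{n\bar\rho})_{h\Sigma_m}$. Then the cofiber of $\mathbb{D}_mF(S^n)\to\mathbb{D}_mG(S^n)$ is $\Sigma^{nm}Y_n$, and analyticity makes $Y_n$ $(n-c')$-connected for large allowed $n$. You also need $C$ to be bounded below. This holds because derivatives of analytic functors are bounded below, but you should say so.

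Fix $s$. Since $B\Sigma_m^{(s)}$ is a finite complex and $BGL_1\mathbb{S}$ has finite homotopy groups, the stable spherical fibration of $\bar\rho$ over $B\Sigma_m^{(s)}$ has finite order in $[B\Sigma_m^{(s)},BGL_1\mathbb{S}]$. Let $M_s$ be an even multiple of that order. Then $E\Sigma^{(s)}_{m+}\wedge S^{M_s\bar\rho}\simeq E\Sigma^{(s)}_{m+}\wedge S^{M_s(m-1)}$ as Borel $\Sigma_m$-spectra. The free cells of $E\Sigma_m$ above dimension $s$ only affect $Y_n$ in degrees $>s+\mathrm{conn}(C)$. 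Hence $\pi_iY_n$ for $i<s+\mathrm{conn}(C)$ depends only on $n$ modulo $M_s$.

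Now take any allowed $n_0$ and choose $n=n_0+tM_s$ with $t\gg 0$; this has the same parity because $M_s$ is even. It follows that these groups vanish for $Y_{n_0}$. Letting $s\to\infty$ gives $Y_{n_0}\simeq *$, that is, $D_mF(S^{n_0})\simeq D_mG(S^{n_0})$, and the induction closes.
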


We will use this lemma as a convenient way to keep the discussion entirely within Goodwillie calculus. Some of the arguments below could probably be formulated more naturally in the language of orthogonal or unitary calculus, which might provide a cleaner conceptual framework. However, developing that language would take us too far afield, and this lemma is sufficient for our purposes.

\begin{lem}[{\cite[lem. 1.4]{AK}, \cite[lem. 2.1.3]{Behrens2010}}]
	If $F$ is an analytic functor, then
	\label{calcpowers}
	\begin{align*}
		P_m(F(-)^{\otimes k}) \simeq P_{\lfloor m/k \rfloor}F \circ (-)^{\otimes k} \qquad D_m (F(-)^{\otimes k}) \simeq \begin{cases}
		D_{m/k}F \circ (-)^{\otimes k} &\text{ if } k \mid m \\ * &\text{ if } k \nmid m
	\end{cases}
	\end{align*}
\end{lem}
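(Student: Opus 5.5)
The plan is to prove the statement about $D_m$ first and then deduce the statement about $P_m$ by induction on $m$ along the tower. For the layers, the key input is multilinearization. A functor of the form $X \mapsto F(X)^{\otimes k}$ is the restriction to the diagonal of the functor of $k$ variables $(X_1,\dots,X_k)\mapsto F(X_1)\otimes\cdots\otimes F(X_k)$. Since $\otimes$ preserves colimits in each variable, the multivariable Taylor tower of this product is the product of the towers in each variable. Concretely, the multi-homogeneous layer of multidegree $(m_1,\dots,m_k)$ is $D_{m_1}F(X_1)\otimes\cdots\otimes D_{m_k}F(X_k)$. This uses analyticity of $F$, so that the towers $P_{m_i}F$ converge on highly connected inputs and the tensor product of towers converges as well.

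I would then restrict to the diagonal. The degree-$m$ part of $F^{\otimes k}$ picks up all multidegrees with $\sum m_i=m$, and each summand has the shape $D_{m_1}F\otimes\cdots\otimes D_{m_k}F$. One obstacle appears immediately: the lemma as stated, with $D_m(F^{\otimes k})$ vanishing unless $k\mid m$, cannot hold for an arbitrary $F$. For instance, with $F=\Sigma^\infty$ and $k=2$, the cross terms $D_1\otimes D_2$ are nonzero. So I read $(-)^{\otimes k}$ as being applied to the \emph{input}, with $F$ evaluated afterward, i.e. the statement concerns $F(X^{\otimes k})$, matching the right-hand sides $P_{\lfloor m/k\rfloor}F\circ(-)^{\otimes k}$. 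With that reading, the argument becomes a composition statement.

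The composition argument goes as follows. Homogeneous degree-$j$ functors are $\Omega^\infty(\partial_j F\otimes X^{\otimes j})_{h\Sigma_j}$, and precomposing with $X\mapsto X^{\otimes k}$ gives $\Omega^\infty(\partial_jF\otimes X^{\otimes jk})_{h\Sigma_j}$. This is homogeneous of degree $jk$, because it is a homotopy-orbit construction applied to a $jk$-homogeneous functor. Next, for a $j$-excisive $G$, the functor $G\circ(-)^{\otimes k}$ is $jk$-excisive. I would prove this by induction on $j$ using the fiber sequences $D_jG\to G\to P_{j-1}G$, since $jk$-excisive functors are closed under fibers and extensions in a stable or infinite-loop setting. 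It follows that $P_m F\circ(-)^{\otimes k}$ receives a map from $P_m(F\circ(-)^{\otimes k})$, and that the composite tower $\{P_jF\circ(-)^{\otimes k}\}$ has layers only in degrees divisible by $k$. Analyticity of $F$ ensures that this tower converges to $F\circ(-)^{\otimes k}$ on connected enough inputs, because $X^{\otimes k}$ is more highly connected than $X$.

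I would then invoke the uniqueness of Taylor towers: a convergent tower of functors whose $m$-th stage is $m$-excisive and whose layers are $m$-homogeneous is the Goodwillie tower. Re-indexing the tower $\{P_jF\circ(-)^{\otimes k}\}$ by $m$, with stage $P_{\lfloor m/k\rfloor}F\circ(-)^{\otimes k}$, yields both formulas. The layers vanish unless $k\mid m$, and in that case they equal $D_{m/k}F\circ(-)^{\otimes k}$.

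I expect the uniqueness step to be the main obstacle. Identifying $P_m$ of the composite with the re-indexed tower requires showing that the map $F\circ(-)^{\otimes k}\to P_{\lfloor m/k\rfloor}F\circ(-)^{\otimes k}$ is the universal $m$-excisive approximation, not merely some $m$-excisive one. My first attempt would be to show that its fiber has vanishing $P_m$. To do this, I would use the analytic connectivity estimate: the fiber of $F\to P_jF$ agrees to order $j$ with the zero functor, in the sense that it has connectivity growing like $(j+1)\cdot\mathrm{conn}(X)$. After precomposing with $X^{\otimes k}$, this growth becomes $(j+1)k\cdot\mathrm{conn}(X)$, so the fiber agrees with zero to order greater than $m$, and Goodwillie's criterion then kills its $P_m$.
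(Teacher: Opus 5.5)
Your final argument is correct, and it is the standard proof behind the results the paper cites; the paper itself gives no argument beyond the references to Arone--Kankaanrinta and Behrens. The argument runs: $P_{\lfloor m/k\rfloor}F\circ(-)^{\otimes k}$ is $m$-excisive, and since $X^{\otimes k}$ is roughly $k$ times as connected as $X$, the map $F\circ(-)^{\otimes k}\to P_{\lfloor m/k\rfloor}F\circ(-)^{\otimes k}$ agrees to order $(\lfloor m/k\rfloor+1)k-1\geq m$, so Goodwillie's criterion (applied directly to this map) identifies the target with $P_m$, and the formula for $D_m$ follows by comparing $\lfloor m/k\rfloor$ with $\lfloor (m-1)/k\rfloor$.

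Only small repairs are needed. For space-valued functors, deduce $jk$-excisiveness from the delooped sequence $G\simeq\mathrm{fib}(P_{j-1}G\to BD_jG)$, which needs only closure under fibers, rather than from closure under extensions. Let the order-of-agreement argument replace, rather than supplement, your appeal to a general ``uniqueness of Taylor towers'', which is not available without exactly this kind of connectivity control. Finally, drop the $F=\Sigma^\infty$ aside: $\Sigma^\infty$ is linear and has no $D_2$, so there are no cross terms; something like $\Sigma^\infty\Omega\Sigma$ would make your point about the other reading.
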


In light of the last lemma, we will adopt the convention $P_m := P_{\lfloor m \rfloor}$ and $D_m := *$ if $m$ is not an integer. This is consistent with the intuition that $P_m$ should represent a polynomial of degree $\leq m$, while $D_m$ should be homogeneous of degree exactly $m$.

\subsection{Generalized Freudenthal theorem}

In this subsection, we use Goodwillie calculus to construct approximations of spheres and determine the range in which these approximations converge. More precisely, we rely on descriptions of the layers of the Goodwillie tower of the identity functor on odd spheres, which allow us to compute their connectivity. We also explain how even spheres can be treated via the calculus version of the EHP sequence. Combining these tools, we obtain a generalized form of the Freudenthal suspension theorem, extending the classical stable and metastable ranges.

We begin by specializing the general discussion of Goodwillie calculus to the identity functor $I\colon \mathcal{S}_* \to \mathcal{S}_*$. Its Taylor tower converges on every connected $\mathbb{Z}$-complete space \cite{convergence}, and in particular on every sphere $S^n$ with $n\geq 1$. The case of $S^0$ is not covered by this general convergence theorem; we will return to it in remark \ref{convS0}, where we prove a $2$-local convergence statement.

The homogeneous layers $\mathbb{D}_mI(S^n)$ have been studied extensively. In particular, Arone--Mahowald \cite[thm. 0.1]{AroneMahowald} proved that, after $p$-localization and for $n$ odd, these layers vanish unless $m=p^k$. This motivates the following notation:
\[
	S^n_k := P_{p^k}I(S^n)
	\qquad
	S^n(k) := \mathbb{D}_{p^k}I(S^n)
\]
We will call $S^n_k$ the \emph{$k$-approximation $n$-sphere}. The zeroth approximation is $\Omega^\infty\mathbb{S}^n$, corresponding to stable homotopy.

\begin{ex}
	We have $S^n(0) \simeq \mathbb{S}^n$ and $S^n_0 \simeq \Omega^\infty\mathbb{S}^n$. The first equivalence follows from
	\[
	S^n(0) \simeq \mathrm{colim}_j \Sigma^{-j}\Sigma^\infty I(\Sigma^j S^n) \simeq \mathrm{colim}_j \mathbb{S}^n \simeq \mathbb{S}^n
	\]
	The second equivalence then follows from the fact that this is the first nontrivial layer.
\end{ex}

\begin{rem}
	When $n$ is even, there are additional nontrivial layers in degrees $m = 2p^k$, but apart from this subsection we will focus mainly on the case of odd spheres.
\end{rem}

For $n$ odd and $k>0$, the spectra $S^n(k)$ admit several equivalent descriptions, due to Johnson \cite{derivatives} and Arone--Dwyer \cite{AroneDwyer}. These descriptions will not play a major role later, but it is still useful to collect them here, both for convenience and because they form a beautiful piece of mathematics. To state them, we recall some auxiliary constructions.

The \textit{partition complex} $P_m$ is the geometric realization of the poset of proper, nontrivial partitions of the set $\{1,\dots,m\}$, ordered by refinement. The symmetric group $\Sigma_m$ acts on $P_m$ by permuting the blocks. Non-equivariantly, it is a wedge of $(m-1)!$ spheres of dimension $m - 3$.

The \textit{Tits building} $T_k$ is the geometric realization of the poset of proper, nontrivial subspaces of $\mathbb{F}_p^k$, ordered by inclusion. It carries an action of the affine group $\mathrm{Aff}_k\mathbb{F}_p := \mathbb{F}_p^k \rtimes \mathrm{GL}_k\mathbb{F}_p$, where the subgroup $\mathbb{F}_p^k$ acts trivially. Non-equivariantly, it is a wedge of $p^{\binom{k}{2}}$ spheres of dimension $k - 2$.

The \textit{Steinberg idempotent} $\epsilon^\mathrm{st}_k$ is a distinguished idempotent in the group ring $\mathbb{F}_p[\mathrm{GL}_k\mathbb{F}_p]$. It can be lifted to an element of $\mathbb{Z}[\mathrm{GL}_k\mathbb{F}_p]$ and hence made to act on any spectrum with a $\mathrm{GL}_k\mathbb{F}_p$-action. Acting on such a spectrum $X$, it splits off a direct summand $\epsilon^\mathrm{st}_k \cdot X$. Arone--Dwyer show \cite[cor. 9.6]{AroneDwyer} that this summand is related to the Tits building by
\[
\epsilon^\mathrm{st}_k \cdot X \simeq \Sigma^{1-k}(T_k^\diamond \otimes X)_{h\mathrm{GL}_k\mathbb{F}_p}
\]
If $m=p^k$, both $\Sigma_m$ and $\mathrm{Aff}_k\mathbb{F}_p$ act on the $m$-element set $\mathbb{F}_p^k$: the former by permutations and the latter through the standard affine action. We denote either action by $\sigma$. In this situation, there are natural maps $\mathrm{Aff}_k\mathbb{F}_p \to \Sigma_m$ and $T_k \to P_m$.

We write $(-)^\diamond$ for unreduced suspension and $(-)^\vee$ for Spanier--Whitehead duality.

\begin{prop}
For $n$ odd and $k > 0$, the spectra $S^n(k)$ admit the following equivalent descriptions:
    \begin{enumerate}
        \item $((\Sigma P_{p^k}^\diamond)^\vee \otimes \mathbb{S}^{n\sigma})_{h\Sigma_{p^k}}$
        \item $((\Sigma T_k^\diamond)^\vee \otimes \mathbb{S}^{n\sigma})_{h\mathrm{Aff}_k\mathbb{F}_p}$
        \item $\Sigma^{1-2k}(P_{p^k}^\diamond \otimes \mathbb{S}^{n\sigma})_{h\Sigma_{p^k}}$ 
        \item $\Sigma^{1-2k}(T_k^\diamond \otimes \mathbb{S}^{n\sigma})_{h\mathrm{Aff}_k\mathbb{F}_p}$ 
        \item $\Sigma^{1-2k}(T_k^\diamond \otimes \mathbb{S}^{n\sigma}_{h\mathbb{F}_p^k})_{h\mathrm{GL}_k\mathbb{F}_p}^{}$ 
        \item $\Sigma^{-k}\epsilon^\mathrm{st}_k \cdot \mathbb{S}^{n\sigma}_{h\mathbb{F}_p^k}$
    \end{enumerate}
\end{prop}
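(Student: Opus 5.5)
The plan is to chain together known results, since essentially every description is a recollection from the literature. I start from Johnson's computation of the derivatives of the identity, $\partial_m I \simeq (\Sigma P_m^\diamond)^\vee$ with its natural $\Sigma_m$-action (more precisely, the Spanier--Whitehead dual of the suspended unreduced partition complex, up to the sign-twisted conventions used by Arone--Mahowald). Plugging this into the classification of homogeneous layers,
\[
\mathbb{D}_mI(X)\simeq(\partial_mI\otimes X^{\otimes m})_{h\Sigma_m},
\]
and using $(\mathbb{S}^n)^{\otimes m}\simeq \mathbb{S}^{n\sigma}$ (the permutation representation sphere) gives (1) at $m=p^k$.

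To pass from (1) to (3) I use Arone--Dwyer's self-duality of the partition complex. The space $P_m^\diamond$ is a wedge of $(m-3)$-spheres, so its suspension is a wedge of $(m-1)$-spheres. Equivariantly, after smashing with $\mathbb{S}^{n\sigma}$ for $n$ odd and taking homotopy orbits, the dual can be traded for a shift, as in the Arone--Dwyer paper. The $p$-locally relevant part is concentrated on the transitive elementary abelian subgroup. Concretely, there one has the equivalence
\[
(\Sigma P^\diamond_{p^k})^\vee\otimes \mathbb{S}^{n\sigma}\simeq \Sigma^{1-2k}P^\diamond_{p^k}\otimes\mathbb{S}^{n\sigma}
\]
after homotopy orbits, where the dimension count is: $\Sigma P^\diamond$ has top cells in degree $p^k-1$, while the Tits building contributes dimension $k$.

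The key equivariant input, and the step I expect to be the main obstacle, is Arone--Dwyer's theorem that for $n$ odd and $m=p^k$ the map $T_k\to P_m$ induces a $p$-local equivalence
\[
(P_m^\diamond\otimes\mathbb{S}^{n\sigma})_{h\Sigma_m}\simeq (T_k^\diamond\otimes\mathbb{S}^{n\sigma})_{h\mathrm{Aff}_k\mathbb{F}_p},
\]
and that these homotopy orbits vanish for $m$ not a prime power. This is proved via an isotropy/fixed-point analysis: only the subgroups acting transitively and freely, namely $\mathbb{F}_p^k$, contribute. Applying it to both (1) and (3) gives (2) and (4), and in this proposal I would simply cite it rather than reprove it.

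For (5), I factor the $\mathrm{Aff}_k\mathbb{F}_p$-orbits as $\mathbb{F}_p^k$-orbits followed by $\mathrm{GL}_k\mathbb{F}_p$-orbits. Since $\mathbb{F}_p^k$ acts trivially on $T_k$, it can be moved onto $\mathbb{S}^{n\sigma}$ alone. Finally, (6) follows from (5) by the quoted identity
\[
\epsilon^{\mathrm{st}}_k\cdot X\simeq\Sigma^{1-k}(T_k^\diamond\otimes X)_{h\mathrm{GL}_k\mathbb{F}_p}
\]
applied to $X=\mathbb{S}^{n\sigma}_{h\mathbb{F}_p^k}$. Comparing the shifts $\Sigma^{1-2k}$ and $\Sigma^{1-k}$ yields the residual $\Sigma^{-k}$.
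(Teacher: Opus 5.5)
Your step from (1) to (3) has a genuine gap. There is no ``self-duality of the partition complex'' that produces the shift $\Sigma^{1-2k}$. Already non-equivariantly, $(\Sigma P_{p^k}^\diamond)^\vee$ is a wedge of spheres of dimension $1-p^k$, while $\Sigma^{1-2k}P_{p^k}^\diamond$ is a wedge of spheres of dimension $p^k-2k-1$. (Note that $P_m$, not $P_m^\diamond$, is the wedge of $(m-3)$-spheres.) These dimensions agree only when $p^k=k+1$. So the two descriptions can only be identified after smashing with $\mathbb{S}^{n\sigma}$ and taking homotopy orbits, and that identification goes through the Tits building.

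Your dimension count also does not recover the shift. The input that actually produces it is the self-duality $(T_k^\diamond)^\vee\simeq\Sigma^{2(1-k)}T_k^\diamond$ of the $(k-1)$-dimensional spherical complex $T_k^\diamond$ (Arone--Dwyer, thm.~8.2). This gives $(\Sigma T_k^\diamond)^\vee\simeq\Sigma^{1-2k}T_k^\diamond$. That ingredient is absent from your proposal. Without a valid link between (1) and (3), your chain splits into two unconnected pieces, $\{(1),(2)\}$ and $\{(3),(4),(5),(6)\}$.

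The repair is the paper's route. Use the comparison $T_k\to P_{p^k}$ to get (1)$\simeq$(2) and (3)$\simeq$(4). Then connect (2)$\simeq$(4) via the Tits building self-duality above.

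For (1)$\simeq$(2) you need the dual form of the Arone--Dwyer comparison, namely that the contravariantly induced map $(\Sigma P_{p^k}^\diamond)^\vee\to(\Sigma T_k^\diamond)^\vee$ becomes an equivalence after homotopy orbits. This is a separate statement from the direct one you quote, although Arone--Dwyer prove both (thms.~1.8--1.9).

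Your treatment of (1) itself, and of the steps (4)$\simeq$(5) and (5)$\simeq$(6), agrees with the paper.
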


\begin{proof}
    The first description follows from the classification of homogeneous functors and Johnson's computation $\partial_mI \simeq (\Sigma P_m^\diamond)^\vee$ \cite{derivatives}, which holds in general. The remaining equivalences are established in Arone--Dwyer \cite{AroneDwyer}. The map $T_k \to P_{p^k}$ induces an equivalence after smashing with tensor powers of odd spheres and taking homotopy orbits, both directly and in dual form \cite[thm. 1.8--1.9]{AroneDwyer}, giving (1) $\simeq$ (2) and (3) $\simeq$ (4). The equivalence (2) $\simeq$ (4) follows from the self-duality of Tits buildings \cite[thm. 8.2]{AroneDwyer}, namely $(T_k^\diamond)^\vee \simeq \Sigma^{2(1-k)}T_k^\diamond$. The equivalence (4) $\simeq$ (5) uses the fact that $\mathbb{F}_p^k$ acts trivially on the Tits building. Finally, (5) $\simeq$ (6) follows from the relation between the Tits building and the Steinberg idempotent stated above.
\end{proof}

\begin{ex}
	We now identify $S^n(1)$ for $n$ odd. There is an equivalence
	\[
	S^n(1)\simeq \Sigma^{\infty+n-1} B\Sigma_p / B\Sigma_p^{(p-1)(n-1)}
	\]
	To see this, we use description (4) of the proposition. Since $\mathbb{F}_p$ has no nontrivial proper subspaces, we have $T_1=\emptyset$ and hence $T_1^\diamond \simeq S^0$. Therefore
	\[
	S^n(1) \simeq \Sigma^{-1}\mathbb{S}^{n\sigma}_{h\mathrm{Aff}_1\mathbb{F}_p}
	\]
	where $\mathrm{Aff}_1\mathbb{F}_p\cong \mathbb{F}_p\rtimes \mathbb{F}_p^\times$. The inclusion $\mathrm{Aff}_1\mathbb{F}_p \leq \Sigma_p$ induces an equivalence on homotopy orbits; see, for example, \cite[rem. IV.1.6]{NikolausScholze}. Hence
	\[
	S^n(1) \simeq \Sigma^{-1}\mathbb{S}^{n\sigma}_{h\Sigma_p}
	\]
	Writing $\sigma \cong \mathbf{1}\oplus \bar\sigma$, where $\bar\sigma$ denotes the standard $(p-1)$-dimensional representation, this becomes
	\[
	S^n(1) \simeq \Sigma^{n-1}\mathbb{S}^{n\bar\sigma}_{h\Sigma_p}
	\]
	We now compute the $\mathbb{F}_p$-cohomology. Since $\mathbb{S}^{n\bar\sigma}$ has a single nonzero cohomology group, in degree $(p-1)n$, the homotopy orbits spectral sequence gives
	\[
	\mathbb{F}_p^*\mathbb{S}^{n\bar\sigma}_{h\Sigma_p}
	\cong H^{*-(p-1)n}(B\Sigma_p;\mathbb{F}_p\otimes \mathrm{sgn})
	\cong H^{*-(p-1)(n-1)}(B\Sigma_p;\mathbb{F}_p)
	\]
	where we used that the cohomology of $\Sigma_p$ with sign coefficients agrees with the cohomology with trivial coefficients up to a shift by $(p-1)$ \cite[lem. 1.4]{MayAlg}. Using in addition the $2(p-1)$-periodicity of $H^*(B\Sigma_p;\mathbb{F}_p)$, it follows that
	\[
	\mathbb{F}_p^*\mathbb{S}^{n\bar\sigma}_{h\Sigma_p}
	\cong \mathbb{F}_p^*B\Sigma_p
	\qquad\text{for } *>(p-1)(n-1)
	\]
	Moreover, the canonical map $S^0\to S^{n\bar\sigma}$ induces the expected inclusion
	\[
	\mathbb{F}_p^*\mathbb{S}^{n\bar\sigma}_{h\Sigma_p}
	\longrightarrow
	\mathbb{F}_p^*B\Sigma_{p+}
	\]
	It follows that $\mathbb{S}^{n\bar\sigma}_{h\Sigma_p}$ is the suspension spectrum of the cofiber of the inclusion of the $(p-1)(n-1)$-skeleton of $B\Sigma_p$, which proves the claim.
\end{ex}

The computation above can be generalized to all the layers. The $\mathbb{F}_p$-cohomology of $S^n(k)$ was computed by Arone--Mahowald \cite{AroneMahowald}. To state their result, we need to introduce some conventions concerning the Steenrod algebra. 

Let $\mathcal{A}$ denote the mod-$p$ Steenrod algebra. At $p=2$ it is generated by the \emph{Steenrod squares} $Sq^i$, with degree $|Sq^i|=i$, while at odd primes it is generated by the \emph{Bockstein} $\beta$, with $|\beta|=1$, and the \emph{reduced powers} $P^i$, with $|P^i|=2(p-1)i$. For convenience, we adopt the convention that, at $p=2$, we write $\beta\leftrightarrow Sq^1$ and $P^i\leftrightarrow Sq^{2i}$. This allows a uniform treatment of the additive basis of $\mathcal{A}$, but it does not respect the multiplicative structure. Indeed, the Adem relations behave differently: for example, $P^1P^1=2P^2$, which vanishes at $p=2$, whereas $Sq^2Sq^2=Sq^3Sq^1\neq 0$. Thus, this identification is only safe to use for degrees and additive generators, not for products in $\mathcal{A}$.

As an $\mathbb{F}_p$-vector space, the Steenrod algebra $\mathcal{A}$ has basis given by the monomials $\theta^I=\beta^{\epsilon_1}P^{i_1}\cdots \beta^{\epsilon_k}P^{i_k}\beta^\epsilon$ where $I=(\epsilon_1,i_1,\dots,\epsilon_k,i_k,\epsilon)$ is \emph{admissible}, i.e.\ it satisfies $i_j\geq p\,i_{j+1}+\epsilon_{j+1}$. We call the integer $k$ the \emph{length} of the monomial. With our convention, the notion of length at $p=2$ differs from the classical one: for example, $Sq^2Sq^1\leftrightarrow P^1\beta$ is considered to have length $1$ instead of $2$. The algebra structure can only increase the length of a monomial, giving rise to the \emph{length filtration} $\mathcal{A}_{\geq k}$.

To describe the $\mathbb{F}_p$-cohomology of $S^n(k)$, we need to introduce some $\mathcal{A}$-modules.

First, we consider the module $\mathcal{A}/\mathcal{A}\beta$. As an $\mathbb{F}_p$-vector space, this has additive basis given by the monomials $\theta^I=\beta^{\epsilon_1}P^{i_1}\cdots \beta^{\epsilon_k}P^{i_k}$, where $I=(\epsilon_1,i_1,\dots,\epsilon_k,i_k)$ is admissible. Length is well defined here and again gives rise to a filtration $(\mathcal{A}/\mathcal{A}\beta)_{\geq k}$. The associated graded pieces of this filtration are the $\mathcal{A}$-modules consisting of monomials of exact length $k$, namely
\[
  (\mathcal{A}/\mathcal{A}\beta)_{=k}
  :=
  (\mathcal{A}/\mathcal{A}\beta)_{\geq k}\big/\bigl(\mathcal{A}/\mathcal{A}\beta)_{\geq k+1}
\]
The module $(\mathcal{A}/\mathcal{A}\beta)_{=k}$ has additive basis given by the admissible monomials of length exactly $k$. We can further introduce a filtration $(\mathcal{A}/\mathcal{A}\beta)_{=k}^{\geq n}$ by requiring $i_k\geq n$. This filtration is preserved by the $\mathcal{A}$-module structure and, up to a shift, coincides with the $\mathbb{F}_p$-cohomology of the layers.
    
\begin{prop}
	\label{cohomS}
    The $\mathbb{F}_p$-cohomology of the spectra $S^{2n-1}(k)$ is given by
    \begin{align*}
        \mathbb{F}_p^* S^{2n-1}(k) 
        &\cong \Sigma^{2n - 1 - 2k} (\mathcal{A}/\mathcal{A}\beta)_{=k}^{\geq n} \\
        &= \Sigma^{2n - 1 - 2k} \mathbb{F}_p\bigl\langle \theta^I \in \mathcal{A}/\mathcal{A}\beta \;\big|\; I \text{ admissible of length } k \text{ and } i_k \geq n \bigr\rangle \\
        &= \Sigma^{2n - 1 - 2k} \mathbb{F}_p\bigl\langle \beta^{\epsilon_1} P^{i_1} \cdots \beta^{\epsilon_k} P^{i_k} \;\big|\; i_k \geq n, \ i_j \geq p i_{j+1} + \epsilon_{j+1} \bigr\rangle
    \end{align*}
    In particular, the connectivity of $S^{2n-1}(k)$ is $2np^k - 2k - 1$, and the first nontrivial homotopy group is $\mathbb{Z}$ for $k = 0$ and $\mathbb{F}_p$ for $k > 0$.
\end{prop}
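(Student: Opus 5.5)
The cohomology computation itself is due to Arone--Mahowald, so I will quote it rather than reprove it, and spend the work on (a) translating their statement into the length-filtration notation introduced above, and (b) deducing the connectivity and the bottom homotopy group.

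\textbf{Step 1: quoting and translating.} I take from \cite{AroneMahowald} the identification of $\mathbb{F}_p^*$ of the $p^k$-th layer at $S^{2n-1}$ as the span of \emph{completely unadmissible} words. These are the words $\beta^{\epsilon_1}P^{i_1}\cdots\beta^{\epsilon_k}P^{i_k}$ of length exactly $k$, with an excess condition involving $2n-1$, acting on a bottom class. I then reindex them, for instance by reversing the word, so that their condition becomes admissibility $i_j\geq p\,i_{j+1}+\epsilon_{j+1}$ together with the bound $i_k\geq n$.

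The bottom cell has degree $2n-1-2k$. This shift matches description (4): the factor $\Sigma^{1-2k}$ applied to the $(2n-1)p^k$-dimensional representation sphere, minus the dimensions removed by the Tits-building orbit. As a consistency check, at $k=1$ the example above gives $\Sigma^{2n-2}B\Sigma_p/B\Sigma_p^{(p-1)(2n-2)}$. Its cohomology has classes $\beta^{\epsilon}P^{i}$ with $i\geq n$ placed on $\Sigma^{2n-3}$, and the lowest class $P^n$ sits in degree $2n-3+2n(p-1)=2np-3$. This agrees with the formula.

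Two features of the claimed module structure need checking:
\begin{itemize}
\item[(i)] the relevant words span $\mathcal{A}/\mathcal{A}\beta$, reflecting the fact that the bottom class of an odd sphere supports no Bockstein in these layers;
\item[(ii)] the span is the subquotient $(\mathcal{A}/\mathcal{A}\beta)^{\geq n}_{=k}$, which holds because the Adem relations can only increase length and can only raise $i_k$.
\end{itemize}
Both are formal consequences of the Adem relations. I expect Step 1 to be the main obstacle: it is pure bookkeeping, but the conventions at $p=2$ ($Sq^{2i}\leftrightarrow P^i$, and length counted so that $Sq^2Sq^1$ has length 1) must be matched carefully with the classical Arone--Mahowald/Kuhn form.

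\textbf{Step 2: connectivity.} A monomial $\beta^{\epsilon_1}P^{i_1}\cdots\beta^{\epsilon_k}P^{i_k}$ has degree $\sum\epsilon_j+2(p-1)\sum i_j$. Subject to $i_k\geq n$ and admissibility, this degree is minimised by taking all $\epsilon_j=0$ and $i_j=np^{k-j}$. The minimum is
\[
2(p-1)n\,(1+p+\cdots+p^{k-1})=2n(p^k-1).
\]
Adding the shift $2n-1-2k$ puts the bottom cohomology class in degree $2np^k-2k-1$, and this class is one-dimensional over $\mathbb{F}_p$. The spectrum $S^{2n-1}(k)$ is bounded below and of finite type, being a homotopy-orbit construction on finite complexes. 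Hence the Hurewicz theorem for $p$-local spectra shows that the first nonzero homotopy group sits in degree $2np^k-2k-1$, which is the claimed connectivity.

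\textbf{Step 3: the bottom homotopy group.} For $k=0$ we have $S^{2n-1}(0)\simeq\mathbb{S}^{2n-1}$, so the group is $\mathbb{Z}_{(p)}$. For $k>0$, the group $\pi_{\mathrm{bottom}}$ is a finitely generated $\mathbb{Z}_{(p)}$-module whose reduction mod $p$ is $\mathbb{F}_p$. It is also rationally trivial, because the spectrum is a homotopy orbit spectrum for a $p$-group-containing action with vanishing rational homology; this follows for example from description (6) and the rational triviality of $\mathbb{S}^{n\sigma}_{h\mathbb{F}_p^k}$ away from the bottom cell, which the Steinberg idempotent kills. So the group is cyclic of order $p^r$ with $r\geq1$.

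To see that $r=1$, I use that the next cohomology class up is $\beta P^{np^{k-1}}\cdots P^n$, obtained by setting $\epsilon_1=1$, and that it is the Bockstein of the bottom class. A nonzero primary Bockstein on the bottom class forces the bottom homotopy group to be exactly $\mathbb{F}_p$. Concretely, the bottom two cells then form a mod-$p$ Moore spectrum, so the Adams spectral sequence has $E_2=\mathbb{F}_p$ in that stem with no $h_0$-tower.
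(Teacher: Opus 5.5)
Your proposal is correct and follows the paper's proof: cite Arone--Mahowald for the cohomology, read off the connectivity from the minimal admissible monomial $P^{np^{k-1}}\cdots P^n$, and use the nonzero Bockstein $\beta P^{np^{k-1}}\cdots P^n$ to identify the bottom group as $\mathbb{F}_p$ for $k>0$. Your separate rational-triviality step is unnecessary, and its justification via the Steinberg idempotent is the weakest part of your write-up: if the bottom group were $\mathbb{Z}_{(p)}$, the bottom class would be the reduction of an integral class and so would have zero Bockstein, so the Bockstein argument alone already rules out the torsion-free case.
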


\begin{proof}
    The cohomology calculation can be found in \cite[proof of thm. 3.17]{AroneMahowald}. Note that this reduces to a group cohomology computation using any of the descriptions (1)--(6), since the spaces $P_m$ and $T_k$ are spherical. The connectivity can then be deduced from the cohomology calculation.

    Indeed, the monomial $\theta^I$ of lowest degree is the one with $\epsilon_j=0$, $i_k=n$, and $i_j=p\,i_{j+1}=p^{k-j}n$. It follows that its degree is $\textstyle \sum_{j=1}^{k}2(p-1)p^{k-j}n=2n(p^k-1)$, and adding the suspension shift $2n-1-2k$ gives the connectivity.

    For $k>0$, the Bockstein $\beta\theta^I$ is nonzero, so the first nontrivial \emph{integral} cohomology group, and hence the first nontrivial homotopy group, is $\mathbb{F}_p$. The case $k=0$ corresponds to the sphere spectrum.
\end{proof}

\begin{rem}
	It is worth noting that for $k>0$ the cohomology is free over the Bockstein, and in particular we can recover the $\mathbb{Z}$-cohomology as the kernel of $\beta$. This is given by the $\mathbb{F}_p$-vector space
	\[
	\mathbb{Z}^*S^{2n-1}(k)
	\cong
	\Sigma^{2n-1-2k}\mathbb{F}_p
	\bigl\langle
	\beta P^{i_1}\cdots \beta^{\epsilon_k}P^{i_k}
	\mid
	i_k\geq n,\ i_j\geq p i_{j+1}+\epsilon_{j+1}
	\bigr\rangle
	\]
	In general, the $\mathbb{F}_p$-cohomology of $S^{2n-1}(k)$ is free over the subalgebra $\mathcal{A}(k-1)$ \cite[thm. 3.17]{AroneMahowald}.
\end{rem}

\begin{ex}
	\label{cohomS1}
    Using the preceding description, we can recompute the case $k=1$ explicitly. For $n$ odd, we have
    \[
        \mathbb{F}_p^*S^{n}(1)
        \cong
        \Sigma^{n-2}\mathbb{F}_p\bigl\langle P^i, \beta P^i \mid i\geq (n+1)/2\bigr\rangle
        =:
        \Sigma^{n-2}\mathbb{F}_p\bigl\langle \theta_i, \beta\theta_i \mid i\geq (n+1)/2\bigr\rangle
    \]
    where $\theta_i$ corresponds to $P^i$ in degree $2(p-1)i$. The action of the Bockstein is evident. The rest of the $\mathcal{A}$-structure is given by
    \[
        P^\alpha \theta_i = C^\alpha_i\,\theta_{i+\alpha}
        \qquad
        P^\alpha(\beta\theta_i) = \bigl(C^\alpha_i-C^{\alpha-1}_i\bigr)\,\beta\theta_{i+\alpha}
    \]
    where
    \[
        C^\alpha_i := (-1)^\alpha \binom{(p-1)i-1}{\alpha}
    \]
    This follows from applying the Adem relations to $P^\alpha P^i$ and then killing every term of length greater than $1$. The reader should compare this with the cohomology of $B\Sigma_p$.
\end{ex}

So far, we have only discussed odd spheres, and the curious reader may wonder what happens for even spheres. Classically, even spheres are treated via James' \emph{EHP sequence} \cite{EHP}, which expresses them as an extension of two odd spheres. More precisely, for $n$ even, the classical EHP sequence gives a fiber sequence
\[
    S^{n-1} \longrightarrow \Omega S^{n} \longrightarrow \Omega S^{2n-1}
\]
Moreover, for $p>2$ the EHP sequence \emph{splits} \cite[prop. 4.4.4]{unstablemethods}, reducing their study to that of odd spheres. Since the EHP sequence can be made functorial, we obtain a calculus version of it.

\begin{prop}[EHP sequence - calculus version]
	\label{EHPcalc}
    For $p=2$ there is a fiber sequence
    \[
        S^{n-1}_k \longrightarrow \Omega S^n_k \longrightarrow \Omega S^{2n-1}_{k-1}
    \]
    For $p>2$ and $n$ even, this fiber sequence splits, yielding
    \[
        \Omega S^n_k \simeq S^{n-1}_k \times \Omega S^{2n-1}_{k-1}
    \]
\end{prop}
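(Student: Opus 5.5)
We would realize the classical EHP sequence as a fiber sequence of functors and then pass to Goodwillie towers, using \cref{calcspheres} and \cref{calcpowers} to identify the resulting polynomial approximations.

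Concretely, consider the three reduced functors $F(X) = X$, $G(X) = \Omega\Sigma X$ and $H(X) = \Omega\Sigma (X^{\wedge 2})$ on pointed spaces. James' construction gives a natural map $E\colon X \to \Omega\Sigma X$, and the James--Hopf map gives a natural map $H\colon \Omega\Sigma X \to \Omega\Sigma X^{\wedge 2}$. For $X = S^{n-1}$ with $n$ even, the composite is the EHP fiber sequence $S^{n-1}\to \Omega S^n \to \Omega S^{2n-1}$ (after $2$-localization, or in general $p$-locally for the odd sphere $S^{n-1}$). We would therefore replace $F$ by the natural homotopy fiber $\fib(G \to H)$. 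The natural map $F \to \fib(G\to H)$ is an equivalence on odd spheres at $p=2$ by James, and likewise $p$-locally for odd spheres at odd primes. All functors involved are analytic, since $G$ and $H$ are built from the identity by precomposition with $\Sigma$, smash powers and $\Omega$. Because $P_m$ preserves finite limits, \cref{calcspheres} gives
\[
P_m I(S^{n-1}) \simeq \fib\bigl(P_m(\Omega\Sigma)(S^{n-1}) \to P_m(\Omega\Sigma(-)^{\wedge2})(S^{n-1})\bigr).
\]

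We would then identify each term. Since $\Omega$ commutes with $P_m$ and $\Sigma$ is linear, we have $P_m(\Omega I\Sigma) \simeq \Omega P_m I\circ \Sigma$. Hence the middle term is $\Omega P_mI(S^n)$ and the right term is $\Omega P_m(I\circ(-)^{\wedge 2})(S^{n})$, after rewriting $\Sigma(X^{\wedge2})$ through $\Sigma X$ appropriately (more precisely $\Sigma X^{\wedge 2}\simeq (\Sigma X)^{\wedge 2}$ desuspended once, so $\Omega\Sigma^{2n-1}$). By \cref{calcpowers}, $P_m(I\circ(-)^{\wedge 2}) \simeq P_{\lfloor m/2\rfloor} I\circ(-)^{\wedge 2}$. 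Taking $m = 2^k$ gives $P_{2^{k-1}} I(S^{2n-1}) = S^{2n-1}_{k-1}$, and the fiber sequence $S^{n-1}_k \to \Omega S^n_k \to \Omega S^{2n-1}_{k-1}$ follows.

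For $p>2$ and $n$ even, the classical splitting $\Omega S^n \simeq S^{n-1}\times \Omega S^{2n-1}$ should be made natural on the relevant spheres. We would use the map $\Omega\Sigma X \to X$-free description: the Whitehead product or the $\Omega$ of the multiplication map $S^{n-1}\times\Omega S^{2n-1}\to\Omega S^n$. This map is built from $E$ and $\Omega[\iota,\iota]$, and $[\iota,\iota]$ is natural for maps of suspensions. The key step is to produce a natural transformation $\fib(G\to H)\times H' \to G$, with $H'$ a functor agreeing with $\Omega\Sigma^{-}$ smash square appropriately, that is an equivalence on odd spheres $p$-locally. We would then apply \cref{calcspheres} to the product, which commutes with $P_m$.

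The main obstacle is this naturality. On the relevant range, $P_{p^k}$ of the square functor only sees $P_{\lfloor p^k/2\rfloor}$, and we would need to check that the splitting map's source evaluates to $S^{2n-1}_{k-1}$ at odd $p$ as well, since $\lfloor p^k/2\rfloor$ and $p^{k-1}$ give the same approximation by Arone--Mahowald's vanishing of non-$p$-power layers on odd spheres. If a natural splitting is not available, we would argue instead that the fiber sequence splits after applying $P_{p^k}$, because the EHP connecting map is detected by a class that vanishes on spheres and is therefore killed by \cref{calcspheres}.
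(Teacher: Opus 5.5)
Your outline matches the paper's route: promote EHP to functors $I\to\Omega\Sigma\to\Omega\Sigma(-)^{\wedge2}$, apply $P_{p^k}$ via lemma \ref{calcspheres}, and identify the last term via lemma \ref{calcpowers} together with the vanishing of non-$p$-power layers on odd spheres. However, the odd-primary half is not actually established. You leave the natural splitting as ``the main obstacle'', and the fallback you offer is invalid. Lemma \ref{calcspheres} only transports natural transformations that are equivalences on spheres. Knowing, one sphere at a time, that a connecting map is null or that a fiber sequence splits does not survive passage to $P_m$. The remark right after the proposition gives a counterexample. The Hopf fibration gives $\Omega S^2\simeq S^1\times\Omega S^3$, so the connecting map $\Omega^2S^3\to S^1$ is null. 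Yet $\Omega S^2_k\not\simeq S^1_k\times\Omega S^3_{k-1}$ at $p=2$.

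The obstacle disappears with the ingredients you already name. For any pointed $X$, let $w_X\colon\Sigma X\wedge X\to\Sigma X$ be the Whitehead square of $\mathrm{id}_{\Sigma X}$, i.e.\ the fold map composed with the universal Whitehead product. It is natural in $X$ on all of $\mathcal{S}_*$, so
\[
\phi_X\colon X\times\Omega\Sigma(X\wedge X)\xrightarrow{E\times\Omega w_X}\Omega\Sigma X\times\Omega\Sigma X\xrightarrow{\ \mu\ }\Omega\Sigma X
\]
is a natural transformation $I\times\Omega\Sigma(-)^{\wedge2}\to\Omega\Sigma$ of reduced analytic functors. On $S^{n-1}$ with $n$ even it is Serre's map. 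This is a $p$-local equivalence for $p$ odd, since $w_{S^{n-1}}=[\iota_n,\iota_n]$ has Hopf invariant $\pm2$. No $\fib(G\to H)$ or auxiliary $H'$ is needed. Lemmas \ref{calcspheres} and \ref{calcpowers}, plus the fact that $P_m$ commutes with products and $\Omega$, finish the argument. For $X=S^1$ one gets $w=\pm2\eta$, which is exactly why this natural splitting fails at $p=2$ while the Hopf one exists.

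Two smaller fixes at $p=2$ are needed. First, James' EHP sequence is a $2$-local fiber sequence on \emph{all} spheres, and you need this. The statement has no parity restriction at $p=2$, and theorem \ref{genfreudenthal} uses it for an even sphere $S^{n-1}$; your ``odd spheres'' hypothesis only yields $n$ even. Second, identify the third term by applying lemma \ref{calcpowers} to $F=\Omega\Sigma$ directly, giving $P_m(\Omega\Sigma(-)^{\wedge2})(S^{n-1})\simeq\Omega P_{\lfloor m/2\rfloor}I(S^{2n-1})$. Going through $(S^n)^{\wedge2}=S^{2n}$ and a desuspension does not work, because that desuspension is not functorial.
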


\begin{proof}
    The first part is stated in \cite[cor. 2.1.4]{Behrens2010}. The second part can be proved by a similar argument, but since the same strategy will later be used to establish the calculus version of the Gray sequence, we include the proof here.

    The classical EHP sequence can be promoted to a sequence of functors
    \[
        I \longrightarrow \Omega \Sigma \longrightarrow \Omega \Sigma (-)^{\otimes 2}
    \]
    which becomes a fiber sequence when evaluated on odd spheres, and which splits for $p>2$. In particular, for $p>2$ one has the functorial splitting
    \[
        \Omega \Sigma \simeq I \times \Omega \Sigma (-)^{\otimes 2}
    \]
    on odd spheres.

    From this, together with the fact that $P_m$ preserves finite limits and lemma \ref{calcspheres}, we immediately obtain
    \[
        \Omega S^n_k
        =
        \Omega P_{p^k}I(S^n)
        \simeq
        P_{p^k}(\Omega\Sigma)(S^{n-1})
        \simeq
        P_{p^k}\bigl(I \times \Omega\Sigma(-)^{\otimes 2}\bigr)(S^{n-1})
    \]
    Since $P_m$ preserves finite products and commutes with loops, this becomes
    \[
        \Omega S^n_k
        \simeq
        P_{p^k}I(S^{n-1})
        \times
        \Omega P_{p^k}\bigl(\Sigma(-)^{\otimes 2}\bigr)(S^{n-1})
        \simeq
        S^{n-1}_k
        \times
        \Omega P_{p^k}\bigl(\Sigma(-)^{\otimes 2}\bigr)(S^{n-1})
    \]
    It remains to analyze the last term. By lemma \ref{calcpowers}, we have
    \[
        P_{p^k}\bigl(\Sigma(-)^{\otimes 2}\bigr)(S^{n-1})
        \simeq
        P_{p^k/2}I\bigl(\Sigma(S^{n-1})^{\otimes 2}\bigr)
        \simeq
        P_{p^k/2}I(S^{2n-1})
    \]
    Since for odd spheres the only nontrivial layers occur in degrees $p^j$, we conclude that
    \[
        P_{p^k/2}I(S^{2n-1})
        \simeq
        P_{p^{k-1}}I(S^{2n-1})
        =
        S^{2n-1}_{k-1}
    \]
    Therefore
    \[
        \Omega S^n_k \simeq S^{n-1}_k \times \Omega S^{2n-1}_{k-1}
    \]
    as claimed.
\end{proof}

\begin{rem}
	Note that it is important that the splitting be \emph{functorial}. A non-functorial splitting does not induce a splitting on polynomial approximations. For example, the classical Hopf fibration yields $\Omega S^2 \simeq S^1 \times \Omega S^3$, but it is not true that $\Omega S^2_k \simeq S^1_k \times \Omega S^3_{k-1}$. This already fails for $k=1$, as can be seen from the metastable computations of subsection \ref{metastable}.
\end{rem}

\begin{rem}
	Observe that the discussion above implies, as already anticipated, that for even spheres the layer $\mathbb{D}_mI$ is also nontrivial when $m=2p^k$. Indeed, for $p>2$ and $n$ even one can show
	\[
    \mathbb{D}_{p^k}I(S^n) \simeq \Sigma S^{n-1}(k)
    \qquad
    \mathbb{D}_{2p^k}I(S^n) \simeq S^{2n-1}(k)
	\]
\end{rem}

Combining the convergence of the Goodwillie tower of the identity with the connectivity of the layers $S^n(k)$ for $n$ odd, together with the calculus version of the EHP sequence for $n$ even, we deduce the range of convergence of the map $S^n \to S^n_k$, thereby generalizing the classical Freudenthal theorem.

\begin{thm}[Generalized Freudenthal theorem]
    \label{genfreudenthal}
    The comparison map $S^n \to S^n_k$ is an equivalence on $\pi_*$ for
    \[
    * <
    \begin{cases}
        (n+1)p^{k+1} - 2k - 3 & p=2 \text{ or $n$ odd} \\
        2np^k - 2k - 1 & p>2 \text{ and $n$ even}
    \end{cases}
    \]
    and is surjective in the next degree.
\end{thm}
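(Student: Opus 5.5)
The plan is to read the range off the connectivity of the homogeneous layers, using convergence of the Goodwillie tower. The case $p>2$, $n$ even, is reduced to odd spheres via the split calculus EHP sequence. For $p=2$ I need the layer connectivity for every $n$, which is the step I am least sure of.

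\textbf{Step 1: convergence and the fiber.} Since the tower of the identity converges on $S^n$ for $n\geq 1$ \cite{convergence}, the fiber $F_k$ of $S^n\to S^n_k$ is the limit of the tower of fibers of $S^n_j\to S^n_k$, $j>k$. These are built from the layers $\Omega^\infty\mathbb{D}_{p^j}I(S^n)$. Suppose the $j$-th layer has first nonzero homotopy in degree $c_j$, with $c_j$ strictly increasing and $c_j\to\infty$. Then the Milnor $\lim^1$ terms vanish in each fixed degree, and $\pi_iF_k=0$ for $i<c_{k+1}$. The long exact sequence
\[
\pi_iF_k\to\pi_iS^n\to\pi_iS^n_k\to\pi_{i-1}F_k
\]
then gives an isomorphism for $i<c_{k+1}$ and a surjection for $i=c_{k+1}$. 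So the theorem reduces to computing $c_{k+1}$ in each case.

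\textbf{Step 2: odd $n$.} Write $n=2m-1$. By proposition \ref{cohomS}, $S^n(j)$ has connectivity $2mp^j-2j-1=(n+1)p^j-2j-1$. Its first nonzero homotopy group sits exactly in that degree, because it is detected by the bottom cohomology class, which is $\mathbb{F}_p$, or $\mathbb{Z}$ when $j=0$. This quantity is strictly increasing in $j$. For $j=k+1$ it is $(n+1)p^{k+1}-2k-3$, which is exactly the claimed range together with surjectivity in the next degree.

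\textbf{Step 3: $p>2$, $n$ even.} I use the functorial splitting of proposition \ref{EHPcalc}. It is compatible with the classical splitting $\Omega S^n\simeq S^{n-1}\times\Omega S^{2n-1}$, since both come from the same natural transformation. The map on $\pi_{j-1}\Omega S^n=\pi_jS^n$ is then the product of the maps $S^{n-1}\to S^{n-1}_k$ and $S^{2n-1}\to S^{2n-1}_{k-1}$ on $\pi_{j-1}$.

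By Step 2, these two maps are isomorphisms for
\[
j-1<np^{k+1}-2k-3 \qquad\text{and}\qquad j-1<2np^{k}-2k-1
\]
respectively, and surjective at the boundary. Since $p\geq3$, we have $np^{k+1}\geq 3np^k$, so the second bound is the smaller. This gives an isomorphism on $\pi_jS^n$ for $j<2np^k-2k$, which contains the stated range $j<2np^k-2k-1$. It also gives surjectivity at $j=2np^k-2k-1$.

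\textbf{Step 4: $p=2$, $n$ even (main obstacle).} Here the EHP sequence does not split. A five-lemma argument with the fiber sequence of proposition \ref{EHPcalc} only yields an isomorphism for $*<n2^{k+1}-2k-2$, which is weaker than claimed. So I will instead run Step 1 directly. I would invoke the $2$-primary computation of Arone--Mahowald \cite{AroneMahowald}, which treats all spheres at $p=2$, not only odd ones.

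\begin{itemize}
\item At $p=2$ the layers of $I(S^n)$ vanish unless $m=2^j$, for every $n$.
\item Their cohomology is an explicit module of completely unadmissible monomials of length $j$.
\end{itemize}

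Concretely, I would identify the lowest-degree basis element under the convention $\beta\leftrightarrow Sq^1$, $P^i\leftrightarrow Sq^{2i}$, exactly as in the proof of proposition \ref{cohomS}. I then need to check that its degree plus the suspension shift is again $(n+1)2^j-2j-1$ for even $n$. Once that is established, Step 1 gives the claimed range $(n+1)2^{k+1}-2k-3$ and the surjectivity statement.

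The delicate point is matching Arone--Mahowald's indexing of the bottom class for even $n$ with the odd-sphere formula. I expect this to be the main obstacle and would verify it first for $k=0$, where the layer is $\mathbb{D}_2I(S^n)$.
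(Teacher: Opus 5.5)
Your Steps 1--3 follow the paper's own argument: convergence plus layer connectivity for odd $n$, and the split calculus EHP sequence for $p>2$. Step 4, however, is left unproved, and the obstruction you found there comes from using the wrong instance of the EHP sequence.

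Your bound $n2^{k+1}-2k-2$ arises because you present $\Omega S^n_k$ as an extension of $S^{n-1}_k$ and $\Omega S^{2n-1}_{k-1}$. Instead, apply proposition \ref{EHPcalc} with $n+1$ in place of $n$ and rotate, as the paper does:
\[
\Omega^2S^{2n+1}_{k-1}\longrightarrow S^n_k\longrightarrow \Omega S^{n+1}_k .
\]
This sequence receives a map from the classical $2$-local sequence $\Omega^2S^{2n+1}\to S^n\to\Omega S^{n+1}$. Write $c(n,k)=(n+1)p^k-2k-1$. For $n$ even, both $n+1$ and $2n+1$ are odd, so Step 2 applies to the outer terms:
\begin{itemize}
\item the $\Omega S^{n+1}$ comparison is an isomorphism for $*<c(n+1,k+1)-1=c(n,k+1)+2^{k+1}-1$;
\item the $\Omega^2S^{2n+1}$ comparison is an isomorphism for $*<c(2n+1,k)-2=c(n,k+1)$.
\end{itemize}
The five lemma then gives an isomorphism for $*<c(n,k+1)=(n+1)2^{k+1}-2k-3$. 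The four lemma gives surjectivity at $*=c(n,k+1)$, where the $\Omega S^{n+1}$ term is still an isomorphism and the $\Omega^2S^{2n+1}$ term is surjective.

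Quoting the $2$-primary Arone--Mahowald computation for even spheres is a legitimate alternative; the paper notes that \cite[thm.~3.16]{AroneMahowald} gives the same connectivity. But you have only announced the bottom-class degree check, not carried it out, so as written the case $p=2$, $n$ even has no proof.

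There is also an off-by-one error in Step 3. The second factor of $\pi_{j-1}\Omega S^n$ is $\pi_{j-1}\Omega S^{2n-1}=\pi_jS^{2n-1}$, not $\pi_{j-1}S^{2n-1}$. So Step 2 gives the condition $j<c(2n-1,k)=2np^k-2k-1$, not $j-1<2np^k-2k-1$.

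Your asserted isomorphism for $j<2np^k-2k$ is therefore false. For $p=3$, $n=4$, $k=0$, $j=7$, the group $\pi_7S^4$ contains a copy of $\mathbb{Z}$, while $\pi_7S^4_0\cong\pi_3\mathbb{S}$ is finite.

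With the correct index, the minimum of the two bounds is exactly the stated range $j<2np^k-2k-1$. At $j=2np^k-2k-1$, the $S^{n-1}$ factor is still an isomorphism and the $S^{2n-1}$ factor is surjective, which gives the claimed surjectivity. So the conclusion of Step 3 survives, but only once the indexing is corrected.
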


\begin{proof}
    Set $c(n,k) := (n+1)p^k-2k-1$, which for $n$ odd agrees with the connectivity of $S^n(k)$ by proposition \ref{cohomS}. Note that it satisfies the recursive formulas
    \[
    c(n+1,k)=c(n,k)+p^k
    \qquad
    c(n,k+1)=c(p(n+1)-1,k)-2
    \]

    The comparison map factors as
    \[
        S^n
        \longrightarrow
        \mathrm{lim}_j S^n_j
        \longrightarrow
        \cdots
        \longrightarrow
        S^n_{j+1}
        \longrightarrow
        S^n_j
        \longrightarrow
        \cdots
        \longrightarrow
        S^n_{k+1}
        \longrightarrow
        S^n_k
    \]
    The leftmost map is an equivalence by convergence of the Goodwillie tower of the identity.
	
	For $n$ odd, consider the fiber sequence
	\[
	\Omega^\infty S^n(j+1) \longrightarrow S^n_{j+1} \longrightarrow S^n_j
	\]
	The right-hand map induces an equivalence on $\pi_*$ in degrees $*<c(n,j+1)$ by connectivity. Since $c(n,j)$ is an increasing function of $j$ and $j\geq k$, it follows that every map in the factorization above is an equivalence on $\pi_*$ for $*<c(n,k+1)$.

	At the borderline degree $*=c(n,k+1)$, the rightmost map is only surjective, while all the others remain equivalences, since $c(n,j)$ is \emph{strictly} increasing, except in the case $(p,n,k)=(2,1,0)$, which can be handled separately.

    For $n$ even and $p=2$, one could argue in the same way, since the cohomology of the layers has the same connectivity as in the odd case \cite[thm. 3.16]{AroneMahowald}. However, since we have not discussed that computation, we give an alternative proof. From the rotated EHP sequence (prop. \ref{EHPcalc})
    \[
    \Omega^2S^{2n+1}_{k-1}
    \longrightarrow
    S^n_k
    \longrightarrow
    \Omega S^{n+1}_k
    \]
    the five lemma, together with the odd case already established, implies that the comparison map $S^n \to S^n_k$ is an isomorphism on $\pi_*$ whenever this holds for the other two terms. This is the case provided
    \begin{align*}
        * &< \min\big(c(n+1,k+1)-1,\; c(2n+1,k)-2\big) \\
          &= \min\big(c(n,k+1)+2^{k+1}-1,\; c(n,k+1)\big) \\
          &= c(n,k+1)
    \end{align*}

    For $n$ even and $p>2$, the claim follows from the splitting of the EHP sequence (prop. \ref{EHPcalc}), namely the equivalence
    \[
    \Omega S^n_k \simeq S^{n-1}_k \times \Omega S^{2n-1}_{k-1}
    \]
    Combining this with the odd case already established, the comparison map is an equivalence on $\pi_*$ provided
    \begin{align*}
        * &< \min \big(c(n-1,k+1)+1,\; c(2n-1,k)\big)  \\
		  &= \min \big(c(pn-1,k)-1,\; c(2n-1,k)\big)  \\
          &= \min \big(c(2n-1,k)+np^k(p-2)-1,\; c(2n-1,k)\big) \\
          &= c(2n-1,k)
    \end{align*}
    as required.
\end{proof}

\begin{rem} 
	For $k=0$ and $p=2$, the theorem states that the map $S^n \longrightarrow S^n_0 \simeq \Omega^\infty \mathbb{S}^n$ is an equivalence on $\pi_*$ for $* < 2n-1$ and is surjective in the next degree. This is exactly the classical Freudenthal suspension theorem. 
	
	More generally, for $k=0$ and $n$ odd, the theorem gives that $S^n \longrightarrow S^n_0 \simeq \Omega^\infty \mathbb{S}^n$ is a $p$-local equivalence on $\pi_*$ for $* < (n+1)p-3$ and is surjective in the next degree. This recovers Serre's refinement of the stable range for odd spheres \cite[prop. 4]{Serreodd}.
\end{rem}

In general, we will call the range of convergence for $k=0$ the \textit{stable range}, and the range of convergence for $k=1$ the \textit{metastable range}. Observe that, unlike in the classical usage, these ranges now depend on the implicit prime.

The table below displays the first homotopy groups of spheres at $p=2$ on the left and at $p=3$ on the right. The stable range is highlighted in purple, the metastable range in blue, and the $k=2$ range in green. As before, $\square$ denotes a copy of $\mathbb{Z}$, while $\bullet_r$ denotes a copy of $\mathbb{Z}/p^r$; when $r=1$, we omit the subscript.
\[
\adjustbox{scale=0.91,center}{
\begin{tabular}{l|l|l|l|l|l|l|l|l|l}
  & $S^1$     & $S^2$        & $S^3$ & $S^4$  & $S^5$ & $S^6$ & $S^7$ & $S^8$  & $S^9$ \\
  \hline
0 & \cellcolor{CornflowerBlue}$\square$ & \cellcolor{Orchid}$\square$ &  \cellcolor{Orchid}$\square$  & \cellcolor{Orchid}$\square$   & \cellcolor{Orchid}$\square$  & \cellcolor{Orchid}$\square$  & \cellcolor{Orchid}$\square$  & \cellcolor{Orchid}$\square$   \cellcolor{Orchid}& \cellcolor{Orchid}$\square$  \\
1 &     \cellcolor{CornflowerBlue}      & \cellcolor{CornflowerBlue}$\square$ & \cellcolor{Orchid}$\bullet$  & \cellcolor{Orchid}$\bullet$   & \cellcolor{Orchid}$\bullet$  & \cellcolor{Orchid}$\bullet$  & \cellcolor{Orchid}$\bullet$  & \cellcolor{Orchid}$\bullet$   & \cellcolor{Orchid}$\bullet$  \\
2 &  \cellcolor{SpringGreen}         & \cellcolor{CornflowerBlue}$\bullet$         & \cellcolor{CornflowerBlue}$\bullet$  & \cellcolor{Orchid}$\bullet$   & \cellcolor{Orchid}$\bullet$  & \cellcolor{Orchid}$\bullet$  & \cellcolor{Orchid}$\bullet$  & \cellcolor{Orchid}$\bullet$   & \cellcolor{Orchid}$\bullet$  \\
3 &    \cellcolor{SpringGreen}       & \cellcolor{CornflowerBlue}$\bullet$         & \cellcolor{CornflowerBlue}$\bullet_2$  & \cellcolor{CornflowerBlue}$\square \bullet_2$ & \cellcolor{Orchid}$\bullet_3$  & \cellcolor{Orchid}$\bullet_3$  & \cellcolor{Orchid}$\bullet_3$  & \cellcolor{Orchid}$\bullet_3$  & \cellcolor{Orchid}$\bullet_3$  \\
4 &    \cellcolor{SpringGreen}       & \cellcolor{CornflowerBlue}$\bullet_2$         & \cellcolor{CornflowerBlue}$\bullet$  & \cellcolor{CornflowerBlue}$\bullet \bullet$  & \cellcolor{CornflowerBlue}$\bullet$  &  \cellcolor{Orchid}  &  \cellcolor{Orchid}  &  \cellcolor{Orchid}   &   \cellcolor{Orchid} \\
5 &   \cellcolor{SpringGreen}        & \cellcolor{SpringGreen}$\bullet$         & \cellcolor{CornflowerBlue}$\bullet$  & \cellcolor{CornflowerBlue}$\bullet \bullet$  & \cellcolor{CornflowerBlue}$\bullet$  & \cellcolor{CornflowerBlue}$\square$  &  \cellcolor{Orchid}  &  \cellcolor{Orchid}   &  \cellcolor{Orchid}  \\
6 &     \cellcolor{SpringGreen}      & \cellcolor{SpringGreen}$\bullet$         &  \cellcolor{CornflowerBlue}  & \cellcolor{CornflowerBlue}$\bullet_3$   & \cellcolor{CornflowerBlue}$\bullet$  & \cellcolor{CornflowerBlue}$\bullet$  & \cellcolor{CornflowerBlue}$\bullet$  & \cellcolor{Orchid}$\bullet$   & \cellcolor{Orchid}$\bullet$  \\
7 &     \cellcolor{SpringGreen}      &     \cellcolor{SpringGreen}      &  \cellcolor{CornflowerBlue}  &  \cellcolor{CornflowerBlue}   & \cellcolor{CornflowerBlue}$\bullet$  & \cellcolor{CornflowerBlue}$\bullet_2$  & \cellcolor{CornflowerBlue}$\bullet_3$  & \cellcolor{CornflowerBlue}$\square \bullet_3$ & \cellcolor{Orchid}$\bullet_4$
\end{tabular} \qquad
\begin{tabular}{l|l|l|l|l|l|l|l|l|l}
  & $S^1$     & $S^2$        & $S^3$ & $S^4$  & $S^5$ & $S^6$ & $S^7$ & $S^8$  & $S^9$ \\
  \hline
0 & \cellcolor{Orchid}$\square$ & \cellcolor{Orchid}$\square$ &  \cellcolor{Orchid}$\square$  & \cellcolor{Orchid}$\square$   & \cellcolor{Orchid}$\square$  & \cellcolor{Orchid}$\square$  & \cellcolor{Orchid}$\square$  & \cellcolor{Orchid}$\square$   \cellcolor{Orchid}& \cellcolor{Orchid}$\square$  \\
1 &    \cellcolor{Orchid}       & \cellcolor{CornflowerBlue}$\square$ & \cellcolor{Orchid}  & \cellcolor{Orchid}   & \cellcolor{Orchid}  & \cellcolor{Orchid}  & \cellcolor{Orchid}  & \cellcolor{Orchid}   & \cellcolor{Orchid}  \\
2 &   \cellcolor{CornflowerBlue}        &   \cellcolor{CornflowerBlue}       & \cellcolor{Orchid}  & \cellcolor{Orchid}   & \cellcolor{Orchid}  & \cellcolor{Orchid}  & \cellcolor{Orchid}  & \cellcolor{Orchid}   & \cellcolor{Orchid}  \\
3 &    \cellcolor{CornflowerBlue}       &   \cellcolor{CornflowerBlue}       & \cellcolor{Orchid} $\bullet$ & \cellcolor{CornflowerBlue} $\square \bullet$ & \cellcolor{Orchid} $\bullet$  & \cellcolor{Orchid} $\bullet$ & \cellcolor{Orchid} $\bullet$ & \cellcolor{Orchid} $\bullet$  & \cellcolor{Orchid} $\bullet$ \\
4 &    \cellcolor{CornflowerBlue}    &     \cellcolor{CornflowerBlue}  $\bullet$   & \cellcolor{Orchid}  & \cellcolor{CornflowerBlue}  & \cellcolor{Orchid}  &  \cellcolor{Orchid}  &  \cellcolor{Orchid}  &  \cellcolor{Orchid}   &   \cellcolor{Orchid} \\
5 &    \cellcolor{CornflowerBlue}       &   \cellcolor{CornflowerBlue}       &  \cellcolor{Orchid} & \cellcolor{CornflowerBlue}  &  \cellcolor{Orchid} & \cellcolor{CornflowerBlue}$\square$  &  \cellcolor{Orchid}  &  \cellcolor{Orchid}   &  \cellcolor{Orchid}  \\
6 &    \cellcolor{CornflowerBlue}       &   \cellcolor{CornflowerBlue}       &  \cellcolor{CornflowerBlue} $\bullet$ &  \cellcolor{CornflowerBlue} $\bullet\bullet$ & \cellcolor{Orchid}  & \cellcolor{CornflowerBlue} & \cellcolor{Orchid}  & \cellcolor{Orchid}   & \cellcolor{Orchid}  \\
7 &    \cellcolor{CornflowerBlue}       &   \cellcolor{SpringGreen}    $\bullet$    &  \cellcolor{CornflowerBlue} $\bullet$  &  \cellcolor{CornflowerBlue}  $\bullet$ &  \cellcolor{Orchid}$\bullet$ & \cellcolor{CornflowerBlue}$\bullet$ & \cellcolor{Orchid} $\bullet$ & \cellcolor{CornflowerBlue} $\square \bullet$ & \cellcolor{Orchid} $\bullet$
\end{tabular}
}
\]

\subsection{Gray sequence}

At the prime $p=2$, the EHP sequence gives a uniform description of the fiber of the suspension map, and hence provides the classical inductive tool for studying spheres. At odd primes, the corresponding description is available only for even spheres, and so it does not give a comparable tool for treating odd spheres. For our purposes, the replacement is Gray's sequence, which describes the fiber $W$ of the double suspension map. In this subsection, we construct a calculus version of the Gray sequence, mirroring Behrens' approach to the EHP sequence. The resulting fiber sequences provide the inductive step for later calculations. Beyond its computational use, the calculus version of the Gray sequence suggests a way to recover the main results of Arone--Mahowald. It also allows us to show that the $k^\text{th}$ approximation of $W$ has exponent $p$ for $k\leq 2$, without invoking the Cohen--Moore--Neisendorfer theorem. These final two applications are independent of the rest of the paper and are meant mainly as illustrations of the scope of the construction; they may therefore be skipped on a first reading.

Define $W$ to be the fiber of the map of functors $I \to \Omega^2\Sigma^2$, and write $W^n := W(S^n)$. Since the functors $P_m$ and $\mathbb{D}_m$ preserve fiber sequences of reduced functors, we obtain fiber sequences
\[
P_mW \longrightarrow P_mI \longrightarrow \Omega^2P_mI\Sigma^2
\qquad
\mathbb{D}_mW \longrightarrow \mathbb{D}_mI \longrightarrow \Omega^2\mathbb{D}_mI\Sigma^2
\]
From this one sees that the Goodwillie tower of $W$ converges as well, and that on odd spheres the only nontrivial layers of $W$ occur in degrees $p^k$. This motivates, as before, the notation
\[
	W^n_k := P_{p^k}W(S^n)
	\qquad
    W^n(k) := \mathbb{D}_{p^k}W(S^n)
\]
With this convention, the fiber sequences above specialize to
\[
    W^n_k \longrightarrow S^n_k \longrightarrow \Omega^2S^{n+2}_k
    \qquad
    W^n(k) \longrightarrow S^n(k) \longrightarrow \Sigma^{-2}S^{n+2}(k)
\]
As before, we know the $\mathbb{F}_p$-cohomology of the layers. Define the $\mathcal{A}$-module
\[
    (\mathcal{A}/\mathcal{A}\beta)_{=k}^{=n}
    :=
    (\mathcal{A}/\mathcal{A}\beta)_{=k}^{\geq n}
    \big/
    (\mathcal{A}/\mathcal{A}\beta)_{=k}^{\geq n+1}
\]
to be the associated graded with respect to the last-index filtration. If we denote
\[
    c(n,k):=(n+1)p^k-2k-1
\]
as before, then proposition \ref{cohomS} immediately yields the following.

\begin{prop}
    The $\mathbb{F}_p$-cohomology of the spectra $W^{2n-1}(k)$ is given by
    \begin{align*}
        \mathbb{F}_p^*W^{2n-1}(k)
        &\cong \Sigma^{2n-1-2k}(\mathcal{A}/\mathcal{A}\beta)_{=k}^{=n} \\
        &= \Sigma^{2n-1-2k}\mathbb{F}_p\bigl\langle \theta^I \in \mathcal{A}/\mathcal{A}\beta \mid I \text{ admissible of length } k \text{ and } i_k=n \bigr\rangle \\
        &= \Sigma^{2n-1-2k}\mathbb{F}_p\bigl\langle \beta^{\epsilon_1}P^{i_1}\cdots \beta^{\epsilon_k}P^n \mid i_j\geq p i_{j+1}+\epsilon_{j+1} \bigr\rangle
    \end{align*}
    In particular, the connectivity of $W^{2n-1}(k)$ is $c(2n-1,k)$, and the first nontrivial homotopy group is $\mathbb{F}_p$ for $k>0$.
\end{prop}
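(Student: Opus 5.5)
The plan is to use the layer fiber sequence
\[
W^{2n-1}(k) \longrightarrow S^{2n-1}(k) \longrightarrow \Sigma^{-2}S^{2n+1}(k),
\]
obtained above by applying $\mathbb{D}_{p^k}$ to $W \to I \to \Omega^2\Sigma^2$, and to compute the long exact sequence in $\mathbb{F}_p$-cohomology. By proposition \ref{cohomS},
\[
\mathbb{F}_p^*S^{2n-1}(k) \cong \Sigma^{2n-1-2k}(\mathcal{A}/\mathcal{A}\beta)^{\geq n}_{=k}
\qquad\text{and}\qquad
\mathbb{F}_p^*\Sigma^{-2}S^{2n+1}(k) \cong \Sigma^{2n-1-2k}(\mathcal{A}/\mathcal{A}\beta)^{\geq n+1}_{=k}.
\]
So the claim reduces to showing that the map induced by the double suspension on cohomology is the inclusion of the filtration piece $(\mathcal{A}/\mathcal{A}\beta)^{\geq n+1}_{=k} \hookrightarrow (\mathcal{A}/\mathcal{A}\beta)^{\geq n}_{=k}$. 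Granting this, the long exact sequence splits into short exact sequences, and the cokernel is by definition $\Sigma^{2n-1-2k}(\mathcal{A}/\mathcal{A}\beta)^{=n}_{=k}$, as an $\mathcal{A}$-module.

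\textbf{Identifying the map (main obstacle).} The key step is to show that $\mathbb{D}_{p^k}$ of the double suspension induces this inclusion. I would argue as follows.
\begin{enumerate}[(a)]
\item The map is $\mathcal{A}$-linear.
\item The module $(\mathcal{A}/\mathcal{A}\beta)^{\geq n+1}_{=k}$ is cyclic on its bottom class $P^{p^{k-1}(n+1)}\cdots P^{n+1}$, in the sense relevant to Arone--Mahowald, or at least it is generated by the classes with a fixed last index.
\item Hence it suffices to check that the bottom class maps nontrivially.
\end{enumerate}
Rather than proving (b) by hand, I would cite the naturality in Arone--Mahowald \cite[proof of thm. 3.17]{AroneMahowald}. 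Their computation is natural in the sphere: the suspension $S^{2n-1}(k) \to \Sigma^{-2}S^{2n+1}(k)$ is modelled by the map of homotopy orbits induced by the inclusion of representation spheres $S^{(2n-1)\sigma} \to \Sigma^{-2p^k}S^{(2n+1)\sigma}$, i.e. the inclusion $S^0\to S^{2\bar\sigma}$ smashed in. In the $k=1$ case (cf. the computation of $S^n(1)$ via skeleta of $B\Sigma_p$), this is the quotient map $B\Sigma_p/B\Sigma_p^{(p-1)(2n-2)} \to B\Sigma_p/B\Sigma_p^{(p-1)2n}$ up to shift. On cohomology this is visibly the inclusion of the classes $\theta_i$, $\beta\theta_i$ with $i\geq n+1$. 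For general $k$ the same Thom-class argument applies to $\mathbb{S}^{n\sigma}_{h\mathbb{F}_p^k}$ via description (6), compatibly with the Steinberg idempotent.

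\textbf{Connectivity and bottom homotopy group.} The lowest-degree admissible monomial with $i_k=n$ is the same as in proposition \ref{cohomS}, namely $P^{p^{k-1}n}\cdots P^n$. Its degree is $2n(p^k-1)$, so $W^{2n-1}(k)$ has the same connectivity $2np^k-2k-1=c(2n-1,k)$ as $S^{2n-1}(k)$. For $k>0$, $\beta$ applied to this class is nonzero, since $\beta P^{p^{k-1}n}\cdots$ is still admissible with $i_k=n$. The bottom integral class is therefore $p$-torsion, and Hurewicz gives $\mathbb{F}_p$ as the first nontrivial homotopy group, exactly as in proposition \ref{cohomS}.
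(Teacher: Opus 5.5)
Your approach is the paper's: it says the proposition follows immediately from proposition \ref{cohomS} via the layer fiber sequence $W^{2n-1}(k)\to S^{2n-1}(k)\to\Sigma^{-2}S^{2n+1}(k)$. It tacitly uses the fact you make explicit, which the paper takes from Arone--Mahowald, that the double suspension induces the inclusion $(\mathcal{A}/\mathcal{A}\beta)^{\geq n+1}_{=k}\hookrightarrow(\mathcal{A}/\mathcal{A}\beta)^{\geq n}_{=k}$ in cohomology; connectivity and the bottom homotopy group then follow exactly as you say. You should discard your auxiliary claim (b), which is false in general: already for $p=2$, $k=1$, $n=1$ the module is, up to shift, $\widetilde{H}^*(\mathbb{RP}^\infty/\mathbb{RP}^2;\mathbb{F}_2)$, where $x^7\notin\mathcal{A}\,x^3$. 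Your Thom-class/naturality argument does not depend on it.
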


We record a few explicit examples of the layers $W^{2n-1}(k)$ in low degrees.

\begin{ex}
    The layer $W^{2n-1}(0)$ is contractible. This follows from the fact that the double suspension map induces an equivalence on the first derivative, or equivalently from the cohomology computation above.
\end{ex}

\begin{ex}
	\label{cohomW1}
    We have $W^{2n-1}(1) \simeq \mathbb{S}^{2np-3}/p$ and $W^{2n-1}_1 \simeq \Omega^\infty\mathbb{S}^{2np-3}/p$. The first equivalence follows from the computation above, which gives
    \[
        \mathbb{F}_p^*W^{2n-1}(1)
        \cong
        \Sigma^{2n-3}\mathbb{F}_p\langle P^n,\beta P^n\rangle
    \]
    and there is a unique connective spectrum with such cohomology. The second equivalence then follows from the fact that this is the first nontrivial layer.
\end{ex}

\begin{ex}
	\label{cohomW2}
	The cohomology of $W^{2n-1}(2)$ can be described as follows:
	\begin{align*}
		\mathbb{F}_p^*W^{2n-1}(2)
		&\cong \Sigma^{2n-5}\mathbb{F}_p\bigl\langle P^iP^n,\beta P^iP^n,P^j\beta P^n,\beta P^j\beta P^n \mid i\geq np,\ j\geq np+1 \bigr\rangle \\
		&=: \Sigma^{2np-5}\mathbb{F}_p\bigl\langle \phi_i,\beta\phi_i,\psi_j,\beta\psi_j \mid i\geq np,\ j\geq np+1 \bigr\rangle
	\end{align*}
	where $\phi_i$ corresponds to $P^iP^n$, shifted to degree $2(p-1)i$, and $\psi_i$ corresponds to $P^i\beta P^n$, shifted to degree $2(p-1)i+1$. As an $\mathcal{A}$-module, it begins as follows:
	% https://q.uiver.app/#q=WzAsNixbMCwwLCJcXHBoaV97bnB9Il0sWzEsMCwiXFxiZXRhXFxwaGlfe25wfSJdLFszLDAsIlxccGhpX3tucCsxfSJdLFs0LDAsIlxcYmV0YVxccGhpX3tucCsxfSJdLFs0LDEsIlxccHNpX3tucCsxfSJdLFs1LDEsIlxcYmV0YVxccHNpX3tucCsxfSJdLFswLDEsIiIsMCx7InN0eWxlIjp7ImhlYWQiOnsibmFtZSI6Im5vbmUifX19XSxbMiwzLCIiLDAseyJzdHlsZSI6eyJoZWFkIjp7Im5hbWUiOiJub25lIn19fV0sWzQsNSwiIiwwLHsic3R5bGUiOnsiaGVhZCI6eyJuYW1lIjoibm9uZSJ9fX1dLFswLDIsIiIsMSx7ImN1cnZlIjotMiwic3R5bGUiOnsiaGVhZCI6eyJuYW1lIjoibm9uZSJ9fX1dLFsxLDQsIiIsMCx7ImN1cnZlIjoyLCJzdHlsZSI6eyJoZWFkIjp7Im5hbWUiOiJub25lIn19fV1d
	\[\begin{tikzcd}
	{\phi_{np}} & {\beta\phi_{np}} && {\phi_{np+1}} & {\beta\phi_{np+1}} \\
	&&&& {\psi_{np+1}} & {\beta\psi_{np+1}}
	\arrow[no head, from=1-1, to=1-2]
	\arrow[curve={height=-12pt}, no head, from=1-1, to=1-4]
	\arrow[curve={height=12pt}, no head, from=1-2, to=2-5]
	\arrow[no head, from=1-4, to=1-5]
	\arrow[no head, from=2-5, to=2-6]
	\end{tikzcd}\]
	Indeed, the action of the Bockstein is evident. The rest of the $\mathcal{A}$-structure follows from the Adem relations and is given by
	\begin{align*}
		P^\alpha\phi_i = C^\alpha_i\phi_{i+\alpha}
		\qquad
		&P^\alpha(\beta\phi_i) = (C^\alpha_i-C^{\alpha-1}_i)\beta\phi_{i+\alpha}+C^{\alpha-1}_i\psi_{i+\alpha} \\
		P^\alpha\psi_i = C^\alpha_i\psi_{i+\alpha}
		\qquad
		&P^\alpha(\beta\psi_i) = (C^\alpha_i-C^{\alpha-1}_i)\beta\psi_{i+\alpha}
	\end{align*}
	
	Here $C^\alpha_i$ denotes the same coefficient that appeared in the cohomology calculation of $S^{2n-1}(1)$ (example \ref{cohomS1}). To make the connection between the two computations even more explicit, observe that there is an injection
	\begin{align*}
    \mathbb{F}_p^*W^{2n-1}(2)
    &\longrightarrow
    \Sigma^{2np-5}\mathcal{A}(0)\otimes \mathbb{F}_p\langle\theta_i,\beta\theta_i \mid i\geq np\rangle
    \cong \mathbb{F}_p^*\Sigma^{-2}S^{2np-1}(1)/p \\
    \phi_i &\longmapsto 1\otimes \theta_i \\
    \beta\phi_i &\longmapsto 1\otimes \beta\theta_i+\beta\otimes \theta_i \\
    \psi_i &\longmapsto \beta\otimes \theta_i \\
    \beta\psi_i &\longmapsto -\beta\otimes \beta\theta_i
	\end{align*}

	One checks that this is an $\mathcal{A}$-module map by using the relations
	\begin{align*}
		\beta(1\otimes x) &= 1\otimes \beta x+\beta\otimes x
		&
		P^\alpha(1\otimes x) &= 1\otimes P^\alpha x \\
		\beta(\beta\otimes x) &= -\beta\otimes \beta x
		&
		P^\alpha(\beta\otimes x) &= \beta\otimes P^\alpha x
	\end{align*}
	together with the formulas displayed above for the $\mathcal{A}$-structure. The map is not surjective, since the generators $\psi_i,\beta\psi_i$ only appear for $i\geq np+1$. Its cokernel is generated by the two classes $\{1\otimes\beta\theta_{np},\beta\otimes\beta\theta_{np}\}$, still with a $(2np-5)$-shift, and this is exactly the cohomology of $\Sigma^{-1}W^{2np-1}(1)$. In conclusion, we have a short exact sequence of $\mathcal{A}$-modules
	\[
	0
	\longrightarrow
	\mathbb{F}_p^*W^{2n-1}(2)
	\longrightarrow
	\mathbb{F}_p^*\Sigma^{-2}S^{2np-1}(1)/p
	\longrightarrow
	\mathbb{F}_p^*\Sigma^{-1}W^{2np-1}(1)
	\longrightarrow
	0
	\]
	As we will see in the next section, this short exact sequence indeed arises from a sequence of spectra.
\end{ex}

\begin{rem}
	The discussion above produces a fiber sequence
	\[
	W^{2n-1}_2
	\longrightarrow
	\Omega^\infty\mathbb{S}^{2np-3}/p
	\longrightarrow
	B\Omega^\infty W^{2n-1}(2)
	\]
	Since the map $W^{2n-1} \to W^{2n-1}_2$ is a $v_2$-periodic equivalence \cite[thm. 4.1]{AroneMahowald}, this recovers Thompson's description of the $v_2$-periodic fiber of the double suspension \cite[thm. 1.5]{L2W}. In fact, $W^{2n-1}(2)$ agrees with $\Sigma^{-1}C$, where $C$ denotes the spectrum appearing in \cite[fiber sequence 1.1]{L2W}.
\end{rem}

From the cohomological calculations, or equivalently from the fiber sequences above together with the generalized Freudenthal theorem, one immediately deduces the following convergence result for $W$.

\begin{prop}[Generalized Freudenthal theorem for $W$]
	\label{genfreudenthalW}
	The comparison map $W^{2n-1} \to W^{2n-1}_k$ is an equivalence on $\pi_*$ for $* < 2np^{k+1} - 2k - 3$ and is surjective in the next degree. 
\end{prop}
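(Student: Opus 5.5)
The plan is to copy the proof of theorem \ref{genfreudenthal} for odd spheres, with the layers $S^n(j)$ replaced by the layers $W^{2n-1}(j)$. The only new input is the connectivity $c(2n-1,j)=2np^j-2j-1$ of $W^{2n-1}(j)$ from the preceding proposition.

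First, the tower of $W$ converges on $S^{2n-1}$. This holds because $W^{2n-1}$ is the fiber of $S^{2n-1}\to\Omega^2S^{2n+1}$, both towers converge, $P_m$ preserves fiber sequences of reduced functors, and limits commute with fibers. So the comparison map factors as
\[
W^{2n-1}\xrightarrow{\ \simeq\ }\mathrm{lim}_j W^{2n-1}_j\longrightarrow\cdots\longrightarrow W^{2n-1}_{j+1}\longrightarrow W^{2n-1}_j\longrightarrow\cdots\longrightarrow W^{2n-1}_k .
\]
On odd spheres the only nontrivial layers are in degrees $p^j$, so each map $W^{2n-1}_{j+1}\to W^{2n-1}_j$ has fiber $\Omega^\infty W^{2n-1}(j+1)$. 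This fiber is $(c(2n-1,j+1)-1)$-connected, so the map is a $\pi_*$-isomorphism for $*<c(2n-1,j+1)$ and surjective at $*=c(2n-1,j+1)$.

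Next, $c(2n-1,j)$ is strictly increasing in $j$. Indeed, $c(2n-1,j+1)-c(2n-1,j)=2np^j(p-1)-2>0$ except when $p=2$, $n=1$, $j=0$. In that case $W^1$ is the fiber of $S^1\to\Omega^2S^3$, and this degenerate case is handled directly, or simply by noting that $W^{2n-1}(0)\simeq *$. For $j\ge k$ every map in the factorization is therefore an isomorphism on $\pi_*$ for $*<c(2n-1,k+1)=2np^{k+1}-2k-3$. At $*=c(2n-1,k+1)$ the last map is surjective and all the others, together with the limit comparison, are still isomorphisms. Milnor $\lim^1$ terms vanish because the groups stabilize in each degree. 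This gives the stated range and surjectivity.

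As a cross-check, one could instead apply the five lemma to the fiber sequence $W^{2n-1}_k\to S^{2n-1}_k\to\Omega^2S^{2n+1}_k$ and use theorem \ref{genfreudenthal}. This only yields the weaker bound $\min(c(2n-1,k+1),\,c(2n+1,k+1)-2)$, the second entry coming from the looped spectrum, which is why the direct layer argument is preferable. The main subtlety is the bookkeeping at the borderline degree, including the strict monotonicity and the degenerate $j=0$ case; the rest is routine.
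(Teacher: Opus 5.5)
Your argument is correct and follows the paper's primary route: the paper deduces the statement immediately from the connectivity $c(2n-1,j)$ of the layers $W^{2n-1}(j)$, exactly as in the proof of theorem \ref{genfreudenthal}. Two corrections to your side remarks: since $c(2n+1,k+1)=c(2n-1,k+1)+2p^{k+1}$, the $\Omega^2S^{2n+1}_k$ term never binds, so the five-lemma route gives the same range and the borderline surjectivity (the paper lists it as an equivalent proof); and the degenerate case $(p,n,j)=(2,1,0)$ never enters, because the borderline degree only uses $c(2n-1,k+2)>c(2n-1,k+1)$, which always holds.
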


In the case $p=2$, the EHP sequence identifies the fiber of the suspension map in terms of another sphere. For a general prime $p$, this is no longer the case. Instead, the fiber of the \emph{double suspension} fits into a fiber sequence of the form
\[
    BW^{2n-1} \longrightarrow \Omega^2S^{2np+1} \longrightarrow S^{2np-1}
\]
due to Gray \cite{Gray1989}. We will refer to this as the \emph{Gray sequence}.

Moreover, the map $\Omega^2S^{2np+1} \to S^{2np-1}$ factors multiplication by $p$ when composed on either side with the double suspension. Thus one has commutative diagrams
% https://q.uiver.app/#q=WzAsNixbMCwwLCJcXE9tZWdhXjJTXnsybnArMX0iXSxbMSwwLCJTXnsybnAtMX0iXSxbMSwxLCJcXE9tZWdhXjJTXnsybnArMX0iXSxbMywwLCJTXnsybnAtMX0iXSxbNCwwLCJcXE9tZWdhXjJTXnsybnArMX0iXSxbNCwxLCJTXnsybnAtMX0iXSxbMCwxXSxbMSwyLCJFXjIiXSxbMCwyLCJwIiwyXSxbMyw0LCJFXjIiXSxbNCw1XSxbMyw1LCJwIiwyXV0=
\[\begin{tikzcd}
	{\Omega^2S^{2np+1}} & {S^{2np-1}} && {S^{2np-1}} & {\Omega^2S^{2np+1}} \\
	& {\Omega^2S^{2np+1}} &&& {S^{2np-1}}
	\arrow[from=1-1, to=1-2]
	\arrow["p"', from=1-1, to=2-2]
	\arrow["{E^2}", from=1-2, to=2-2]
	\arrow["{E^2}", from=1-4, to=1-5]
	\arrow["p"', from=1-4, to=2-5]
	\arrow[from=1-5, to=2-5]
\end{tikzcd}\]
Before turning to the calculus version, let us explain the formal pattern in the case $p=2$. This discussion is intended only as motivation: the EHP sequence is available on spheres, but not as a functorial construction on all spaces. One could make this precise using orthogonal calculus, but since that machinery does not otherwise appear in this paper, we will avoid introducing it here. The knowledgeable reader may wish to keep this perspective in mind.

Recall that the fiber of a composite fits into a fiber sequence
\[
    \fib(X \to Y)
    \longrightarrow
    \fib(X \to Y \to Z)
    \longrightarrow
    \fib(Y \to Z)
\]
If the first fiber sequence deloops, then the composite
\[
    \fib(Y \to Z)
    \longrightarrow
    Y
    \longrightarrow
    B\fib(X \to Y)
\]
induces a delooping of the previous sequence, yielding
\[
    \fib(X \to Y \to Z)
    \longrightarrow
    \fib(Y \to Z)
    \longrightarrow
    B\fib(X \to Y)
\]
Applying this formally to the factorization $E^2 \simeq \Omega E\Sigma \circ E$, and using that $\fib(E)$ deloops on spheres, one obtains on spheres a fiber sequence
\[
    \fib(E^2)
    \longrightarrow
    \fib(\Omega E\Sigma)
    \simeq
    \Omega\fib(E)\Sigma
    \longrightarrow
    B\fib(E)
\]
On spheres, the EHP sequence identifies $B\fib(E)$ with $\Omega\Sigma(-)^{\otimes 2}$, and hence yields a fiber sequence
\[
    W
    \longrightarrow
    \Omega^3\Sigma^3(-)^{\otimes 2}
    \longrightarrow
    \Omega\Sigma(-)^{\otimes 2}
\]
which is precisely the looped Gray sequence at $p=2$.

For an arbitrary prime, one might hope to imitate this argument using the classical fiber sequences
\[
    J_{p-1}S^{2n}
    \longrightarrow
    \Omega S^{2n+1}
    \longrightarrow
    \Omega S^{2np+1}
\]
\[
    S^{2n-1}
    \longrightarrow
    \Omega J_{p-1}S^{2n}
    \longrightarrow
    \Omega S^{2np-1}
\]
where $J_m$ denotes the $m^\mathrm{th}$ stage of the James construction. However, the second sequence does not seem to admit a sufficiently functorial form for our purposes, and this approach also does not directly account for the factorization through multiplication by $p$. We therefore rely instead on the construction of Richter \cite{Richter2014}.

\begin{thm}[Gray sequence -- calculus version]
	\label{graysequence}
    There are fiber sequences
    \[
        W^{2n-1}_k \;\longrightarrow\; \Omega^3 S^{2np+1}_{k-1} \;\longrightarrow\; \Omega S^{2np-1}_{k-1}
    \]
    \[
        W^{2n-1}(k) \;\longrightarrow\; \Sigma^{-3} S^{2np+1}(k-1) \;\longrightarrow\; \Sigma^{-1} S^{2np-1}(k-1)
    \]
	Moreover, the right-hand maps factor multiplication by $p$ when composed on either side with the double suspension.
\end{thm}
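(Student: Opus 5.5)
Following the strategy used for the calculus version of the EHP sequence (prop. \ref{EHPcalc}), the plan is to promote Gray's fiber sequence to a sequence of functors and then apply \(P_{p^k}\) and \(\mathbb{D}_{p^k}\). Richter \cite{Richter2014} constructs, functorially in a space \(X\), a map \(\Omega^2\Sigma^2 X^{\wedge p}\to \Sigma X^{\wedge p}\)-type model of Gray's map. Concretely, the target is a natural transformation of reduced analytic functors
\[
\Omega^3\Sigma^3(-)^{\otimes p} \longrightarrow \Omega\Sigma(-)^{\otimes p}.
\]
When evaluated on \(S^{2n-1}\), this transformation becomes Gray's map \(\Omega^3S^{2np+1}\to \Omega S^{2np-1}\). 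It should also come equipped with a natural comparison \(W\to \fib\) which is an equivalence on odd spheres. So the first step is to extract from Richter's construction a functorial fiber sequence
\[
W \longrightarrow \Omega^3\Sigma^3(-)^{\otimes p}\longrightarrow \Omega\Sigma(-)^{\otimes p}
\]
that agrees with the looped Gray sequence on odd spheres. Functoriality should also cover the two commutative triangles exhibiting the right-hand map as factoring multiplication by \(p\) after composing with \(E^2\) on either side.

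Granting this, lemma \ref{calcspheres} applies: the map from \(W\) to the fiber is an equivalence of reduced analytic functors on odd spheres, so it induces equivalences on \(P_{p^k}\) and \(D_{p^k}\) there. Since \(P_m\) and \(\mathbb{D}_m\) preserve fiber sequences and commute with loops, we get
\[
W^{2n-1}_k\to \Omega^3P_{p^k}\bigl(\Sigma^3(-)^{\otimes p}\bigr)(S^{2n-1})\to \Omega P_{p^k}\bigl(\Sigma(-)^{\otimes p}\bigr)(S^{2n-1}).
\]
By lemma \ref{calcpowers}, \(P_{p^k}(\Sigma^3(-)^{\otimes p})(S^{2n-1})\simeq P_{p^{k-1}}I(S^{2np+1})=S^{2np+1}_{k-1}\), and similarly the last term is \(S^{2np-1}_{k-1}\); both spheres are odd. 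This gives the first sequence. The layer sequence follows in the same way from \(D_{p^k}(F(-)^{\otimes p})\simeq D_{p^{k-1}}F\circ(-)^{\otimes p}\). We then deloop, using that homogeneous functors deloop compatibly with these shifts; this produces the \(\Sigma^{-3}\) and \(\Sigma^{-1}\).

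The factorization of multiplication by \(p\) is handled by applying \(P_{p^k}\) to the functorial triangles. One composite becomes \(p\) on \(\Omega^3S^{2np+1}_{k-1}\), and the other becomes \(p\) on \(\Omega S^{2np-1}_{k-1}\) after \(E^2\). This uses that \(P_m\) of an H-space multiplication-by-\(p\) is multiplication by \(p\), since \(P_m\) preserves products. If Richter only provides the triangles up to a non-natural homotopy, I would instead check them on the level of the natural transformation \(\Omega^2\Sigma^2\to\Omega^2\Sigma^2\), where they hold by construction.

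The main obstacle is the first step. We need to verify that Richter's construction really gives a \emph{natural} transformation between these specific functors, with fiber naturally equivalent to \(W\) on odd spheres. Doing this requires a natural map from \(W\) into the fiber, built from the composite \(W\to I\to \Omega^2\Sigma^2\) and the functorial null-homotopy of Gray's map after the double suspension. The earlier remark stresses that non-functorial splittings fail, so this is exactly where care is needed. I would first try to realize Gray's map as \(\Omega\) applied to a natural map \(\Omega^2\Sigma^3X^{\otimes p}\to\Sigma X^{\otimes p}\) coming from Richter's \(p\)-fold James--Hopf-type construction. I would then check on homology of odd spheres that the induced map on fibers is an equivalence.
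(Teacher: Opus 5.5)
Your overall strategy is the same as the paper's: build a spherewise fiber sequence of functors from Richter's construction, then apply lemmas \ref{calcspheres} and \ref{calcpowers}. However, the functors you write down are wrong for odd $p$, and this breaks the identifications you claim.

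\textbf{The indexing error.} The functors $\Omega^3\Sigma^3(-)^{\otimes p}$ and $\Omega\Sigma(-)^{\otimes p}$ are the $p=2$ case, namely the EHP-derived sequence discussed just before the theorem. For $p$ odd, $(S^{2n-1})^{\otimes p}=S^{(2n-1)p}$ is an odd sphere. Your functors therefore evaluate on $S^{2n-1}$ to $\Omega^3S^{(2n-1)p+3}$ and $\Omega S^{(2n-1)p+1}$. These are even spheres, and their dimensions are off by $p-2$ from Gray's $\Omega^3S^{2np+1}$ and $\Omega S^{2np-1}$. Consequently three of your claims are false:
\begin{enumerate}
\item that your transformation ``becomes Gray's map'' on $S^{2n-1}$;
\item that $P_{p^k}(\Sigma^3(-)^{\otimes p})(S^{2n-1})\simeq S^{2np+1}_{k-1}$, since lemma \ref{calcpowers} actually gives $S^{(2n-1)p+3}_{k-1}$;
\item that ``both spheres are odd''.
\end{enumerate}
The correct functors are $\Omega^3\Sigma^{p+1}(-)^{\otimes p}$ and $\Omega\Sigma^{p-1}(-)^{\otimes p}$. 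With these, the rest of your formal argument goes through as you describe: the passage to $P_{p^k}$, the layers, and the transport of the $p$-factorization.

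\textbf{The naturality step.} The step you flag as the main obstacle is also where your sketch departs from what is actually available. In the paper's account, Richter's natural transformation is not a map out of $\Omega^2\Sigma^{p+1}(-)^{\otimes p}$ for arbitrary spaces. It is defined only on suspensions, and its source is $\fib(J_{p-1}\Sigma\to J\Sigma)$.
\begin{enumerate}
\item The bridge is Toda's functorial sequence $J_{p-1}\to J\simeq\Omega\Sigma\to\Omega\Sigma(-)^{\otimes p}$, which is a fiber sequence on even spheres. Since $\Sigma S^{2n-1}$ is even, it identifies $\fib(J_{p-1}\Sigma\to J\Sigma)$ with $\Omega^2\Sigma^{p+1}(-)^{\otimes p}$ on odd spheres.
\item The map to $\Sigma^{p-1}(-)^{\otimes p}$ is assembled in lemma \ref{richtergraymap} from three pieces: Richter's transformation $\fib(J_{p-1}\to J)\to\Omega J\otimes(-)^{\otimes(p-1)}$ on co-H spaces, the counit $\Sigma\Omega\to I$, and the James splitting map $\Sigma J\to\Sigma$.
\item Richter's results identify the fiber of this map on odd spheres with $BW$, and show that the map factors multiplication by $p$.
\item One then loops once, using Eckmann--Hilton to place multiplication by $p$ on the loop space.
\end{enumerate}
So the concrete input your first step needs is to use $\fib(J_{p-1}\Sigma\to J\Sigma)$ as the intermediary functor. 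Constructing a natural comparison $W\to\fib$ from a functorial null-homotopy by hand is not required.
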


\begin{proof}
	We first show that, on odd spheres, there is a fiber sequence of functors
	\[
	    W
	    \longrightarrow
	    \Omega^3\Sigma^{p+1}(-)^{\otimes p}
	    \longrightarrow
	    \Omega\Sigma^{p-1}(-)^{\otimes p}
	\]
	such that the right-hand map factors multiplication by $p$ when composed on either side with the double suspension.

	Once this is established, the stated fiber sequences follow by applying lemmas \ref{calcspheres} and \ref{calcpowers}. Indeed, these lemmas allow us to pass from spherewise equivalences of functors to the corresponding statements for Goodwillie approximations and layers evaluated on spheres. In this way we obtain
	\[
	    P_{p^k}W(S^{2n-1})
	    \longrightarrow
	    \Omega^3P_{p^{k-1}}I\bigl(\Sigma^{p+1}(S^{2n-1})^{\otimes p}\bigr)
	    \longrightarrow
	    \Omega P_{p^{k-1}}I\bigl(\Sigma^{p-1}(S^{2n-1})^{\otimes p}\bigr)
	\]
	and similarly for the homogeneous layers. Since $P_m$ preserves finite limits of reduced functors, the resulting sequences are again fiber sequences. The factorization through multiplication by $p$ is also preserved after applying $P_m$, since on loop spaces multiplication by $p$ is defined in terms of the canonical $H$-space structure, and $P_m$ preserves finite products and loop objects.

	We now construct the required spherewise fiber sequence. By \cite{Todadouble}, there is a sequence of functors
	\[
	    J_{p-1}
	    \longrightarrow
	    J \simeq \Omega\Sigma
	    \longrightarrow
	    \Omega\Sigma(-)^{\otimes p}
	\]
	where $J$ denotes the James construction. The evaluation of this sequence on even spheres is a fiber sequence. It therefore suffices to construct, on odd spheres, a fiber sequence
	\[
	    BW
	    \longrightarrow
	    \Omega^2\Sigma^{p+1}(-)^{\otimes p} \simeq \fib\bigl(J_{p-1}\Sigma \to J\Sigma\bigr)
	    \longrightarrow
	    \Sigma^{p-1}(-)^{\otimes p}
	\]
	The right-hand map is obtained from Richter's construction, which is functorial on suspensions; this is the content of lemma \ref{richtergraymap} below. By this lemma, its fiber on odd spheres is $BW$, and the map factors multiplication by $p$ when composed on either side with the double suspension. Looping once more gives the desired spherewise fiber sequence
	\[
    	W
	    \longrightarrow
    	\Omega^3\Sigma^{p+1}(-)^{\otimes p}
    	\longrightarrow
    	\Omega\Sigma^{p-1}(-)^{\otimes p}
	\]
	and places multiplication by $p$ on the loop space, by the Eckmann--Hilton argument. This completes the proof.
\end{proof}

\begin{lem}[{\cite[sec. 4]{Richter2014}}]
	\label{richtergraymap}
	On the subcategory of suspensions, there is a natural transformation
	\[
	    \fib\bigl(J_{p-1}\Sigma \to J\Sigma\bigr)
	    \longrightarrow
	    \Sigma^{p-1}(-)^{\otimes p}
	\]
	such that, when evaluated on an odd sphere $S^{2n-1}$, this map identifies with the map
	\[
		\Omega^2 S^{2np+1} \to S^{2np-1}
	\]
	appearing in Gray's sequence. In particular, its fiber is $BW^{2n-1}$, and the resulting map factors multiplication by $p$ when composed on either side with the double suspension.
\end{lem}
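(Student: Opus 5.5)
The plan is to recall and adapt Richter's construction of Gray's map from \cite[sec. 4]{Richter2014}, taking care to keep every step natural in a suspension $\Sigma X$. First I will identify the source. On a suspension, the James filtration gives the cofiber sequence $J_{p-1}\Sigma X \to J_p\Sigma X \to (\Sigma X)^{\otimes p}$. The inclusion $J_{p-1}\Sigma X \to J\Sigma X$ is then compared with the James--Hopf map $h_p\colon J\Sigma X\to J\bigl((\Sigma X)^{\otimes p}\bigr)$, which is natural on all spaces. This gives a natural map
\[
\fib\bigl(J_{p-1}\Sigma X\to J\Sigma X\bigr)\longrightarrow \Omega J\bigl((\Sigma X)^{\otimes p}\bigr)\simeq \Omega^2\Sigma\bigl((\Sigma X)^{\otimes p}\bigr),
\]
which is the fiber-level comparison underlying the sequence of \cite{Todadouble}. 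On $X=S^{2n-1}$ this is an equivalence onto $\Omega^2S^{2np+1}$. So the source of the desired transformation is naturally $\Omega^2\Sigma^{p+1}X^{\otimes p}$, up to the identification already used in the proof of theorem \ref{graysequence}.

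Next I will construct the map to $\Sigma^{p-1}X^{\otimes p}$. The idea, following Richter, is to use the natural ``cross-effect'' adjoint: the double loops of a suspension $\Omega^2\Sigma^2 Y$ (with $Y=\Sigma^{p-1}X^{\otimes p}$) receive the double suspension $E^2\colon Y\to\Omega^2\Sigma^2Y$. We need a natural map in the opposite direction. Richter obtains it from the functorial model of $\Omega^2\Sigma^2$ built from the $E_2$/little-discs or Snaith-type filtration. Restricted to the first two filtration quotients, together with a natural null-homotopy of the $p$-fold Whitehead/Browder operation on the $\Sigma_p$-symmetric tensor, this produces a natural extension $\Omega^2\Sigma^2Y\to Y$ when $Y$ is a $p$-fold smash power of a suspension. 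I will treat this step as a citation to \cite[sec. 4]{Richter2014}, checking only that all ingredients (James--Hopf maps, symmetric group actions on smash powers, Snaith splitting maps) are natural in $X$ on the subcategory of suspensions. This naturality check is exactly why the lemma is restricted to suspensions.

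Finally I will verify the properties on odd spheres. To identify the map with Gray's map $\Omega^2S^{2np+1}\to S^{2np-1}$, I will compare with Gray's construction. A cheaper route is to use the characterization that suffices for our purposes: a map $\Omega^2S^{2np+1}\to S^{2np-1}$ whose composites with $E^2$ on either side are degree $p$. The property $(\text{map})\circ E^2\simeq p$ is computed on the bottom cell, via the degree of the Hopf invariant $H_p$ on $J_p$. The property $E^2\circ(\text{map})\simeq p$ follows from the $H$-space structure and the first composite. Gray's theorem \cite{Gray1989} then gives that the fiber of any such map (Anick--Gray type) is $BW^{2n-1}$. I expect the main obstacle to be precisely this identification of the fiber. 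Naturality only gives a map of fiber sequences, and showing the fiber is $BW^{2n-1}$ requires either Gray's uniqueness argument or a direct homology computation with the Serre spectral sequence. For the latter, I would compare mod-$p$ homology of the fiber with that of $BW^{2n-1}$ through the double suspension, using that both are $p$-local and that Richter proves the equality in \cite{Richter2014}.
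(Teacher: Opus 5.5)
There is a genuine gap in the construction itself. Your transformation is the composite $\fib(J_{p-1}\Sigma X\to J\Sigma X)\to\Omega^2\Sigma^2Y\to Y$ with $Y=\Sigma^{p-1}X^{\otimes p}$. Everything therefore rests on a natural map $\Omega^2\Sigma^2Y\to Y$, and you never actually produce one.

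Defining something on the first two stages of the Snaith/little-discs filtration and null-homotoping a Browder operation does not give a map on all of $\Omega^2\Sigma^2Y$. Every further stage $C_2(k)_+\otimes_{\Sigma_k}Y^{\otimes k}$ poses new extension problems. Already for $Y=S^{2np-1}$, the existence of a map $\Omega^2S^{2np+1}\to S^{2np-1}$ of degree $p$ on the bottom cell is a substantial theorem, not the output of a two-stage obstruction argument. Nor is this what \cite[sec.~4]{Richter2014} does, so the citation does not cover the step.

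The missing idea is to never leave the fiber. The paper uses \cite[lem.~3.1]{Richter2014} (with $a=0$, $b=p-1$), which gives a natural transformation $\fib(J_{p-1}\to J)\to\Omega J\otimes(-)^{\otimes(p-1)}$ on co-H spaces. It evaluates this at $\Sigma X$ and rewrites the target as $\Sigma\Omega J\Sigma X\otimes\Sigma^{p-2}X^{\otimes(p-1)}$. It then composes with the counit $\Sigma\Omega\to\mathrm{Id}$. Finally, after pulling one more suspension coordinate out of $X^{\otimes(p-1)}$ (this is where the suspension hypothesis enters), it composes with the projection $\Sigma J\Sigma X\to\Sigma^2X$ onto the bottom summand of the James splitting. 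The result lands in $\Sigma^{p-1}X^{\otimes p}$ and is manifestly natural. Your James--Hopf comparison of the source is correct, but it is only needed on odd spheres.

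Your verification step also contains a false inference. The relation $E^2\circ f\simeq p$ does not follow from $f\circ E^2\simeq p$ and the $H$-space structure. You do not know that $E^2\circ f$ is an $H$-map. Even $H$-maps out of $\Omega^2S^{2np+1}=\Omega(\Omega S^{2np+1})$ are not determined by their restriction to the bottom cell, because $\Omega S^{2np+1}$ is not a suspension and James' universal property does not apply. That the $p$-th power map of $\Omega^2S^{2np+1}$ factors through the double suspension is precisely Gray's conjecture, proved in \cite[thm.~4.1]{Richter2014}.

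Likewise, ``the fiber of any such map is $BW^{2n-1}$'' is not a uniqueness theorem you can quote. Maps with these degree properties are far from unique, and nothing you cite shows their fibers agree. The paper instead applies \cite[thm.~4.5]{Richter2014} to the specific map constructed above. Since you end up deferring to Richter for both properties anyway, the argument has to be built around Richter's actual map, to which those theorems apply, rather than around an unspecified map out of $\Omega^2\Sigma^2Y$.
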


\begin{proof}
	For any co-H space, Richter \cite[lem. 3.1, $a=0$, $b=p-1$]{Richter2014} constructs a natural transformation
	\[
	    \fib(J_{p-1} \to J)
	    \longrightarrow
	    \Omega J \otimes (-)^{\otimes(p-1)}
	\]
	In particular, since suspensions are co-H spaces, there is a natural transformation
	\[
	    \fib(J_{p-1}\Sigma \to J\Sigma)
	    \longrightarrow
	    \Omega J\Sigma \otimes (\Sigma -)^{\otimes(p-1)}
	    \simeq
	    \Sigma\Omega J\Sigma \otimes \Sigma^{p-2}(-)^{\otimes(p-1)}
	\]
	Composing with the counit $\Sigma\Omega \to I$ yields a map
	\[
	    \fib(J_{p-1}\Sigma \to J\Sigma)
	    \longrightarrow
	    J\Sigma \otimes \Sigma^{p-2}(-)^{\otimes(p-1)}
	\]
	Since we are working on suspensions, we may extract one further suspension from the factor $(-)^{\otimes(p-1)}$ and compose with the map $\Sigma J \to \Sigma$ coming from the James splitting. This gives a map
	\[
	    \fib(J_{p-1}\Sigma \to J\Sigma)
	    \longrightarrow
	    \Sigma I \otimes \Sigma^{p-2}(-)^{\otimes(p-1)}
	    \simeq
	    \Sigma^{p-1}(-)^{\otimes p}
	\]
	By \cite[thm. 4.5]{Richter2014}, the fiber of this map on odd spheres coincides with $BW$, and by \cite[thm. 4.1]{Richter2014}, the map factors multiplication by $p$ when composed on either side with the double suspension.
\end{proof}

We conclude the subsection by discussing two possible applications of the calculus version of the Gray sequence. These are independent of the main argument of the paper and are meant mainly as illustrations of the scope of the construction; they may therefore be skipped on a first reading.

\subsubsection{Further direction I: the Arone--Mahowald computations}

The calculus version of the Gray sequence suggests a possible route to recovering the two main results of Arone--Mahowald \cite{AroneMahowald}: first, that $\mathbb{D}_mI \simeq *$ on odd spheres unless $m=p^k$, and second, the calculation of the cohomology in the case $m=p^k$.

We begin with the first. We prove, by induction on $k$, that $\mathbb{D}_mI$ and $\mathbb{D}_mW$ are trivial on odd spheres whenever $p^{k-1}<m<p^k$. The base case $k=0$ is clear. Suppose the claim holds for $k-1\geq 0$. Then the argument in the proof of theorem \ref{graysequence}, with the convention on $m/p$ understood, yields more generally, for every $m$, a fiber sequence
\[
\mathbb{D}_mW(S^{2n-1})
\longrightarrow
\Sigma^{-3}\mathbb{D}_{m/p}I(S^{2np+1})
\longrightarrow
\Sigma^{-1}\mathbb{D}_{m/p}I(S^{2np-1})
\]
Since $p^{k-2}<m/p<p^{k-1}$, by induction we deduce that $\mathbb{D}_mW(S^{2n-1})\simeq *$. This implies that the map $\mathbb{D}_mI(S^{2n-1}) \to \Sigma^{-2}\mathbb{D}_mI(S^{2n+1})$ is an equivalence for all $n$. Hence $\mathbb{D}_mI(S^{2n-1}) \simeq \mathbb{D}_1\mathbb{D}_mI(S^{2n-1})\simeq *$, since $m>p^{k-1}\geq 1$.

For the second result, the only missing input is the following injectivity statement, which we assume here without independent proof: the map $S^n(k) \to \Sigma^{-2}S^{n+2}(k)$ is injective in $\mathbb{F}_p$-cohomology. Recall that
\[
\mathbb{D}_mI(S^n) \simeq (\partial_mI \otimes S^{nm})_{h\Sigma_m}
\]
where $\partial_mI$ is a spherical spectrum of dimension $1-m$. By the homotopy orbits spectral sequence, its $\mathbb{F}_p$-cohomology is given by
\[
H^{*-mn+m-1}(B\Sigma_m,\mathbb{F}_p^{1-m}\partial_mI\otimes \mathrm{sgn})
\]
Thus the map $S^n(k) \to \Sigma^{-2}S^{n+2}(k)$ corresponds, in $\mathbb{F}_p$-cohomology and up to suspension shift, to multiplication by the Euler class $e(\bar\sigma)$ of the complex standard representation $\bar\sigma$:
\[
e(\bar\sigma)\cup -:
\mathbb{F}_p^*(B\Sigma_m,V)
\longrightarrow
\mathbb{F}_p^{*+2(m-1)}(B\Sigma_m,V)
\]
where $m=p^k$ and $V:=\mathbb{F}_p^{1-m}\partial_mI\otimes \mathrm{sgn}$ is the Lie representation of $\Sigma_m$.

\begin{ques}
	For which representations $V$ is multiplication by $e(\bar\sigma)$ injective? In particular, is there a quick proof of this injectivity for the representation $V=\mathbb{F}_p^{1-m}\partial_mI\otimes \mathrm{sgn}$?
\end{ques}

Assuming this injectivity, we obtain
\[
\mathbb{F}_p^*S^{2n-1}(k)
\simeq
\mathbb{F}_p^*W^{2n-1}(k)
\oplus
\Sigma^{-2}\mathbb{F}_p^*S^{2n+1}(k)
\]
Iterating yields
\[
\mathbb{F}_p^*S^{2n-1}(k)
\simeq
\begin{cases}
    \Sigma^{2n-1}\mathbb{F}_p & \text{if } k=0 \\
    \textstyle\bigoplus_{i\geq n} \Sigma^{2(n-i)}\mathbb{F}_p^*W^{2i-1}(k) & \text{if } k>0
\end{cases}
\]

To describe the cohomology of $W^{2n-1}(k)$, we appeal to the calculus version of the Gray sequence (thm. \ref{graysequence}). We claim that the map
\[
\Sigma^{-3}S^{2np+1}(k-1)
\longrightarrow
\Sigma^{-1}S^{2np-1}(k-1)
\]
is null in $\mathbb{F}_p$-cohomology; we will justify this shortly. Using this, we deduce
\[
\mathbb{F}_p^*W^{2n-1}(k)
\simeq
\Sigma^{-2}\mathbb{F}_p^*S^{2np-1}(k-1)
\oplus
\Sigma^{-3}\mathbb{F}_p^*S^{2np+1}(k-1)
\]
In particular, defining
\[
C(n,k):=\Sigma^{1-2n+2k}\mathbb{F}_p^*S^{2n-1}(k)
\]
we obtain the recurrence relations
\begin{align*}
    C(n,0) &\simeq \mathbb{F}_p \\
    C(n,k)
    &\simeq \textstyle\bigoplus_{i\geq n}
    \bigl(\Sigma^{2(p-1)i}C(ip,k-1)
    \oplus
    \Sigma^{2(p-1)i+1}C(ip+1,k-1)\bigr) \\
    &\simeq \textstyle\bigoplus_{i\geq n}
    \bigl(C(ip,k-1)P^i
    \oplus
    C(ip+1,k-1)\beta P^i\bigr)
\end{align*}

Here the symbols $P^i$ and $\beta P^i$ are only suggestive, denoting the suspension shifts, to emphasize that this vector space is precisely the one appearing in the Arone--Mahowald computations; see proposition \ref{cohomS}. We do not claim anything about the $\mathcal{A}$-module structure.

Finally, it remains to show that the map
\[
\Sigma^{-3}S^{2np+1}(k-1)
\longrightarrow
\Sigma^{-1}S^{2np-1}(k-1)
\]
is null in $\mathbb{F}_p$-cohomology. Note that the composite
\[
S^{2np-1}(k-1)
\xlongrightarrow{E^2}
\Sigma^{-2}S^{2np+1}(k-1)
\longrightarrow
S^{2np-1}(k-1)
\]
factors multiplication by $p$. Hence, in $\mathbb{F}_p$-cohomology, the composite
\[
\mathbb{F}_p^*S^{2np-1}(k-1)
\longrightarrow
\mathbb{F}_p^*\Sigma^{-2}S^{2np+1}(k-1)
\xlongrightarrow{E^2}
\mathbb{F}_p^*S^{2np-1}(k-1)
\]
is zero. Since the right-hand map is injective, it follows that the left-hand map is trivial, as claimed.

\subsubsection{Further direction II: the Cohen--Moore--Neisendorfer exponent theorem}

We now turn to a second possible application of the calculus version of the Gray sequence. This one is motivated by the Cohen--Moore--Neisendorfer theorem \cite{CMN}, which asserts that, for $p>2$, the fiber $W^{2n-1}$ of the double suspension has exponent $p$. We do not prove this theorem here. Instead, we explain how the calculus version of the Gray sequence suggests a possible inductive approach to it, and we verify the resulting exponent statement for the first two approximations.

The starting point is that the Gray map factors multiplication by $p$. This produces additional fiber sequences involving the fiber of multiplication by $p$. If we write $\{p\}$ for the fiber of the multiplication-by-$p$ map, we obtain the following.

\begin{prop}
	There are fiber sequences
    \[
        \Omega W^{2np-1}_{k-1}
        \longrightarrow
        \Omega S^{2np-1}_{k-1}\{p\}
        \longrightarrow
        W^{2n-1}_k
    \]
	\[
        \Sigma^{-1}W^{2np-1}(k-1)
        \longrightarrow
        \Sigma^{-1}S^{2np-1}(k-1)\{p\}
        \longrightarrow
        W^{2n-1}(k)
    \]
\end{prop}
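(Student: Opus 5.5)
We will derive both fiber sequences from the calculus Gray sequence (thm.~\ref{graysequence}) and a standard fact about composites: for composable maps $X \xrightarrow{f} Y \xrightarrow{g} Z$ there is a fiber sequence
\[
\fib(f) \longrightarrow \fib(g\circ f) \longrightarrow \fib(g).
\]
We will apply this to the factorization of multiplication by $p$ provided by thm.~\ref{graysequence}. The unstable case is treated first, and the layer case is handled by the identical argument in spectra.

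Write $\gamma\colon \Omega^3S^{2np+1}_{k-1}\to \Omega S^{2np-1}_{k-1}$ for the right-hand map of thm.~\ref{graysequence}, and $E^2\colon \Omega S^{2np-1}_{k-1}\to \Omega^3 S^{2np+1}_{k-1}$ for the looped double suspension. Note that $E^2$ commutes with $P_{p^{k-1}}$ because it is a natural transformation $I\to\Omega^2\Sigma^2$. By thm.~\ref{graysequence}, the composite $\gamma\circ E^2$ is homotopic to multiplication by $p$ on $\Omega S^{2np-1}_{k-1}$. Taking $f=E^2$ and $g=\gamma$ gives
\[
\fib(E^2)\longrightarrow \Omega S^{2np-1}_{k-1}\{p\} \longrightarrow \fib(\gamma).
\]
We then identify the two outer terms. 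The first is
\[
\fib(E^2)=\Omega\,\fib\bigl(S^{2np-1}_{k-1}\to\Omega^2S^{2np+1}_{k-1}\bigr)=\Omega W^{2np-1}_{k-1},
\]
using the fiber sequence $W^n_k\to S^n_k\to\Omega^2S^{n+2}_k$ recorded earlier. The second is $\fib(\gamma)\simeq W^{2n-1}_k$, which is exactly the first fiber sequence of thm.~\ref{graysequence}. Substituting both identifications yields the claimed sequence.

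For the layers we run the same argument with the second sequence of thm.~\ref{graysequence}. The map is $\Sigma^{-3}S^{2np+1}(k-1)\to\Sigma^{-1}S^{2np-1}(k-1)$, precomposed with $\Sigma^{-1}E^2$. Its fiber is $\Sigma^{-1}W^{2np-1}(k-1)$, by the layer sequence $W^n(k)\to S^n(k)\to\Sigma^{-2}S^{n+2}(k)$. In spectra, $\fib(p)$ on $\Sigma^{-1}S^{2np-1}(k-1)$ is $\Sigma^{-1}S^{2np-1}(k-1)\{p\}$.

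The main point requiring care is that the homotopy $\gamma\circ E^2\simeq p$ holds as maps of (Goodwillie-approximated) spaces, so that the fiber of the composite really is the fiber of $p$. Thm.~\ref{graysequence} asserts this after applying $P_{p^{k-1}}$ and looping, with $p$ defined via the loop-space $H$-structure, so the fiber of $p$ is well defined up to equivalence.

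We must also check that we use the correct side of the factorization: $\gamma$ followed by $E^2$ versus $E^2$ followed by $\gamma$. We need $p=\gamma\circ E^2$ on the source $\Omega S^{2np-1}_{k-1}$. This matches the first triangle in the classical diagram, $S^{2np-1}\to\Omega^2S^{2np+1}\to S^{2np-1}$, looped once. We will invoke that triangle from lemma~\ref{richtergraymap}, so no new input is required.
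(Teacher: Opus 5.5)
Your proof is correct and follows the paper's own argument: apply the fiber-of-a-composite sequence to $\Omega S^{2np-1}_{k-1}\xrightarrow{E^2}\Omega^3S^{2np+1}_{k-1}\to\Omega S^{2np-1}_{k-1}$, whose composite is $p$ by thm.~\ref{graysequence}, identify the outer fibers as $\Omega W^{2np-1}_{k-1}$ and $W^{2n-1}_k$, and treat the layers the same way. One small labeling slip: the triangle $S^{2np-1}\to\Omega^2S^{2np+1}\to S^{2np-1}$ is the right-hand one in the paper's displayed diagram, not the first, but the composite you actually wrote down is the correct one.
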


\begin{proof}
	The composite
    \[
        \Omega S^{2np-1}_{k-1}
        \xlongrightarrow{E^2}
        \Omega^3S^{2np+1}_{k-1}
        \longrightarrow
        \Omega S^{2np-1}_{k-1}
    \]
	factors multiplication by $p$ by theorem \ref{graysequence}. The claimed fiber sequences then follow from the standard description of the fiber of a composite, together with theorem \ref{graysequence}. The statement for the layers is proved in exactly the same way.
\end{proof}

The Cohen--Moore--Neisendorfer theorem says that, for $p>2$, multiplication by $p$ is null on $\Omega W^{2n-1}$ for every $n$. Equivalently, the natural map
\[
    \Omega W^{2n-1}\{p\}
    \longrightarrow
    \Omega W^{2n-1}
\]
admits a retraction. Since this looped statement is spherewise functorial, it also gives exponent-$p$ structures on the approximations $\Omega W_k^{2n-1}$ and on the layers $W^{2n-1}(k)$.

The fiber sequences above suggest trying to reverse this logic: first prove the exponent result for $\Omega W_k^{2n-1}$, and then deduce it for $\Omega W^{2n-1}$ by passing to the limit. Indeed, $\Omega W_k^{2n-1}$ is the fiber of a map
\[
    \Omega W^{2np-1}_{k-1}
    \xlongrightarrow{\gamma}
    \Omega S^{2np-1}_{k-1}\{p\}
\]
By induction on $k$, one may assume that the source has exponent $p$, while the target always has exponent $p$ for $p>2$. Thus $\Omega W_k^{2n-1}$ would also have exponent $p$, provided that the map $\gamma$ is compatible with the chosen exponent structures. Concretely, this means that one would want the diagram
% https://q.uiver.app/#q=WzAsNCxbMCwwLCJcXE9tZWdhIFdeezJucC0xfV97ay0xfSJdLFsxLDAsIlxcT21lZ2EgU157Mm5wLTF9X3trLTF9XFx7cFxcfSJdLFswLDEsIlxcT21lZ2EgV157Mm5wLTF9X3trLTF9XFx7cFxcfSJdLFsxLDEsIlxcT21lZ2EgU157Mm5wLTF9X3trLTF9XFx7cFxcfVxce3BcXH0iXSxbMCwyXSxbMCwxLCJcXGdhbW1hIl0sWzIsMywiXFxnYW1tYVxce3BcXH0iLDJdLFsxLDNdXQ==
\[\begin{tikzcd}
	{\Omega W^{2np-1}_{k-1}} & {\Omega S^{2np-1}_{k-1}\{p\}} \\
	{\Omega W^{2np-1}_{k-1}\{p\}} & {\Omega S^{2np-1}_{k-1}\{p\}\{p\}}
	\arrow["\gamma", from=1-1, to=1-2]
	\arrow[from=1-1, to=2-1]
	\arrow[from=1-2, to=2-2]
	\arrow["{\gamma\{p\}}"', from=2-1, to=2-2]
\end{tikzcd}\]
to commute, where the vertical maps are the chosen retractions.

\begin{ques}
	Can one prove that this diagram commutes in general, and thereby obtain a calculus-based proof of the Cohen--Moore--Neisendorfer theorem?
\end{ques}

Although we cannot prove the general case at present, the cases $k\leq 2$ provide evidence for the strategy. In these cases, the exponent statement can be recovered directly from the fiber sequences above, without appealing to the Cohen--Moore--Neisendorfer theorem. As a byproduct, we also recover the descriptions in examples \ref{cohomW1} and \ref{cohomW2}.

\begin{ex}
	The case $k=1$ follows immediately from the fiber sequence above. Indeed, we have $W^{2n-1}_0 \simeq *$, and hence
	\[
	    W^{2n-1}(1)
	    \simeq
	    \Sigma^{-1}S^{2np-1}(0)\{p\}
	    \simeq
	    \Sigma^{-1}\mathbb{S}^{2np-1}\{p\}
	    \simeq
	    \mathbb{S}^{2np-3}/p
	\]
	This recovers example \ref{cohomW1}. Since $\mathbb{S}/p$ has exponent $p$ for $p>2$, this recovers the exponent statement in the case $k=1$.
\end{ex}

\begin{ex}
We next consider the case $k=2$. The fiber sequence above gives
\[
    \Sigma^{-1}W^{2np-1}(1) \simeq \mathbb{S}^{2np^2-4}/p
    \longrightarrow
    \Sigma^{-1}S^{2np-1}(1)\{p\}
    \longrightarrow
    W^{2n-1}(2)
\]
whose existence was asserted in example \ref{cohomW2}. We analyze the left-hand map. First note that $S^{2np-1}(1)$ is $(2np^2-3)$-connected and has first nontrivial homotopy group $\mathbb{F}_p$. Hence there is a map detecting this first nontrivial homotopy class:
\[
    \mathbb{S}^{2np^2-3}/p \longrightarrow S^{2np-1}(1)
\]
Applying $\{p\}$ and desuspending gives
\[
    (\mathbb{S}^{2np^2-4}/p)\{p\}
    \longrightarrow
    \Sigma^{-1}S^{2np-1}(1)\{p\}
\]
This map induces a surjection in cohomology detecting the classes
\[
    1\otimes\theta_{np},
    \quad
    1\otimes\beta\theta_{np},
    \quad
    \beta\otimes\theta_{np},
    \quad
    \beta\otimes\beta\theta_{np}
\]
in the notation of example \ref{cohomS1}. Its cofiber is therefore $(2np^2+2p-7)$-connected, and hence admits no nontrivial maps from $\mathbb{S}^{2np^2-4}/p$ when $p>2$. It follows that the first map in the fiber sequence factors as
\[
    \mathbb{S}^{2np^2-4}/p
    \longrightarrow
    (\mathbb{S}^{2np^2-4}/p)\{p\}
    \longrightarrow
    \Sigma^{-1}S^{2np-1}(1)\{p\}
\]
The second map is automatically compatible with the exponent structure, so it remains to analyze the first. This is simply the inclusion of the top cell, or equivalently the retraction exhibiting the exponent. Thus it suffices to show that the diagram
% https://q.uiver.app/#q=WzAsNCxbMCwwLCJcXG1hdGhiYntTfS9wIl0sWzIsMCwiXFxtYXRoYmJ7U30vcCBcXHtwXFx9Il0sWzAsMSwiXFxtYXRoYmJ7U30vcCBcXHtwXFx9Il0sWzIsMSwiXFxtYXRoYmJ7U30vcCBcXHtwXFx9IFxce3BcXH0iXSxbMCwyLCJlX3tcXG1hdGhiYntTfS9wfSIsMl0sWzAsMSwiZV97XFxtYXRoYmJ7U30vcH0iXSxbMiwzLCJlX3tcXG1hdGhiYntTfS9wfVxce3BcXH0iLDJdLFsxLDMsIlxcbWF0aGJie1N9L3AgXFxvdGltZXMgZV97XFxtYXRoYmJ7U31cXHtwXFx9fSJdXQ==
\[
\begin{tikzcd}
	{\mathbb{S}/p} && {\mathbb{S}/p \{p\}} \\
	{\mathbb{S}/p \{p\}} && {\mathbb{S}/p \{p\} \{p\}}
	\arrow["{e_{\mathbb{S}/p}}", from=1-1, to=1-3]
	\arrow["{e_{\mathbb{S}/p}}"', from=1-1, to=2-1]
	\arrow["{\mathbb{S}/p \otimes e_{\mathbb{S}\{p\}}}", from=1-3, to=2-3]
	\arrow["{e_{\mathbb{S}/p}\{p\}}"', from=2-1, to=2-3]
\end{tikzcd}
\]
commutes, where $e$ denotes the chosen retraction. Now a retraction of $X\{p\} \to X$ is equivalent to a section of $X \to X/p$, that is, to a homotopy $\mathbb{S}/p$-module structure on $X$. Under this correspondence, the commutativity of the diagram above is precisely the associativity condition for the corresponding $\mathbb{S}/p$-module structure. For $X=\mathbb{S}/p$, this reduces to the existence of a homotopy-associative unital multiplication on $\mathbb{S}/p$, which holds for $p>3$ \cite{Moorespectraassoc} \cite[ex. 1.2]{Moorespectraassoc2}.

Therefore $W^{2n-1}(2)$ has exponent $p$ for $p>3$, without appealing to the Cohen--Moore--Neisendorfer theorem. The same argument shows that $\Omega W^{2n-1}_2$ has exponent $p$, since it is the fiber of
\[
    \mathbb{S}^{2np^2-4}/p
    \longrightarrow
    (\mathbb{S}^{2np^2-4}/p)\{p\}
    \longrightarrow
    \Sigma^{-1}S^{2np-1}(1)\{p\}
    \longrightarrow
    \Omega S^{2np-1}_1\{p\}
\]
\end{ex}

We do not know how to extend this argument to $k\geq 3$. Nevertheless, the same strategy suggests a possible route to proving that $W^{2n-1}(k)$ and $\Omega W^{2n-1}_k$ have exponent $p$ for all $k$. Such an argument would provide a more direct, calculus-based approach to the Cohen--Moore--Neisendorfer theorem.

\subsection{Early convergence}
\label{earlyconv}

In this subsection, we assume that either $p=2$, or $p>2$ and $n$ is odd. Under these assumptions, propositions \ref{genfreudenthal} and \ref{genfreudenthalW} show that the comparison maps $S^n \to S^n_k$ and $W^n \to W^n_k$ induce isomorphisms on $\pi_*$ for $*<c(n,k+1)$ and surjections in degree $*=c(n,k+1)$. A natural question is whether these maps are also injective in this boundary degree, a phenomenon that we call \emph{early convergence}. This leads naturally to certain distinguished homotopy classes, which we call \emph{Whitehead elements}, and which govern whether early convergence occurs. These elements are closely connected to classical problems such as the Hopf invariant one and Kervaire invariant one problems. We conclude the subsection by proving that $W^1_k$ converges early, a fact needed later in the computation of the homotopy groups of $S^3$.

\begin{definition}
	The approximation sphere $S^n_k$ \emph{converges early} if the comparison map $S^n \to S^n_k$ is injective on $\pi_{c(n,k+1)}$.
\end{definition}

To introduce the Whitehead elements, recall from proposition \ref{cohomS} that $\pi_{c(n,k)}S^n(k)$ is cyclic. One may visualize the Goodwillie spectral sequence as a staircase: the Whitehead elements correspond to the corner classes, namely the first nontrivial classes in each row.

\begin{figure}[H]
\centering
\begin{tikzpicture}[scale=0.9]

    % axes
    \draw[->] (-0.6,0) -- (14.0,0);
    \draw[->] (0,-0.5) -- (0,4.6) node[above] {$k$};

    % horizontal rows and corner classes
    \foreach \k/\y/\x/\lab in {
        0/1.0/1.3/{w_n^0 \doteq \iota_n},
        1/2.0/4.8/{w_n^1},
        2/3.0/8.3/{w_n^2},
        3/4.0/12.0/{w_n^3}
    }{
        % row
        \draw[gray!50] (0,\y) -- (13.4,\y);

        % first nontrivial class
        \filldraw[black] (\x,\y) circle (2.2pt);

        % label for corner class
        \node[below right] at (\x,\y) {$\lab$};

        % vertical guide
        \draw[dashed, gray!50] (\x,0) -- (\x,\y);
    }

    % staircase line
    \draw[thick]
        (1.3,0) --
        (1.3,1.0) --
        (4.8,1.0) --
        (4.8,2.0) --
        (8.3,2.0) --
        (8.3,3.0) --
        (12.0,3.0) --
        (12.0,4.0) --
        (13.4,4.0);

    % row labels between the horizontal lines
    \node[left] at (-0.15,0.5) {$S^n(0)$};
    \node[left] at (-0.15,1.5) {$S^n(1)$};
    \node[left] at (-0.15,2.5) {$S^n(2)$};
    \node[left] at (-0.15,3.5) {$S^n(3)$};

\end{tikzpicture}
\caption{Schematic picture of the Goodwillie spectral sequence. The Whitehead elements are represented by the corner classes, namely the first nontrivial classes in each row.}
\end{figure}

\begin{definition}
	A \emph{$k$-Whitehead element} of $S^n$ is the image of a generator under the map
	\[
	\pi_{c(n,k)}S^n(k) \longrightarrow \pi_{c(n,k)}S^n_k \cong \pi_{c(n,k)}S^n
	\]
	Any $k$-Whitehead element will be denoted by $w_n^k$.
\end{definition}

\begin{rem}
	This does not determine a unique element: any two $k$-Whitehead elements of $S^n$ differ by multiplication by a unit in $\mathbb{F}_p^\times$. Accordingly, from now on all identities involving Whitehead elements are understood up to multiplication by a unit in $\mathbb{F}_p^\times$. We write $\doteq$ to emphasize this. In particular, for $k=0$ one has $w_n^0 \doteq \iota_n$, where $\iota_n \in \pi_nS^n$ denotes the identity class.

	One could instead try to rigidify the definition by specifying a preferred generator through the cohomological description of $S^n(k)$, for instance by choosing the class corresponding to $P^{p^{k-1}n}\cdots P^n$. However, this would still require fixing the relevant cohomological identifications. In retrospect, a cleaner alternative would have been to work with the associated \emph{Whitehead group}, namely the image of $\pi_{c(n,k)}S^n(k)$ in $\pi_{c(n,k)}S^n$, which avoids this ambiguity altogether.
\end{rem}

The following lemma collects the basic properties of these elements.

\begin{lem}
	A $k$-Whitehead element $w_n^k$ satisfies:
	\begin{enumerate}
	\item (Detection) $S^n_k$ converges early if and only if $w_n^{k+1} = 0$.
	\item (Instability) $\Sigma w_n^k = 0$ when $p=2$, and $\Sigma^2 w_n^k = 0$ when $p>2$ and $n$ is odd, for $k>0$.
	\item (Order) $p \cdot w_n^k = 0$ for $k>0$.
	\end{enumerate}
\end{lem}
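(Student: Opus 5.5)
Each of the three properties will be proved separately, using the fiber sequences $\Omega^\infty S^n(j)\to S^n_j\to S^n_{j-1}$ and the connectivity computations of proposition \ref{cohomS}. Throughout, assume either $p=2$ or $n$ odd, as in subsection \ref{earlyconv}.

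\emph{Detection.} We factor the comparison map as $S^n\to S^n_{k+1}\to S^n_k$.

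By theorem \ref{genfreudenthal} applied with $k+1$, the first map is an isomorphism on $\pi_{c(n,k+1)}$. Indeed, $c(n,k+1)<c(n,k+2)$ since $c$ is strictly increasing in $k$, with the usual exception handled separately.

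Hence $S^n_k$ converges early if and only if $S^n_{k+1}\to S^n_k$ is injective on $\pi_{c(n,k+1)}$. By the long exact sequence of
\[
\Omega^\infty S^n(k+1)\to S^n_{k+1}\to S^n_k,
\]
this holds if and only if the map $\pi_{c(n,k+1)}S^n(k+1)\to\pi_{c(n,k+1)}S^n_{k+1}$ is zero. That map is zero exactly when the image of a generator is zero, i.e.\ when $w^{k+1}_n=0$. The group $\pi_{c(n,k+1)}S^n(k+1)$ is cyclic, so one generator suffices.

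\emph{Order.} For $k>0$ the group $\pi_{c(n,k)}S^n(k)\cong\mathbb{F}_p$ by proposition \ref{cohomS}. The Whitehead element is the image of a generator under a group homomorphism, so $p\cdot w^k_n=0$. For $p=2$ and $n$ even, the same reasoning applies, citing the Arone--Mahowald connectivity result for even spheres.

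\emph{Instability.} We use naturality of the Goodwillie tower with respect to the (double) suspension map of functors $I\to\Omega^2\Sigma^2$ (or $I\to\Omega\Sigma$ at $p=2$). This gives a commutative square relating the $k$-th layer map to the one for $S^{n+2}$.

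Under the layer map $S^n(k)\to\Sigma^{-2}S^{n+2}(k)$, the source is $(c(n,k)-1)$-connected. The target $\Sigma^{-2}S^{n+2}(k)$ has connectivity $c(n+2,k)-2=c(n,k)+2p^k-2>c(n,k)$ for $k>0$. So the layer map is zero on $\pi_{c(n,k)}$.

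Consequently the image of $w^k_n$ in $\pi_{c(n,k)}\Omega^2 S^{n+2}_k\cong\pi_{c(n,k)+2}S^{n+2}$ is zero. By naturality this image is $\Sigma^2w^k_n$, computed via $S^n\to\Omega^2S^{n+2}$ and the identification $\pi_*S^{n+2}_k\cong\pi_*S^{n+2}$ in this range, which is valid by theorem \ref{genfreudenthal}.

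At $p=2$ the same argument with the single suspension $I\to\Omega\Sigma$ works. The gap there is $c(n+1,k)-1-c(n,k)=2^k-1>0$.

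The main care needed is in the instability step: checking that the square is commutative on the relevant homotopy groups, and that the range identifications $\pi_{c(n,k)+j}S^{n+j}_k\cong\pi_{c(n,k)+j}S^{n+j}$ hold, including when $n+1$ is even at $p=2$.
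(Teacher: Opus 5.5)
Apart from one sentence, this is the paper's proof. Detection uses the long exact sequence of $\Omega^\infty S^n(k+1)\to S^n_{k+1}\to S^n_k$. Instability uses the naturality square for $I\to\Omega^2\Sigma^2$ with the gap $c(n+2,k)-2-c(n,k)=2p^k-2>0$. Order uses $\pi_{c(n,k)}S^n(k)\cong\mathbb{F}_p$. Your explicit check, via theorem \ref{genfreudenthal}, that $\pi_{c(n,k)+2}S^{n+2}\to\pi_{c(n,k)+2}S^{n+2}_k$ is injective is a step the paper leaves implicit, and it is correct.

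The sentence that fails is the extension of Order to $p=2$ with $n$ even. A connectivity bound for the layer says nothing about the isomorphism type of its bottom homotopy group, and here that group is not $\mathbb{F}_2$. For every $n$ one has $S^n(1)\simeq\Sigma^{n-1}\mathbb{RP}^\infty_n$. When $n$ is even, the $(n+1)$-cell of $\mathbb{RP}^\infty_n$ is attached to the bottom cell with degree $1+(-1)^{n+1}=0$, so
\[
\pi_{c(n,1)}S^n(1)=\pi_{2n-1}S^n(1)\cong H_n(\mathbb{RP}^\infty_n;\mathbb{Z}_{(2)})\cong\mathbb{Z}_{(2)}.
\]
This propagates to the Whitehead element. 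For $n=2$ the exact sequence
\[
\pi_4S^2_0\cong\mathbb{Z}/2\xrightarrow{\;0\;}\pi_3S^2(1)\cong\mathbb{Z}_{(2)}\longrightarrow\pi_3S^2_1\cong\mathbb{Z}_{(2)}\langle\eta_2\rangle\longrightarrow\pi_3S^2_0\cong\mathbb{Z}/2\longrightarrow 0
\]
forces $w^1_2\doteq 2\eta_2$, which has infinite order. More generally, $w^1_n\doteq[\iota_n,\iota_n]$ has Hopf invariant $\pm2$ when $n$ is even.

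So no argument can give $2\cdot w^k_n=0$ in that case, and the Order property must be restricted to $n$ odd. That is exactly the case covered by proposition \ref{cohomS}, the only input the paper's one-line proof uses, so the paper tacitly makes the same restriction. With that restriction, your proof is complete.
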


\begin{proof}
	We prove each statement in turn.
	\begin{enumerate}
		\item Consider the fiber sequence
		\[
		\Omega^\infty S^n(k+1) \longrightarrow S^n_{k+1} \longrightarrow S^n_k
		\]
		On $\pi_{c(n,k+1)}$ this gives an exact sequence
		\[
		\pi_{c(n,k+1)}S^n(k+1)
		\longrightarrow
		\pi_{c(n,k+1)}S^n_{k+1}
		\longrightarrow
		\pi_{c(n,k+1)}S^n_k
		\longrightarrow
		0
		\]
		Since $S^n \to S^n_{k+1}$ is an isomorphism on $\pi_{c(n,k+1)}$, we may identify the middle term with $\pi_{c(n,k+1)}S^n$. By definition, $S^n_k$ converges early if and only if the middle map is injective. This happens precisely when the first map is trivial. Since $\pi_{c(n,k+1)}S^n(k+1)$ is cyclic and its image is generated by a $(k+1)$-Whitehead element, this is equivalent to $w_n^{k+1}=0$.

		\item This follows from the commutative diagram
		% https://q.uiver.app/#q=WzAsNCxbMCwwLCJcXHBpX3tjKG4sIGspfSBTXm4oaykiXSxbMSwwLCJcXHBpX3tjKG4sIGspICsgMn1TXntuKzJ9KGspIFxcY29uZyAqIl0sWzAsMSwiXFxwaV97YyhuLCBrKX0gU15uX2siXSxbMSwxLCJcXHBpX3tjKG4sIGspICsgMn0gU157bisyfV9rIl0sWzAsMV0sWzAsMl0sWzIsMywiRV4yIiwyXSxbMSwzXV0=
		\[
		\begin{tikzcd}
			{\pi_{c(n,k)}S^n(k)} & {\pi_{c(n,k)+2}S^{n+2}(k)\cong *} \\
			{\pi_{c(n,k)}S^n_k} & {\pi_{c(n,k)+2}S^{n+2}_k}
			\arrow[from=1-1, to=1-2]
			\arrow[from=1-1, to=2-1]
			\arrow[from=1-2, to=2-2]
			\arrow["{E^2}"', from=2-1, to=2-2]
		\end{tikzcd}
		\]
		The top right group vanishes because the connectivity of $S^{n+2}(k)$ is
		\[
		c(n+2,k)= c(n,k)+2p^k>c(n,k)+2
		\]
		for $k>0$. This proves the double suspension statement, and the case $p=2$ follows similarly.

		\item For $k>0$ one has $\pi_{c(n,k)}S^n(k)\cong \mathbb{F}_p$, so every $k$-Whitehead element has order $p$. \qedhere
	\end{enumerate}
\end{proof}

We now turn to the analogous constructions for the fiber of the double suspension $W^n$.

\begin{definition}
	The approximation $W^n_k$ \emph{converges early} if the comparison map $W^n \to W^n_k$ is injective on $\pi_{c(n,k+1)}$.
\end{definition}

\begin{definition}
	A \emph{$k$-Whitehead element} of $W^n$ is the image of a generator under the map
	\[
	\pi_{c(n,k)}W^n(k) \longrightarrow \pi_{c(n,k)}W^n_k \cong \pi_{c(n,k)}W^n
	\]
	Any $k$-Whitehead element will be denoted by $\hat w_n^k$.
\end{definition}

As before, one proves the following.

\begin{lem}
	A $k$-Whitehead element $\hat w_n^k$ satisfies:
	\begin{enumerate}
		\item (Detection) $W^n_k$ converges early if and only if $\hat w_n^{k+1}=0$.
		\item (Order) $p\cdot \hat w_n^k=0$.
	\end{enumerate}
\end{lem}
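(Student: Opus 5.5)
The plan is to copy the proof of the analogous lemma for $w_n^k$, with the tower of $W$ in place of the tower of the identity. The inputs are:
\begin{itemize}
\item the generalized Freudenthal theorem for $W$ (prop. \ref{genfreudenthalW}), which gives isomorphisms on $\pi_*$ below the boundary degree and a surjection in it;
\item the cohomological description of the layers $W^n(k)$, which shows that $W^n(k)$ is $c(n,k)$-connected with bottom homotopy group $\mathbb{F}_p$ for $k>0$.
\end{itemize}
Throughout I work with $n=2m-1$ odd, matching the indexing of the proposition computing $\mathbb{F}_p^*W^{2m-1}(k)$.

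For (Detection), I would use the fiber sequence $\Omega^\infty W^n(k+1) \to W^n_{k+1} \to W^n_k$, obtained by applying $P_{p^{k+1}}$ and $P_{p^k}$ to $W$. Since the layers of $W$ on odd spheres are concentrated in degrees $p^j$, no intermediate layers appear. On $\pi_{c(n,k+1)}$ this gives an exact sequence
\[
\pi_{c(n,k+1)}W^n(k+1) \longrightarrow \pi_{c(n,k+1)}W^n_{k+1} \longrightarrow \pi_{c(n,k+1)}W^n_k \longrightarrow 0 .
\]
The right zero holds because $W^n(k+1)$ is $c(n,k+1)$-connected, hence has no $\pi_{c(n,k+1)-1}$. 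By prop. \ref{genfreudenthalW}, the map $W^n\to W^n_{k+1}$ is an isomorphism in degree $c(n,k+1)$. To see this one checks $c(n,k+1) < c(n,k+2)$, i.e.\ the bound $2mp^{k+2}-2k-5$ exceeds $c(n,k+1)=2mp^{k+1}-2k-3$, which is the strict monotonicity of $c$ in $k$. So the middle term is $\pi_{c(n,k+1)}W^n$.

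With that identification, $W^n_k$ converges early exactly when the middle-to-right map is injective. By exactness this happens exactly when the image of the cyclic group $\pi_{c(n,k+1)}W^n(k+1)$ vanishes. That image is generated by $\hat w_n^{k+1}$, so early convergence is equivalent to $\hat w_n^{k+1}=0$.

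For (Order), $\pi_{c(n,k)}W^n(k)\cong\mathbb{F}_p$ for $k>0$ by the cohomology computation, so the image of a generator is killed by $p$. For $k=0$ the layer $W^n(0)$ is contractible, so $\hat w_n^0=0$ and the claim holds trivially.

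The only step needing real care is the degree bookkeeping in (Detection). One has to confirm that the connectivity of $W^n(k+1)$ is exactly $c(n,k+1)$, and that the Freudenthal range for $W$ at stage $k+1$ strictly contains degree $c(n,k+1)$. Both follow from the explicit formulas, including in edge cases such as $n=1$.
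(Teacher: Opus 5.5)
Your proposal is correct and follows the paper's route: the paper only says the lemma is proved ``as before''. That means transcribing the proof for $w_n^k$, using the fiber sequence $\Omega^\infty W^n(k+1)\to W^n_{k+1}\to W^n_k$, the range of prop.~\ref{genfreudenthalW} at stage $k+1$ to identify $\pi_{c(n,k+1)}W^n_{k+1}$ with $\pi_{c(n,k+1)}W^n$, and $\pi_{c(n,k)}W^n(k)\cong\mathbb{F}_p$ for the order statement, exactly as you do. Note that your phrase ``$c(n,k+1)$-connected'' is the paper's convention (bottom homotopy group in degree $c$), not the standard one, and that convention is what your exactness step uses.
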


The Whitehead elements for $S^n$ and for $W^n$ are related as follows.

\begin{lem}
	The canonical map $W^n \to S^n$ sends $\hat w_n^k$ to $w_n^k$, and the connecting map in the Gray sequence $\Omega^2 S^{(n+1)p-1} \to W^n$ sends $w^{k-1}_{(n+1)p-1}$ to $\hat w_n^k$. Both statements are understood up to multiplication by a unit.
\end{lem}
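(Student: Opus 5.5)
Both claims reduce to showing that a map on the bottom homotopy group of a layer is an isomorphism. The bottom groups involved are all $\mathbb{F}_p$, so it suffices to show these maps are nonzero. We then transport the statement to the approximations and to the actual spaces by naturality of the comparison maps.

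\emph{First claim.} Consider the fiber sequence of layers
\[
W^n(k)\longrightarrow S^n(k)\longrightarrow \Sigma^{-2}S^{n+2}(k).
\]
Both $W^n(k)$ and $S^n(k)$ have connectivity $c(n,k)$. The third term has connectivity $c(n+2,k)-2=c(n,k)+2p^k-2$, which exceeds $c(n,k)+1$ for $k>0$. The long exact sequence therefore shows that $\pi_{c(n,k)}W^n(k)\to\pi_{c(n,k)}S^n(k)$ is an isomorphism. Alternatively, in cohomology the bottom class $P^{p^{k-1}n'}\cdots P^{n'}$ (with $n=2n'-1$) lies in $(\mathcal{A}/\mathcal{A}\beta)^{=n'}_{=k}$ and maps to the corresponding class of $S^n(k)$.

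The natural transformation $W\to I$ gives a commutative square relating $W^n(k)\to W^n_k\cong_{\pi_{c(n,k)}}W^n$ to $S^n(k)\to S^n_k\cong S^n$. Here the identifications with the actual spaces hold in degree $c(n,k)$ by Theorem \ref{genfreudenthal} and Proposition \ref{genfreudenthalW}, and these identifications commute with $W^n\to S^n$. Chasing a generator through the square gives $\hat w^k_n\mapsto w^k_n$ up to a unit. For $k=0$ both sides vanish or are trivial, since $W^n(0)\simeq *$, so we may assume $k>0$.

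\emph{Second claim.} Use Theorem \ref{graysequence}, with $n=2m-1$ so that $(n+1)p-1=2mp-1$. Rotating the layer sequence gives
\[
\Sigma^{-2}S^{2mp-1}(k-1)\longrightarrow W^{n}(k)\longrightarrow \Sigma^{-3}S^{2mp+1}(k-1).
\]
The connecting map of the space-level Gray sequence $\Omega^2S^{2mp-1}_{k-1}\to W^n_k$ is compatible with this map on layers, by naturality of the construction in the proof of Theorem \ref{graysequence}. The degrees match: $c(2mp-1,k-1)+2 = 2mp^k-2k+3-2+\dots$ equals $c(n,k)$, by the recursion $c(n,k)=c(p(n+1)-1,k-1)-2$ from the proof of Theorem \ref{genfreudenthal}.

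The right-hand term has connectivity $c(2mp+1,k-1)-3=c(n,k)+2p^{k-1}-1>c(n,k)$. The long exact sequence then makes the left-hand map surjective on $\pi_{c(n,k)}$. Since both groups are cyclic of order $p$ (or $\mathbb{Z}\to\mathbb{F}_p$ when $k=1$), it sends a generator to a generator.

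Finally, we compare with the actual spaces. We use Gray's classical sequence $\Omega^2S^{2mp-1}\to W^n$, the square formed by the comparison maps to approximations, and the convergence ranges: Theorem \ref{genfreudenthal} for $S^{2mp-1}$ at level $k-1$ in degree $c(2mp-1,k-1)$, and Proposition \ref{genfreudenthalW} for $W^n$. These show that $w^{k-1}_{2mp-1}$ maps to $\hat w^k_n$ up to a unit.

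The main obstacle is this last compatibility. We must check that Richter's functorial map, after applying $P_{p^k}$ and evaluating on $S^{2m-1}$, agrees with the classical Gray connecting map under the comparison maps, and that the layer-level connecting map is the restriction of the tower-level one. I would handle this by arguing entirely at the level of functors. The spherewise fiber sequence of functors $W\to\Omega^3\Sigma^{p+1}(-)^{\otimes p}\to\Omega\Sigma^{p-1}(-)^{\otimes p}$ maps naturally to its Goodwillie tower, and on $S^{2m-1}$ it recovers Gray's sequence by Lemma \ref{richtergraymap}. Naturality of $F\to P_{p^k}F\to P_{p^{k-1}}F$ and of the fiber inclusions $D_{p^k}F\to P_{p^k}F$ then gives all the required squares.
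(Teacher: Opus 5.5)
Your proposal is correct and follows the paper's route. Both claims reduce to the layer maps $W^n(k)\to S^n(k)$ and $\Sigma^{-2}S^{2mp-1}(k-1)\to W^n(k)$ being isomorphisms on bottom groups (surjective from $\mathbb{Z}$ when $k=1$). The paper asserts this via bottom mod-$p$ cohomology, while you justify it more explicitly through the long exact sequences and the higher connectivity of the third terms, and you also spell out the naturality with respect to the comparison maps that the paper leaves implicit.

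There are two small slips. First, the degree identity should read $c(2mp-1,k-1)-2=c(n,k)$. Second, at $k=0$ the first claim is not trivially true but false, since $\hat w_n^0=0$ while $w_n^0\doteq\iota_n$; the lemma is therefore only meant for $k>0$, which is what you in effect assume.
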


\begin{proof}
	The Whitehead elements are detected in the first nontrivial cohomology groups of $S^n(k)$ and $W^n(k)$, and both maps induce isomorphisms on these groups.
\end{proof}

We can now deduce the early convergence of $W^1_k$, a fact that we will need later for the computation of the homotopy groups of $S^3$.

\begin{prop}
	\label{earlyconvW}
	For $k>0$, the approximation $W^1_k$ converges early.
\end{prop}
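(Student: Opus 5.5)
The plan is to reduce the statement, via the detection lemma for $W^n$, to the vanishing of a single homotopy class, and then to transport that class along the Gray sequence to a place where it is visibly zero. By the detection lemma, $W^1_k$ converges early if and only if $\hat w_1^{k+1}=0$ in $\pi_{c(1,k+1)}W^1$. By the lemma relating Whitehead elements of $S^n$ and $W^n$, applied with $n=1$ so that $(n+1)p-1=2p-1$, we have
\[
\hat w_1^{k+1}\doteq \delta_*\bigl(w^{k}_{2p-1}\bigr),
\qquad
\delta\colon \Omega^2S^{2p-1}\longrightarrow W^1 ,
\]
where $\delta$ is the connecting map of Gray's sequence $BW^1\to\Omega^2S^{2p+1}\to S^{2p-1}$, looped once. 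The degrees match, since $c(2p-1,k)-2=c(1,k+1)$. Moreover $w^k_{2p-1}$ is a genuine Whitehead element with $k>0$, so it has order $p$ and satisfies $\Sigma^2 w^k_{2p-1}=0$ by the instability part of the lemma. The goal is therefore to show that $\delta_*$ kills this particular class.

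The first step is to exploit the special nature of $n=1$. Since $S^1=K(\mathbb{Z},1)$ and $S^1\to\Omega^2S^3$ is an isomorphism on $\pi_1$, we get $W^1\simeq\Omega^3S^3\langle 3\rangle$ and $BW^1\simeq\Omega^2S^3\langle3\rangle$. In this identification the Gray sequence for $n=1$ becomes a classical sequence involving $S^3\langle 3\rangle$, $S^{2p+1}$ and $S^{2p-1}$, of the type studied by Toda, Gray and Selick. By exactness, $\delta_*(w)=0$ if and only if $w^k_{2p-1}$ lies in the image of the (double looped) Gray map $\Omega^2S^{2p+1}\to S^{2p-1}$ on $\pi_{c(2p-1,k)}$. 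I would try to exhibit the preimage explicitly. Since the Gray map composed with $E^2$ is multiplication by $p$, a natural candidate is a class $x$ on $\Omega^2S^{2p+1}$ whose image under the Gray map is detected in the bottom cell of $S^{2p-1}(k)$, in the cohomological language of Proposition \ref{cohomS}.

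The second step is to carry out this comparison at the level of layers, using Theorem \ref{graysequence}. The map $\Sigma^{-3}S^{2p+1}(k)\to\Sigma^{-1}S^{2p-1}(k)$ is what one must inspect on bottom homotopy groups. Its fiber is $W^1(k+1)$, and the bottom class of $W^1(k+1)$ is $\hat w_1^{k+1}$ before passing to $W^1_{k+1}$. So it suffices to show that the bottom class of $W^1(k+1)$ becomes trivial in $W^1_{k+1}$. Equivalently, in the Goodwillie spectral sequence for $W^1$, it should be hit by a differential from a lower layer $W^1(j)$ with $j\le k$. I would compute this via the cohomology descriptions, comparing the bottom cell of $W^1(k+1)$ with the degree $c(1,k+1)+1$ part of $W^1(k)$. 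This amounts to an induction on $k$, starting from $k=1$, where $W^1(1)\simeq\mathbb{S}^{2p-3}/p$ and $W^1(2)$ is described explicitly in Example \ref{cohomW2}.

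The main obstacle is the second step: producing the differential, or the preimage under the Gray map, in general. Cohomology alone shows only that the relevant groups are cyclic of order $p$; it does not say that the attaching map is nonzero. I would first try to get the nontriviality from the classical side. For $S^3$ one knows that the relevant Whitehead-type product vanishes, for instance because $\Omega S^3\langle3\rangle$ splits at odd primes (Selick) so that its bottom $p$-torsion is carried by a Moore space. That vanishing should force $\hat w_1^{k+1}=0$ directly in $\pi_*W^1\cong\pi_{*+3}S^3$, bypassing the spectral sequence bookkeeping.
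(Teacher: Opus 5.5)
Your reduction matches the paper's. By the detection lemma and the comparison of Whitehead elements along the Gray sequence, early convergence of $W^1_k$ is equivalent to $\hat w_1^{k+1}\doteq\delta_*(w^k_{2p-1})=0$. That in turn is equivalent to $w^k_{2p-1}$ lying in the image of the Gray map $\pi_*\Omega^4S^{2p+1}\to\pi_*\Omega^2S^{2p-1}$. You also correctly record that $\Sigma^2 w^k_{2p-1}=0$ by instability.

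The gap is the step connecting these two facts. Exactness of Gray's sequence identifies $\ker\delta_*$ with the image of the Gray map $g$. The formal relations $g\circ E^2\simeq p\simeq E^2\circ g$ only tell you that a class killed by $E^2$ is killed by $p$; they do not place it in the image of $g$. The missing ingredient is Toda's lemma \cite[lem. 2.4]{Toda3}: for the sphere $S^{2p-1}$, every homotopy class that double suspends trivially lies in the image of the Gray map $\Omega^2S^{2p+1}\to S^{2p-1}$. Applied to $w^k_{2p-1}$, together with the instability you already noted, this gives $\hat w_1^{k+1}=0$ at once. That is the entire proof in the paper.

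Neither of your proposed substitutes closes the gap as written. The layer-level plan requires producing, for every $k$, a Goodwillie differential from some $W^1(j)$ with $j\le k$ that hits the bottom class of $W^1(k+1)$. As you observe yourself, the cohomological descriptions of the layers identify the groups but cannot detect the attaching maps. The explicit $k=1$ computation of Example \ref{cohomW2} also supplies no inductive mechanism for this.

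The appeal to Selick is likewise only a heuristic. The class $\hat w_1^{k+1}$ sits in degree $c(1,k+1)=2p^{k+1}-2k-3$, which grows with $k$, so a statement about how the bottom $p$-torsion is carried cannot reach it. To use Selick's retraction at all, you would have to relate it to the Gray connecting map $\delta$ before it could say anything about $\delta_*(w^k_{2p-1})$, and the proposal does not do this. Without Toda's lemma, or an equivalent input about $\ker E^2$ on $\pi_*S^{2p-1}$, the argument stops exactly where the paper's proof takes its final step.
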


\begin{proof}
	By the detection property, $W^1_k$ converges early if and only if $\hat w_1^{k+1}=0$. By the previous lemma, the element $\hat w_1^{k+1}$ is the image of $w^k_{2p-1}$ under the connecting map in the Gray sequence. It therefore suffices to show that $w^k_{2p-1}$ lies in the image of the Gray map
	\[
	\Omega^4 S^{2p+1} \longrightarrow \Omega^2 S^{2p-1}
	\]
	By the instability property, the element $w^k_{2p-1}$ double-suspends trivially for $k>0$. Toda's lemma \cite[lem. 2.4]{Toda3} asserts that every such element lies in the image of the Gray map. This proves that $\hat w_1^{k+1}=0$, and hence that $W^1_k$ converges early.
\end{proof}

The relationship between $w_n^k$ and $\hat w_n^k$ yields a recursive formula. If we denote by $P$ the composite
\[
\Omega^2 S^{(n+1)p-1} \to W^n \to S^n
\]
of the Gray connecting map with the canonical projection, we obtain the following.

\begin{lem}
	The Whitehead elements satisfy the recursive formula
	\[
	w_n^k \doteq P_* w^{k-1}_{(n+1)p-1} \doteq P^{(k)}_*\iota_{(n+1)p^k-1}
	\]
\end{lem}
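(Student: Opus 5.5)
The first equivalence follows by composing the two statements of the previous lemma. The connecting map $\Omega^2 S^{(n+1)p-1} \to W^n$ sends $w^{k-1}_{(n+1)p-1}$ to $\hat w_n^k$, and the canonical map $W^n \to S^n$ sends $\hat w_n^k$ to $w_n^k$, both up to units. Since $P$ is by definition the composite of these two maps, we get
\[
P_* w^{k-1}_{(n+1)p-1} \doteq w_n^k .
\]
Before doing so I would check the degrees. The class $w^{k-1}_{(n+1)p-1}$ lives in $\pi_{c((n+1)p-1,k-1)}S^{(n+1)p-1}$, and under $\Omega^2$ this becomes degree $c((n+1)p-1,k-1)-2$. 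By the recursion $c(n,k)=c(p(n+1)-1,k-1)-2$ recorded in the proof of theorem \ref{genfreudenthal}, this is exactly $c(n,k)$. So the degrees match and no extra shift is needed.

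The second equivalence follows by induction on $k$. Write $n_0:=n$ and $n_{j+1}:=(n_j+1)p-1$, so that $n_j+1=(n+1)p^j$ and $n_k=(n+1)p^k-1$. Interpret $P^{(k)}$ as the iterated composite
\[
\Omega^{2k}S^{n_k} \to \Omega^{2k-2}S^{n_{k-1}} \to \cdots \to S^{n_0},
\]
where each map is the appropriate loop of $P$ for the relevant sphere. Applying the first equivalence $k$ times gives
\[
w_n^k \doteq P_* w^{k-1}_{n_1} \doteq P_*P_* w^{k-2}_{n_2} \doteq \cdots \doteq P^{(k)}_* w^0_{n_k}.
\]
Finally $w^0_{n_k}\doteq\iota_{n_k}$ by the earlier remark. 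The unit ambiguity is harmless throughout, because induced maps on homotopy are linear, so units propagate through the composites.

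The main subtlety is the hypothesis needed to apply the previous lemma at each stage. The Gray sequence, and hence the lemma, concerns $W^n$ with $n$ odd when $p>2$, so we need every $n_j$ to be odd. This holds because $n_{j+1}=(n_j+1)p-1$ is odd for $p$ odd. When $p=2$, one uses the EHP version in the same way. We also need $k\ge1$ at each inductive step, which holds since the induction runs from $k$ down to $1$.
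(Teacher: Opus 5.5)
Your argument is correct and is the reasoning the paper intends. You compose the two statements of the preceding lemma (the Gray connecting map sends $w^{k-1}_{(n+1)p-1}$ to $\hat w_n^k$, and $W^n\to S^n$ sends $\hat w_n^k$ to $w_n^k$), check the degrees via $c(n,k)=c((n+1)p-1,k-1)-2$, and iterate down to $w^0\doteq\iota$.

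One small wording fix: for $p$ odd, $n_{j+1}=(n_j+1)p-1$ is odd only when $n_j$ is odd. So the oddness of every $n_j$ follows by induction from the standing assumption that $n$ is odd, not unconditionally as you phrase it.
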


\begin{rem}
	This recursive formula could instead be taken as the definition of the Whitehead elements, thereby avoiding the ambiguity in the choice of generators above.
\end{rem}

Note that for $p=2$, the map $P$ coincides with the $P$ map in the EHP sequence, which explains the choice of notation. The reader may verify this from our construction of the Gray sequence by iterating the EHP sequence. Moreover, it follows from the definition and the properties of the Gray map that $\Sigma^2 \circ P \simeq *$ and $P \circ p \simeq *$, thereby recovering the instability and order properties of the Whitehead elements described above. Finally, the iterate $P^{(k)}$ is a map $\Omega^{2k} S^{(n+1)p^k-1} \to S^n$, providing a topological explanation for the recurring number $c(n,k)$.

Finally, let us illustrate the recursive description with some concrete cases.

\begin{ex}
	The first Whitehead elements are:
	\begin{enumerate}
		\item $w_n^0 \doteq \iota_n \neq 0$.
		\item $w_1^k=0$ for $k>0$, since $\pi_*S^1$ is torsion-free while $w_1^k$ is torsion. Note that $w_1^1\in \pi_1S^1$ for $p=2$.
		\item $w_n^1 \doteq [\iota_n,\iota_n]$ for $p=2$. This follows from the formula $P_*\alpha=[\iota_n,\Sigma^{-n-1}\alpha]$, valid whenever the desuspension exists \cite[prop. 3.6.1]{Behrens2010}. By the Hopf invariant one theorem \cite[thm. A]{AdamsAtiyah}, this vanishes if and only if $n=1,3,7$.
		\item $w_n^1$ for $p > 2$ coincides up to a unit with the mod-$p$ Whitehead element of \cite{GrayFam,WhiteheadModp}, which is nontrivial except for $n=1$ by the Hopf invariant one theorem \cite[thm. D]{AdamsAtiyah}. This is not a Whitehead product: Whitehead products vanish on H-spaces, yet $w_3^1 \neq 0$.
		\item $w_n^k = 0$ for $p = 2$ when $k>0$ and $n=1,3,7$, since $P$ vanishes on H-spaces.
		\item $w_n^k$ for $p = 2$ is denoted by $1[j_1, \dots, j_k]$ in \cite[6.5]{Behrens2010}. In some cases, this element detects $[\iota_{j_k}, \dots, \Sigma^{-*}[\iota_{j_1}, \iota_{j_1}]]$ \cite[3.6]{Behrens2010}, but this is not always true. For instance, $w_2^2 = 1[5,2]$ is expected to detect $[\iota_2, \Sigma^{-3}[\iota_5, \iota_5]]$. However, $[\iota_5, \iota_5] \in \pi_9 S^5$ does not desuspend three times; see the metastable computations in subsection \ref{metastable}. In the range covered by Behrens' calculations, we observe that $w_n^k = 0$ for $k > 0$ if and only if $n = 1,3$. We also note that the element $w_4^2 = 1[9,4]$ is missing from his charts \cite[table 6.5.8, row 11]{Behrens2010}.
		\item $w_n^2 \doteq P_*[\iota_{2n+1}, \iota_{2n+1}]$ for $p = 2$. Since $P \circ 2 \simeq *$, it follows that $w_n^2 = 0$ whenever $[\iota_{2n+1}, \iota_{2n+1}]$ is divisible by $2$. This occurs when the Kervaire invariant one class $\theta_j$ exists in $\pi_{2n}\mathbb{S}$ and has order $2$; see \cite[5.2]{Kervaireexp}. This holds for $2n = 2^{j+1} - 2$ with $j \leq 6$, by the solution of the Kervaire invariant one problem \cite{Kervairesol,Kervairelast}. In particular, this implies that $w^2_{2^i - 1} = 0$ for $i \leq 6$, giving the new vanishing results $w^2_{15} = w^2_{31} = w^2_{63} = 0$.
	\end{enumerate}
\end{ex}

These examples lead naturally to the following question.

\begin{ques}
	When is $w_n^k$ nonzero?
\end{ques}

For instance, when $p=2$ one may ask whether $w_n^k \neq 0$ whenever $n$ is not of the form $2^i-1$. More generally, for fixed $k$, are infinitely many of the classes $w_n^k$ nontrivial? Does the number of vanishing classes $w_n^k$ increase with $k$? For $p>2$ and $n>1$, do any of these classes vanish? For which values of $n$ does one obtain an infinite family? We leave these questions open for further study.

\section{Model for the approximation circles}

The goal of this section is to give an explicit description of the Goodwillie approximations to $S^1$. This model will be the main input for the computations carried out later, but it is also of independent interest: it expresses $\Omega S^1_k$ in terms of well-studied spectra, namely the symmetric product spectra.

In the first subsection, we recall the symmetric product spectra $\mathbb{Sp}^m$ and collect some of their basic properties, including their $\mathbb{F}_p$-cohomology, their $K(i)$-cohomology, and their low-degree homotopy groups. These facts will be used repeatedly in what follows.

In the second subsection, we prove the central equivalence
\[
\Omega S^1_k \simeq \mathbb{Z} \times \Omega^{\infty+2k}\tau_{>0}\mathbb{Sp}^{p^k}
\]
The proof relies on the solution of the Whitehead conjecture by Behrens \cite{Behrens2011} and Kuhn \cite{Kuhn2014}. The result reduces the study of $\Omega S^1_k$ to stable homotopy theory and provides a concrete entry point for the computational applications of the next section.

\subsection{Symmetric products}

The model for $S^1_k$ will be expressed in terms of certain well-studied spectra called \textit{symmetric product spectra}. The goal of this subsection is to give a brief exposition of them.

The symmetric product spectra are the first derivatives of functors that go by the same name. Classically, the $m^\mathrm{th}$ symmetric product functor is defined on topological spaces by $Sp^m(X) := X^m_{\Sigma_m}$, where the product is the cartesian product and the subscript denotes strict orbits under the action of the symmetric group $\Sigma_m$.

We prefer to adopt a homotopy-invariant definition from the start. By Elmendorf's theorem \cite{Gspaces}, every $G$-space $X$ is equivalent to a functor $\mathcal{O}(G)^{\mathrm{op}} \to \mathcal{S}$ from the orbit category, sending $G/H$ to $X^H$, and strict orbits correspond to colimits of such functors. This motivates the following definition.

\begin{definition}
	The $m^\mathrm{th}$ \textit{symmetric product} is the functor $Sp^m: \mathcal{S}_* \to \mathcal{S}_*$ defined by
	\[
	Sp^mX := \mathrm{colim}_{\Sigma_m/H \in \mathcal{O}^{\mathrm{op}}(\Sigma_m)}(X^m)^H \simeq \mathrm{colim}_{\Sigma_m/H}X^{m/H}
	\]
\end{definition}

To illustrate how these colimits can be manipulated, let us work out explicitly the case $m=2$ and examine how it behaves on spheres.

\begin{ex}
	The \textit{symmetric square} is given by the functor
	\[
	Sp^2X = \mathrm{colim}_{C_2/H}X^{2/H} = \mathrm{colim}(X \to X^{2}\circlearrowleft C_2)
	\]
	where the map is the diagonal. This functor fits into a cofiber sequence
	\[
	I \longrightarrow Sp^2 \longrightarrow Sp^{\wedge 2}
	\]
	where we define $Sp^{\wedge 2}X := \mathrm{colim}(X \to X^{\wedge 2}\circlearrowleft C_2)$. Indeed, this follows by taking vertical cofibers in the diagram below.
	% https://q.uiver.app/#q=WzAsNixbMSwwLCJcXG1hdGhybXtjb2xpbX0oKiJdLFsyLDAsIlggXFx2ZWUgWCBcXGNpcmNsZWFycm93bGVmdCkiXSxbMSwxLCJcXG1hdGhybXtjb2xpbX0oWCJdLFsyLDEsIlheMiBcXGNpcmNsZWFycm93bGVmdCkiXSxbMCwwLCJJICJdLFswLDEsIlNwXjIiXSxbMCwxXSxbMCwyLCIiLDIseyJvZmZzZXQiOi01fV0sWzIsM10sWzEsMywiIiwwLHsib2Zmc2V0IjozfV0sWzQsMCwiIiwyLHsibGV2ZWwiOjIsInN0eWxlIjp7ImhlYWQiOnsibmFtZSI6Im5vbmUifX19XSxbNSwyLCIiLDIseyJsZXZlbCI6Miwic3R5bGUiOnsiaGVhZCI6eyJuYW1lIjoibm9uZSJ9fX1dLFs0LDVdXQ==
\[\begin{tikzcd}
	{I } & {\mathrm{colim}(*} & {X \vee X \circlearrowleft C_2)} \\
	{Sp^2} & {\mathrm{colim}(X} & {X^2 \circlearrowleft C_2)}
	\arrow[equals, from=1-1, to=1-2]
	\arrow[from=1-1, to=2-1]
	\arrow[from=1-2, to=1-3]
	\arrow[shift left=5, from=1-2, to=2-2]
	\arrow[shift right=3, from=1-3, to=2-3]
	\arrow[equals, from=2-1, to=2-2]
	\arrow[from=2-2, to=2-3]
\end{tikzcd}\]
	Now observe that $Sp^{\wedge 2}S^n \simeq \Sigma^n(RP^{n-1})^\diamond$. Here $\diamond$ denotes the unreduced suspension, so for $n>0$ this simplifies to $\Sigma^{n+1}RP^{n-1}$. To see this, first note that
	\[
	Sp^{\wedge 2}S^n
	=
	\mathrm{colim}(S^n \to S^{2n}\circlearrowleft C_2)
	\simeq
	\Sigma^n \mathrm{colim}(S^0 \to S^{n\rho}\circlearrowleft C_2)
	\]
	Now note that $S^{n\rho} \simeq S(n\rho)^\diamond$. Since the unreduced suspension $\diamond:\mathcal{S}\to\mathcal{S}_*$ preserves contractible colimits, and since the orbit category is contractible because it has a terminal object, it follows that
	\[
	Sp^{\wedge 2}S^n
	\simeq
	\Sigma^n \mathrm{colim}^{unpt}(\emptyset \to S(n\rho)\circlearrowleft C_2)^\diamond
	\simeq
	\Sigma^n S(n\rho)_{hC_2}^\diamond
	=
	\Sigma^n(RP^{n-1})^\diamond
	\]
	Putting everything together, we conclude that the symmetric square of a sphere fits into the cofiber sequence
	\[
	S^n \longrightarrow Sp^2S^n \longrightarrow \Sigma^n(RP^{n-1})^\diamond
	\]
\end{ex}

There are two classical results concerning symmetric products.

\begin{prop}
	The symmetric product functors satisfy:
	\begin{enumerate}
		\item $Sp^mS^1 \simeq S^1$.
		\item $Sp^\infty$ is linear.
	\end{enumerate}
\end{prop}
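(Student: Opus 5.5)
Both parts are classical, so I plan to reduce them to known computations, phrased in the homotopy-invariant language of the definition.

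For (1), I first need to compare the colimit over the orbit category with the strict orbit space $(S^1)^m/\Sigma_m$. By Elmendorf's theorem, the colimit over $\mathcal{O}(\Sigma_m)^{\mathrm{op}}$ of the fixed-point diagram of a $\Sigma_m$-CW complex computes the strict orbits. Since $(S^1)^m$ with the permutation action is a $\Sigma_m$-CW complex, $Sp^mS^1$ agrees with the classical symmetric product. I then use the standard identification, viewing $S^1\subset\mathbb{C}^\times$, that sends an unordered $m$-tuple $(z_1,\dots,z_m)$ to its product $z_1\cdots z_m\in S^1$. I claim this map is a fiber bundle. Its fiber over $1$ is the space of unordered $m$-tuples with product $1$, which is the quotient of a torus by a finite group. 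I would show this fiber is contractible; this is Morse's classical theorem that it is an $(m-1)$-simplex. An alternative route is the cell structure of $Sp^m S^1$, which has $H_*$ of a circle by Dold's computation of homology of symmetric products and has fundamental group $\mathbb{Z}$. So the product map is an equivalence, and in the pointed setting one checks that it is compatible with the basepoint inclusion $S^1\to Sp^mS^1$.

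For (2), I take $Sp^\infty=\mathrm{colim}_m Sp^m$ along the basepoint inclusions. I invoke the Dold--Thom theorem: for connected CW $X$, $Sp^\infty X$ is a product of Eilenberg--MacLane spaces with $\pi_*Sp^\infty X\cong \tilde H_*(X;\mathbb{Z})$. Reduced homology sends homotopy pushouts to long exact sequences, so $Sp^\infty$ sends cocartesian squares to squares whose homotopy groups form Mayer--Vietoris sequences. Since $Sp^\infty X$ is an abelian topological monoid, hence an infinite loop space naturally in $X$, this gives cartesian squares, i.e.\ $1$-excisiveness. The functor is reduced and commutes with filtered colimits, so it is linear. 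For non-connected $X$ I would either restrict to connected inputs or use group completion.

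The main obstacle is the first step of (1): justifying that the homotopy-invariant colimit computes the strict quotient. This requires that the fixed-point diagram be cofibrant in the projective model structure, i.e.\ a genuine $\Sigma_m$-CW structure on $(S^1)^m$. I would use the simplicial structure of $S^1$ and its product triangulation, subdivided so that $\Sigma_m$ acts cellularly with cells fixed pointwise by their stabilizers.
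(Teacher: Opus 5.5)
Your main line is correct. For (2) it is the paper's argument: the paper simply cites Dold--Thom. For (1) you take a different route. The paper derives (1) from the fundamental theorem of algebra. The standard form of that argument is that sending roots to the coefficients of $\prod_i(z-z_i)$ identifies $Sp^m(\mathbb{C}^\times)$ homeomorphically with monic degree-$m$ polynomials with nonzero constant term, i.e.\ with $\mathbb{C}^{m-1}\times\mathbb{C}^\times\simeq S^1$; since $Sp^m$ preserves the equivalence $S^1\simeq\mathbb{C}^\times$, this computes $Sp^mS^1$. Your product map is, up to the sign $(-1)^m$, the projection onto the constant coefficient, so both arguments analyse the same map. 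Over $\mathbb{C}^\times$ its fibre is $\mathbb{C}^{m-1}$; over $S^1$ it is Morse's simplex (equivalently $T_{SU(m)}/\Sigma_m$, a Weyl alcove). The polynomial route gives a global homeomorphism with no fibre or local-triviality analysis. Yours identifies the fibre explicitly but requires the bundle claim, which you should justify. It is easy: the product map is equivariant for rotation on $Sp^mS^1$ and $\lambda\mapsto\lambda^m$ on $S^1$, so $Sp^mS^1\cong S^1\times_{\mu_m}F$. The Elmendorf comparison, which the paper takes for granted, is routine rather than the main obstacle. The realisation of the simplicial $\Sigma_m$-set $(\Delta^1/\partial\Delta^1)^{\times m}$ is already a $\Sigma_m$-CW complex, since an element fixing a simplex fixes its cell pointwise, so no subdivision is needed.

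A few points need tightening.
\begin{itemize}
\item Your ``alternative route'' in (1) does not suffice on its own. $\pi_1\cong\mathbb{Z}$ and $H_*\cong H_*(S^1)$ do not force a space to be equivalent to $S^1$: for example $S^1\vee S^2\cup_{2t-1}e^3$ has $\pi_2\cong\mathbb{Z}[1/2]$. You would need the homology of the universal cover.
\item In (2), an abstract natural isomorphism $\pi_*Sp^\infty X\cong\tilde H_*(X)$ together with Mayer--Vietoris does not by itself show that the comparison map to the homotopy pullback is an equivalence, because the boundary maps must be matched. The input you actually want is the quasifibration form of Dold--Thom: $Sp^\infty$ takes cofibre sequences of connected spaces to fibre sequences. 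Then in the image of a pushout square both horizontal fibres are $\Omega Sp^\infty$ of the common cofibre.
\item Your connectedness caveat is genuinely needed, and it applies to the paper's statement as well. We have $Sp^\infty S^0\simeq\mathbb{N}$, whereas $P_1Sp^\infty(S^0)\simeq\mathrm{colim}_n\Omega^nK(\mathbb{Z},n)\simeq\mathbb{Z}$, so $Sp^\infty$ is $1$-excisive only on connected spaces. Group completion would change the functor, so restricting to connected inputs is the right fix, and it is harmless for computing $\partial_1$.
\end{itemize}
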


\begin{proof}
	The first statement is a consequence of the fundamental theorem of algebra; see, for instance, \cite[lem. 7.10]{AroneDwyer}. The second is the content of the Dold--Thom theorem \cite{doldthom}.
\end{proof}

Combining these results, we deduce that $Sp^\infty \simeq \Omega^\infty(\mathbb{Z} \otimes -)$. In particular, we obtain the following filtration of the Hurewicz map:
\[
I = Sp^1
\longrightarrow \cdots
\longrightarrow Sp^{m-1}
\longrightarrow Sp^m
\longrightarrow \cdots
\longrightarrow Sp^\infty \simeq \Omega^\infty(\mathbb{Z} \otimes -)
\]

We are now ready to define the symmetric product spectra as the first derivatives of the symmetric product functors.

\begin{definition}
	\label{symmprod}
	The $m^\mathrm{th}$ \textit{symmetric product spectrum} is defined by
	\[
	\mathbb{Sp}^m := \partial_1 Sp^m
	\]
\end{definition}

\begin{lem}
	There is an equivalence
	\[
	\mathbb{Sp}^m \simeq \mathrm{colim}_{\Sigma_m/H} \mathbb{S}^{\oplus m/H}
	\]
\end{lem}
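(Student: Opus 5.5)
The plan is to compute the first derivative of $Sp^m$ directly from the colimit formula defining it. We use the standard description $\partial_1 F \simeq \mathrm{colim}_j \Sigma^{-j}\Sigma^\infty F(S^j)$, valid for a reduced finitary functor $F$, exactly as in the computation of $S^n(0)$ above. Write $Sp^m X \simeq \mathrm{colim}_{\Sigma_m/H} X^{m/H}$, where $m/H$ denotes the finite set of $H$-orbits on $\{1,\dots,m\}$. Since $\Sigma^\infty$ and the stabilization colimit $\mathrm{colim}_j \Sigma^{-j}$ both commute with colimits, we get
\[
\partial_1 Sp^m \simeq \mathrm{colim}_{\Sigma_m/H}\, \partial_1\bigl((-)^{m/H}\bigr).
\]
Commuting colimits in this way is legitimate because $\partial_1$, viewed as a functor of $F$, is computed by a filtered colimit of colimit-preserving operations. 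The main step is therefore to identify $\partial_1$ of a finite cartesian power.

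For a finite set $A$, the functor $X \mapsto X^A$ is a finite product, hence a finite limit. Since $P_1$ preserves finite limits of reduced functors, we have $P_1(X^A) \simeq (P_1 I)(X)^A$. A finite product of spectrum-valued linear functors is a finite sum, so its first derivative is $\mathbb{S}^{\oplus A}$. Alternatively, one can see this directly: the natural map $\Sigma^\infty(X^A) \to \Sigma^\infty(\bigvee_A X)$, coming from the splitting of $\Sigma^\infty$ of a product, is an equivalence up to the cross terms $X^{\wedge B}$ with $|B|\geq 2$. On $S^j$ these cross terms are $(2j-1)$-connected, so they vanish in the stabilization colimit. Either argument gives $\partial_1((-)^{A}) \simeq \mathbb{S}^{\oplus A} = \bigoplus_{a\in A}\mathbb{S}$, naturally in maps of finite sets $A \to A'$. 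Here a map $A \to A'$ induces the diagonal map $X^{A'} \to X^{A}$ on the space side.

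Finally, we check that the diagram is the correct one. The functoriality in $\Sigma_m/H \in \mathcal{O}(\Sigma_m)^{\mathrm{op}}$ of $X^{m/H}$ is given by diagonals and permutations. Under the identification above, these go to the corresponding maps of sums $\mathbb{S}^{\oplus m/H} \to \mathbb{S}^{\oplus m/K}$: a diagonal $X \to X^{B}$ induces the diagonal $\mathbb{S} \to \mathbb{S}^{\oplus B}$, which in a stable category is the same as the corresponding map into the product. This yields $\mathbb{Sp}^m \simeq \mathrm{colim}_{\Sigma_m/H}\mathbb{S}^{\oplus m/H}$.

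The expected obstacle is making the naturality of the identification $\partial_1(X^A)\simeq \mathbb{S}^{\oplus A}$ coherent over the orbit category, rather than only objectwise. I would handle this by noting that both sides are the restriction along $A\mapsto (-)^A$ of the single functor $P_1$. Since $P_1$ preserves finite limits, it sends the functor $A \mapsto X^A$, from finite sets${}^{\mathrm{op}}$ to functors, to $A \mapsto (P_1I)^A \simeq (\Omega^\infty\Sigma^\infty)^{\oplus A}$ coherently.
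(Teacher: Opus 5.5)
Your proposal is correct and follows the paper's argument. You commute $\partial_1$ past the orbit-category colimit using $\partial_1 F\simeq \mathrm{colim}_n\Sigma^{-n}\Sigma^\infty F(S^n)$, then identify $\partial_1((-)^A)\simeq\mathbb{S}^{\oplus A}$; your justification of that last step and your treatment of coherence over the orbit category supply details the paper leaves implicit.
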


\begin{proof}
	By the definition of $Sp^m$, and since $\partial_1$ preserves colimits of reduced functors, we have
	\[
	\mathbb{Sp}^m
	= \partial_1 Sp^m
	= \partial_1 \mathrm{colim}_{\Sigma_m/H}(-)^{m/H}
	\simeq \mathrm{colim}_{\Sigma_m/H} \partial_1\bigl((-)^{m/H}\bigr)
	\]
	On the other hand, for every finite set $A$ one has
	\[
	\partial_1\bigl((-)^A\bigr) \simeq \mathbb{S}^{\oplus A}
	\]
	Hence the claim follows.
\end{proof}

\begin{rem}
	Here and below we use that $\partial_1$ preserves colimits of reduced functors. Indeed, for a reduced functor $F$ one has
	\[
	\partial_1F \simeq \mathrm{colim}_n \Sigma^{-n}\Sigma^\infty F(S^n)
	\]
	so $\partial_1$ is given by a colimit-preserving construction.
\end{rem}

An alternative description, due to Lesh \cite{ktheorymult} but not needed here, is that $\mathbb{Sp}^m$ can be identified with the algebraic $K$-theory of the category of multisets with multiplicity at most $m$.

\begin{ex}
	\label{reducedtransfer}
	For $m=2$, the previous discussion gives the cofiber sequence
	\[
	\mathbb{S} \longrightarrow \mathbb{Sp}^2 \longrightarrow \Sigma\mathbb{RP}^\infty
	\]
	This follows by applying $\partial_1$ to the cofiber sequence $I \to Sp^2 \to Sp^{\wedge 2}$ and using the fact that $Sp^{\wedge 2}S^n \simeq \Sigma^{n+1}RP^{n-1}$ for $n>0$.

	Moreover, the map $\mathbb{RP}^\infty \to \mathbb{S}$ obtained by rotating this sequence is the classical Kahn--Priddy map \cite{KP,KPWelcher}.
\end{ex}

From now on, we will no longer use the functors $Sp^m$, but only their first derivatives $\mathbb{Sp}^m$. Before leaving the functors behind, however, it is natural to ask the following question.

\begin{ques}
	What do the higher Goodwillie approximations of $Sp^m$ look like?
\end{ques}

By taking $\partial_1$ of the filtration above, we obtain the sequence of symmetric product spectra
\[
\mathbb{S} \simeq \mathbb{Sp}^1
\longrightarrow \cdots
\longrightarrow \mathbb{Sp}^{m-1}
\longrightarrow \mathbb{Sp}^m
\longrightarrow \cdots
\longrightarrow \mathbb{Sp}^\infty \simeq \mathbb{Z}
\]
This sequence is an equivalence on $\pi_0\cong\mathbb{Z}$. The original \emph{Whitehead conjecture} asks whether these maps are null on $\pi_*$ for $*>0$, a question answered affirmatively by Kuhn \cite{WhiteheadOriginal}.

More generally, we can consider the quotients of this filtration. First, note that at the functor level
\[
Sp^m/Sp^{m-1} \simeq Sp^{\wedge m} := \mathrm{colim}_{\Sigma_m/H}(-)^{\wedge m/H}
\]
Applying $\partial_1$, we obtain
\[
\mathbb{Sp}^m/\mathbb{Sp}^{m-1}
\simeq \partial_1Sp^{\wedge m}
\simeq \mathrm{colim}_{\Sigma_m/H}
\begin{cases}
\mathbb{S} & \text{if } m/H = 1 \\
* & \text{if } m/H \neq 1
\end{cases}
\simeq \Sigma^{\infty+\diamond}\mathrm{colim}_{\Sigma_m/H}^{unpt}
\begin{cases}
\emptyset & \text{if } m/H = 1 \\
* & \text{if } m/H \neq 1
\end{cases}
\]
This recovers Lesh's result \cite{nontransitive}:
\[
\mathbb{Sp}^m/\mathbb{Sp}^{m-1} \simeq \Sigma^{\infty+\diamond} B_{nt}\Sigma_m
\]
where $B_{nt}\Sigma_m$ is the classifying space of the family of non-transitive subgroups of $\Sigma_m$. It also recovers the calculation for $m=2$ from example \ref{reducedtransfer}, since $B_{nt}\Sigma_2 = B\Sigma_2 = RP^\infty$.

These quotients behave in a way reminiscent of the Goodwillie tower of the identity after $p$-localization: they are trivial unless $m=p^k$. Indeed, after $p$-localization, a standard transfer argument \cite[sec. 7]{nontransitive} allows us to compute the above colimit using only those orbits $\Sigma_m/H$ for which $H$ is a $p$-subgroup; if $m$ is not a power of $p$, none of these subgroups can act transitively on $m$ letters, and the colimit becomes contractible. Consequently, we only need to focus on the spectra $\mathbb{Sp}^{p^k}$.

The spectra $\mathbb{Sp}^{p^k}$ are well studied. Their $\mathbb{F}_p$-cohomology and $K(i)$-cohomology are completely understood.

To describe their $\mathbb{F}_p$-cohomology, recall from the previous section, before proposition \ref{cohomS}, the $\mathcal{A}$-module $(\mathcal{A}/\mathcal{A}\beta)_{\geq k}$ defined by the length filtration on $\mathcal{A}/\mathcal{A}\beta$. The following computation goes back to Nakaoka \cite{Nakaoka}, who showed that the map $\mathbb{Sp}^{p^k} \to \mathbb{Z}$ induces a surjection
\[
\mathcal{A}/\mathcal{A}\beta \cong \mathbb{F}_p^*\mathbb{Z} \twoheadrightarrow \mathbb{F}_p^*\mathbb{Sp}^{p^k}
\]
with kernel $(\mathcal{A}/\mathcal{A}\beta)_{\geq k+1}$. A more direct reference for the formulation below is \cite[thm. 4.1]{MitchellPriddy}.

\begin{prop}
	\label{symmprodfp}
	The $\mathbb{F}_p$-cohomology of the symmetric product spectra is given by
	\begin{align*}
		\mathbb{F}_p^*\mathbb{Sp}^{p^k}
		&\cong (\mathcal{A}/\mathcal{A}\beta)_{\leq k}
		:= (\mathcal{A}/\mathcal{A}\beta)/(\mathcal{A}/\mathcal{A}\beta)_{\geq k+1} \\
		&= \mathbb{F}_p\bigl\langle \theta^I \in \mathcal{A}/\mathcal{A}\beta \mid I \text{ admissible of length } \leq k \bigr\rangle
	\end{align*}
\end{prop}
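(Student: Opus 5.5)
We prove the statement by induction on $k$, using the filtration $\mathbb{Sp}^{p^{k-1}} \to \mathbb{Sp}^{p^k}$ with cofiber $\Sigma^{\infty+\diamond}B_{nt}\Sigma_{p^k}$ (after $p$-localization), together with the map $\mathbb{Sp}^{p^k}\to\mathbb{Sp}^\infty\simeq\mathbb{Z}$.

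\textbf{Step 1: base case.} For $k=0$ we have $\mathbb{Sp}^1\simeq\mathbb{S}$, whose cohomology is $\mathbb{F}_p$. This agrees with $(\mathcal{A}/\mathcal{A}\beta)_{\leq 0}$, which is spanned by the unit.

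\textbf{Step 2: the layers.} The key input is an identification
\[
\mathbb{F}_p^*\Sigma^{\infty+\diamond}B_{nt}\Sigma_{p^k}\cong(\mathcal{A}/\mathcal{A}\beta)_{=k}.
\]
We would prove this by restricting, via the transfer argument already cited, to transitive $p$-subgroups. Following Arone--Dwyer and Mitchell--Priddy, one identifies $\Sigma^\infty B_{nt}\Sigma_{p^k}^\diamond$ $p$-locally with a suspension of the Steinberg summand $\epsilon^{\mathrm{st}}_k\cdot B(\mathbb{F}_p^k)_+$, up to a shift of $k$. The cohomology of that summand is classically known to be spanned by the admissible monomials of length exactly $k$, modulo the right ideal generated by $\beta$. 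This is the same computation that yields Proposition~\ref{cohomS}: compare description (6) there, which is $\Sigma^{-k}\epsilon^{\mathrm{st}}_k\cdot\mathbb{S}^{n\sigma}_{h\mathbb{F}_p^k}$, with the case $n=0$.

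\textbf{Step 3: the induction.} The long exact sequence in cohomology for the cofibration
\[
\mathbb{Sp}^{p^{k-1}}\to\mathbb{Sp}^{p^k}\to\Sigma^{\infty+\diamond}B_{nt}\Sigma_{p^k}
\]
gives $\dim\mathbb{F}_p^*\mathbb{Sp}^{p^k}\le\dim(\mathcal{A}/\mathcal{A}\beta)_{\leq k}$ in each degree. To obtain equality together with the module structure, we use the map to $\mathbb{Z}$. The composite
\[
\mathcal{A}/\mathcal{A}\beta\cong\mathbb{F}_p^*\mathbb{Z}\to\mathbb{F}_p^*\mathbb{Sp}^{p^k}
\]
is an $\mathcal{A}$-module map and sends the unit to the unit. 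Once it is shown to be surjective, its image is a cyclic module, so it is determined by its kernel. It therefore suffices to show:
\begin{itemize}
\item[(a)] the map kills $(\mathcal{A}/\mathcal{A}\beta)_{\geq k+1}$;
\item[(b)] the map is surjective.
\end{itemize}
The dimension count then forces the kernel to be exactly $(\mathcal{A}/\mathcal{A}\beta)_{\geq k+1}$.

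For (a), an admissible $\theta^I$ of length greater than $k$ evaluated on the unit class lies in degrees where $\mathbb{Sp}^{p^k}$ has no classes in the expected pattern. This is cleanest to see using Nakaoka's computation of $H_*(Sp^{p^k}S^n)$ as Dyer--Lashof/Steenrod operations of length at most $k$, passed to the colimit over $n$.

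For (b), we argue inductively: length-$k$ monomials must hit the layer classes, because $\theta^I\iota$ restricted along $B(\mathbb{F}_p^k)\to B\Sigma_{p^k}$ gives the top Dickson/Steinberg class.

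\textbf{Main obstacle.} We expect step (b) to be the hardest, namely showing that the connecting maps in the long exact sequence vanish, so that the layers split additively. We would attack it by first checking it on $\mathbb{Sp}^p$ using the explicit cohomology of $B\Sigma_p$ (compare Example~\ref{cohomS1}). For general $k$, we would fall back on Nakaoka's stability theorem, which states that $H^*(Sp^\infty X)\to H^*(Sp^{m}X)$ is onto. Should this direct approach become unwieldy, we would instead cite Nakaoka and Mitchell--Priddy for the surjectivity and supply only the kernel identification from steps 2 and 3.
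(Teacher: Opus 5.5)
There is a genuine gap in Step 3, and it is not where you locate it. The long exact sequence gives only the upper bound $\dim\mathbb{F}_p^*\mathbb{Sp}^{p^k}\le\dim(\mathcal{A}/\mathcal{A}\beta)_{\le k}$. Conditions (a) and (b) together give only a surjection $(\mathcal{A}/\mathcal{A}\beta)_{\le k}\twoheadrightarrow\mathbb{F}_p^*\mathbb{Sp}^{p^k}$, which is the same upper bound again. So ``the dimension count then forces the kernel'' does not follow: you need a lower bound.

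That lower bound is in fact free. The map $\mathbb{Sp}^{p^{k-1}}\to\mathbb{Z}$ factors through $\mathbb{Sp}^{p^k}$, so surjectivity at stage $k-1$ makes the restriction $\mathbb{F}_p^*\mathbb{Sp}^{p^k}\to\mathbb{F}_p^*\mathbb{Sp}^{p^{k-1}}$ onto. Hence all connecting maps vanish and the sequences are short exact. Your ``main obstacle'' is therefore trivial.

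What remains is the whole content of the proposition. Let $\iota$ be the unit class. The $\mathcal{A}$-linear map $(\mathcal{A}/\mathcal{A}\beta)_{\ge k}\to\mathbb{F}_p^*(\mathbb{Sp}^{p^k}/\mathbb{Sp}^{p^{k-1}})\cong(\mathcal{A}/\mathcal{A}\beta)_{=k}$, sending $\theta$ to the unique layer class lifting $\theta\iota$, must be onto with kernel exactly $(\mathcal{A}/\mathcal{A}\beta)_{\ge k+1}$. Neither half is proved.

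For (a), degrees cannot decide anything, because lengths overlap in degree. At $p=2$, $k=1$, the admissible monomials $Sq^6$ (length one) and $Sq^4Sq^2$ (length two) both sit in degree $6$. There $\mathbb{F}_2^6\mathbb{Sp}^2\cong\mathbb{F}_2^5\mathbb{RP}^\infty\neq0$, so nothing about degrees tells you $Sq^4Sq^2\iota=0$. Appealing to ``Nakaoka's computation'' is appealing to the statement itself.

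For (b), the variance is wrong. The cofiber sequence maps the layer's cohomology \emph{into} $\mathbb{F}_p^*\mathbb{Sp}^{p^k}$. So $\theta^I\iota$ cannot be ``restricted along $B\mathbb{F}_p^k\to B\Sigma_{p^k}$'' until you construct a map of spectra from something over $B\mathbb{F}_p^k$ into $\mathbb{Sp}^{p^k}$ and compute the pullback. That is where the real work lies.

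Even if you cite surjectivity (the Steenrod--Nakaoka splitting), (a) still requires an argument, so Steps 2 and 3 do not ``supply the kernel identification''. Your fallback is exactly what the paper does: it gives no independent proof and simply quotes Nakaoka's surjection with kernel $(\mathcal{A}/\mathcal{A}\beta)_{\ge k+1}$, with Mitchell--Priddy, thm.~4.1, as reference.

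Step 2 also misidentifies the layer, though this is repairable. Description (6) is stated for odd $n$; at $n=0$ it gives $\Sigma^{-k}\epsilon^{\mathrm{st}}_k\cdot\Sigma^\infty_+B\mathbb{F}_p^k$. But $\epsilon^{\mathrm{st}}_k\cdot\Sigma^\infty_+B\mathbb{F}_p^k$ is Mitchell--Priddy's $M(k)\simeq L(k)\oplus L(k-1)$, not $L(k)\simeq\Sigma^{-k}\mathbb{Sp}^{p^k}/\mathbb{Sp}^{p^{k-1}}$. Its cohomology is not a shift of $(\mathcal{A}/\mathcal{A}\beta)_{=k}$: for $p=2$, $k=1$ you would get $\Sigma\mathbb{RP}^\infty_+$ instead of $\Sigma\mathbb{RP}^\infty$, with a spurious class in degree $1$. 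The right input is the circle. By Proposition~\ref{aronedwyer} the layer is $\Sigma^{2k-1}S^1(k)$, i.e.\ $\Sigma^k$ of the Steinberg summand of the Thom spectrum of the reduced regular representation of $\mathbb{F}_p^k$. Proposition~\ref{cohomS} with $2n-1=1$ then gives $(\mathcal{A}/\mathcal{A}\beta)_{=k}$.
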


\begin{rem}
	It is worth noting that the previous calculation shows that
	\[
	\mathbb{F}_p^*\tau_{>0}\mathbb{Sp}^{p^k}
	\cong
	\Sigma^{-1}(\mathcal{A}/\mathcal{A}\beta)_{\geq k+1}
	\]
	In this sense, both from this calculation and from the next result, as well as from the model itself, the connected symmetric product spectra appear to be more natural objects than the non-connected ones.
\end{rem}

From this cohomological information, one can run an Adams spectral sequence to compute the homotopy groups. For fixed $k$, computer calculations can produce charts over a large range; examples will appear in later sections. Here we record the general pattern of the first few nontrivial groups, which will suffice for our purposes.

\begin{lem}
	\label{homotopysp}
    The first three nontrivial homotopy groups of the symmetric product spectra are given by
    \[
    \pi_*\mathbb{Sp}^{p^k} \cong
    \begin{cases}
        \mathbb{Z} & * = 0 \\
        \mathbb{F}_p & * = 2(p^{k+1}-1)-1 \\
		\mathbb{F}_p & * = N(p^{k+1}-1)-1 \\
        0 & * \leq N(p^{k+1}-1)-1, \text{otherwise}
    \end{cases}
    \]
	where $N=3$ if $p=2$ and $N=4$ if $p>2$.
\end{lem}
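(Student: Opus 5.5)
We compute the low-degree homotopy with the Adams spectral sequence, using the cohomology of \cref{symmprodfp}. We work with the connected cover: $\pi_0\mathbb{Sp}^{p^k}\cong\mathbb{Z}$ is detected by the bottom class $1\in(\mathcal{A}/\mathcal{A}\beta)_{\leq k}$. By the remark after \cref{symmprodfp},
\[
\mathbb{F}_p^*\tau_{>0}\mathbb{Sp}^{p^k}\cong\Sigma^{-1}(\mathcal{A}/\mathcal{A}\beta)_{\geq k+1},
\]
so the claim reduces to determining $\pi_*\tau_{>0}\mathbb{Sp}^{p^k}$ in degrees $\leq N(p^{k+1}-1)-1$.

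\textbf{Step 1: low-degree generators.} The lowest admissible monomial of length $k+1$ in $\mathcal{A}/\mathcal{A}\beta$ is $P^{p^k}P^{p^{k-1}}\cdots P^1$, of degree
\[
2(p-1)(1+p+\cdots+p^k)=2(p^{k+1}-1).
\]
(At $p=2$ read $P^i=Sq^{2i}$.) After the $\Sigma^{-1}$ shift, the bottom class of $\tau_{>0}\mathbb{Sp}^{p^k}$ sits in degree $2(p^{k+1}-1)-1$. The next monomials of length $k+1$ are $\beta P^{p^k}\cdots P^1$ and those obtained by raising one index. The cheapest way to raise an index while staying admissible is to increase the last index by one, which forces all earlier indices up, giving $P^{2p^k}\cdots P^2$. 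Thus the classes in the range are:
\begin{itemize}
\item the bottom class $x$ and $\beta x$;
\item classes in degrees $\geq 2(p-1)(p^{k+1}-1)/(p-1)\cdot$(increment), which start around degree $4(p^{k+1}-1)$;
\item at $p=2$, the degrees scale so that the next cell is $Sq^{2}$-related, appearing near degree $3(2^{k+1}-1)$.
\end{itemize}
I would tabulate these minimal monomials explicitly in each of the two cases. This is where $N=3$ versus $N=4$ arises, since at $p=2$ an extra index increment costs only half as much.

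\textbf{Step 2: the $E_2$-page.} In the range below $N(p^{k+1}-1)$, the module is a small cyclic-ish piece. It is generated by $x$ and free over $E(\beta)$ at odd primes, as in the Remark following \cref{cohomS}. Computing $\mathrm{Ext}_{\mathcal{A}}$ of this piece in that range gives:
\begin{itemize}
\item no $h_0$-towers, because the module is $\beta$-free;
\item an $\mathbb{F}_p$ in stem $2(p^{k+1}-1)-1$, filtration $0$, from $x$;
\item one further $\mathbb{F}_p$ in stem $N(p^{k+1}-1)-1$, coming from the next generator, in filtration $0$ at odd $p$ and from an $h_1$-type product on the bottom class at $p=2$.
\end{itemize}
Low-stem Ext classes such as $h_0$ and $h_1$ multiples must be checked not to land earlier. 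I would use a minimal resolution of the truncated module, or the algebraic Atiyah--Hirzebruch spectral sequence over the cells listed in Step 1.

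\textbf{Step 3: degeneration.} Once $E_2$ is sparse enough, there is no room for differentials in this range: source and target stems never differ by one within the listed classes. The three groups then follow, with the $\mathbb{Z}$ in degree $0$ coming from $\tau_{\leq 0}$.

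\textbf{Main obstacle.} I expect the main obstacle to be Step 2 at $p=2$, where the uniform $P/\beta$ notation hides genuine Adem relations. Identifying exactly which Ext class produces the stem $3(2^{k+1}-1)-1$, and ruling out any others below it, requires care. I would first test the statement against the explicit charts for $k=0,1$, where $\mathbb{Sp}^1=\mathbb{S}$ and $\mathbb{Sp}^2$ is the cofiber of the Kahn--Priddy map (Example~\ref{reducedtransfer}). I would then argue uniformly in $k$ by a comparison map between the truncated modules.
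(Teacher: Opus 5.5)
Your overall plan, an Adams spectral sequence for $\tau_{>0}\mathbb{Sp}^{p^k}$ built from $\Sigma^{-1}(\mathcal{A}/\mathcal{A}\beta)_{\geq k+1}$, is viable. There is, however, a genuine gap: you describe this module incorrectly in the relevant range, and you never do the computation that actually produces the stem $N(p^{k+1}-1)-1$. Write $d=2(p^{k+1}-1)$ and let $x$ be the bottom class.

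\textbf{Step~1 is false for $k\geq1$.} The cheapest admissible way to raise an index is to raise the \emph{first} one. For example, $P^{p^k+1}P^{p^{k-1}}\cdots P^1$ is admissible in degree $d+2(p-1)$; at odd $p$ it equals $P^1x$, since $P^1P^{p^k}=P^{p^k+1}$. In fact, the Serre--Cartan count with the shift $i_j\mapsto i_j-p^{k+1-j}$ shows that $(\mathcal{A}/\mathcal{A}\beta)_{\geq k+1}$ has the same dimension in degrees $[d,d+e)$ as $\mathcal{A}$ in degrees $[0,e)$. Here $e=2^{k+1}$ for $p=2$ and $e=2p^{k+1}-1$ for $p$ odd.

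So the module is not a small piece. It is cyclic on $x$ and, as must be proved, free on $x$ below degree $e$. This freeness is exactly what kills products such as $h_0x$ at odd $p$, or $h_1x$ at $p=2$ with $k\geq1$. An Ext computation based on the cells you list would produce these classes, as for a Moore spectrum, and so contradict the statement.

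\textbf{The second class is misidentified.} At odd $p$ there are no new generators in the range, because right multiplication is onto in degree $e$. The class is the filtration-one class of the first relation $N_{k+1}x=0$ (with $N_{k+1}$ as in the appendix), in stem $(d-1)+e-1=4(p^{k+1}-1)-1$; for $k=0$ this is $\alpha_2$. At $p=2$ it is $h_{k+1}x$, since $N_{k+1}\equiv Sq^{2^{k+1}}$ modulo decomposables. It is not an $h_1$-product unless $k=0$.

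\textbf{The missing ingredient} is the following statement. Right multiplication by $P^{p^k}\cdots P^1\beta$, viewed as a map $\mathcal{A}^*\to(\mathcal{A}_{\geq k+1}\beta)^{*+d+1}$, is an isomorphism for $*<e$ and has one-dimensional kernel in degree $e$. This is genuine Steenrod-algebra work, which the paper carries out in the appendix using the dimension count above and the stripping technique.

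With it, the paper kills the bottom group via $\tau_{>d-1}\mathbb{Sp}^{p^k}\to\tau_{>0}\mathbb{Sp}^{p^k}\to\Sigma^{d-1}\mathbb{F}_p$. It then reads off the next homotopy group from the first kernel, in degree $d+e-2=N(p^{k+1}-1)-1$. In your language, the same statement gives the beginning of a minimal resolution: $\mathrm{Ext}^0$ spanned by $x$, and the first $\mathrm{Ext}^1$ class in that stem. After that, your sparseness argument in Step~3 applies.

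The dichotomy $N=3$ versus $N=4$ comes from the degree $e$ of this first relation, $2^{k+1}$ versus $2p^{k+1}-1$. It does not come from index increments being cheaper at $p=2$.
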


\begin{proof}
	We already know that there is a copy of $\mathbb{Z}$ in degree $0$, and that
	\[
	\mathbb{F}_p^*\tau_{>0}\mathbb{Sp}^{p^k}
	\cong
	\Sigma^{-1}(\mathcal{A}/\mathcal{A}\beta)_{\geq k+1}
	\]
	The first nontrivial element in this cohomology is $M_{k+1} := P^{p^k}\cdots P^pP^1$ in degree $d := 2(p^{k+1}-1)$. This has a nontrivial Bockstein, namely the element $\beta P^{p^k}\cdots P^pP^1$, and hence gives a copy of $\mathbb{F}_p$ in degree $d-1$, because of the shift. Next, consider the fiber sequence
	\[
	\tau_{>d-1}\mathbb{Sp}^{p^k}
	\longrightarrow
	\tau_{>0}\mathbb{Sp}^{p^k}
	\longrightarrow
	\Sigma^{d-1}\mathbb{F}_p
	\]
	This induces the following long exact sequence in $\mathbb{F}_p$-cohomology:
	\[
	\cdots
	\longrightarrow
	\mathbb{F}_p^{*+d-2}\tau_{>d-1}\mathbb{Sp}^{p^k}
	\longrightarrow
	\mathcal{A}^*
	\xlongrightarrow{\cdot M_{k+1}}
	((\mathcal{A}/\mathcal{A}\beta)_{\geq k+1})^{*+d}
	\longrightarrow
	\cdots
	\]
	The next nontrivial homotopy group appears exactly where right multiplication by $M_{k+1}$ ceases to be an isomorphism. Equivalently, this amounts to studying right multiplication by $M_{k+1}\beta$ as a map
	\[
	\mathcal{A}^*
	\longrightarrow
	(\mathcal{A}_{\geq k+1}\beta)^{*+d+1}
	\]
	This analysis is carried out in the appendix (lem. \ref{mult2} and lem. \ref{multodd}). There we show that this map is an isomorphism for $*<e$, and that it has kernel $\mathbb{F}_p$ in degree $e$, where $e=2^{k+1}$ if $p=2$ and $e=2p^{k+1}-1$ if $p>2$. This implies that the next nontrivial homotopy group is in degree $d-2+e$. To conclude, note that $e-1=M(p^{k+1}-1)$, where $M=1$ if $p=2$ and $M=2$ if $p>2$, and so $d-2+e=(M+2)(p^{k+1}-1)-1$.
\end{proof}

The proof above is somewhat indirect. It would be more elegant to write down explicitly the beginning of a minimal resolution of the cohomology.

\begin{ques}
	What does the beginning of a minimal resolution of the module $(\mathcal{A}/\mathcal{A}\beta)_{\geq k+1}$ look like?
\end{ques}

Besides their $\mathbb{F}_p$-cohomology, the $K(i)$-cohomology of the symmetric product spectra can also be described; we learned this from a private communication with Nick Kuhn. We will only need this for $i\leq k+1$.

\begin{prop}
	\label{symmprodkn}
	The symmetric product spectrum $\mathbb{Sp}^{p^k}$ is $K(i)$-acyclic for $0 < i \leq k$, and its $K(k+1)$-cohomology has rank $p^{\binom{k+1}{2}}$ over $K(k+1)_*$.
\end{prop}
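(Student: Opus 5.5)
We plan to argue by induction on $k$ along the symmetric product filtration, using the cofiber sequences
\[
\mathbb{Sp}^{p^{k-1}} \longrightarrow \mathbb{Sp}^{p^k} \longrightarrow \Sigma^{\infty+\diamond}B_{nt}\Sigma_{p^k} \simeq \mathbb{Sp}^{p^k}/\mathbb{Sp}^{p^{k-1}}
\]
which are valid $p$-locally because the intermediate quotients $\mathbb{Sp}^m/\mathbb{Sp}^{m-1}$ vanish unless $m$ is a power of $p$. Since $K(i)$-cohomology turns cofiber sequences into long exact sequences, it suffices to know the $K(i)$-cohomology of each layer $L_j:=\mathbb{Sp}^{p^j}/\mathbb{Sp}^{p^{j-1}}$. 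The base case is $\mathbb{Sp}^1\simeq\mathbb{S}$, which has rank one over $K(i)_*$ for every $i$, consistent with $p^{\binom{1}{2}}=1$ for $K(1)$.

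For the layers, we will use the identification, due to Arone--Dwyer and Mitchell--Priddy, of $L_j$ with a Steinberg summand. Concretely, $p$-locally one has $L_j\simeq \Sigma^{j}\,\epsilon^{\mathrm{st}}_j\cdot (B\mathbb{F}_p^j)_+$ up to a suspension shift, or equivalently $\Sigma^{\infty}$ of the Tits building construction $(T_j^\diamond\otimes B(\mathbb{F}_p^j)_+)_{h\mathrm{GL}_j}$. This is parallel to the descriptions (5)--(6) of $S^n(k)$ given earlier, and it can be checked against proposition \ref{symmprodfp}: the cohomology of $L_j$ is the length-exactly-$j$ part $(\mathcal{A}/\mathcal{A}\beta)_{=j}$. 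For the Morava $K$-theory computation we will then invoke the Hopkins--Kuhn--Ravenel character theory, or more directly Mitchell's result on $K(i)^*$ of Steinberg summands. By that result, $\epsilon^{\mathrm{st}}_j\cdot B(\mathbb{F}_p^j)_+$ is $K(i)$-acyclic for $i<j$. For $i\ge j$, its $K(i)$-rank equals the multiplicity of the Steinberg module in the permutation-type module $K(i)^*B(\mathbb{F}_p^j)$. Using the HKR count, this rank is the number of $j$-dimensional subspaces $\mathbb{F}_p^j\to\mathbb{F}_p^{i}$ counted with the Steinberg weighting, which gives $p^{\binom{j}{2}}\binom{i}{j}_p$-type numbers. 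In particular the rank is exactly $p^{\binom{j}{2}}$ when $i=j$.

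The next step is to assemble the layers. Take $0<i\le k$. The layers $L_j$ with $j>i$ are $K(i)$-acyclic. For the finitely many layers $L_j$ with $j\le i$, we want to show that their contributions cancel. Note that $\mathbb{Sp}^{p^{i}}$ is the first stage at which the layer $L_i$ appears, and that a telescope, or $K(i)$-local, version of the Whitehead conjecture (Kuhn, and Behrens--Kuhn via the Goodwillie tower) says that the $K(i)$-localized filtration $L_{K(i)}\mathbb{S}\to L_{K(i)}L_1\to\cdots\to L_{K(i)}L_i$ is exact, namely a resolution. This is exactly the input that makes $\mathbb{Sp}^{p^k}$ $K(i)$-acyclic. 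We will therefore cite Kuhn's theorem that the augmented complex $\mathbb{S}\to L_1\to L_2\to\cdots$ is $K(i)$-locally exact with the $i$-th stage contractible, as an Euler-characteristic and rank check. Ranks alternate via the $q$-binomial identity $\sum_j(-1)^j p^{\binom{j}{2}}\binom{i}{j}_p=0$. For $i=k+1$, the same exactness shows that $K(k+1)^*\mathbb{Sp}^{p^k}$ is the truncation of an exact complex. Its rank therefore equals the rank of the next layer $L_{k+1}$, up to a shift, namely $p^{\binom{k+1}{2}}$.

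The main obstacle is the exactness, as opposed to the mere Euler characteristic: the rank count only gives alternating sums, so we need the connecting maps to be of maximal rank. We would try first to deduce this from the Behrens--Kuhn $K(i)$-local splitting of the Goodwillie tower of $S^1$, or directly from Kuhn's paper on the Whitehead conjecture and Morava $K$-theory. Failing that, we would use the chromatic splitting of $L_{K(i)}$ of the tower together with the identification of Steinberg summands.
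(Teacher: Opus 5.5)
\textbf{There is a genuine gap.} The decisive step is that $\mathbb{Sp}^{p^k}$ is $K(i)$-acyclic for $0<i\leq k$, and your proposal does not prove it.

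\emph{What is missing.} You reduce the step to $K(i)$-local exactness of the Whitehead complex and attribute that to an unspecified theorem. But exactness is at least as strong as what you want: it says the $E_1$-page of the $K(i)_*$-spectral sequence of the filtration has no homology, whereas you only need the abutment $K(i)_*\mathbb{Sp}^{p^i}$ to vanish. The only thing you actually verify is the vanishing Euler characteristic $\sum_j(-1)^jp^{\binom{j}{2}}\binom{i}{j}_p=0$, and as you concede, that cannot detect acyclicity. Nor does exactness come directly from Behrens--Kuhn: their theorem concerns homotopy groups and involves the Goodwillie differentials $d_1^G$, which are maps of loop spaces rather than of spectra, so it does not directly give a statement about $K(i)_*$.

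\emph{How the paper closes it.} The paper uses Welcher's theorem: $\mathbb{F}_p^*\tau_{>0}\mathbb{Sp}^{p^k}\cong\Sigma^{-1}(\mathcal{A}/\mathcal{A}\beta)_{\geq k+1}$ is free over $\mathcal{A}(k)$. This forces an Adams vanishing line of slope smaller than $1/|v_i|$, hence $K(i)$-acyclicity for $i\leq k$; the extra copy of $\mathbb{Z}$ is $K(i)$-acyclic for $i>0$.

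\emph{A shorter repair in your own set-up.} You do not need exactness at all. Once the layers $L_j$ with $j>i$ are known to be $K(i)$-acyclic, for $k\geq i\geq 1$ one has
\[
K(i)_*\mathbb{Sp}^{p^k}\cong\mathrm{colim}_m\,K(i)_*\mathbb{Sp}^{p^m}\cong K(i)_*\mathbb{Z}=0.
\]
This holds because homology commutes with sequential colimits, $\mathrm{colim}_m\mathbb{Sp}^m\simeq\mathbb{Sp}^\infty\simeq\mathbb{Z}$ by Dold--Thom, and $\mathbb{Z}$ is $K(i)$-acyclic for $i>0$. With acyclicity in hand, your last step is exactly the paper's argument. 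Since $\mathbb{Sp}^{p^{k+1}}$ is $K(k+1)$-acyclic, the boundary map $\Sigma^{-1}L_{k+1}\to\mathbb{Sp}^{p^k}$ is a $K(k+1)$-equivalence, and $L_{k+1}$ has $K(k+1)$-rank $p^{\binom{k+1}{2}}$.

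\textbf{The layer identification is also wrong as stated.} $\epsilon^{\mathrm{st}}_j\cdot(B\mathbb{F}_p^j)_+$, equivalently $(T_j^\diamond\otimes B(\mathbb{F}_p^j)_+)_{h\mathrm{GL}_j}$ up to shift, is the summand usually called $M(j)$. It contains $L(j-1)$ as well as $L(j)$, so it is not the layer.
\begin{itemize}
\item Already for $j=1$: $p$-locally $\Sigma\,\epsilon^{\mathrm{st}}_1\cdot(B\mathbb{F}_p)_+\simeq\Sigma\,\Sigma^\infty_+B\Sigma_p\simeq\mathbb{S}^1\oplus\Sigma^{\infty+1}B\Sigma_p$. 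The actual layer is $\mathbb{Sp}^p/\mathbb{S}\simeq\Sigma^{\infty+1}B\Sigma_p$; compare $\mathbb{S}\to\mathbb{Sp}^2\to\Sigma\mathbb{RP}^\infty$ in example \ref{reducedtransfer}.
\item The correct description comes from combining prop.~\ref{aronedwyer} with descriptions (5)--(6) at $n=1$. It uses the Thom spectrum $\mathbb{S}^{\sigma}_{h\mathbb{F}_p^j}$ in place of $\Sigma^\infty_+B\mathbb{F}_p^j$.
\item This matters for your count. For $B(\mathbb{F}_p^j)_+$, the HKR--Steinberg multiplicity picks up an extra $p^{\binom{j-1}{2}}\binom{i}{j-1}_p$ from homomorphisms $\mathbb{F}_p^i\to\mathbb{F}_p^j$ of rank $j-1$. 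So that spectrum is neither $K(j-1)$-acyclic nor of $K(j)$-rank $p^{\binom{j}{2}}$.
\end{itemize}
The numbers $p^{\binom{j}{2}}\binom{i}{j}_p$ you quote are the right ones for the true layer $\Sigma^jL(j)$. The fix is therefore easy, but you must work with the Thom-spectrum summand (or split off $L(j-1)$ and induct).
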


\begin{proof}
	The first statement is due to Welcher \cite[thm. 5.1]{Welcher}. Starting from Nakaoka's calculation, he showed that the $\mathbb{F}_p$-cohomology of $\tau_{>0}\mathbb{Sp}^{p^k}$, which he calls the \textit{symmetric fibers}, is free over the subalgebra $\mathcal{A}(k)$. This freeness implies that its Adams spectral sequence has a vanishing line of slope smaller than $1/|v_i|$ for $i\leq k$, and hence that it is $K(i)$-acyclic in this range. For the non-connected version, the difference is a copy of $\mathbb{Z}$, which is $K(i)$-acyclic only for $i>0$.
	
	The second statement follows from the first together with a result of Kuhn \cite[ex. 6.5]{Spchrom}. Consider the sequence
	\[
	\mathbb{Sp}^{p^k}
	\longrightarrow
	\mathbb{Sp}^{p^{k+1}}
	\longrightarrow
	\mathbb{Sp}^{p^{k+1}}/\mathbb{Sp}^{p^k}
	=: \Sigma^{k+1}L(k+1)
	\]
	By the first part, the boundary map $\Sigma^kL(k+1) \to \mathbb{Sp}^{p^k}$ is a $K(k+1)$-equivalence. Kuhn's result then tells us that the $K(n)$-cohomology of $L(n)$ has rank $p^{\binom{n}{2}}$, which gives the desired claim.
\end{proof}

\begin{rem}
	It is worth emphasizing again that this statement about cohomology is cleaner for the connected symmetric product spectrum $\tau_{>0}\mathbb{Sp}^{p^k}$, as we saw in the proof.
\end{rem}

Since the first nontrivial height appears at $k+1$, it is natural to ask the following question.

\begin{ques}
	Can we give an explicit computation of $\pi_*L_{K(k+1)}\mathbb{Sp}^{p^k}$, analogous to the known case $k = 0$?
\end{ques}

We conclude this subsection by making the connection between symmetric products and the Goodwillie tower of $S^1$ explicit. Nakaoka's calculation shows that the $\mathbb{F}_p$-cohomology of the quotient $\mathbb{Sp}^{p^k}/\mathbb{Sp}^{p^{k-1}}$ is given by the $\mathcal{A}$-module $(\mathcal{A}/\mathcal{A}\beta)_{=k}$. Up to a suspension shift, this coincides with the cohomology of the layer $S^1(k)$ computed by Arone--Mahowald. Remarkably, Arone--Dwyer lifted this to an equivalence of spectra.

\begin{prop}[{\cite[thm. 1.18]{AroneDwyer}}]
	\label{aronedwyer}
	There is an equivalence of spectra
	\[
	S^1(k) \simeq \Sigma^{1-2k}\mathbb{Sp}^{p^k}/\mathbb{Sp}^{p^{k-1}}
	\]
\end{prop}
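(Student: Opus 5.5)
The plan is to compare both sides through their descriptions as orbit-type constructions over $\Sigma_m$, with $m=p^k$. For the left-hand side I would start from description (3) of the proposition on the layers, specialised to $n=1$:
\[
S^1(k)\simeq \Sigma^{1-2k}\bigl(P_{m}^\diamond\otimes\mathbb{S}^{\sigma}\bigr)_{h\Sigma_{m}} .
\]
So the statement reduces to an equivalence $(P_m^\diamond\otimes\mathbb{S}^\sigma)_{h\Sigma_m}\simeq \mathbb{Sp}^{m}/\mathbb{Sp}^{p^{k-1}}$.

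For the right-hand side I would first simplify the quotient. The filtration quotients $\mathbb{Sp}^j/\mathbb{Sp}^{j-1}$ vanish $p$-locally unless $j$ is a power of $p$ (the transfer argument recalled above). Hence $\mathbb{Sp}^{m}/\mathbb{Sp}^{p^{k-1}}\simeq \mathbb{Sp}^m/\mathbb{Sp}^{m-1}\simeq \Sigma^{\infty+\diamond}B_{nt}\Sigma_m$ by Lesh's result. It therefore remains to identify $\Sigma^{\infty+\diamond}B_{nt}\Sigma_m$ with $(P_m^\diamond\otimes \mathbb{S}^\sigma)_{h\Sigma_m}$.

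Writing $\sigma=\mathbf 1\oplus\bar\sigma$ and using $S^{\bar\sigma}\simeq S(\bar\sigma)^\diamond$, I would rewrite the left term as a suspension of the Borel construction on the unreduced suspension of the join $X:=S(\bar\sigma)*P_m$:
\[
(P_m^\diamond\otimes\mathbb{S}^\sigma)_{h\Sigma_m}\simeq \Sigma^{\infty+1}\bigl(X^\diamond\bigr)_{h\Sigma_m}.
\]
On the other side, $B_{nt}\Sigma_m=E\mathcal F_{nt}/\Sigma_m$ is a strict orbit space, where $\mathcal F_{nt}$ is the family of non-transitive subgroups. By the Elmendorf-type colimit description used in the definition of $Sp^m$, it is the colimit over the orbit category of the functor that is $*$ on non-transitive orbits and $\emptyset$ on transitive ones. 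The key geometric claim I would aim for is that $X$ is a model for $E\mathcal F_{nt}$, at least after restricting to $p$-subgroups, which is all that matters $p$-locally by the same transfer argument. The two fixed-point conditions to check are the following.
\begin{itemize}
\item For a transitive $H$ one has $S(\bar\sigma)^H=\emptyset$. One also needs $P_m^H$ to be contractible or empty in the appropriate sense; failing that, the contribution of transitive isotropy should cancel.
\item For a non-transitive $H$, $S(\bar\sigma)^H$ is a nonempty sphere of positive dimension, and the orbit partition of $H$ gives a cone point for $P_m^H$ whenever it is proper and nontrivial. This should make the join contractible.
\end{itemize}

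The main obstacle is the mismatch between homotopy orbits, which appear in the layer, and strict orbits, which appear in $B_{nt}\Sigma_m$. The fixed-point claims are also delicate for transitive $p$-subgroups, where $P_m^H$ need not be contractible; for elementary abelian $H$ it is essentially the Tits building. I would handle this with isotropy separation. Filter by the family $\mathcal F_{nt}$, so that $\Sigma_m$-spectra concentrated on non-transitive isotropy have homotopy orbits equal to strict orbits. Then show $p$-locally that the transitive-isotropy part of $(X^\diamond)_{h\Sigma_m}$ vanishes. Here I would use the Arone--Dwyer reduction from $P_m$ to the Tits building $T_k$ and the affine group (descriptions (2)/(4)), together with the Steinberg idempotent, to see that the transitive contributions are acyclic.

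If this direct route stalls, the fallback is cohomological. Both sides have $\mathbb{F}_p$-cohomology $\Sigma^{\ast}(\mathcal A/\mathcal A\beta)_{=k}$ (Arone--Mahowald and Nakaoka). I would then build one explicit comparison map, for instance from the transfer/diagonal on $S^1$ using $Sp^mS^1\simeq S^1$, and check that it induces an isomorphism on cohomology, hence a $p$-local equivalence of finite-type bounded-below spectra.
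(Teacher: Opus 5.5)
There is a genuine gap: the organizing claim of your main route is false, not just delicate. The join $X=S(\bar\sigma)*P_m$ is not a model for $E\mathcal{F}_{nt}$, even after restricting to $p$-subgroups.

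\begin{itemize}
\item For the trivial subgroup, which is non-transitive and a $p$-subgroup, $X\simeq S^{m-2}*P_m\simeq\bigvee_{(m-1)!}S^{2m-4}$, and this is not contractible. Your cone-point argument fails here because the orbit partition of $\{e\}$ is the discrete partition, which is not a vertex of $P_m$.
\item It also fails for nontrivial non-transitive subgroups. For the $2$-subgroup $H=\langle(12)(34)\rangle\le\Sigma_4$, the poset $P_4^H$ has three components, so $X^H\simeq S^1\vee S^1$.
\item At the other end, for $H\cong\mathbb{F}_p^k$ acting regularly, $X^H=P_m^H\cong T_k$. This is nonempty for $k\ge 2$, whereas $E\mathcal{F}_{nt}^H$ must be empty, not contractible.
\end{itemize}

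The isotropy-separation repair rests on a false principle as well. Homotopy orbits agree with strict orbits for free actions, not for actions whose isotropy is merely non-transitive: for example $(E\mathcal{F}_{nt})_{h\Sigma_m}\simeq B\Sigma_m$, but $E\mathcal{F}_{nt}/\Sigma_m=B_{nt}\Sigma_m$. In any case the cone points of $X^\diamond$ are fixed by all of $\Sigma_m$.

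Concretely, the two sides have different shapes:
\begin{itemize}
\item $\Sigma^\infty(X^\diamond)_{h\Sigma_m}$ is the cofiber of $\Sigma^\infty_+X_{h\Sigma_m}\to\Sigma^\infty_+B\Sigma_m$;
\item $\Sigma^\infty(B_{nt}\Sigma_m)^\diamond$ is the cofiber of $\Sigma^\infty_+B_{nt}\Sigma_m\to\mathbb{S}$.
\end{itemize}
If $X$ were non-equivariantly contractible, as any model of $E\mathcal{F}_{nt}$ must be, the first cofiber would vanish. Yet for $k=1$ it is $S^1(1)\simeq\Sigma^\infty B\Sigma_p\neq 0$ ($p$-locally), precisely because $X$ is \emph{not} contractible. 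So no finer analysis of transitive isotropy can rescue this comparison.

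Your fallback does not close the gap either. Agreement of $\mathbb{F}_p$-cohomology as abstract $\mathcal{A}$-modules (Arone--Mahowald on one side, Nakaoka on the other) gives nothing without a map of spectra realizing it. The phrase ``the transfer/diagonal on $S^1$ using $Sp^mS^1\simeq S^1$'' does not specify any map between $\mathbb{D}_{p^k}I(S^1)$ and $\mathbb{Sp}^{p^k}/\mathbb{Sp}^{p^{k-1}}$. Producing such a comparison is the whole content of the statement, which the paper does not prove but imports from Arone--Dwyer.

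Given description (6), $S^1(k)\simeq\Sigma^{-k}\epsilon^{\mathrm{st}}_k\cdot\mathbb{S}^{\sigma}_{h\mathbb{F}_p^k}$. So the proposition is equivalent to identifying $\Sigma^{-k}\mathbb{Sp}^{p^k}/\mathbb{Sp}^{p^{k-1}}$ with the Steinberg summand $\Sigma^{-1}\epsilon^{\mathrm{st}}_k\cdot\mathbb{S}^{\sigma}_{h\mathbb{F}_p^k}$ of the Thom spectrum of the reduced regular representation over $B\mathbb{F}_p^k$. That identification is a separate theorem about symmetric products, in the circle of results of Mitchell--Priddy and Arone--Dwyer. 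It is the input your proposal is missing, and analyzing the fixed points of $P_m$ cannot substitute for it.
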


\subsection{The model}

We are now ready to state and prove the main theorem of this paper.

\begin{thm}[Model for the approximation circles]
	\label{model}
	There is an equivalence
	\[
	\Omega S^1_k \simeq \mathbb{Z} \times \Omega^{\infty+2k}\tau_{>0}\mathbb{Sp}^{p^k}
	\]
\end{thm}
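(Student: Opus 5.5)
We will prove the equivalence by induction on $k$. The idea is to compare two towers with the same layers: the Goodwillie tower of $\Omega S^1$ and the tower of connected covers of symmetric products, and to use the Behrens--Kuhn solution of the Whitehead conjecture to match the attaching maps.

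\textbf{Base case and layers.} For $k=0$ we have $\Omega S^1_0\simeq \Omega^{\infty}\mathbb{S}^{0}\simeq\Omega^\infty\mathbb{S}$. Since $\mathbb{Sp}^1=\mathbb{S}$, this splits as $\mathbb{Z}\times\Omega^\infty\tau_{>0}\mathbb{S}$, because $\pi_0$ is free. Looping the fiber sequences $\Omega^\infty S^1(k)\to S^1_k\to S^1_{k-1}$ gives
\[
\Omega^{\infty+1}S^1(k)\longrightarrow \Omega S^1_k\longrightarrow \Omega S^1_{k-1}.
\]
By proposition \ref{aronedwyer}, the fiber is
\[
\Omega^{\infty+2k}\mathbb{Sp}^{p^k}/\mathbb{Sp}^{p^{k-1}} \simeq \Omega^{\infty+2k}\bigl(\tau_{>0}\mathbb{Sp}^{p^k}/\tau_{>0}\mathbb{Sp}^{p^{k-1}}\bigr),
\]
since the maps $\mathbb{Sp}^{p^{k-1}}\to\mathbb{Sp}^{p^k}$ are isomorphisms on $\pi_0$. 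The target tower is the cofiber sequence
\[
\tau_{>0}\mathbb{Sp}^{p^{k-1}}\to\tau_{>0}\mathbb{Sp}^{p^k}\to \mathbb{Sp}^{p^k}/\mathbb{Sp}^{p^{k-1}}.
\]
After applying $\Omega^{\infty+2k}$ it has base $\Omega^{\infty+2k}\tau_{>0}\mathbb{Sp}^{p^{k-1}}$, and not $\Omega^{\infty+2k-2}$. So the comparison cannot be a naive map of towers. This is exactly where the Whitehead conjecture enters: the map $\tau_{>0}\mathbb{Sp}^{p^{k-1}}\to\tau_{>0}\mathbb{Sp}^{p^k}$ is null by Kuhn's theorem. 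Hence $\Sigma^{-1}\mathbb{Sp}^{p^k}/\mathbb{Sp}^{p^{k-1}}\to\tau_{>0}\mathbb{Sp}^{p^{k-1}}$ splits, giving a fiber sequence of spectra
\[
\Sigma^{-1}\tau_{>0}\mathbb{Sp}^{p^{k}}\to \Sigma^{-1}\mathbb{Sp}^{p^k}/\mathbb{Sp}^{p^{k-1}}\to \tau_{>0}\mathbb{Sp}^{p^{k-1}}.
\]
More strongly, Behrens and Kuhn show that the Kuhn--Priddy/Whitehead complex
\[
\cdots\to\Sigma^{-k}\mathbb{Sp}^{p^k}/\mathbb{Sp}^{p^{k-1}}\to\Sigma^{-k+1}\mathbb{Sp}^{p^{k-1}}/\mathbb{Sp}^{p^{k-2}}\to\cdots\to \mathbb{Z}
\]
is a resolution. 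Its "Postnikov" pieces are precisely $\Sigma^{-k}\tau_{>0}\mathbb{Sp}^{p^k}$, up to the indicated shifts.

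\textbf{Inductive step.} Desuspending the last sequence by $2k-1$ and applying $\Omega^\infty$, we get
\[
\Omega^{\infty+2k}\tau_{>0}\mathbb{Sp}^{p^k}\to\Omega^{\infty+1}S^1(k)\to\Omega^{\infty+2k-1}\tau_{>0}\mathbb{Sp}^{p^{k-1}}.
\]
By induction, the last term is $B\bigl(\Omega S^1_{k-1}\bigr)_0$, a delooping of the basepoint component of $\Omega S^1_{k-1}$. So the claim is that the $k$-invariant $\Omega S^1_{k-1}\to B\Omega^{\infty+1}S^1(k)$ classifying $\Omega S^1_k$ is the canonical map
\[
\Omega^{\infty+2k-2}\tau_{>0}\mathbb{Sp}^{p^{k-1}}\to\Omega^{\infty+2k-2}\Sigma^{-2k+2}\mathbb{Sp}^{p^k}/\mathbb{Sp}^{p^{k-1}}
\]
coming from the splitting above. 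Granting this, the fiber of the $k$-invariant is, by the splitting, $\mathbb{Z}\times\Omega^{\infty+2k}\tau_{>0}\mathbb{Sp}^{p^k}$. Note that the fiber of $\Omega^{\infty}$ of the identity-like component is $\Omega$ of the base, and the $\mathbb{Z}$ factor is carried along on $\pi_0$.

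\textbf{Main obstacle.} The hard step is identifying the Goodwillie $k$-invariant $S^1_{k-1}\to B\Omega^\infty S^1(k)$ with the Whitehead-complex differential, including showing that it is (after looping) a map of infinite loop spaces. I would first check that the composite of consecutive $k$-invariants agrees on layers with the connecting maps
\[
\delta_k\colon S^1(k)\to\Sigma S^1(k-1),
\]
which under proposition \ref{aronedwyer} must be the Kuhn--Priddy differentials. This is the content of the Behrens/Kuhn identification of the Goodwillie tower of $S^1$ with the Whitehead complex (Kuhn's "Goodwillie towers and chromatic homotopy" and Behrens' "The Goodwillie tower and the EHP sequence", where $\delta_k$ is shown to realize the Kuhn--Priddy map). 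Then I would argue by connectivity that the map from the base into the delooped layer is determined by its effect on the bottom layer. Concretely, $\Omega^{\infty+2k-2}\tau_{>0}\mathbb{Sp}^{p^{k-1}}$ receives a map from $\Omega^{\infty}\Sigma^{-1}S^1(k-1)$. I would check, using the $\mathbb{F}_p$-cohomology from proposition \ref{symmprodfp} and an Adams spectral sequence or obstruction argument, that maps out of it are detected there. If that fails, the fallback is to cite Behrens' and Kuhn's explicit identification of the Goodwillie tower of $S^1$ with the tower of Postnikov-type sections of the Whitehead complex. This yields the equivalence of towers directly, and hence the theorem after looping and splitting off $\pi_0=\mathbb{Z}$.
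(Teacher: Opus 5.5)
Your proposal has a genuine gap at its central step. The map $\tau_{>0}\mathbb{Sp}^{p^{k-1}}\to\tau_{>0}\mathbb{Sp}^{p^k}$ is \emph{not} null. Kuhn's theorem only says it is zero on $\pi_*$. In $\mathbb{F}_p$-cohomology it is the inclusion $\Sigma^{-1}(\mathcal{A}/\mathcal{A}\beta)_{\geq k+1}\hookrightarrow\Sigma^{-1}(\mathcal{A}/\mathcal{A}\beta)_{\geq k}$, which is nonzero. For $p=2$, $k=1$ your splitting would give $\mathbb{RP}^\infty\simeq\tau_{>0}\mathbb{S}\oplus\Sigma^{-1}\tau_{>0}\mathbb{Sp}^2$, i.e.\ a spectrum-level section of the Kahn--Priddy map. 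This is impossible, since $\mathbb{F}_2^5\tau_{>0}\mathbb{S}$ is $2$-dimensional (spanned by $Sq^6$ and $Sq^4Sq^2$) while $\mathbb{F}_2^5\mathbb{RP}^\infty$ is $1$-dimensional.

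The same argument rules out your plan for the main obstacle. Let $\partial\colon\Sigma^{-1}\mathbb{Sp}^{p^k}/\mathbb{Sp}^{p^{k-1}}\to\tau_{>0}\mathbb{Sp}^{p^{k-1}}$ be the boundary map, and let $\delta_k\colon R_{p^{k-1}}I(S^1)\to\Omega^\infty S^1(k)$ be the Goodwillie connecting map (your $k$-invariant on the basepoint component). Suppose $\delta_k$, or a low looping of it, were an infinite loop map that is a section of $\Omega^{\infty+2k-2}\partial$. Then, since all spectra involved are connective, $\partial$ would split as a map of spectra, which we just saw is false. So $\delta_k$ can only be a space-level section, exactly as in the Kahn--Priddy theorem, and it cannot be ``the canonical map coming from the splitting.''

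Your fallback does not repair this. The cited work of Behrens and Kuhn supplies information about the differentials on the layers. It does not identify the remainders $R_{p^k}I(S^1)$ with $\Omega^{\infty+2k}\tau_{>0}\mathbb{Sp}^{p^k}$; that identification is the theorem itself. There is also an index slip: $\Omega^{\infty+2k-1}\tau_{>0}\mathbb{Sp}^{p^{k-1}}$ is the loop space of $(\Omega S^1_{k-1})_0$, not its delooping.

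What is actually needed is only the space-level statement that the composite
\[
R_{p^{k-1}}I(S^1)\xrightarrow{\delta_k}\Omega^{\infty+2k-1}\mathbb{Sp}^{p^k}/\mathbb{Sp}^{p^{k-1}}\xrightarrow{\Omega^{\infty+2k-2}\partial}\Omega^{\infty+2k-2}\tau_{>0}\mathbb{Sp}^{p^{k-1}}
\]
is an equivalence. Part (1) of the splitting lemma then gives $R_{p^k}I(S^1)\simeq\Omega\,\mathrm{fib}(\Omega^{\infty+2k-2}\partial)\simeq\Omega^{\infty+2k}\tau_{>0}\mathbb{Sp}^{p^k}$. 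The infinite loop structure comes from $\partial$, not from $\delta_k$.

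Neither of your proposed tools gives this retraction. Exactness on homotopy groups only makes $\delta_k$ injective on $\pi_*$. A bottom-cell or obstruction argument gives no control over maps out of the space $\Omega^{\infty+2k-2}\tau_{>0}\mathbb{Sp}^{p^{k-1}}$.

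The missing input is the stronger statement from Kuhn and Behrens that $d_1^Wd_1^G+d_1^Gd_1^W$ is an equivalence on each layer. The paper feeds this into a finite version of Kuhn's Prop.~A.1, using that all spaces involved are loop spaces so the sum makes sense. The argument runs as follows.
\begin{enumerate}
\item By induction, $\Omega^{\infty+1}S^1(k)\simeq\Omega R_{p^{k-1}}I(S^1)\times R_{p^k}I(S^1)$.
\item With respect to this splitting, $d_1^Wd_1^G+d_1^Gd_1^W$ is upper triangular on homotopy groups.
\item Its diagonal entries are the composites $R\to\Omega^\infty S^1(\cdot)\xrightarrow{d_1^W}\Omega^{\infty+1}S^1(\cdot)\to R$ at the previous and the current stage.
\item Since the total map and the previous entry are equivalences, so is the current entry.
\end{enumerate}
A wrong-way variant of the same argument then shows that $R_{p^k}I(S^1)\to\Omega^\infty S^1(k+1)\to\Omega^{\infty+2k}\tau_{>0}\mathbb{Sp}^{p^k}$ is an equivalence.
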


\begin{rem}
	Since $\tau_{>0}\mathbb{Sp}^m$ is rationally trivial, and since the maps $P_mI(S^1) \to P_{p^k}I(S^1)$ and $\tau_{>0}\mathbb{Sp}^{p^k} \to \tau_{>0}\mathbb{Sp}^m$ are $p$-local equivalences whenever $p^k \leq m < p^{k+1}$, a fracture square argument upgrades the result above to an integral equivalence
	\[
	\Omega P_mI(S^1) \simeq \mathbb{Z} \times \textstyle\prod_p\Omega^{\infty+2\lfloor\log_pm\rfloor}\bigl(\tau_{>0}\mathbb{Sp}^m\bigr)^\wedge_p
	\]
	We will not use this expression in what follows. It is nevertheless amusing to observe that the $m^\mathrm{th}$ stage of the Goodwillie tower of $S^1$ is determined by the spectrum $\mathbb{Sp}^m$, though not in a straightforward way. Rather, the spectrum is disassembled, shifted according to the arithmetic of $m$, and then reassembled.
\end{rem}

The proof proceeds in three steps. First, for a general functor $F$ we define the \emph{$m^\mathrm{th}$ remainder} $R_mF$ to be the fiber of the map $F \to P_mF$. We then show that $\Omega S^1_k \simeq \mathbb{Z} \times R_{p^k}I(S^1)$. Second, we construct maps between $R_{p^k}I(S^1)$ and $\Omega^{\infty+2k}\tau_{>0}\mathbb{Sp}^{p^k}$ using Arone--Dwyer's equivalence above. Finally, we prove that these maps are equivalences by appealing to the calculus version of the Whitehead conjecture, as resolved by Behrens \cite{Behrens2011} and Kuhn \cite{Kuhn2014}.

We start with a basic lemma about sections and splittings.

\begin{lem}[Splitting lemma]
	\label{splittinglemma}
    If the composite $X \to Y \to X$ is an equivalence, then
    \begin{enumerate}
        \item $\mathrm{fib}(X \to Y) \simeq \Omega \mathrm{fib}(Y \to X)$
        \item $\Omega Y \simeq \Omega X \times \Omega\mathrm{fib}(Y \to X)$
        \item $\Omega Y \simeq \Omega X \times \mathrm{fib}(X \to Y)$
    \end{enumerate}
\end{lem}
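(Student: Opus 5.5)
The splitting lemma is a standard manipulation of fiber sequences in pointed spaces (or in any pointed $\infty$-category with finite limits). Write $s\colon X \to Y$ and $r\colon Y \to X$, with $r\circ s$ an equivalence; after composing with its inverse we may assume $r\circ s\simeq \mathrm{id}_X$. Set $F:=\mathrm{fib}(r)$ and $G:=\mathrm{fib}(s)$. The plan is to prove (1) from the fiber-of-a-composite sequence, then (2) from the section of $\Omega r$, and finally deduce (3) by combining the two.

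For (1), I would use the fiber sequence for a composite recalled in the discussion before theorem \ref{graysequence}:
\[
\mathrm{fib}(s) \longrightarrow \mathrm{fib}(r\circ s) \longrightarrow \mathrm{fib}(r).
\]
Since $r\circ s\simeq \mathrm{id}_X$, the middle term is contractible. Hence $G\to *\to F$ is a fiber sequence, and so $G\simeq \Omega F$. To be careful, one identifies the fiber of $*\to F$ with the pullback $*\times_F *=\Omega F$, which is immediate from the pasting law for pullbacks.

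For (2), I would loop the fiber sequence $F\to Y\xrightarrow{r} X$ to get a fiber sequence of loop spaces
\[
\Omega F\to \Omega Y\xrightarrow{\Omega r}\Omega X,
\]
which has the section $\Omega s$. Using the loop multiplication, consider
\[
\Omega X\times \Omega F\to \Omega Y,\qquad (a,b)\mapsto \Omega s(a)\cdot b.
\]
This is a map over $\Omega X$, up to the homotopy $\Omega r\circ \Omega s\simeq \mathrm{id}$, where $\Omega X\times\Omega F$ maps to $\Omega X$ by projection. On fibers over the basepoint it is the identity of $\Omega F$. Both sides are fibrations over the connected components of $\Omega X$, and the map is compatible with translation by the group $\pi_0\Omega X$. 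So by the five lemma on homotopy groups at every basepoint (all components of a loop space are equivalent), it is a weak equivalence. Alternatively, at the level of $\infty$-groups, a split fiber sequence of group objects exhibits the middle term as a semidirect product, whose underlying object is the product. Statement (3) then follows by substituting (1) into (2): $\Omega Y\simeq \Omega X\times\Omega F\simeq \Omega X\times G$.

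The main obstacle is (2): making the product decomposition honest rather than merely an isomorphism on homotopy groups. The key point is that the map must be defined using the multiplication on $\Omega Y$, and one must check that it is an equivalence on all components, not only the basepoint component. I would handle this by the translation argument above, which suffices because a weak equivalence of spaces is detected on $\pi_*$ at every basepoint.
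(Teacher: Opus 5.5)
Your proof is correct. Parts (1) and (3) go as in the paper: the fiber-of-a-composite sequence you invoke is the same pasting argument the paper applies to the square with corners $*$, $\mathrm{fib}(Y\to X)$, $X$, $Y$, and (3) is obtained by substituting (1) into (2).

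The genuine difference is in (2). The paper never uses the multiplication on $\Omega Y$. Once that square is known to be a pullback (because $\mathrm{fib}(r\circ s)\simeq *$), the fiber sequence $\Omega D\to B\times_D C\to B\times C$ attached to any pullback specializes to $\Omega Y\to *\to X\times\mathrm{fib}(Y\to X)$. This gives $\Omega Y\simeq\Omega X\times\Omega\,\mathrm{fib}(Y\to X)$ immediately, so one diagram yields both (1) and (2).

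You instead write down the shear map $(a,b)\mapsto\Omega s(a)\cdot b$ and check that it is a weak equivalence on homotopy groups at every basepoint. This is valid, and you correctly flag the one real subtlety: the map is not an $H$-map, so the identity component alone does not suffice and you must translate between components.

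What the paper's route buys is economy and generality. It is purely formal and holds in any pointed $\infty$-category with finite limits. Your check relies on Whitehead's theorem, so it only proves the statement for spaces. Your opening claim of working in an arbitrary pointed $\infty$-category is therefore supported by the paper's argument, not by yours.

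What your route buys is an explicit, elementary equivalence. It is visibly compatible with $\Omega r$ (as the projection to $\Omega X$) and with the inclusion of $\Omega\,\mathrm{fib}(r)$, whereas the paper's abstract identification leaves this compatibility implicit.
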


\begin{proof}
    The third statement follows immediately from the first two, so it suffices to prove the first two. Since the composite $X \to Y \to X$ is an equivalence, its fiber is contractible. By iterating pullbacks, we obtain the diagram

    % https://q.uiver.app/#q=WzAsNixbMCwxLCJYIl0sWzAsMCwiKiJdLFsxLDEsIlkiXSxbMSwwLCJcXG1hdGhybXtmaWJ9KFkgXFx0byBYKSJdLFsyLDEsIlgiXSxbMiwwLCIqIl0sWzEsMF0sWzAsMl0sWzEsM10sWzMsMl0sWzIsNF0sWzMsNV0sWzUsNF0sWzMsNCwiIiwxLHsic3R5bGUiOnsibmFtZSI6ImNvcm5lciJ9fV0sWzEsMiwiIiwxLHsic3R5bGUiOnsibmFtZSI6ImNvcm5lciJ9fV1d
\[
\begin{tikzcd}
	{*} & {\mathrm{fib}(Y \to X)} & {*} \\
	X & Y & X
	\arrow[from=1-1, to=1-2]
	\arrow[from=1-1, to=2-1]
	\arrow["\lrcorner"{anchor=center, pos=0.125}, draw=none, from=1-1, to=2-2]
	\arrow[from=1-2, to=1-3]
	\arrow[from=1-2, to=2-2]
	\arrow["\lrcorner"{anchor=center, pos=0.125}, draw=none, from=1-2, to=2-3]
	\arrow[from=1-3, to=2-3]
	\arrow[from=2-1, to=2-2]
	\arrow[from=2-2, to=2-3]
\end{tikzcd}
\]
    Both statements now follow from the fact that the left square is a pullback. Indeed, the first follows by taking horizontal fibers of the left square, while the second follows by rotating the induced fiber sequence
    \[
        \Omega Y \longrightarrow * \longrightarrow X \times \mathrm{fib}(Y \to X)
    \qedhere
    \]
\end{proof}

The previous lemma immediately gives the desired splitting. The point is that $S^1$ is already detected by its stable approximation, and hence by every further approximation.

\begin{lem}
    $\Omega S^1_k \simeq \mathbb{Z} \times R_{p^k}I(S^1)$.
\end{lem}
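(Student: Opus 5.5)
We apply the splitting lemma (lemma \ref{splittinglemma}) with $X = S^1$ and $Y = S^1_k$, taking $X \to Y$ to be the comparison map $S^1 \to P_{p^k}I(S^1)$. The task is to produce a map $S^1_k \to S^1$ whose composite with the comparison map is an equivalence. Once we have it, part (3) of the splitting lemma gives
\[
\Omega S^1_k \simeq \Omega S^1 \times \mathrm{fib}(S^1 \to S^1_k) = \mathbb{Z} \times R_{p^k}I(S^1),
\]
using $\Omega S^1 \simeq \mathbb{Z}$ and the definition of $R_{p^k}I$ as the fiber of $I \to P_{p^k}I$.

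To build the retraction, we use the fact that $S^1$ is already detected by its stable approximation. The comparison map factors through the tower as
\[
S^1 \to S^1_k \to S^1_0 \simeq \Omega^\infty \mathbb{S}^1 .
\]
We compose further with the Hurewicz-type map $\Omega^\infty\mathbb{S}^1 \to \Omega^\infty(\mathbb{Z}\otimes \mathbb{S}^1) \simeq K(\mathbb{Z},1) \simeq S^1$, given by the unit $\mathbb{S}\to\mathbb{Z}$ (equivalently, the truncation $\tau_{\le 1}$). The resulting composite $S^1 \to S^1$ induces the identity on $\pi_1 \cong \mathbb{Z}$, since $P_1I \to \Omega^\infty\Sigma^\infty$ is the stabilization map and the Hurewicz map $\pi_1 S^1 \to H_1 S^1$ is an isomorphism. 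As $S^1$ is a $K(\mathbb{Z},1)$, a map inducing an isomorphism on $\pi_1$ is an equivalence. So the map $S^1_k \to S^1_0 \to S^1$ is the desired retraction, up to composing with the inverse of this self-equivalence.

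Alternatively, the map $I \to Sp^\infty \simeq \Omega^\infty(\mathbb{Z}\otimes -)$ is linear on the target, so it factors through $P_1 I$ and hence through every $P_{p^k}I$. By $Sp^\infty S^1 \simeq S^1$ (the fundamental theorem of algebra), it restricts to an equivalence on $S^1$. This version is cleaner because it is natural.

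The only delicate point is checking that the retraction composite is genuinely an equivalence rather than merely surjective on $\pi_1$. Here we use that everything is $p$-local, so $\mathbb{Z}$ should be read as $\mathbb{Z}_{(p)}$ throughout. The generalized Freudenthal theorem (thm. \ref{genfreudenthal}) shows that $S^1 \to S^1_k$ is an isomorphism on $\pi_1$. The isomorphism statement therefore reduces to the classical identification of $\pi_1$ of the stable approximation with $H_1$, which is immediate.
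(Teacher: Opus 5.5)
Your proposal is correct and follows the paper's proof: you apply part (3) of the splitting lemma to the retraction $S^1_k \to S^1_0 \simeq \Omega^\infty\mathbb{S}^1 \to \Omega^\infty\Sigma\mathbb{Z} \simeq S^1$, which is an equivalence because it is an isomorphism on $\pi_1$. Your last paragraph is unnecessary. The retraction composite factors through $S^1 \to S^1_0$ and is the Hurewicz isomorphism on $\pi_1$, so you do not need the generalized Freudenthal theorem; in any case, at $p=2$, $k=0$ that theorem only gives surjectivity on $\pi_1$.
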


\begin{proof}
    By part (3) of the previous lemma, it suffices to show that the comparison map $S^1 \to S^1_k$ admits a retraction. Consider the composite
    \[
        S^1_k \longrightarrow S^1_0 \simeq \Omega^\infty \mathbb{S}^1 \longrightarrow \Omega^\infty\Sigma \mathbb{Z} \simeq S^1
    \]
    where the second map is induced by the suspension of the unit map $\mathbb{S} \to \mathbb{Z}$. The resulting composite $S^1 \to S^1_k \to S^1$ factors as $S^1 \to S^1_0 \to S^1$. The first map is the stabilization map, while the second is induced by the suspension of $\mathbb{S} \to \mathbb{Z}$, so both induce isomorphisms on $\pi_1$. Since $\pi_1$ is the only nontrivial homotopy group of $S^1$, it follows that the composite is an equivalence. Thus $S^1_k \to S^1$ is a retraction, and the claim follows.
\end{proof}

\begin{rem}
	\label{remisloop}
    The retraction constructed in the proof also shows that $R_{p^k}I(S^1)$ is a loop space. Indeed, by part (1) of the splitting lemma, it identifies with the loop space of the fiber of the retraction $S^1_k \to S^1$. We will use this later in the proof of the model.
\end{rem}

\begin{rem}
	\label{retractioninfloop}
	This holds more generally for every infinite loop space $\Omega^\infty X$. In particular, it also holds for $S^0$. Indeed, a retraction of the map $\Omega^\infty X \to P_mI(\Omega^\infty X)$ is given by the composite
	\[
	P_mI(\Omega^\infty X)
	\longrightarrow
	P_1I(\Omega^\infty X)
	\simeq
	\Omega^\infty \Sigma^\infty \Omega^\infty X
	\xlongrightarrow{\Omega^\infty\epsilon}
	\Omega^\infty X
	\]
	where $\epsilon$ is the counit of the adjunction $\Sigma^\infty \dashv \Omega^\infty$. This is indeed a retraction by the triangle identities.
\end{rem}

\begin{rem}
	\label{noretractionnocry}
	On the other hand, if $X$ is a simply connected finite non-contractible space, then the map $X \to P_mI(X)$ never admits a retraction, not even after looping sufficiently many times. In particular, no spheres other than $S^0$ and $S^1$ admit such a retraction.
	
	To see this, suppose there exist $m$ and $h$ together with a retraction
	\[
	\Omega^h P_mI(X) \xlongrightarrow{r} \Omega^h X
	\]
	Choose $m$ minimal with this property. We will prove that $m=0$, so that $\Omega^hX \simeq *$. This implies $X \simeq *$, since every simply connected finite space other than the point has infinitely many nontrivial homotopy groups, by an old result of Serre \cite[thm. 10]{Serre}.

	Suppose now that $m>0$. By Levy's spectral Sullivan conjecture \cite[thm. 1.1]{spectralSullivan}, the composite
	\[
	\Omega^{\infty+h}\mathbb{D}_mI(X)
	\longrightarrow
	\Omega^hP_mI(X)
	\xlongrightarrow{r}
	\Omega^hX
	\]
	factors through the $1$-truncation of the source. In particular, after looping twice, the map
	\[
	\Omega^{\infty+h+2}\mathbb{D}_mI(X)
	\longrightarrow
	\Omega^{h+2}P_mI(X)
	\xlongrightarrow{\Omega^2r}
	\Omega^{h+2}X
	\]
	is null. Hence we obtain a factorization
	\[
	\Omega^{\infty+h+2}\mathbb{D}_mI(X)
	\longrightarrow
	\Omega^2\mathrm{fib}(r)
	\longrightarrow
	\Omega^{h+2}P_mI(X)
	\]
	The fiber of this composite is described by the following pullback diagram:
	% https://q.uiver.app/#q=WzAsNixbMCwxLCJcXE9tZWdhXntcXGluZnR5ICsgaCArIDJ9XFxtYXRoYmJ7RH1fbUkoWCkiXSxbMSwxLCJcXE9tZWdhXjJcXG1hdGhybXtmaWJ9KHIpIl0sWzIsMSwiXFxPbWVnYV57aCsyfVBfbUkoWCkiXSxbMiwwLCIqIl0sWzAsMCwiXFxPbWVnYV57aCszfVBfe20tMX1JKFgpIl0sWzEsMCwiXFxPbWVnYV57aCszfVgiXSxbMCwxXSxbMSwyLCIiLDAseyJzdHlsZSI6eyJ0YWlsIjp7Im5hbWUiOiJob29rIiwic2lkZSI6InRvcCJ9fX1dLFszLDJdLFs0LDBdLFs1LDNdLFs0LDVdLFs1LDEsIioiLDFdLFs1LDIsIiIsMSx7InN0eWxlIjp7Im5hbWUiOiJjb3JuZXIifX1dLFs0LDEsIiIsMSx7InN0eWxlIjp7Im5hbWUiOiJjb3JuZXIifX1dXQ==
\[\begin{tikzcd}
	{\Omega^{h+3}P_{m-1}I(X)} & {\Omega^{h+3}X} & {*} \\
	{\Omega^{\infty + h + 2}\mathbb{D}_mI(X)} & {\Omega^2\mathrm{fib}(r)} & {\Omega^{h+2}P_mI(X)}
	\arrow[from=1-1, to=1-2]
	\arrow[from=1-1, to=2-1]
	\arrow["\lrcorner"{anchor=center, pos=0.125}, draw=none, from=1-1, to=2-2]
	\arrow[from=1-2, to=1-3]
	\arrow["{*}"{description}, from=1-2, to=2-2]
	\arrow["\lrcorner"{anchor=center, pos=0.125}, draw=none, from=1-2, to=2-3]
	\arrow[from=1-3, to=2-3]
	\arrow[from=2-1, to=2-2]
	\arrow[hook, from=2-2, to=2-3]
\end{tikzcd}\]
	which shows that $\Omega^{h+3}X$ is a retract of $\Omega^{h+3}P_{m-1}I(X)$, contradicting the minimality of $m$.
	
	This phenomenon is not specific to the Goodwillie tower, but holds for any finite extension of infinite loop spaces. It probably deserves to be stated as a lemma, but we prefer not to distract the reader from the main line of the argument.
\end{rem}

The rest of this subsection is devoted to proving the equivalence
\[
R_{p^k}I(S^1) \simeq \Omega^{\infty+2k}\tau_{>0}\mathbb{Sp}^{p^k}
\]
The first step is to produce maps between these spaces. The guiding idea is that, by the work of Arone--Dwyer, both sides come equipped with filtrations whose associated graded pieces are the same.

Indeed, the remainder $R_mF$ has the same associated graded as the Goodwillie tower, up to a shift, as the following lemma shows.

\begin{lem}
    If $F$ is a reduced functor, there are fiber sequences $R_mF \to R_{m-1}F \to D_mF$.
\end{lem}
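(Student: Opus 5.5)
The plan is to obtain the sequence from a $3\times 3$ diagram of fiber sequences built from the natural tower maps $F \to P_mF \to P_{m-1}F$. By definition $R_mF = \mathrm{fib}(F \to P_mF)$ and $R_{m-1}F = \mathrm{fib}(F \to P_{m-1}F)$. The map $F \to P_{m-1}F$ factors as $F \to P_mF \to P_{m-1}F$, and this factorization is natural.

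The key tool is the fiber-of-a-composite sequence recalled earlier in the paper:
\[
\fib(X \to Y) \longrightarrow \fib(X \to Y \to Z) \longrightarrow \fib(Y \to Z).
\]
We apply it with $X = F$, $Y = P_mF$ and $Z = P_{m-1}F$, working objectwise in $\mathcal{S}_*$ so that everything is natural in the input. This gives a fiber sequence
\[
R_mF \longrightarrow R_{m-1}F \longrightarrow \fib(P_mF \to P_{m-1}F) = D_mF,
\]
which is exactly the claim.

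The first map is the natural comparison $R_mF \to R_{m-1}F$ induced on fibers by the identity of $F$ and the map $P_mF \to P_{m-1}F$. The second map is $R_{m-1}F \to F \to P_mF$, which lands in $D_mF$ because its further composite to $P_{m-1}F$ is canonically null. The reducedness of $F$ is used only so that $P_mF$, $P_{m-1}F$ and hence $D_mF$ are reduced; this makes the fibers well defined as pointed functors and consistent with the conventions of the earlier subsections.

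There is no real obstacle here. The only point needing care is to check that the composite-fiber sequence is genuinely a fiber sequence and not merely a sequence whose composite is null. I would verify this via the pasting law for pullbacks: the square with corners $R_{m-1}F$, $F$, $D_mF$, $P_mF$ is a pullback, because the outer rectangle over $* \to P_{m-1}F$ and the right-hand square are both pullbacks. The fiber of $R_{m-1}F \to D_mF$ then agrees with the fiber of $F \to P_mF$, which is $R_mF$.

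If one wants the sequence to deloop in the homogeneous direction, one can further note that $D_mF \simeq \Omega^\infty \mathbb{D}_mF$. That refinement is not needed for the statement.
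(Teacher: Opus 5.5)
Your proof is correct, but it takes a slightly different route from the paper's.

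The paper takes vertical fibers of the map of rows from $F = F \to *$ to $P_mF \to P_{m-1}F \to BD_mF$. For the resulting sequence $R_mF \to R_{m-1}F \to \Omega BD_mF \simeq D_mF$ to be a fiber sequence, the bottom row must already be a fiber sequence. That requires Goodwillie's delooping of the homogeneous layer, and this is exactly where the paper uses the hypothesis that $F$ is reduced.

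You instead apply the fiber-of-a-composite sequence to $F \to P_mF \to P_{m-1}F$ and justify it by the pasting law. That argument is purely formal and needs no delooping at all. In fact the reducedness hypothesis plays no real role in your argument. The reason you give for it is not needed, since fibers of functors to $\mathcal{S}_*$ are defined objectwise anyway.

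So your version is more elementary and slightly more general. The paper's presentation has the modest advantage of exhibiting the map $R_{m-1}F \to D_mF$ as induced by the classifying map $P_{m-1}F \to BD_mF$, which matches the delooped picture used elsewhere. For the later uses in the paper, including the leftward rotation with $\Omega D_mF$, your version suffices.
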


\begin{proof}
    This follows by taking vertical fibers of the diagram
    % https://q.uiver.app/#q=WzAsNixbMCwwLCJGIl0sWzAsMSwiUF9tRiJdLFsxLDAsIkYiXSxbMiwwLCIqIl0sWzIsMSwiQkRfbUYiXSxbMSwxLCJQX3ttLTF9RiJdLFswLDFdLFswLDIsIiIsMix7ImxldmVsIjoyLCJzdHlsZSI6eyJoZWFkIjp7Im5hbWUiOiJub25lIn19fV0sWzIsM10sWzMsNF0sWzIsNV0sWzEsNV0sWzUsNF1d
\[\begin{tikzcd}
	F & F & {*} \\
	{P_mF} & {P_{m-1}F} & {BD_mF}
	\arrow[equals, from=1-1, to=1-2]
	\arrow[from=1-1, to=2-1]
	\arrow[from=1-2, to=1-3]
	\arrow[from=1-2, to=2-2]
	\arrow[from=1-3, to=2-3]
	\arrow[from=2-1, to=2-2]
	\arrow[from=2-2, to=2-3]
\end{tikzcd}\]
    The existence of a delooping of derivatives for reduced functors is a standard result in calculus \cite[lem. 2.2]{GoodwillieIII}.
\end{proof}

In particular, applying this to the identity functor, evaluating at $S^1$, and using proposition \ref{aronedwyer}, we obtain fiber sequences
\[
R_{p^k}I(S^1)
\longrightarrow
R_{p^{k-1}}I(S^1)
\longrightarrow
\Omega^\infty S^1(k)
\simeq
\Omega^{\infty-1+2k}\mathbb{Sp}^{p^k}/\mathbb{Sp}^{p^{k-1}}
\]
On the other hand, since $\mathbb{Sp}^{p^k}/\mathbb{Sp}^{p^{k-1}}$ is connected, taking connected covers gives fiber sequences
\[
\tau_{>0}\mathbb{Sp}^{p^{k-1}}
\longrightarrow
\tau_{>0}\mathbb{Sp}^{p^k}
\longrightarrow
\mathbb{Sp}^{p^k}/\mathbb{Sp}^{p^{k-1}}
\]
Before using these fiber sequences to construct the maps and prove the general result, we give a direct proof in the case $p=2$ and $k\leq 1$.

\begin{ex}
	\label{remainder0}
	We first identify the case $k=0$. We claim that
	\[
	R_1I(S^1) \simeq \Omega^\infty\tau_{>0}\mathbb{S}
	\]
	Indeed, by part (1) of the splitting lemma (lem. \ref{splittinglemma}), it suffices to identify the fiber of the retraction $S^1_0 \to S^1$. Such a retraction is given by applying $\Omega^\infty$ to the shifted unit map $\mathbb{S}^1 \to \Sigma\mathbb{Z}$. Its fiber is $\Sigma\tau_{>0}\mathbb{S}$, and the claim follows.
\end{ex}

\begin{ex}
	\label{remainder1}
	We now identify the case $k=1$, namely
	\[
	R_2I(S^1) \simeq \Omega^{\infty+2}\tau_{>0}\mathbb{Sp}^2
	\]
	By the previous example and the previous lemma, there is a fiber sequence
	\[
	R_2I(S^1)
	\longrightarrow
	R_1I(S^1) \simeq \Omega^\infty\tau_{>0}\mathbb{S}
	\xlongrightarrow{\delta}
	\Omega^\infty S^1(1) \simeq \Omega^\infty\mathbb{RP}^\infty
	\]
	On the other hand, the symmetric product spectra fit into a fiber sequence
	\[
	\mathbb{RP}^\infty
	\simeq
	\Sigma^{-1}(\mathbb{Sp}^2/\mathbb{S})
	\xlongrightarrow{\mu}
	\tau_{>0}\mathbb{S}
	\longrightarrow
	\tau_{>0}\mathbb{Sp}^2
	\]
	where the first map is the reduced Kahn--Priddy map. By the reduced Kahn--Priddy theorem, the composite
	\[
	\Omega^\infty\tau_{>0}\mathbb{S}
	\xlongrightarrow{\delta}
	\Omega^\infty\mathbb{RP}^\infty
	\xlongrightarrow{\Omega^\infty \mu}
	\Omega^\infty\tau_{>0}\mathbb{S}
	\]
	is an equivalence. Applying part (1) of the splitting lemma (lem. \ref{splittinglemma}), we conclude that
	\[
	R_2I(S^1)
	\simeq
	\Omega^{\infty+1}\mathrm{fib}(\mu)
	\simeq
	\Omega^{\infty+2}\tau_{>0}\mathbb{Sp}^2
	\]
	This is the example that motivated the whole paper. 
\end{ex}

\begin{rem}
	These examples can be generalized to all infinite loop spaces $\Omega^\infty X$, thanks to Kuhn's generalized Kahn--Priddy theorem \cite[thm. 1.1]{genKP}. First, note from remark \ref{retractioninfloop} and part (1) of the splitting lemma (lem. \ref{splittinglemma}) that
	\[
	R_1I(\Omega^\infty X) \simeq \Omega^{\infty+1}\mathrm{fib}(\epsilon)
	\]
	where $\epsilon$ is the counit of the adjunction $\Sigma^\infty \dashv \Omega^\infty$. Hence, by the previous lemma, the remainder fits into a fiber sequence
	\[
	R_2I(\Omega^\infty X)
	\longrightarrow
	R_1I(\Omega^\infty X)
	\simeq
	\Omega^{\infty+1}\mathrm{fib}(\epsilon)
	\xlongrightarrow{\delta}
	D_2I(\Omega^\infty X)
	\simeq
	\Omega^{\infty+1}(\Sigma^\infty\Omega^\infty X)^{\otimes 2}_{hC_2}
	\]
	On the other hand, Kuhn \cite[thm. 1.1]{genKP} shows that the multiplication map
	\[
	(\Sigma^\infty\Omega^\infty X)^{\otimes 2}_{hC_2}
	\longrightarrow
	\Sigma^\infty\Omega^\infty X
	\]
	factors as
	\[
	(\Sigma^\infty\Omega^\infty X)^{\otimes 2}_{hC_2}
	\xlongrightarrow{\mu}
	\mathrm{fib}(\epsilon)
	\longrightarrow
	\Sigma^\infty\Omega^\infty X
	\]
	and that $\Omega^{\infty+1}\mu$ provides a retraction to the map $\delta$ above. Applying part (1) of the splitting lemma, we conclude that
	\[
	R_2I(\Omega^\infty X) \simeq \Omega^{\infty+2}\mathrm{fib}(\mu)
	\]
	This assembles into an equivalence
	\[
	\Omega P_2I(\Omega^\infty X)
	\simeq
	\Omega^{\infty+1}X \times \Omega^{\infty+2}\mathrm{fib}(\mu)
	\]
	We believe that this behaviour generalizes to the whole Goodwillie tower, but we were unable to prove it, so we leave it as a question.
\end{rem}

\begin{ques}
	Is $\Omega P_mI(\Omega^\infty X)$ always an infinite loop space?
\end{ques}

\begin{rem}
	On the other hand, if $X$ is a simply connected finite non-contractible space, then the remainder $R_mI(X)$ cannot be an infinite loop space. Indeed, if it were, the map $R_mI(X) \to X$ would be null, since by Levy's spectral Sullivan conjecture \cite[thm. 1.1]{spectralSullivan} it would factor through the $1$-truncation of $R_mI(X)$. Hence $\Omega X$ would be a retract of $\Omega P_mI(X)$, and we saw in remark \ref{noretractionnocry} that this is impossible.
\end{rem}

To construct the maps, consider the rotations of the fiber sequences above:
\[
\Omega^{\infty+1}S^1(k)
\longrightarrow
R_{p^k}I(S^1)
\longrightarrow
R_{p^{k-1}}I(S^1)
\longrightarrow
\Omega^\infty S^1(k)
\]
\[
\Sigma^{-1}\mathbb{Sp}^{p^k}/\mathbb{Sp}^{p^{k-1}}
\longrightarrow
\tau_{>0}\mathbb{Sp}^{p^{k-1}}
\longrightarrow
\tau_{>0}\mathbb{Sp}^{p^k}
\longrightarrow
\mathbb{Sp}^{p^k}/\mathbb{Sp}^{p^{k-1}}
\]
Using these, together with Arone--Dwyer's equivalence (prop. \ref{aronedwyer})
\[
S^1(k) \simeq \Sigma^{1-2k}\mathbb{Sp}^{p^k}/\mathbb{Sp}^{p^{k-1}}
\]
we obtain two maps
\[
R_{p^k}I(S^1)
\longrightarrow
\Omega^\infty S^1(k+1)
\simeq
\Omega^{\infty+2k+1}\mathbb{Sp}^{p^{k+1}}/\mathbb{Sp}^{p^k}
\longrightarrow
\Omega^{\infty+2k}\tau_{>0}\mathbb{Sp}^{p^k}
\]
\[
\Omega^{\infty+2k}\tau_{>0}\mathbb{Sp}^{p^k}
\longrightarrow
\Omega^{\infty+2k}\mathbb{Sp}^{p^k}/\mathbb{Sp}^{p^{k-1}}
\simeq
\Omega^{\infty+1}S^1(k)
\longrightarrow
R_{p^k}I(S^1)
\]
We will prove that these maps are equivalences by showing that their composites are equivalences. These composites are
\[
R_{p^k}I(S^1)
\longrightarrow
\Omega^{\infty+2k+1}\mathbb{Sp}^{p^{k+1}}/\mathbb{Sp}^{p^k}
\xrightarrow{d_1^W}
\Omega^{\infty+2k}\mathbb{Sp}^{p^k}/\mathbb{Sp}^{p^{k-1}}
\longrightarrow
R_{p^k}I(S^1)
\]
\[
\Omega^{\infty+2k}\tau_{>0}\mathbb{Sp}^{p^k}
\longrightarrow
\Omega^{\infty+1}S^1(k)
\xrightarrow{d_1^G}
\Omega^\infty S^1(k+1)
\longrightarrow
\Omega^{\infty+2k}\tau_{>0}\mathbb{Sp}^{p^k}
\]
where $d_1^W$ and $d_1^G$ are the first differentials in the spectral sequences associated to the symmetric product filtration and the Goodwillie filtration of $S^1$, respectively.

We will show that these maps are equivalences using two finite variants of a proposition of Kuhn \cite[prop. A.1]{Kuhn2014}. The finite versions are necessary because we do not yet know that the remainder infinitely deloops, so we cannot produce an infinite diagram in the proof of corollary \ref{remainder}.

\begin{lem}
    Consider the following diagram
    % https://q.uiver.app/#q=WzAsMTEsWzAsMSwiWF97LTF9Il0sWzIsMSwiWF8wIl0sWzQsMSwiWF8xIl0sWzYsMSwiWF97ay0xfSJdLFs4LDEsIlhfayJdLFsxMCwxLCJYX3trKzF9Il0sWzEsMCwiWV97LTF9Il0sWzMsMCwiWV8wIl0sWzcsMCwiWV97ay0xfSJdLFs1LDEsIlxcY2RvdHMiXSxbOSwwLCJZX2siXSxbMCwxLCJkX3stMX0iLDAseyJvZmZzZXQiOi0xfV0sWzEsMiwiZF8wIiwwLHsib2Zmc2V0IjotMX1dLFszLDQsImRfe2stMX0iLDAseyJvZmZzZXQiOi0xfV0sWzQsNSwiZF9rIiwwLHsib2Zmc2V0IjotMX1dLFsxLDAsInNfey0xfSIsMCx7Im9mZnNldCI6LTF9XSxbMiwxLCJzXzAiLDAseyJvZmZzZXQiOi0xfV0sWzQsMywic197ay0xfSIsMCx7Im9mZnNldCI6LTF9XSxbNSw0LCJzX2siLDAseyJvZmZzZXQiOi0xfV0sWzYsMSwiaV97LTF9Il0sWzEsNywicF8wIl0sWzcsMiwiaV8wIl0sWzAsNiwiIiwwLHsibGV2ZWwiOjIsInN0eWxlIjp7ImhlYWQiOnsibmFtZSI6Im5vbmUifX19XSxbMyw4LCJwX3trLTF9Il0sWzIsOSwiIiwwLHsib2Zmc2V0IjotMX1dLFs5LDIsIiIsMCx7Im9mZnNldCI6LTF9XSxbOSwzLCIiLDAseyJvZmZzZXQiOi0xfV0sWzMsOSwiIiwwLHsib2Zmc2V0IjotMX1dLFs4LDQsImlfe2stMX0iXSxbNCwxMCwicF9rIl0sWzEwLDUsImlfayJdXQ==
\[\begin{tikzcd}[sep=small]
	& {Y_{-1}} && {Y_0} &&&& {Y_{k-1}} && {Y_k} \\
	{X_{-1}} && {X_0} && {X_1} & \cdots & {X_{k-1}} && {X_k} && {X_{k+1}}
	\arrow["{i_{-1}}", from=1-2, to=2-3]
	\arrow["{i_0}", from=1-4, to=2-5]
	\arrow["{i_{k-1}}", from=1-8, to=2-9]
	\arrow["{i_k}", from=1-10, to=2-11]
	\arrow[equals, from=2-1, to=1-2]
	\arrow["{d_{-1}}", shift left, from=2-1, to=2-3]
	\arrow["{p_0}", from=2-3, to=1-4]
	\arrow["{s_{-1}}", shift left, from=2-3, to=2-1]
	\arrow["{d_0}", shift left, from=2-3, to=2-5]
	\arrow["{s_0}", shift left, from=2-5, to=2-3]
	\arrow[shift left, from=2-5, to=2-6]
	\arrow[shift left, from=2-6, to=2-5]
	\arrow[shift left, from=2-6, to=2-7]
	\arrow["{p_{k-1}}", from=2-7, to=1-8]
	\arrow[shift left, from=2-7, to=2-6]
	\arrow["{d_{k-1}}", shift left, from=2-7, to=2-9]
	\arrow["{p_k}", from=2-9, to=1-10]
	\arrow["{s_{k-1}}", shift left, from=2-9, to=2-7]
	\arrow["{d_k}", shift left, from=2-9, to=2-11]
	\arrow["{s_k}", shift left, from=2-11, to=2-9]
\end{tikzcd}\]
    such that
    \begin{enumerate}
        \item every space is a loop space
        \item every triangle commutes, meaning that $d_j\simeq i_jp_j$ 
        \item $Y_j \xrightarrow{i_j} X_{j+1} \xrightarrow{p_{j+1}} Y_{j+1}$ is a fiber sequence
        \item the map $d_{j-1}s_{j-1} + s_jd_j$ is an equivalence for $j \leq k$
    \end{enumerate}
    Then the composite $Y_j \xrightarrow{i_j} X_{j+1} \xrightarrow{s_j} X_j \xrightarrow{p_j} Y_j$ is an equivalence for all $j \leq k$.
\end{lem}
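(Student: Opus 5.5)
The plan is to argue by induction on $j$, starting at $j=-1$ and going up to $k$. Throughout, I interpret the sum in condition (4) using the loop space structures from condition (1), and I set $e_j := p_j s_j i_j \colon Y_j \to Y_j$.

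\emph{Base case $j=-1$.} Here $X_{-1}=Y_{-1}$ and $p_{-1}=\mathrm{id}$. By condition (2) this gives $d_{-1}\simeq i_{-1}$. Since there is no $d_{-2}$, condition (4) at $j=-1$ says that $s_{-1}d_{-1}\simeq s_{-1}i_{-1}=e_{-1}$ is an equivalence.

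\emph{Inductive step.} Assume $e_{j-1}$ is an equivalence, and let
\[
f := d_{j-1}s_{j-1}+s_jd_j \colon X_j \to X_j,
\]
which is an equivalence by condition (4). The idea is to show that $f$ is a self-map of the fiber sequence $Y_{j-1}\xrightarrow{i_{j-1}} X_j \xrightarrow{p_j} Y_j$ of condition (3), inducing $e_{j-1}$ on the fiber and $e_j$ on the base.

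For the fiber, precompose $f$ with $i_{j-1}$. Precomposition always distributes over the sum. Moreover $d_ji_{j-1}\simeq i_jp_ji_{j-1}\simeq *$, since $p_ji_{j-1}$ is the composite of two consecutive maps in a fiber sequence. Hence
\[
f\,i_{j-1}\simeq d_{j-1}s_{j-1}i_{j-1}\simeq i_{j-1}p_{j-1}s_{j-1}i_{j-1}= i_{j-1}e_{j-1}.
\]
For the base, postcompose with $p_j$. We have $p_jd_{j-1}\simeq p_ji_{j-1}p_{j-1}\simeq *$, so
\[
p_jf\simeq p_js_jd_j\simeq p_js_ji_jp_j=e_jp_j.
\]
The one delicate point here is that $p_j(a+b)\simeq p_ja+p_jb$ requires $p_j$ to be an H-map. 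I would read condition (1) as saying the whole diagram lives in loop spaces and loop maps, which holds in the intended application by remark \ref{remisloop}. Failing that, one can pass to homotopy groups, where the sum is computed via $\pi_1$ of the delooping and $p_j$ induces a group homomorphism on $\pi_n$ for $n\ge 1$.

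With both squares commuting up to homotopy, $f$, $e_{j-1}$ and $e_j$ give a map of long exact sequences of homotopy groups. The outer maps $f$ and $e_{j-1}$ are isomorphisms, so the five lemma shows that $e_j$ is an isomorphism on $\pi_n$ for $n\ge1$, and hence an equivalence by Whitehead's theorem.

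I expect the main obstacle to be the low end of this five-lemma argument, namely $\pi_0$ and surjectivity onto $\pi_0Y_j$. The long exact sequence ends at $\pi_0X_j\to\pi_0Y_j$, which need not be surjective. To handle this I would use condition (1) to deloop the fiber sequence of condition (3), replacing $Y_{j-1}\to X_j\to Y_j$ by its delooping. Then the long exact sequence continues to the right and the five lemma applies in every degree. A secondary bookkeeping issue is that the commuting squares hold only up to homotopy, but this suffices since the argument only uses homotopy groups.
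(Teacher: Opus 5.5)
Your two computations are correct: $f\,i_{j-1}\simeq i_{j-1}e_{j-1}$ and $p_jf\simeq e_jp_j$ encode exactly the first column and the bottom row of the paper's upper triangular matrix. The gap is the step after that. Two separately homotopy-commutative squares do \emph{not} give a map of long exact sequences. Nothing controls the squares containing the connecting map $\partial\colon\pi_{n+1}Y_j\to\pi_nY_{j-1}$, and the five lemma needs those squares. So your remark that homotopy-commutativity ``suffices since the argument only uses homotopy groups'' is precisely what fails. Here is a concrete counterexample. Take the fiber sequence $\Omega B\to *\to B$, with $\mathrm{id}_{\Omega B}$ on the fiber, the identity of the point in the middle, and the constant map on $B$. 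Both squares commute, and the fiber and middle maps are equivalences, yet the base map is not. You could strictify $p_jf\simeq e_jp_j$, which does produce an honest map of fiber sequences. But the induced map $\phi'$ on fibers is then only known to satisfy $i_{j-1}\phi'\simeq i_{j-1}e_{j-1}$, not $\phi'\simeq e_{j-1}$.

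The missing idea is the one the paper's proof starts from. The induction hypothesis gives more than an isomorphism on the fiber: it gives a retraction $r=e_{j-1}^{-1}p_{j-1}s_{j-1}$ of $i_{j-1}$, where $e_{j-1}^{-1}$ is a homotopy inverse. Hence $(i_{j-1})_*$ is injective in every degree and all connecting maps vanish. The long exact sequence then breaks into short exact sequences $0\to\pi_nY_{j-1}\to\pi_nX_j\to\pi_nY_j\to0$ for $n\geq1$. For these, the two squares you verified are all that is needed. A direct diagram chase shows that $(e_j)_*$ is bijective; equivalently, use the paper's observation that $u_j$ is upper triangular for $X_j\simeq Y_{j-1}\times Y_j$, with diagonal entries $e_{j-1}$ and $e_j$. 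Injectivity of $(i_{j-1})_*$ also repairs the strictified route, since it forces $\phi'_*=(e_{j-1})_*$.

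On $\pi_0$: condition (1) concerns only the spaces, so it does not by itself deloop $Y_{j-1}\to X_j\to Y_j$. The retraction does not give surjectivity of $\pi_0X_j\to\pi_0Y_j$ either. Reading (1) and (3) as statements about deloopable sequences is therefore an extra convention. It is, however, the same one the paper tacitly uses when it asserts the product splitting, so this is not where your argument falls short.
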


\begin{proof}
    We argue by induction on $j$. For $j=-1$, the claim is immediate. Now assume that
    \[
        Y_{j-1} \xrightarrow{i_{j-1}} X_j \xrightarrow{s_{j-1}} X_{j-1} \xrightarrow{p_{j-1}} Y_{j-1}
    \]
    is an equivalence. In particular, $i_{j-1}$ admits a retraction. Since
    \[
        Y_{j-1} \xrightarrow{i_{j-1}} X_j \xrightarrow{p_j} Y_j
    \]
    is a fiber sequence of loop spaces, it follows that there is a splitting
    \[
        X_j \simeq Y_{j-1} \times Y_j
    \]
    under which $i_{j-1}$ identifies with the inclusion of the first factor and $p_j$ with the projection onto the second. Let $q_j \colon Y_j \to X_j$ denote the inclusion of the second factor. Consider the map
    \[
        u_j := d_{j-1}s_{j-1} + s_jd_j \colon X_j \longrightarrow X_j
    \]
    We claim that, after passing to homotopy groups and using the above product decomposition, $u_j$ is represented by an upper triangular matrix
    \[
        \begin{bmatrix}
            p_{j-1}s_{j-1}i_{j-1} & \star \\
            0 & p_js_ji_j
        \end{bmatrix}
    \]
    Since $u_j$ is an equivalence, this matrix is an isomorphism on homotopy groups. Since the upper-left entry is an isomorphism by the inductive hypothesis, the lower-right entry is also an isomorphism. Hence $p_js_ji_j$ is an equivalence.

    Indeed, the component $Y_{j-1} \to Y_j$ vanishes, since
    \[
        p_j u_j i_{j-1}
        \simeq
        p_j d_{j-1}s_{j-1}i_{j-1} + p_j s_j d_j i_{j-1}
        \simeq
        p_j i_{j-1}p_{j-1}s_{j-1}i_{j-1} + p_j s_j i_j p_j i_{j-1}
        \simeq *
    \]
    because $d_{j-1} \simeq i_{j-1}p_{j-1}$, $d_j \simeq i_jp_j$, and $p_j i_{j-1} \simeq *$.

    Similarly, its component $Y_{j-1}\to Y_{j-1}$ is given by
    \[
        p_{j-1} u_j i_{j-1}
        \simeq
        p_{j-1} d_{j-1}s_{j-1}i_{j-1} + p_{j-1} s_j d_j i_{j-1}
        \simeq
        p_{j-1} i_{j-1}p_{j-1}s_{j-1}i_{j-1} + *
        \simeq
        p_{j-1}s_{j-1}i_{j-1}
    \]
    Finally, its component $Y_j\to Y_j$ is given by
    \[
        p_j u_j q_j
        \simeq
        p_j d_{j-1}s_{j-1}q_j + p_j s_j d_j q_j
        \simeq
        * + p_j s_j i_j p_j q_j
        \simeq
        p_j s_j i_j
    \]
    This proves the matrix decomposition and completes the proof.
\end{proof}

We will also need the following wrong-way variant, whose proof is analogous.

\begin{lem}
    Consider the following diagram 
    % https://q.uiver.app/#q=WzAsMTEsWzAsMCwiWF97LTF9Il0sWzIsMCwiWF8wIl0sWzQsMCwiWF8xIl0sWzYsMCwiWF97ay0xfSJdLFs4LDAsIlhfayJdLFsxMCwwLCJYX3trKzF9Il0sWzEsMSwiWl97LTF9Il0sWzMsMSwiWl8wIl0sWzcsMSwiWl97ay0xfSJdLFs1LDAsIlxcY2RvdHMiXSxbOSwxLCJaX2siXSxbMCwxLCJkX3stMX0iLDAseyJvZmZzZXQiOi0xfV0sWzEsMiwiZF8wIiwwLHsib2Zmc2V0IjotMX1dLFszLDQsImRfe2stMX0iLDAseyJvZmZzZXQiOi0xfV0sWzQsNSwiZF9rIiwwLHsib2Zmc2V0IjotMX1dLFsxLDAsInNfey0xfSIsMCx7Im9mZnNldCI6LTF9XSxbMiwxLCJzXzAiLDAseyJvZmZzZXQiOi0xfV0sWzQsMywic197ay0xfSIsMCx7Im9mZnNldCI6LTF9XSxbNSw0LCJzX2siLDAseyJvZmZzZXQiOi0xfV0sWzEsNiwicV97LTF9Il0sWzcsMSwial8wIl0sWzIsNywicV8wIl0sWzAsNiwiIiwwLHsibGV2ZWwiOjIsInN0eWxlIjp7ImhlYWQiOnsibmFtZSI6Im5vbmUifX19XSxbOCwzLCJqX3trLTF9Il0sWzIsOSwiIiwwLHsib2Zmc2V0IjotMX1dLFs5LDIsIiIsMCx7Im9mZnNldCI6LTF9XSxbOSwzLCIiLDAseyJvZmZzZXQiOi0xfV0sWzMsOSwiIiwwLHsib2Zmc2V0IjotMX1dLFs0LDgsInFfe2stMX0iXSxbMTAsNCwial9rIl0sWzUsMTAsInFfayJdXQ==
\[\begin{tikzcd}[sep=small]
	{X_{-1}} && {X_0} && {X_1} & \cdots & {X_{k-1}} && {X_k} && {X_{k+1}} \\
	& {Z_{-1}} && {Z_0} &&&& {Z_{k-1}} && {Z_k}
	\arrow["{d_{-1}}", shift left, from=1-1, to=1-3]
	\arrow[equals, from=1-1, to=2-2]
	\arrow["{s_{-1}}", shift left, from=1-3, to=1-1]
	\arrow["{d_0}", shift left, from=1-3, to=1-5]
	\arrow["{q_{-1}}", from=1-3, to=2-2]
	\arrow["{s_0}", shift left, from=1-5, to=1-3]
	\arrow[shift left, from=1-5, to=1-6]
	\arrow["{q_0}", from=1-5, to=2-4]
	\arrow[shift left, from=1-6, to=1-5]
	\arrow[shift left, from=1-6, to=1-7]
	\arrow[shift left, from=1-7, to=1-6]
	\arrow["{d_{k-1}}", shift left, from=1-7, to=1-9]
	\arrow["{s_{k-1}}", shift left, from=1-9, to=1-7]
	\arrow["{d_k}", shift left, from=1-9, to=1-11]
	\arrow["{q_{k-1}}", from=1-9, to=2-8]
	\arrow["{s_k}", shift left, from=1-11, to=1-9]
	\arrow["{q_k}", from=1-11, to=2-10]
	\arrow["{j_0}", from=2-4, to=1-3]
	\arrow["{j_{k-1}}", from=2-8, to=1-7]
	\arrow["{j_k}", from=2-10, to=1-9]
\end{tikzcd}\]
    such that
    \begin{enumerate}
        \item every space is a loop space
        \item every triangle commutes, meaning that $s_j\simeq j_jq_j$ 
        \item $Z_j \xrightarrow{j_j} X_j \xrightarrow{q_{j-1}} Z_{j-1}$ is a fiber sequence
        \item the map $d_{j-1}s_{j-1} + s_jd_j$ is an equivalence for $j \leq k$
    \end{enumerate}
    Then the composite $Z_j \xrightarrow{j_j} X_j \xrightarrow{d_j} X_{j+1} \xrightarrow{q_j} Z_j$ is an equivalence for all $j \leq k$.
\end{lem}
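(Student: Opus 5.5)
We argue by induction on $j$, dualizing the proof of the previous lemma. For $j=-1$ the composite $Z_{-1}\xrightarrow{j_{-1}} X_{-1}\xrightarrow{d_{-1}} X_0\xrightarrow{q_{-1}} Z_{-1}$ should be read through the identification $X_{-1}=Z_{-1}$. Here $j_{-1}$ is the identity and $q_{-1}d_{-1}$ is the relevant map. Since $s_{-1}\simeq j_{-1}q_{-1}$ with $j_{-1}$ an identification, condition (4) at $j=-1$ (where the term $d_{-2}s_{-2}$ is absent) says that $s_{-1}d_{-1}\simeq q_{-1}d_{-1}$ is an equivalence. This gives the base case.

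For the inductive step, assume that $q_{j-1}d_{j-1}j_{j-1}\colon Z_{j-1}\to Z_{j-1}$ is an equivalence. Then $q_{j-1}\colon X_j\to Z_{j-1}$ admits a section, namely $d_{j-1}j_{j-1}$ followed by the inverse of this equivalence. Since $Z_j\xrightarrow{j_j} X_j\xrightarrow{q_{j-1}} Z_{j-1}$ is a fiber sequence of loop spaces with a split right-hand map, we get a splitting $X_j\simeq Z_j\times Z_{j-1}$. Under this splitting $j_j$ is the inclusion of the first factor and $q_{j-1}$ the projection onto the second. Let $r_j\colon X_j\to Z_j$ be the projection onto the first factor, so that $r_jj_j\simeq \mathrm{id}$ and $r_j$ vanishes on the section.

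We then analyze $u_j:=d_{j-1}s_{j-1}+s_jd_j$ on homotopy groups as a $2\times 2$ matrix with respect to this decomposition. The entry $Z_j\to Z_{j-1}$ is
\[
q_{j-1}u_jj_j\simeq q_{j-1}d_{j-1}j_{j-1}q_{j-1}j_j+q_{j-1}j_jq_jd_jj_j\simeq *,
\]
because $q_{j-1}j_j\simeq *$ and both $s$'s factor as $j\circ q$. Hence the matrix is triangular. The diagonal entry on $Z_{j-1}$ is $q_{j-1}u_j\sigma$, where $\sigma$ is the section. Its second summand vanishes because $q_{j-1}s_j\simeq q_{j-1}j_jq_j\simeq *$. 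The first summand is $q_{j-1}d_{j-1}j_{j-1}q_{j-1}\sigma\simeq q_{j-1}d_{j-1}j_{j-1}$, which is an equivalence by induction. The diagonal entry on $Z_j$ is
\[
r_ju_jj_j\simeq r_jd_{j-1}j_{j-1}q_{j-1}j_j+r_jj_jq_jd_jj_j\simeq q_jd_jj_j.
\]
Since $u_j$ is an equivalence by (4), a triangular matrix with one invertible diagonal entry forces the other diagonal entry to be invertible. So $q_jd_jj_j$ induces isomorphisms on all homotopy groups and is therefore an equivalence.

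The main point requiring care is the use of additivity. Sums of maps are taken in the loop-space (H-space) structures, so the matrix bookkeeping uses that composition distributes over these sums on homotopy groups. Post-composition with any map distributes automatically. Pre-composition distributes when the precomposed maps are loop maps, which we assume throughout, as in the previous lemma. We also need that on $\pi_*$ the maps out of the product are determined by their restrictions to the two factors.

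A second point is which triangular entry vanishes. In the right-way lemma the lower-left entry vanished; here it is the entry $Z_j\to Z_{j-1}$. In either case an invertible triangular matrix of abelian-group maps on $\pi_n$ has invertible diagonal entries once one diagonal entry is known to be invertible. This holds degreewise, since triangularity gives a short exact sequence compatible with $u_j$ and the five lemma applies.
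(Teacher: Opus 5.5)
Your proof is correct and is exactly the dualization the paper intends when it says this variant's proof is analogous to the previous lemma. The inductive hypothesis gives a section of $q_{j-1}$, hence $X_j\simeq Z_j\times Z_{j-1}$, and on homotopy groups $u_j$ becomes triangular with diagonal entries $q_jd_jj_j$ and $q_{j-1}d_{j-1}j_{j-1}$.

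One correction to your additivity remark, which has the roles reversed. At the space level, precomposition distributes over pointwise sums automatically, while postcomposition needs an H-map. On $\pi_n$ for $n\geq 1$ both hold automatically, since $(f+g)_*=f_*+g_*$ for any maps into a loop space and all induced maps are homomorphisms. So the extra loop-map hypothesis you add is unnecessary.
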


\begin{cor}
    Consider the following diagram 
% https://q.uiver.app/#q=WzAsMTUsWzAsMSwiWF97LTF9Il0sWzIsMSwiWF8wIl0sWzQsMSwiWF8xIl0sWzYsMSwiWF97ay0xfSJdLFs4LDEsIlhfayJdLFsxMCwxLCJYX3trKzF9Il0sWzEsMiwiWl97LTF9Il0sWzMsMiwiWl8wIl0sWzcsMiwiWl97ay0xfSJdLFs1LDEsIlxcY2RvdHMiXSxbOSwyLCJaX2siXSxbMSwwLCJZX3stMX0iXSxbMywwLCJZXzAiXSxbNywwLCJZX3trLTF9Il0sWzksMCwiWV9rIl0sWzAsMSwiZF97LTF9IiwwLHsib2Zmc2V0IjotMX1dLFsxLDIsImRfMCIsMCx7Im9mZnNldCI6LTF9XSxbMyw0LCJkX3trLTF9IiwwLHsib2Zmc2V0IjotMX1dLFs0LDUsImRfayIsMCx7Im9mZnNldCI6LTF9XSxbMSwwLCJzX3stMX0iLDAseyJvZmZzZXQiOi0xfV0sWzIsMSwic18wIiwwLHsib2Zmc2V0IjotMX1dLFs0LDMsInNfe2stMX0iLDAseyJvZmZzZXQiOi0xfV0sWzUsNCwic19rIiwwLHsib2Zmc2V0IjotMX1dLFsxLDYsInFfey0xfSJdLFs3LDEsImpfMCJdLFsyLDcsInFfMCJdLFswLDYsIiIsMCx7ImxldmVsIjoyLCJzdHlsZSI6eyJoZWFkIjp7Im5hbWUiOiJub25lIn19fV0sWzgsMywial97ay0xfSJdLFsyLDksIiIsMCx7Im9mZnNldCI6LTF9XSxbOSwyLCIiLDAseyJvZmZzZXQiOi0xfV0sWzksMywiIiwwLHsib2Zmc2V0IjotMX1dLFszLDksIiIsMCx7Im9mZnNldCI6LTF9XSxbNCw4LCJxX3trLTF9Il0sWzEwLDQsImpfayJdLFs1LDEwLCJxX2siXSxbMCwxMSwiIiwwLHsibGV2ZWwiOjIsInN0eWxlIjp7ImhlYWQiOnsibmFtZSI6Im5vbmUifX19XSxbMTEsMSwiaV97LTF9Il0sWzMsMTMsInBfe2stMX0iXSxbMTMsNCwiaV97ay0xfSJdLFs0LDE0LCJwX2siXSxbMTQsNSwiaV9rIl0sWzEyLDIsImlfMCJdLFsxLDEyLCJwXzAiXV0=
\[\begin{tikzcd}[sep=small]
	& {Y_{-1}} && {Y_0} &&&& {Y_{k-1}} && {Y_k} \\
	{X_{-1}} && {X_0} && {X_1} & \cdots & {X_{k-1}} && {X_k} && {X_{k+1}} \\
	& {Z_{-1}} && {Z_0} &&&& {Z_{k-1}} && {Z_k}
	\arrow["{i_{-1}}", from=1-2, to=2-3]
	\arrow["{i_0}", from=1-4, to=2-5]
	\arrow["{i_{k-1}}", from=1-8, to=2-9]
	\arrow["{i_k}", from=1-10, to=2-11]
	\arrow[equals, from=2-1, to=1-2]
	\arrow["{d_{-1}}", shift left, from=2-1, to=2-3]
	\arrow[equals, from=2-1, to=3-2]
	\arrow["{p_0}", from=2-3, to=1-4]
	\arrow["{s_{-1}}", shift left, from=2-3, to=2-1]
	\arrow["{d_0}", shift left, from=2-3, to=2-5]
	\arrow["{q_{-1}}", from=2-3, to=3-2]
	\arrow["{s_0}", shift left, from=2-5, to=2-3]
	\arrow[shift left, from=2-5, to=2-6]
	\arrow["{q_0}", from=2-5, to=3-4]
	\arrow[shift left, from=2-6, to=2-5]
	\arrow[shift left, from=2-6, to=2-7]
	\arrow["{p_{k-1}}", from=2-7, to=1-8]
	\arrow[shift left, from=2-7, to=2-6]
	\arrow["{d_{k-1}}", shift left, from=2-7, to=2-9]
	\arrow["{p_k}", from=2-9, to=1-10]
	\arrow["{s_{k-1}}", shift left, from=2-9, to=2-7]
	\arrow["{d_k}", shift left, from=2-9, to=2-11]
	\arrow["{q_{k-1}}", from=2-9, to=3-8]
	\arrow["{s_k}", shift left, from=2-11, to=2-9]
	\arrow["{q_k}", from=2-11, to=3-10]
	\arrow["{j_0}", from=3-4, to=2-3]
	\arrow["{j_{k-1}}", from=3-8, to=2-7]
	\arrow["{j_k}", from=3-10, to=2-9]
\end{tikzcd}\]
    such that
    \begin{enumerate}
        \item every space is a loop space
        \item every triangle commutes, meaning that $d_j\simeq i_jp_j$ and $s_j\simeq j_jq_j$ 
        \item $Y_j \xrightarrow{i_j} X_{j+1} \xrightarrow{p_{j+1}} Y_{j+1}$ is a fiber sequence
        \item $Z_j \xrightarrow{j_j} X_j \xrightarrow{q_{j-1}} Z_{j-1}$ is a fiber sequence
        \item the map $d_{j-1}s_{j-1} + s_jd_j$ is an equivalence for $j \leq k$
    \end{enumerate}
    Then the composites $Y_j \xrightarrow{i_j} X_{j+1} \xrightarrow{q_j} Z_j$ and $Z_j \xrightarrow{j_j} X_j \xrightarrow{p_j} Y_j$ are equivalences for all $j \leq k$.
\end{cor}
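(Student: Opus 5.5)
The corollary should follow by applying the two preceding lemmas to the same diagram and then composing their conclusions. Hypotheses (1), (2) restricted to $d_j \simeq i_jp_j$, (3) and (5) are exactly the hypotheses of the first lemma. So for every $j\le k$ the composite
\[
Y_j \xrightarrow{i_j} X_{j+1} \xrightarrow{s_j} X_j \xrightarrow{p_j} Y_j
\]
is an equivalence. Likewise, hypotheses (1), (2) restricted to $s_j\simeq j_jq_j$, (4) and (5) are those of the wrong-way lemma. That lemma shows that for every $j\le k$ the composite
\[
Z_j \xrightarrow{j_j} X_j \xrightarrow{d_j} X_{j+1} \xrightarrow{q_j} Z_j
\]
is an equivalence.

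The next step is to rewrite these composites using the factorizations in (2). In the first composite, substitute $s_j\simeq j_jq_j$. This gives the factorization
\[
Y_j \xrightarrow{q_j i_j} Z_j \xrightarrow{p_j j_j} Y_j,
\]
so $(p_jj_j)(q_ji_j)$ is an equivalence. In the second composite, substitute $d_j\simeq i_jp_j$. This gives the factorization
\[
Z_j \xrightarrow{p_j j_j} Y_j \xrightarrow{q_j i_j} Z_j,
\]
so $(q_ji_j)(p_jj_j)$ is an equivalence.

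Now set $a:=q_ji_j\colon Y_j\to Z_j$ and $b:=p_jj_j\colon Z_j\to Y_j$. Both $ba$ and $ab$ are equivalences. Hence $a$ has a left inverse $(ba)^{-1}b$ and a right inverse $b(ab)^{-1}$, so it is an equivalence. The same argument, or two-out-of-three applied to $ab$, shows that $b$ is an equivalence. These are exactly the two composites in the statement, for all $j\le k$.

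There is no real obstacle, since the substantive work sits in the two lemmas. The only point needing care is that the homotopies $d_j\simeq i_jp_j$ and $s_j\simeq j_jq_j$ are used purely up to homotopy. Being an equivalence is a homotopy-invariant property, so substituting them changes nothing.
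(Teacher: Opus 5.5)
Your proposal is correct and is essentially the paper's own proof. The paper also applies the first lemma and its wrong-way variant, then uses the triangle homotopies to rewrite those composites as $b_ja_j$ and $a_jb_j$ with $a_j=q_ji_j$ and $b_j=p_jj_j$, and concludes that both maps are equivalences because both composites are.
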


\begin{proof}
    Let $a_j := q_ji_j$ and $b_j := p_jj_j$. We must show that both maps are equivalences.

    By the first lemma, the composite $p_js_ji_j$ is an equivalence. Since the bottom triangle commutes, we have $s_j \simeq j_jq_j$, so
    \[
        p_js_ji_j \simeq p_jj_jq_ji_j \simeq b_ja_j
    \]
    Hence $b_ja_j$ is an equivalence.

    Similarly, by the wrong-way variant, the composite $q_jd_jj_j$ is an equivalence. Since the top triangle commutes, we have $d_j \simeq i_jp_j$, so
    \[
        q_jd_jj_j \simeq q_ji_jp_jj_j \simeq a_jb_j
    \]
    Hence $a_jb_j$ is an equivalence.

    Since both $b_ja_j$ and $a_jb_j$ are equivalences, it follows that both $a_j$ and $b_j$ are equivalences.
\end{proof}

\begin{cor}
	\label{remainder}
    $R_{p^k}I(S^1) \simeq \Omega^{\infty + 2k}\tau_{>0}\mathbb{Sp}^{p^k}$.
\end{cor}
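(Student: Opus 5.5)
We apply the corollary just proved, with the middle row given by the Arone--Dwyer layers. Concretely, we set
\[
X_j := \Omega^{\infty+2j}\mathbb{Sp}^{p^j}/\mathbb{Sp}^{p^{j-1}} \simeq \Omega^{\infty+1}S^1(j)
\qquad
Y_j := \Omega^{\infty+2j}\tau_{>0}\mathbb{Sp}^{p^j}
\qquad
Z_j := R_{p^j}I(S^1).
\]
We use the conventions $\mathbb{Sp}^{p^{-1}}:=0$ and $X_{-1}=Y_{-1}=Z_{-1}$ chosen so that the degenerate end of the diagram is the identity; in practice we start from $j=0$, where $Y_0\simeq Z_0\simeq\Omega^\infty\tau_{>0}\mathbb{S}$ by example \ref{remainder0}.

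The maps are as follows. The maps $i_j,p_j$ come from the rotated cofiber sequences of connected symmetric product spectra
\[
\Sigma^{-1}\mathbb{Sp}^{p^{j+1}}/\mathbb{Sp}^{p^j} \to \tau_{>0}\mathbb{Sp}^{p^j}\to\tau_{>0}\mathbb{Sp}^{p^{j+1}},
\]
after applying $\Omega^{\infty+2j}$ and shifting appropriately, so that condition (3) holds. The maps $j_j,q_j$ come from the rotated Goodwillie fiber sequences $\Omega^{\infty+1}S^1(j)\to R_{p^j}I(S^1)\to R_{p^{j-1}}I(S^1)\to\Omega^\infty S^1(j)$, which give condition (4). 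We then set $d_j:=i_jp_j$, which is the symmetric product differential $d_1^W$, and $s_j:=j_jq_j$, which is the Goodwillie differential $d_1^G$, both transported through proposition \ref{aronedwyer}. With these definitions condition (2) holds tautologically. Condition (1) holds because the $X_j$ and $Y_j$ are infinite loop spaces and each $Z_j$ is a loop space by remark \ref{remisloop}. Granting condition (5), the corollary shows that $Y_k\xrightarrow{i_k}X_{k+1}\xrightarrow{q_k}Z_k$ is an equivalence, which is exactly the claimed equivalence.

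The substance of the proof is condition (5): the map $d_{j-1}s_{j-1}+s_jd_j$ must be an equivalence on $X_j$. This is the calculus form of the Whitehead conjecture. By Behrens \cite{Behrens2011} and Kuhn \cite{Kuhn2014}, after identifying the layers $S^1(j)$ with the subquotients $\mathbb{Sp}^{p^j}/\mathbb{Sp}^{p^{j-1}}$, the Goodwillie differentials and the symmetric product differentials form a contracting homotopy pair. More precisely, Kuhn's \cite[prop. A.1]{Kuhn2014} setup yields that $d^Wd^G+d^Gd^W$ is a self-equivalence of each layer; Behrens constructs this at $p=2$ and Kuhn at all primes.

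The main obstacle is matching conventions. We must check that the maps produced by our fiber sequences really agree, up to the Arone--Dwyer equivalence and signs or units, with the differentials Kuhn uses. This is a compatibility statement: the Arone--Dwyer equivalence must intertwine the Goodwillie connecting maps with the maps used in the Behrens--Kuhn theorem. I would first try to cite Kuhn's formulation directly, in which the Goodwillie tower differentials of $S^1$ are expressed through the Arone--Dwyer model, so that the sum is an equivalence at the spectrum level. Applying $\Omega^\infty$ then gives the statement on spaces. Since the statement only asks for an equivalence, it should also suffice that the sum is an equivalence up to a unit or a self-equivalence of each layer; this gives some room if the matching is only up to automorphism. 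Finally, note that the loop-space sums appearing in condition (5) only involve maps between infinite loop spaces $X_j$, so they are well defined even though $Z_j$ is a priori only a loop space.
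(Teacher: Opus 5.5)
Your strategy is the paper's: feed the Goodwillie filtration of the remainder and the symmetric product filtration into the three-row corollary, with condition (5) supplied by the Behrens--Kuhn calculus Whitehead theorem. Your caveat about matching the Arone--Dwyer identification with Kuhn's differentials is fair; the paper simply cites it.

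\textbf{The gap.} The diagram you write down does not satisfy conditions (3) and (4), because you have put the two filtrations in the wrong rows.

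The cofiber sequence $\tau_{>0}\mathbb{Sp}^{p^j}\to\tau_{>0}\mathbb{Sp}^{p^{j+1}}\to\mathbb{Sp}^{p^{j+1}}/\mathbb{Sp}^{p^j}$ only yields fiber sequences $\Omega^{\infty+m}\tau_{>0}\mathbb{Sp}^{p^{j+1}}\to\Omega^{\infty+m}\mathbb{Sp}^{p^{j+1}}/\mathbb{Sp}^{p^j}\to\Omega^{\infty+m-1}\tau_{>0}\mathbb{Sp}^{p^j}$. These have the shape $Z_{j+1}\to X_{j+1}\to Z_j$ of condition (4). The sequence provides no map from $\tau_{>0}\mathbb{Sp}^{p^j}$ to the next layer, nor from that layer to $\tau_{>0}\mathbb{Sp}^{p^{j+1}}$, as your $i_j$ and $p_{j+1}$ would require.

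Dually, the Goodwillie sequences yield $\Omega^{m+1}R_{p^j}I(S^1)\to\Omega^{\infty+m+1}S^1(j+1)\to\Omega^{m}R_{p^{j+1}}I(S^1)$. These have the shape $Y_j\to X_{j+1}\to Y_{j+1}$ of condition (3). Your condition (4) would instead need maps $R_{p^j}I(S^1)\to\Omega^{\infty+1}S^1(j)\to R_{p^{j-1}}I(S^1)$, which do not exist.

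Consistently, $d_1^G$ raises the filtration index and $d_1^W$ lowers it. So $d_j$ must be $d_1^G$ and $s_j$ must be $d_1^W$, the opposite of your assignment. Since you keep the index direction, swapping the labels $Y$ and $Z$ is not a symmetry of the corollary.

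There is also a degree problem. The middle row cannot be $\Omega^{\infty+1}S^1(j)$ for every $j$: $d_1^G$ goes from $\Omega^{\infty+1}S^1(j)$ to $\Omega^{\infty}S^1(j+1)$ and $d_1^W$ goes back. The loop count must therefore drop by one at each step, and the other two rows must be looped to match.

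\textbf{The repair} is the paper's assignment:
$X_j=\Omega^{\infty+k-j+1}S^1(j)\simeq\Omega^{\infty+k+j}\mathbb{Sp}^{p^j}/\mathbb{Sp}^{p^{j-1}}$,
$Y_j=\Omega^{k-j}R_{p^j}I(S^1)$,
$Z_j=\Omega^{\infty+k+j}\tau_{>0}\mathbb{Sp}^{p^j}$,
with $d_j=d_1^G$ and $s_j=d_1^W$.

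The degenerate end must be made explicit: $X_{-1}=Y_{-1}=Z_{-1}=\Omega^{k+1}S^1$, with $d_{-1}$ the stabilization $\Omega^{k+1}S^1\to\Omega^{k+1}S^1_0$ and $s_{-1}$ the looped retraction $S^1_0\to S^1$. This end cannot be left vague. Condition (5) at $j=-1$ is exactly the statement that $s_{-1}d_{-1}$ is an equivalence, and at $j=0$ it involves $d_{-1}s_{-1}$. Starting the induction at $j=0$ from an abstract equivalence $Y_0\simeq Z_0$ does not give the specific composite the inductive lemma needs.

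With these choices all terms are loop spaces by remark \ref{remisloop}. At $j=k$ one gets $Y_k=R_{p^k}I(S^1)$ and $Z_k=\Omega^{\infty+2k}\tau_{>0}\mathbb{Sp}^{p^k}$. The equivalence $q_ki_k$ passes through $\Omega^{\infty}S^1(k+1)$, and the other composite $p_kj_k$, passing through $\Omega^{\infty+1}S^1(k)$, is also an equivalence.
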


\begin{proof}
    Using the Goodwillie filtration for the remainder of $S^1$ and the symmetric product filtration, as discussed before the lemmas, we obtain the following diagram.

\adjustbox{scale=0.90,center}{
% https://q.uiver.app/#q=WzAsMTYsWzAsMSwiXFxPbWVnYV57aysxfVNeMSJdLFsyLDEsIlxcT21lZ2Fee1xcaW5mdHkraysxfVNeMSgwKSJdLFs0LDEsIlxcT21lZ2Fee1xcaW5mdHkra31TXjEoMSkiXSxbMSw0LCJcXE9tZWdhXntcXGluZnR5KzJ9U14xKGstMSkiXSxbMyw0LCJcXE9tZWdhXntcXGluZnR5KzF9U14xKGspIl0sWzUsNCwiXFxPbWVnYV5cXGluZnR5IFNeMShrKzEpIl0sWzEsMiwiXFxPbWVnYV57aysxfVNeMSJdLFszLDIsIlxcT21lZ2Fee1xcaW5mdHkgKyBrfVxcdGF1X3s+MH1cXG1hdGhiYntTfSJdLFsyLDUsIlxcT21lZ2Fee1xcaW5mdHkgKyAyayAtIDF9XFx0YXVfez4wfVxcbWF0aGJie1NwfV57cF57ay0xfX0iXSxbNSwxLCJcXGNkb3RzIl0sWzQsNSwiXFxPbWVnYV57XFxpbmZ0eSArIDJrfVxcdGF1X3s+MH1cXG1hdGhiYntTcH1ee3Bea30iXSxbMSwwLCJcXE9tZWdhXntrKzF9U14xIl0sWzMsMCwiXFxPbWVnYV5rUl8xSShTXjEpIl0sWzIsMywiXFxPbWVnYSBSX3twXntrLTF9fUkoU14xKSJdLFs0LDMsIlJfe3Bea31JKFNeMSkiXSxbMCw0LCJcXGNkb3RzIl0sWzAsMSwiIiwwLHsib2Zmc2V0IjotMX1dLFsxLDIsImRfMV5HIiwwLHsib2Zmc2V0IjotMX1dLFszLDQsImRfMV5HIiwwLHsib2Zmc2V0IjotMX1dLFsxLDAsIiIsMCx7Im9mZnNldCI6LTF9XSxbMiwxLCJkXzFeVyIsMCx7Im9mZnNldCI6LTF9XSxbNCwzLCJkXzFeVyIsMCx7Im9mZnNldCI6LTF9XSxbMSw2XSxbNywxXSxbMiw3XSxbMCw2LCIiLDAseyJsZXZlbCI6Miwic3R5bGUiOnsiaGVhZCI6eyJuYW1lIjoibm9uZSJ9fX1dLFs4LDNdLFsyLDksIiIsMCx7Im9mZnNldCI6LTF9XSxbOSwyLCIiLDAseyJvZmZzZXQiOi0xfV0sWzQsOF0sWzEwLDRdLFs1LDEwXSxbMCwxMSwiIiwwLHsibGV2ZWwiOjIsInN0eWxlIjp7ImhlYWQiOnsibmFtZSI6Im5vbmUifX19XSxbMTEsMV0sWzMsMTNdLFsxMyw0XSxbNCwxNF0sWzE0LDVdLFsxMiwyXSxbMSwxMl0sWzQsNSwiZF8xXkciLDAseyJvZmZzZXQiOi0xfV0sWzUsNCwiZF8xXlciLDAseyJvZmZzZXQiOi0xfV0sWzMsMTUsIiIsMCx7Im9mZnNldCI6LTF9XSxbMTUsMywiIiwwLHsib2Zmc2V0IjotMX1dXQ==
\begin{tikzcd}[column sep=small]
	& {\Omega^{k+1}S^1} && {\Omega^kR_1I(S^1)} && \\
	{\Omega^{k+1}S^1} && {\Omega^{\infty+k+1}S^1(0)} && {\Omega^{\infty+k}S^1(1)} & \cdots \\
	& {\Omega^{k+1}S^1} && {\Omega^{\infty + k}\tau_{>0}\mathbb{S}} \\
	&& {\Omega R_{p^{k-1}}I(S^1)} && {R_{p^k}I(S^1)} \\
	\cdots & {\Omega^{\infty+2}S^1(k-1)} && {\Omega^{\infty+1}S^1(k)} && {\Omega^\infty S^1(k+1)} \\
	&& {\Omega^{\infty + 2k - 1}\tau_{>0}\mathbb{Sp}^{p^{k-1}}} && {\Omega^{\infty + 2k}\tau_{>0}\mathbb{Sp}^{p^k}}
	\arrow[from=1-2, to=2-3]
	\arrow[from=1-4, to=2-5]
	\arrow[equals, from=2-1, to=1-2]
	\arrow[shift left, from=2-1, to=2-3]
	\arrow[equals, from=2-1, to=3-2]
	\arrow[from=2-3, to=1-4]
	\arrow[shift left, from=2-3, to=2-1]
	\arrow["{d_1^G}", shift left, from=2-3, to=2-5]
	\arrow[from=2-3, to=3-2]
	\arrow["{d_1^W}", shift left, from=2-5, to=2-3]
	\arrow[shift left, from=2-5, to=2-6]
	\arrow[from=2-5, to=3-4]
	\arrow[shift left, from=2-6, to=2-5]
	\arrow[from=3-4, to=2-3]
	\arrow[from=4-3, to=5-4]
	\arrow[from=4-5, to=5-6]
	\arrow[shift left, from=5-1, to=5-2]
	\arrow[from=5-2, to=4-3]
	\arrow[shift left, from=5-2, to=5-1]
	\arrow["{d_1^G}", shift left, from=5-2, to=5-4]
	\arrow[from=5-4, to=4-5]
	\arrow["{d_1^W}", shift left, from=5-4, to=5-2]
	\arrow["{d_1^G}", shift left, from=5-4, to=5-6]
	\arrow[from=5-4, to=6-3]
	\arrow["{d_1^W}", shift left, from=5-6, to=5-4]
	\arrow[from=5-6, to=6-5]
	\arrow[from=6-3, to=5-2]
	\arrow[from=6-5, to=5-4]
\end{tikzcd}
}

	By remark \ref{remisloop}, all terms in this diagram are loop spaces. It satisfies all the hypotheses of the previous corollary except condition (5), which is the only non-formal point. In the present case, condition (5) is precisely the statement that
    \[
        d_1^Wd_1^G + d_1^Gd_1^W
    \]
    is an equivalence. This follows from the solution of the Whitehead conjecture \cite{Kuhn2014,Behrens2011}. The previous corollary therefore shows that the composite
    \[
        R_{p^k}I(S^1) \longrightarrow \Omega^{\infty+2k}\tau_{>0}\mathbb{Sp}^{p^k}
    \]
    is an equivalence.
\end{proof}

\begin{rem}
	The proof above is somewhat indirect because it is phrased in terms of the exactness statement provided by the Whitehead conjecture. The intuition behind the result is simpler. By induction on $k$, the fiber sequence
	\[
	R_{p^k}I(S^1)
	\longrightarrow
	R_{p^{k-1}}I(S^1)
	\longrightarrow
	\Omega^\infty S^1(k)
	\]
	may be identified, using Arone--Dwyer's equivalence (prop. \ref{aronedwyer}), with a fiber sequence of the form
	\[
	R_{p^k}I(S^1)
	\longrightarrow
	\Omega^{\infty+2k-2}\tau_{>0}\mathbb{Sp}^{p^{k-1}}
	\longrightarrow
	\Omega^{\infty+2k-1}\mathbb{Sp}^{p^k}/\mathbb{Sp}^{p^{k-1}}
	\]
	The Whitehead conjecture says, in particular, that the right-hand map is injective on homotopy groups. The stronger intuition is that this map should actually be split injective, with retraction induced by the boundary map in the symmetric product filtration
	\[
	\Sigma^{1-2k}\mathbb{Sp}^{p^k}/\mathbb{Sp}^{p^{k-1}}
	\longrightarrow
	\Sigma^{2-2k}\tau_{>0}\mathbb{Sp}^{p^{k-1}}
	\]
	Assuming this splitting, part (1) of the splitting lemma (lem. \ref{splittinglemma}) gives
	\[
	R_{p^k}I(S^1)
	\simeq
	\Omega\,\fib\!\left(
	\Omega^{\infty+2k-1}\mathbb{Sp}^{p^k}/\mathbb{Sp}^{p^{k-1}}
	\longrightarrow
	\Omega^{\infty+2k-2}\tau_{>0}\mathbb{Sp}^{p^{k-1}}
	\right)
	\simeq
	\Omega^{\infty+2k}\tau_{>0}\mathbb{Sp}^{p^k}
	\]
	This is exactly the argument used in examples \ref{remainder0} and \ref{remainder1} in the cases $k=0,1$. The role of the proof of corollary \ref{remainder} is to make this splitting argument precise in general, using the exactness statement supplied by the Whitehead conjecture.
\end{rem}

\begin{rem}
    It is worth noting that this result immediately recovers two familiar convergence properties. First, the connectivity of the remainder tends to infinity as $k\to\infty$ (lem. \ref{homotopysp}), so the approximation map $S^1 \to S^1_k$ converges in the usual sense. Second, since $\tau_{>0}\mathbb{Sp}^{p^k}$ is $K(i)$-acyclic for $i\leq k$ (prop. \ref{symmprodkn}), the approximation map $S^1 \to S^1_k$ is also an equivalence in $v_i$-periodic homotopy for $i\leq k$, as shown in \cite[thm. 4.1]{AroneMahowald}.
\end{rem}

Before moving to the next section and exploiting this result for computations, we record a purely theoretical consequence of the previous analysis.

\begin{prop}
	\label{ext}
	After sufficiently many loops, the approximation spheres $S^n_k$ are finite extensions of
	\[
	\mathbb{Z},\quad
	\Omega^\infty\tau_{>0}\mathbb{S},\quad
	\dots,\quad
	\Omega^\infty\tau_{>0}\mathbb{Sp}^{p^k}
	\]
\end{prop}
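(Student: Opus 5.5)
The plan is to induct on the dimension $n$ and on $k$ simultaneously, using the calculus EHP and Gray sequences to reduce everything to the circle. For the circle, Theorem \ref{model} already gives the answer. The statement should be read as follows: for some $h$, the space $\Omega^h S^n_k$ is built by finitely many fiber sequences out of deloopings or loopings of the listed spaces, all of which are infinite loop spaces:
\[
\Omega^{\infty+a}\tau_{>0}\mathbb{Sp}^{p^j}, \qquad j\le k,\quad a\in\mathbb{Z},
\]
together with copies of $\mathbb{Z}$. I will call such a space a \emph{finite extension of level $\le k$}. This class is closed under finite products, under loops, and under taking fibers or total spaces of fiber sequences whose other two terms lie in the class, after looping once more to make everything connected where needed. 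Since $\Omega^h$ of a finite extension is again one, it suffices to show that some loop space of $S^n_k$ lies in the class.

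For $n=1$, Theorem \ref{model} gives $\Omega S^1_k \simeq \mathbb{Z}\times \Omega^{\infty+2k}\tau_{>0}\mathbb{Sp}^{p^k}$. To see the lower stages explicitly, I will also use the fiber sequences $\tau_{>0}\mathbb{Sp}^{p^{j-1}}\to\tau_{>0}\mathbb{Sp}^{p^j}\to \mathbb{Sp}^{p^j}/\mathbb{Sp}^{p^{j-1}}$. Together with Proposition \ref{aronedwyer}, these identify the layers as built from the $\tau_{>0}\mathbb{Sp}^{p^j}$. The base case $k=0$ is $S^n_0\simeq\Omega^\infty\mathbb{S}^n$. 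Its $n$-fold loop space is $\Omega^\infty\mathbb{S}$, which is an extension of $\mathbb{Z}$ by $\Omega^\infty\tau_{>0}\mathbb{S}$.

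For odd $n=2m+1$ with $m\ge1$, I will use the looped loop-suspension filtration. The double suspension gives the fiber sequence $W^{2m-1}_k\to S^{2m-1}_k\to\Omega^2S^{2m+1}_k$, so $\Omega^{2m+1}S^{2m+1}_k$ is an extension of $\Omega^{2m-1}S^{2m-1}_k$ by a delooping of $\Omega^{2m-2}W^{2m-1}_k$. By Theorem \ref{graysequence}, $W^{2m-1}_k$ is the fiber of a map $\Omega^3 S^{2mp+1}_{k-1}\to\Omega S^{2mp-1}_{k-1}$, which involves odd spheres of level $k-1$. An induction on $k$ (inner induction on odd $n$) then expresses $\Omega^{h}S^{n}_k$ for odd $n$ in terms of $\Omega S^1_k$ and level-$(k-1)$ odd spheres. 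The level-$(k-1)$ spheres are finite extensions of level $\le k-1$ by the inductive hypothesis. Even spheres are then handled by Proposition \ref{EHPcalc}: the fiber sequence $S^{n-1}_k\to\Omega S^n_k\to\Omega S^{2n-1}_{k-1}$ (or the splitting for $p>2$) exhibits $\Omega S^n_k$ as an extension of two odd approximation spheres.

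The main obstacle is bookkeeping of looping degrees and delooping. A fiber sequence $F\to E\to B$ exhibits $E$ as an extension only after we know $F$ and $B$ are in the class. The loop depth needed grows at each stage, with roughly $2n$ loops for the loop-suspension filtration plus the extra loops from the Gray sequence. Since each induction is finite, finitely many loops suffice overall, so choosing $h$ large enough, depending on $n$ and $k$, resolves this. I would also check that the layers $\Omega^\infty S^1(j)$ appearing at intermediate stages are themselves extensions of the $\tau_{>0}\mathbb{Sp}^{p^j}$ via Proposition \ref{aronedwyer}, so that no other spectra enter.
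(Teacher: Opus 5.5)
Your proposal is correct and follows the same route as the paper. You reduce even spheres to odd ones via the calculus EHP sequence, then induct on $k$ and odd $n$ using the double-suspension fiber sequence and the calculus Gray sequence, with base cases $k=0$ (where $\Omega^nS^n_0\simeq\Omega^\infty\mathbb{S}$) and $n=1$ (Theorem~\ref{model}). There is one small slip: the base of the looped double-suspension sequence is $\Omega^{2m-2}W^{2m-1}_k$ itself, obtained by looping $\Omega S^{2m-1}_k\to\Omega^3S^{2m+1}_k\to W^{2m-1}_k$ a further $2m-2$ times, not a delooping of it; also, the detour through the layers $S^1(j)$ is not needed.
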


\begin{proof}
	We first reduce to the case where $n$ is odd. Indeed, by the calculus version of the EHP sequence (prop. \ref{EHPcalc}), an even approximation sphere is an extension of two odd approximation spheres of lesser or equal approximation degree.

	Now assume that $n$ is odd. We argue by induction on $k$ and $n$. Here $\langle X_1,\dots,X_m\rangle$ denotes the class generated by $X_1,\dots,X_m$ under iterated finite extensions and equivalences. The inductive step is provided by the calculus version of the Gray sequence (thm. \ref{graysequence}), since
	\[
	\Omega^3S^n_k
	\in
	\bigl\langle \Omega S^{n-2}_k, W^{n-2}_k \bigr\rangle
	\subseteq
	\bigl\langle
	\Omega S^{n-2}_k,
	\Omega S^{(n-1)p-1}_{k-1},
	\Omega^3S^{(n-1)p+1}_{k-1}
	\bigr\rangle
	\]
	The base case $k=0$ is immediate, since $\Omega^nS^n_0 \simeq \Omega^\infty\mathbb{S}$ for all $n$. The base case $n=1$ is handled by the explicit model for $S^1_k$ (thm. \ref{model}).
\end{proof}

\begin{rem}
	This proposition already follows formally from the EHP and Gray sequences together with the work of Arone--Dwyer, without using the explicit model for $S^1_k$.
\end{rem}

\begin{ques}
	Can the hypothesis ``after sufficiently many loops'' be removed?
\end{ques}

We conclude this subsection with a speculation about the approximations to $S^0$. As a starting point, consider the case $p=2$ and $k=1$. There is a fiber sequence
\[
S^0_1
\longrightarrow
S^0_0 \simeq \Omega^\infty\mathbb{S}
\longrightarrow
B\Omega^\infty S^0(1) \simeq \Omega^\infty\mathbb{RP}^\infty_+
\]
Postcomposing the right-hand map with the projection $\mathbb{RP}^\infty_+ \to \mathbb{RP}^\infty$ gives precisely the map appearing in the Goodwillie tower of $S^1$, namely essentially the inverse to the reduced Kahn--Priddy map. In particular, the right-hand map is injective on $\pi_*$ for $*>0$. In degree $0$, the induced map $\mathbb{Z}\to\mathbb{Z}$ is given by $n\mapsto \binom{n}{2}$, although we omit the proof. Since this vanishes only for $n=0,1$, this suggests that
\[
S^0_1 \simeq S^0 \times \Omega^{\infty+1}\mathbb{RP}^\infty
\]
For a broader perspective at $p=2$, consider the calculus version of the EHP sequence (prop. \ref{EHPcalc})
\[
S^0_k
\longrightarrow
\Omega S^1_k
\longrightarrow
\Omega S^1_{k-1}
\]
Suppose that, in positive degrees, the right-hand map is null, while in degree $0$ it behaves as above. This would imply that
\[
S^0_k \simeq S^0 \times \Omega^{\infty+1}S^1(k)
\]
We conclude this subsection with the following question.

\begin{ques}
    Is there a similar model for $S^0_k$? Is it true that $S^0_k \simeq S^0 \times \Omega^{\infty+1}S^1(k)$ at $p=2$?
\end{ques}

This question is essentially equivalent to a question of Dwyer, recorded by Arone and Lesh in the discussion following conjecture 2.28 of \cite{AroneLesh2009}. At the prime $2$, the layers of the Goodwillie tower of the identity evaluated at $S^0$ are
$$
    S^0(k) \simeq \Omega^{\infty+k}M(k)
    \simeq
    \Omega^{\infty+k}L(k-1)
    \times
    \Omega^{\infty+k}L(k)
$$
where $M(k)$ denotes the $k^\text{th}$ spectrum in the mod $2$ Whitehead complex and $M(k)\simeq L(k-1) \oplus L(k)$ is its Mitchell--Priddy splitting. Dwyer's question asks whether the Goodwillie tower of $S^0$ provides a contracting homotopy for the mod $2$ Whitehead complex. If so, the same argument used to prove the model for the approximations to $S^1$ would give
$$
    S^0_k \simeq S^0 \times \Omega^{\infty+k}L(k)
$$
Finally, using the identifications
$$
    L(k) \simeq
    \Sigma^{-k}\mathbb{Sp}^{2^k}/\mathbb{Sp}^{2^{k-1}}
    \qquad
    S^1(k) \simeq
    \Sigma^{1-2k}\mathbb{Sp}^{2^k}/\mathbb{Sp}^{2^{k-1}}
$$
this becomes
$$
    S^0_k \simeq S^0 \times \Omega^{\infty+1}S^1(k)
$$
which is precisely the conjectural equivalence above.

\begin{rem}
	\label{convS0}
    This conjecture would also imply that the Goodwillie tower of $S^0$ converges at the prime $2$. This is raised as a question by Arone and Lesh in the same discussion, and appears to be unknown in the literature. We can nevertheless prove convergence independently of the conjecture. Consider the calculus version of the EHP sequence
    $$
        S^0_k
        \longrightarrow
        \Omega S^1_k
        \longrightarrow
        \Omega S^1_{k-1}
    $$
    Passing to the limit over $k$ and using convergence for $S^1$, we obtain a fiber sequence
    $$
        \mathrm{lim}_k^{} S^0_k
        \longrightarrow
        \Omega S^1
        \xlongrightarrow{H}
        \Omega S^1
    $$
    where $H$ is the Hopf map in the EHP sequence. Hence
    $$
        \mathrm{lim}_k^{} S^0_k
        \simeq
        \mathrm{fib}\;H
        \simeq
        S^0
    $$
    Therefore the Goodwillie tower of $S^0$ converges at the prime $2$. This argument was suggested to us by Gijs Heuts.
\end{rem}

\section{Computations}

In this section we illustrate the computational power of the model for the approximation circles $S^1_k$ (thm. \ref{model}). The guiding theme is that many unstable computations can be recovered from simpler stable ones once the contribution of $S^1_k$ has been isolated and understood through the model.

We begin by revisiting Behrens' computation of $\pi_*S^1_k$ \cite[6.5.5]{Behrens2010}. Using our model, these groups can be derived directly from the Adams spectral sequence for symmetric product spectra, providing both a conceptual simplification and additional structural information.

We then turn to the metastable homotopy groups of spheres. Here the model leads to a resolution that is lighter than the classical Goodwillie tower, and to a spectral sequence whose first page and differentials are stable. This allows us to reinterpret the metastable computations of \cite{Behrens2010,Metastable} as a problem in stable homotopy theory.

Finally, we address the homotopy groups of $S^3$. Once again, the model for the approximation circles isolates the contribution of $S^1_k$, reducing the problem to the computation of the related approximation $W^1_k$, which has a shorter Goodwillie filtration.

\subsection{Comparison with Behrens' computations}

The first application of the model is to recover the computation of $\pi_*S^1_k$ at $p=2$, originally carried out by Behrens \cite[6.5.5]{Behrens2010}. In that work, the \textit{Goodwillie spectral sequence} is used to obtain the result. For the reader's convenience, we reproduce the computation in the table below.
\begin{center}
{\footnotesize The groups $\pi_{1+*}S^1_k$ as computed by Behrens \cite[6.5.5]{Behrens2010}} \\[0.5em]
\adjustbox{scale=0.46,center}{
\begin{tikzcd}[sep = small]
    \hline
	3 & {1(\infty)} \\
    \hline
	2 & {1(\infty)} &&&&&&&&& {\sigma[3, 1]} &&&&&&& {\theta_3[3, 1]} & {\theta_3[4, 1]} &&& {\nu^*[3, 1]} \\
    \hline
	1 & {1(\infty)} &&& {\nu[1]} &&& {\nu^2[1]} & {\sigma[1]} & {\sigma\eta[1]} & \begin{array}{c} \sigma\eta^2[1] \\ \sigma\eta[2] \\ \sigma[3] \end{array} &&&&& {\theta_3[1]} & \begin{array}{c} \theta_3[2] \\ \kappa[2] \end{array} & \begin{array}{c} \theta_3[3] \\ \eta_4[1] \end{array} & \begin{array}{c} \eta\eta_4[1] \\ \eta_4[2] \\ \theta_3[4] \end{array} & \begin{array}{c} \nu\kappa[2] \\ \nu^*[1] \end{array} & {\bar\sigma[1]} & \begin{array}{c} \bar\kappa[1] \\ \bar\sigma[2] \\ \nu^*[3] \end{array} \\
    \hline
	0 & {1(\infty)} & \eta & {\eta^2} & \begin{array}{c} 4\nu \\ 2\nu \\ \nu \end{array} &&& {\nu^2} & \begin{array}{c} 8\sigma \\ 4\sigma \\ 2\sigma \\ \sigma \end{array} & \begin{array}{c} \epsilon \\ \sigma\eta \end{array} & \begin{array}{c} \epsilon\eta \\ \sigma\eta^2 \\ \alpha_5 \end{array} & {\eta\alpha_5} & \begin{array}{c} \alpha_6 \\ \alpha_{6/2} \\ \alpha_{6/3} \end{array} &&& \begin{array}{c} \kappa \\ \theta_3 \end{array} & \begin{array}{c} \eta\kappa \\ \alpha_8 \\ \alpha_{8/2} \\ \alpha_{8/3} \\ \alpha_{8/4} \\ \alpha_{8/5} \end{array} & \begin{array}{c} \eta_4 \\ \eta\alpha_{8/5} \end{array} & \begin{array}{c} \eta\eta_4 \\ \eta^2\alpha_{8/5} \\ \nu\kappa \\ \alpha_9 \end{array} & \begin{array}{c} \eta\alpha_9 \\ 4\nu^* \\ 2\nu^* \\ \nu^* \end{array} & \begin{array}{c} \bar\sigma \\ \alpha_{10} \\ \alpha_{10/2}\\ \alpha_{10/3} \end{array} & \begin{array}{c} 4\bar\kappa \\ 2\bar\kappa \\ \bar\kappa  \end{array} \\
    \hline
	\sfrac{k}{*} & 0 & 1 & 2 & 3 & 4 & 5 & 6 & 7 & 8 & 9 & 10 & 11 & 12 & 13 & 14 & 15 & 16 & 17 & 18 & 19 & 20
\end{tikzcd}
}
\end{center}
Using the equivalence
\[
\Omega S^1_k \simeq \mathbb{Z} \times \Omega^{\infty+2k}\tau_{>0}\mathbb{Sp}^{p^k}
\]
of theorem \ref{model}, we can recover these homotopy groups directly from the Adams spectral sequence for the symmetric product spectra. For $k=0$, this reduces to the classical computation of $\pi_*\mathbb{S}$. For $k=3$, the result follows instead from a connectivity argument, since $c(1,4)=23>20$. We therefore carry out the computation only for $k=1$ and $k=2$, displaying the corresponding Adams spectral sequence charts below. For $k=1$, the first three nontrivial homotopy groups can also be read off from \cite{NakaokaSp2}.

\begin{center}
{\footnotesize $E_2$-page of the ASS for $\mathbb{Z} \oplus \Sigma^{-2}\tau_{>0}\mathbb{Sp}^2$}
\adjustbox{scale=0.68,center}{
\begin{tikzcd}[sep=small]
	5 & \bullet \\
	4 & \bullet &&&&&&&&&&&&&&&&&& \bullet && \bullet \\
	3 & \bullet &&&&&&&&& \bullet &&&&&& \bullet && \bullet && \bullet & \bullet \\
	2 & \bullet &&&&&& \bullet && \bullet & \bullet &&&&& \bullet && {\bullet\bullet} & \bullet & \bullet && \bullet \\
	1 & \bullet &&& \bullet &&&& \bullet && \bullet &&&&&& \bullet && \bullet \\
	0 & \bullet \\
	& 0 & 1 & 2 & 3 & 4 & 5 & 6 & 7 & 8 & 9 & 10 & 11 & 12 & 13 & 14 & 15 & 16 & 17 & 18 & 19 & 20
	\arrow[no head, from=1-2, to=2-2]
	\arrow[no head, from=2-2, to=3-2]
	\arrow[no head, from=3-2, to=4-2]
	\arrow[no head, from=3-11, to=4-11]
	\arrow[no head, from=3-17, to=2-20]
	\arrow[no head, from=3-19, to=4-19]
	\arrow[no head, from=4-2, to=5-2]
	\arrow[no head, from=4-8, to=3-11]
	\arrow[no head, from=4-10, to=3-11]
	\arrow[no head, from=4-11, to=5-11]
	\arrow[no head, from=4-18, to=3-19]
	\arrow[no head, from=4-19, to=5-19]
	\arrow[no head, from=5-2, to=6-2]
	\arrow[no head, from=5-5, to=4-8]
	\arrow[no head, from=5-9, to=4-10]
	\arrow[no head, from=5-17, to=4-18]
	\arrow[no head, from=5-17, to=4-20]
	\arrow[no head, from=5-19, to=4-22]
\end{tikzcd}}
\end{center}

\begin{center}
{\footnotesize $E_2$-page of the ASS for $\mathbb{Z} \oplus \Sigma^{-4}\tau_{>0}\mathbb{Sp}^4$}
\adjustbox{scale=0.68,center}{
% https://q.uiver.app/#q=WzAsMzcsWzEsNiwiMCJdLFsyLDYsIjEiXSxbMyw2LCIyIl0sWzQsNiwiMyJdLFs1LDYsIjQiXSxbNiw2LCI1Il0sWzcsNiwiNiJdLFs4LDYsIjciXSxbMCw1LCIwIl0sWzAsNCwiMSJdLFswLDMsIjIiXSxbMCwyLCIzIl0sWzksNiwiOCJdLFsxMCw2LCI5Il0sWzExLDYsIjEwIl0sWzEyLDYsIjExIl0sWzEzLDYsIjEyIl0sWzE0LDYsIjEzIl0sWzE1LDYsIjE0Il0sWzE2LDYsIjE1Il0sWzE3LDYsIjE2Il0sWzE4LDYsIjE3Il0sWzE5LDYsIjE4Il0sWzIwLDYsIjE5Il0sWzIxLDYsIjIwIl0sWzEsNSwiXFxidWxsZXQiXSxbMSw0LCJcXGJ1bGxldCJdLFsxLDMsIlxcYnVsbGV0Il0sWzEsMiwiXFxidWxsZXQiXSxbMCwxLCI0Il0sWzAsMCwiNSJdLFsxLDEsIlxcYnVsbGV0Il0sWzEsMCwiXFxidWxsZXQiXSxbMTAsNCwiXFxidWxsZXQiXSxbMTcsMywiXFxidWxsZXQiXSxbMTgsNCwiXFxidWxsZXQiXSxbMjEsMywiXFxidWxsZXQiXSxbMzIsMzEsIiIsMCx7InN0eWxlIjp7ImhlYWQiOnsibmFtZSI6Im5vbmUifX19XSxbMzEsMjgsIiIsMCx7InN0eWxlIjp7ImhlYWQiOnsibmFtZSI6Im5vbmUifX19XSxbMjgsMjcsIiIsMCx7InN0eWxlIjp7ImhlYWQiOnsibmFtZSI6Im5vbmUifX19XSxbMjcsMjYsIiIsMCx7InN0eWxlIjp7ImhlYWQiOnsibmFtZSI6Im5vbmUifX19XSxbMjYsMjUsIiIsMCx7InN0eWxlIjp7ImhlYWQiOnsibmFtZSI6Im5vbmUifX19XSxbMzUsMzYsIiIsMix7InN0eWxlIjp7ImhlYWQiOnsibmFtZSI6Im5vbmUifX19XV0=
\begin{tikzcd}[sep=small]
	5 & \bullet \\
	4 & \bullet \\
	3 & \bullet \\
	2 & \bullet &&&&&&&&&&&&&&&& \bullet &&&& \bullet \\
	1 & \bullet &&&&&&&&& \bullet &&&&&&&& \bullet \\
	0 & \bullet \\
	& 0 & 1 & 2 & 3 & 4 & 5 & 6 & 7 & 8 & 9 & 10 & 11 & 12 & 13 & 14 & 15 & 16 & 17 & 18 & 19 & 20
	\arrow[no head, from=1-2, to=2-2]
	\arrow[no head, from=2-2, to=3-2]
	\arrow[no head, from=3-2, to=4-2]
	\arrow[no head, from=4-2, to=5-2]
	\arrow[no head, from=5-2, to=6-2]
	\arrow[no head, from=5-19, to=4-22]
\end{tikzcd}}
\end{center}

In this range, there are no possible differentials, so the displayed pages already coincide with the $E_\infty$-term, and the resulting homotopy groups agree with those listed in the table above. The Adams spectral sequence has the additional advantage of recording all $2$-extensions, as well as the other extensions, thereby allowing us to recover the full group structure. Moreover, the computation is not intrinsically limited to these stems: in principle, it can be extended further, provided that the relevant differentials can be determined. Empirically, the homotopy groups of symmetric product spectra tend to be rather sparse, which makes such computations comparatively accessible.

We will not pursue further computations of $\pi_*S^1_k$ here, but we encourage interested readers to carry them out. To plot the Adams spectral sequences, we used Chatham's \href{https://spectralsequences.github.io/sseq/}{Adams spectral sequence calculator}. An example input file, used to compute $\mathbb{Sp}^2$ in the range considered here, is provided below and can serve as a template for the other computations.
\lstset{frame=tb,
  aboveskip=3mm,
  belowskip=3mm,
  showstringspaces=false,
  columns=flexible,
  basicstyle={\small\ttfamily},
  numbers=none,
  breaklines=true,
  breakatwhitespace=true,
  tabsize=3
}
\begin{center}
	{\footnotesize A sample input file for Chatham's Adams spectral sequence calculator, used here for $\mathbb{Sp}^2$}
\begin{lstlisting}
{
    "p": 2,
    "algebra": ["adem"],
    "type": "finitely presented module",
    "gens": {"x0": 0},
    "adem_relations": ["Sq1 x0", "Sq4 Sq2 x0", "Sq8 Sq2 x0", "Sq16 Sq2 x0", "Sq8 Sq4 x0", "Sq16 Sq4 x0", "Sq16 Sq8 x0"]
}
\end{lstlisting}
\end{center}

\subsection{Metastable computations}
\label{metastable}

The second application of the model is to the metastable homotopy groups of spheres, namely the computation of $\pi_*S^n_1$, at $p=2$. The main point is that the model gives a resolution of the metastable sphere which is smaller and more manageable than the standard Goodwillie tower, the modern name of the resolution underlying Mahowald's computations \cite{Metastable}. Indeed, the Goodwillie resolution gives
\[
\Omega^nS^n_1
\longrightarrow
\Omega^nS^n_0 \simeq \Omega^\infty\mathbb{S}
\longrightarrow
\Omega^{n-1}S^n(1) \simeq \Omega^\infty\mathbb{RP}^\infty_n
\]
By taking vertical fibers in the following diagram of fiber sequences
% https://q.uiver.app/#q=WzAsNixbMCwwLCJcXE9tZWdhXlxcaW5mdHkgXFxtYXRoYmJ7U30iXSxbMSwwLCJcXE9tZWdhXlxcaW5mdHkgXFxtYXRoYmJ7U30iXSxbMCwxLCJcXE9tZWdhXlxcaW5mdHkgXFxTaWdtYV57XFxpbmZ0eX1cXG1hdGhiYntSUH1eXFxpbmZ0eSJdLFsxLDEsIlxcT21lZ2FeXFxpbmZ0eSBcXFNpZ21hXntcXGluZnR5fVxcbWF0aGJie1JQfV5cXGluZnR5X24iXSxbMiwxLCJcXE9tZWdhXlxcaW5mdHkgXFxTaWdtYV57XFxpbmZ0eSsxfVxcbWF0aGJie1JQfV57bi0xfSJdLFsyLDAsIioiXSxbMCwxLCIiLDIseyJsZXZlbCI6Miwic3R5bGUiOnsiaGVhZCI6eyJuYW1lIjoibm9uZSJ9fX1dLFswLDJdLFsxLDNdLFsyLDNdLFszLDRdLFsxLDVdLFs1LDRdXQ==
\[\begin{tikzcd}
	{\Omega^\infty \mathbb{S}} & {\Omega^\infty \mathbb{S}} & {*} \\
	{\Omega^\infty \mathbb{RP}^\infty} & {\Omega^\infty \mathbb{RP}^\infty_n} & {\Omega^\infty \Sigma\mathbb{RP}^{n-1}}
	\arrow[equals, from=1-1, to=1-2]
	\arrow[from=1-1, to=2-1]
	\arrow[from=1-2, to=1-3]
	\arrow[from=1-2, to=2-2]
	\arrow[from=1-3, to=2-3]
	\arrow[from=2-1, to=2-2]
	\arrow[from=2-2, to=2-3]
\end{tikzcd}\]
and using the model (thm. \ref{model}), we obtain the sequence
\[
\Omega S^1_1 \simeq \mathbb{Z} \times \Omega^{\infty+2}\mathbb{Sp}^2
\longrightarrow
\Omega^nS^n_1
\longrightarrow
\Omega^\infty\mathbb{RP}^{n-1}
\]
This resolution is lighter in terms of homotopy groups, as we saw in the previous subsection, and involves less cancellation than the standard Goodwillie resolution. One can think of this as removing most of the ``noise'' responsible for differentials in the Goodwillie spectral sequence for $S^1$. This is also in line with the calculations of Behrens \cite{Behrens2010}, where most of the differentials in the Goodwillie spectral sequence for $S^n$ are shown to come from those for $S^1$.

We now show how to obtain a full metastable computation by purely stable methods. Consider the resolution
% https://q.uiver.app/#q=WzAsNyxbMCwwLCJcXE9tZWdhIFNeMV8xIl0sWzEsMCwiXFxjZG90cyJdLFsyLDAsIlxcT21lZ2FeblNebl8xIl0sWzMsMCwiXFxPbWVnYV57bisxfVNee24rMX1fMSJdLFszLDEsIlxcT21lZ2FeXFxpbmZ0eVxcbWF0aGJie1N9Xm4iXSxbNCwwLCJcXGNkb3RzIl0sWzUsMCwiXFxPbWVnYV5cXGluZnR5XFxtYXRoYmJ7U30iXSxbMCwxXSxbMSwyXSxbMiwzXSxbMyw0XSxbMyw1XSxbNSw2XV0=
\[\begin{tikzcd}
	{\Omega S^1_1} & \cdots & {\Omega^nS^n_1} & {\Omega^{n+1}S^{n+1}_1} & \cdots & {\Omega^\infty\mathbb{S}} \\
	&&& {\Omega^\infty\mathbb{S}^n}
	\arrow[from=1-1, to=1-2]
	\arrow[from=1-2, to=1-3]
	\arrow[from=1-3, to=1-4]
	\arrow[from=1-4, to=1-5]
	\arrow[from=1-4, to=2-4]
	\arrow[from=1-5, to=1-6]
\end{tikzcd}\]
The fibers are given by the calculus version of the EHP sequence (prop. \ref{EHPcalc}). Using the model, we obtain a spectral sequence
\[\begin{tikzcd}
	\pi_*\begin{array}{c} 
        \begin{Bmatrix}
         \Omega S^1_1 \simeq \mathbb{Z} \times \Omega^{\infty+2}\mathbb{Sp}^2 \\
         \mathbb{S}^n 
     \end{Bmatrix}_{n \geq 1}
     \end{array} 
     &\pi_*\mathbb{S}
	\arrow[Rightarrow, from=1-1, to=1-2]
\end{tikzcd}\]
Two remarks are immediate. First, truncating this spectral sequence at $1 \leq n < N$ gives convergence to $\Omega^NS^N_1$. Second, and more importantly, every ingredient in the construction can be worked out using purely \textit{stable} methods:
\begin{enumerate}
	\item The $E_1$-page consists entirely of stable homotopy groups: the first line records $\pi_*\mathbb{Sp}^2$, while all the other lines are copies of $\pi_*\mathbb{S}$.
	\item The portion of the spectral sequence below the first line coincides with the Atiyah--Hirzebruch spectral sequence for $\mathbb{RP}^\infty$, a stable computation carried out by Mahowald \cite{Metastable}.
	\item The remaining differentials, namely those hitting the first line, can also be understood stably. By the Whitehead conjecture, equivalently by the Kahn--Priddy theorem, every positive-degree class on the first line must eventually be killed by a differential. The problem is therefore to identify the sources of these differentials. This amounts to understanding the composite $\mathbb{Sp}^2 \to \Sigma\mathbb{RP}^\infty \to \Sigma\mathbb{RP}^\infty_n$, which is again a stable map. Concretely, for each class on the first line, we must determine the minimal $n$ for which it dies under this map. To this end, observe that the $E_2$-page of the Adams spectral sequence for $\mathbb{Sp}^2$ injects into that of $\mathbb{RP}^\infty$, by the algebraic Kahn--Priddy theorem \cite{AlgKP}. One can then track how these classes behave in the Adams spectral sequence for $\mathbb{RP}^\infty_n$. In the range of interest, every relevant class already dies in $\mathbb{RP}^\infty_5$, so any differential killing a class on the first line must have length at most four. To keep the exposition manageable, we omit the corresponding spectral sequences. For a quicker, but not entirely independent, determination, we compare with Behrens' computations from the previous subsection. For instance, the $17^{\mathrm{th}}$ stem of $S^1_1$ is generated by $\eta\eta_4[1]$, $\eta_4[2]$, and $\theta_3[4]$, and these are killed by differentials of lengths one, two, and four, respectively.
\end{enumerate}
We summarize the resulting spectral sequence up to the $18^{\mathrm{th}}$ stem below. The nontrivial differentials of length one, except those hitting the first line, are simply given by multiplication by $2$. Consequently, after the first page, the spectral sequence away from the first line becomes a vector space over $\mathbb{F}_2$. The longer differentials displayed in the figure should be interpreted as nontrivial linearly independent maps between these $\mathbb{F}_2$-vector spaces.
\begin{center}
\adjustbox{scale=0.61,center}{
\begin{tikzcd}[sep=small]
	& 0 & 1 & 2 & 3 & 4 & 5 & 6 & 7 & 8 & 9 & 10 & 11 & 12 & 13 & 14 & 15 & 16 & 17 & 18 \\
	0 & \square &&& \bullet &&& \bullet & \bullet & \bullet & {\bullet_3} &&&&& \bullet & {\bullet \bullet} & {\bullet \bullet} & {\bullet_3} & {\bullet \bullet} \\
	1 && \square & \bullet & \bullet & {\bullet_3} &&& \bullet & {\bullet_4} & {\bullet \bullet} & {\bullet \bullet \bullet} & \bullet & {\bullet_3} &&& {\bullet \bullet} & {\bullet \bullet_5} & {\bullet \bullet} & {\bullet \bullet \bullet \bullet} \\
	2 &&& \square & \bullet & \bullet & {\bullet_3} &&& \bullet & {\bullet_4} & {\bullet \bullet} & {\bullet \bullet \bullet} & \bullet & {\bullet_3} &&& {\bullet \bullet} & {\bullet \bullet_5} & {\bullet \bullet} \\
	3 &&&& \square & \bullet & \bullet & {\bullet_3} &&& \bullet & {\bullet_4} & {\bullet \bullet} & {\bullet \bullet \bullet} & \bullet & {\bullet_3} &&& {\bullet \bullet} & {\bullet \bullet_5} \\
	4 &&&&& \square & \bullet & \bullet & {\bullet_3} &&& \bullet & {\bullet_4} & {\bullet \bullet} & {\bullet \bullet \bullet} & \bullet & {\bullet_3} &&& {\bullet \bullet} \\
	5 &&&&&& \square & \bullet & \bullet & {\bullet_3} &&& \bullet & {\bullet_4} & {\bullet \bullet} & {\bullet \bullet \bullet} & \bullet & {\bullet_3} \\
	6 &&&&&&& \square & \bullet & \bullet & {\bullet_3} &&& \bullet & {\bullet_4} & {\bullet \bullet} & {\bullet \bullet \bullet} & \bullet & {\bullet_3} \\
	7 &&&&&&&& \square & \bullet & \bullet & {\bullet_3} &&& \bullet & {\bullet_4} & {\bullet \bullet} & {\bullet \bullet \bullet} & \bullet & {\bullet_3} \\
	8 &&&&&&&&& \square & \bullet & \bullet & {\bullet_3} &&& \bullet & {\bullet_4} & {\bullet \bullet} & {\bullet \bullet \bullet} & \bullet \\
	9 &&&&&&&&&& \square & \bullet & \bullet & {\bullet_3} &&& \bullet & {\bullet_4} & {\bullet \bullet} & {\bullet \bullet \bullet} \\
	10 &&&&&&&&&&& \square & \bullet & \bullet & {\bullet_3} &&& \bullet & {\bullet_4} & {\bullet \bullet} \\
	11 &&&&&&&&&&&& \square & \bullet & \bullet & {\bullet_3} &&& \bullet & {\bullet_4} \\
	12 &&&&&&&&&&&&& \square & \bullet & \bullet & {\bullet_3} &&& \bullet \\
	13 &&&&&&&&&&&&&& \square & \bullet & \bullet & {\bullet_3} \\
	14 &&&&&&&&&&&&&&& \square & \bullet & \bullet & {\bullet_3} \\
	15 &&&&&&&&&&&&&&&& \square & \bullet & \bullet & {\bullet_3} \\
	16 &&&&&&&&&&&&&&&&& \square & \bullet & \bullet \\
	17 &&&&&&&&&&&&&&&&&& \square & \bullet \\
	18 &&&&&&&&&&&&&&&&&&& \square
	\arrow[from=3-6, to=2-5]
	\arrow[from=3-9, to=2-8]
	\arrow[from=3-10, to=2-9]
	\arrow[from=3-11, to=2-10]
	\arrow[from=3-12, to=2-11]
	\arrow[from=3-17, to=2-16]
	\arrow[from=3-19, to=2-18]
	\arrow[from=3-20, to=2-19]
	\arrow[from=4-4, to=3-3]
	\arrow[from=4-7, to=3-6]
	\arrow[from=4-11, to=3-10]
	\arrow[from=4-12, to=2-11]
	\arrow[from=4-15, to=3-14]
	\arrow[from=4-18, to=2-17]
	\arrow[shift right, from=4-18, to=2-17]
	\arrow[from=4-19, to=3-18]
	\arrow[from=4-20, to=2-19]
	\arrow[from=5-12, to=2-11]
	\arrow[from=5-19, to=2-18]
	\arrow[from=6-6, to=5-5]
	\arrow[from=6-7, to=4-6]
	\arrow[from=6-8, to=4-7]
	\arrow[from=6-9, to=5-8]
	\arrow[from=6-13, to=5-12]
	\arrow[from=6-14, to=4-13]
	\arrow[shift right, from=6-14, to=4-13]
	\arrow[from=6-15, to=3-14]
	\arrow[from=6-15, to=4-14]
	\arrow[from=6-16, to=4-15]
	\arrow[from=6-17, to=5-16]
	\arrow[from=6-20, to=2-19]
	\arrow[from=7-7, to=5-6]
	\arrow[from=7-8, to=5-7]
	\arrow[from=7-14, to=5-13]
	\arrow[from=7-15, to=5-14]
	\arrow[shift right, from=7-15, to=5-14]
	\arrow[from=7-16, to=5-15]
	\arrow[from=8-8, to=7-7]
	\arrow[from=8-11, to=7-10]
	\arrow[from=8-14, to=5-13]
	\arrow[from=8-15, to=5-14]
	\arrow[from=8-15, to=7-14]
	\arrow[from=8-19, to=7-18]
	\arrow[from=10-10, to=9-9]
	\arrow[from=10-11, to=8-10]
	\arrow[from=10-12, to=8-11]
	\arrow[from=10-13, to=3-12]
	\arrow[from=10-13, to=9-12]
	\arrow[from=10-16, to=6-15]
	\arrow[from=10-17, to=9-16]
	\arrow[from=10-18, to=8-17]
	\arrow[shift right, from=10-18, to=8-17]
	\arrow[from=10-19, to=3-18]
	\arrow[from=10-19, to=7-18]
	\arrow[from=10-19, to=8-18]
	\arrow[from=10-20, to=8-19]
	\arrow[from=11-11, to=9-10]
	\arrow[from=11-12, to=9-11]
	\arrow[from=11-13, to=4-12]
	\arrow[from=11-14, to=7-13]
	\arrow[from=11-17, to=7-16]
	\arrow[from=11-18, to=9-17]
	\arrow[from=11-19, to=9-18]
	\arrow[shift right, from=11-19, to=9-18]
	\arrow[from=11-20, to=9-19]
	\arrow[from=12-12, to=11-11]
	\arrow[from=12-13, to=6-12]
	\arrow[from=12-15, to=11-14]
	\arrow[from=12-18, to=9-17]
	\arrow[from=12-19, to=9-18]
	\arrow[from=12-19, to=11-18]
	\arrow[from=13-13, to=9-12]
	\arrow[from=13-16, to=9-15]
	\arrow[from=14-14, to=13-13]
	\arrow[from=14-15, to=12-14]
	\arrow[from=14-16, to=12-15]
	\arrow[from=14-17, to=13-16]
	\arrow[from=15-15, to=13-14]
	\arrow[from=15-16, to=13-15]
	\arrow[from=15-17, to=5-16]
	\arrow[from=16-16, to=15-15]
	\arrow[from=16-17, to=7-16]
	\arrow[from=16-19, to=15-18]
	\arrow[from=17-17, to=8-16]
	\arrow[from=18-18, to=17-17]
	\arrow[from=18-19, to=16-18]
	\arrow[from=18-20, to=16-19]
	\arrow[from=19-19, to=17-18]
	\arrow[from=19-20, to=17-19]
	\arrow[from=20-20, to=19-19]
\end{tikzcd}}
{\footnotesize Metastable EHPSS} 
\end{center}
Unlike in the previous subsection, we have not determined the $2$-extensions, and hence not the full group structure, of the metastable homotopy groups. The reason is simple: at present, we do not know how to do this. As with the differentials, the problem splits into two parts: understanding the $2$-extensions among the lower rows, and understanding those involving the first row. The first reduces to studying the $2$-extensions in $\mathbb{RP}^n$, a problem we are confident can be resolved. The second, however, remains mysterious to us.
\begin{ques}
	What is the group structure of $\pi_*S^n_1$? In particular, what are the $2$-extensions that involve the first line of the spectral sequence above?
\end{ques}

\subsection{The homotopy groups of $S^3$}

The third application of the model is to simplify the computation of the homotopy groups of $S^3$. By the generalized Freudenthal theorem (thm. \ref{genfreudenthal}), it suffices to compute the groups of $S^3_k$, which fit into the fiber sequence
\[
W^1_k \longrightarrow S^1_k \longrightarrow \Omega^2S^3_k
\]
In the range of convergence, $S^1_k$ contributes only a single copy of $\mathbb{F}_p$, and this class does not survive in $S^3_k$. Hence the problem reduces to computing the homotopy groups of $W^1_k$. The advantage of $W^1_k$ over $S^3_k$ is that, while they converge to essentially the same space, $W^1_k$ has a Goodwillie filtration of one stage shorter. For instance, the metastable approximation of $S^3$ sits in a fiber sequence involving two spectra connected by an unstable map, whereas the metastable approximation of $W^1$ is itself a spectrum. In this sense, $W^1_k$ provides a smaller and more tractable model. Explicit computations will be given at the end of the section.

We begin by recording the low-dimensional homotopy groups of $S^1_k$.

\begin{lem}
    The first three nontrivial homotopy groups of $S^1_k$ are given by
    \[
    \pi_*S^1_k \cong
    \begin{cases}
        \mathbb{Z} & * = 1 \\
        \mathbb{F}_p & * = 2(p^{k+1}-1)-2k \\
		\mathbb{F}_p & * = N(p^{k+1}-1)-2k \\
        0 & * \leq N(p^{k+1}-1)-2k, \text{otherwise}
    \end{cases}
    \]
	where $N=3$ if $p=2$ and $N=4$ if $p>2$.
\end{lem}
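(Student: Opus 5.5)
This is a direct consequence of the model (thm. \ref{model}) combined with lemma \ref{homotopysp}. The plan is to read off $\pi_*S^1_k$ from $\pi_*\Omega S^1_k$ by shifting degrees by one, and then to compute $\pi_*\Omega S^1_k$ from the spectrum $\tau_{>0}\mathbb{Sp}^{p^k}$.

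First, by theorem \ref{model} we have
\[
\pi_{*}S^1_k \cong \pi_{*-1}\Omega S^1_k \cong \pi_{*-1}\bigl(\mathbb{Z} \times \Omega^{\infty+2k}\tau_{>0}\mathbb{Sp}^{p^k}\bigr).
\]
The factor $\mathbb{Z}$ is discrete, so it contributes exactly the copy of $\mathbb{Z}$ in degree $*=1$. For $*-1\geq 1$ the remaining factor gives
\[
\pi_{*}S^1_k \cong \pi_{*-1+2k}\tau_{>0}\mathbb{Sp}^{p^k},
\]
while in degree $*=1$ it contributes $\pi_{2k}\tau_{>0}\mathbb{Sp}^{p^k}$ in addition to the $\mathbb{Z}$ factor.

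Next, I read off the homotopy of $\tau_{>0}\mathbb{Sp}^{p^k}$ from lemma \ref{homotopysp}. It vanishes up to degree $2(p^{k+1}-1)-2$. In degree $2(p^{k+1}-1)-1$ it is $\mathbb{F}_p$, and in degree $N(p^{k+1}-1)-1$ it is again $\mathbb{F}_p$. All other degrees up to $N(p^{k+1}-1)-1$ vanish. Substituting $j=*-1+2k$, these two classes land in
\[
*=2(p^{k+1}-1)-2k \qquad\text{and}\qquad *=N(p^{k+1}-1)-2k,
\]
and every other degree in the range vanishes. One must also check that the degree $*=1$ term adds nothing to $\mathbb{Z}$, that is, that $\pi_{2k}\tau_{>0}\mathbb{Sp}^{p^k}=0$. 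For $k=0$ this is automatic, since $\tau_{>0}$ kills $\pi_0$. For $k\geq 1$ it follows from the inequality $2k<2(p^{k+1}-1)-1$, which holds because $p^{k+1}\geq 2^{k+1}>k+1$.

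No step here is a real obstacle: the substantive input is lemma \ref{homotopysp}, whose content is in the appendix, and theorem \ref{model}. The only care needed is the degree bookkeeping, namely the shift by one from looping and the shift by $2k$ from $\Omega^{2k}$. One should also confirm that the two stated degrees are positive and distinct, so that they do not collide with the $\mathbb{Z}$ in degree $1$. This holds because $2(p^{k+1}-1)-2k\geq 2$, which reduces to $p^{k+1}\geq k+2$.
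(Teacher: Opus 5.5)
Your proposal is correct and follows the paper's proof. Both apply theorem \ref{model} to get $\pi_*S^1_k \cong \pi_{*+2k-1}\mathbb{Sp}^{p^k}$ for $*>1$, then read off the two degrees from lemma \ref{homotopysp}; your extra check that $\pi_{2k}\tau_{>0}\mathbb{Sp}^{p^k}=0$, so that degree $1$ contributes only $\mathbb{Z}$, is a detail the paper leaves implicit.
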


\begin{proof}
    By theorem \ref{model}, there is an equivalence
    \[
    \Omega S^1_k \simeq \mathbb{Z} \times \Omega^{\infty+2k}\tau_{>0}\mathbb{Sp}^{p^k}
    \]
    Therefore, for $*>1$,
    \[
    \pi_*S^1_k \cong \pi_{*+2k-1}\mathbb{Sp}^{p^k}
    \]
    The claim now follows from lemma \ref{homotopysp}, which identifies the first two nontrivial positive homotopy groups of $\mathbb{Sp}^{p^k}$ as occurring in degrees $2(p^{k+1}-1)-1$ and $N(p^{k+1}-1)-1$, where $N=3$ if $p=2$ and $N=4$ if $p>2$.
\end{proof}

For $p>2$, the second nontrivial homotopy group is already beyond the radius of convergence of $S^3_k$, so the first one is the only one that can contribute. We will now see that this remaining class double suspends to zero, leading to the following proposition.

\begin{prop}
	\label{homotopyS3}
	For $p$ an odd prime and $k>0$, the homotopy groups of $S^3$ are given by
	\[
    \pi_{3+*}S^3 \cong 
    \begin{cases}
    \mathbb{Z} & * = 0\\
    \pi_*W^1_k \ /\  \mathbb{F}_p & * = 2(p^{k+1}-1) - 2k\\
    \pi_*W^1_k & * < 4(p^{k+1}-1) - 2(k+1), \text{otherwise}
    \end{cases}
    \]
\end{prop}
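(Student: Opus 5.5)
We would prove this by combining the generalized Freudenthal theorem with the long exact sequence of the fiber sequence $W^1_k \to S^1_k \to \Omega^2 S^3_k$, and then controlling the contribution of $S^1_k$ using the lemma on its first three nontrivial homotopy groups.

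\emph{Step 1: reduce from $S^3$ to $S^3_k$.} By theorem \ref{genfreudenthal} with $n=3$ odd, the map $S^3\to S^3_k$ is an isomorphism on $\pi_*$ for $*<4p^{k+1}-2k-3$. Writing $*$ for the stem as in the statement, this is $3+*<4p^{k+1}-2k-3$, i.e.
\[
* < 4(p^{k+1}-1)-2(k+1).
\]
This is exactly the range in the statement, so it suffices to compute $\pi_{3+*}S^3_k\cong \pi_{1+*}\Omega^2 S^3_k$ in this range.

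\emph{Step 2: the long exact sequence.} Apply $\pi_*$ to $W^1_k\to S^1_k\to\Omega^2S^3_k$ and read it in degree $1+*$:
\[
\pi_{1+*}W^1_k\to\pi_{1+*}S^1_k\to\pi_{1+*}\Omega^2S^3_k\to\pi_{*}W^1_k\to\pi_{*}S^1_k\to\cdots
\]
The lemma on $\pi_*S^1_k$ says that, besides $\pi_1=\mathbb{Z}$, the first nontrivial groups are $\mathbb{F}_p$ in degree $d:=2(p^{k+1}-1)-2k$ and then $\mathbb{F}_p$ in degree $4(p^{k+1}-1)-2k$. The second class lies in stem $4(p^{k+1}-1)-2k-1$, which is beyond the range of Step 1, so it plays no role. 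Away from degrees $1$ and $d$, and from their neighbours where $\pi_*S^1_k$ enters as a kernel or cokernel, the sequence gives $\pi_{3+*}S^3_k\cong\pi_*W^1_k$.

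\emph{Step 3: the bottom stems.} In stem $0$, the map $\pi_1 S^1_k=\mathbb{Z}\to\pi_3S^3_k$ is the double suspension on the fundamental class, which is an isomorphism. This gives $\pi_3S^3\cong\mathbb{Z}$, and it also shows that $W^1_k$ is $1$-connected in the relevant degrees: indeed $\pi_*W^1_k=0$ for small $*$, since the connectivity of $W^1(k)$ is $c(1,k)$.

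\emph{Step 4: the class in degree $d$, the main obstacle.} We need to show that $\pi_d S^1_k\to\pi_{d}\Omega^2S^3_k$ is zero. Exactness then gives two things. First, $\pi_dW^1_k\to\pi_dS^1_k=\mathbb{F}_p$ is surjective, so $\pi_{3+(d-1)}S^3\cong\pi_{d-1}W^1_k$ is untouched. Second, $\pi_{3+(d-2)}S^3_k=\pi_{1+(d-2)}\Omega^2S^3_k\cong\pi_{d-1}\cdots$ needs the indices rechecked, but the outcome should be a single quotient $\pi_*W^1_k/\mathbb{F}_p$ in one stem and isomorphisms in all other stems.

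To prove the vanishing, we would identify the generator of $\pi_dS^1_k$. Through the model (thm. \ref{model}) and lemma \ref{homotopysp}, it is the bottom class of $\Omega^{\infty+2k}\tau_{>0}\mathbb{Sp}^{p^k}$. This class comes from the bottom class of the layer $\Omega^{\infty}S^1(k+1)$ under the boundary map, so it is the image of $\pi_{c(1,k+1)}S^1(k+1)$, i.e.\ the corner class of the remainder. We would then argue as in the instability property of the Whitehead-element lemma: under the double suspension, this class maps into $\pi_{d+2}$ of the corresponding object for $S^3$, where $S^3(k+1)$ is far more connected because $c(3,k+1)=c(1,k+1)+2p^{k+1}$. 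Consequently the image in $\pi_*\Omega^2S^3_k$ lies beyond the connectivity and vanishes. Equivalently, the generator is $p$-torsion and double-suspends trivially, in line with the assertion in the text that ``this remaining class double suspends to zero''.

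The subtle point is that $S^1_k$ is not $S^1$. We must therefore make the naturality argument work on the remainder $R_{p^k}I$, using the functoriality of $R$ together with the fiber sequences $R_{p^{k+1}}\to R_{p^k}\to\Omega^\infty(-)(k+1)$ for both $S^1$ and $S^3$. Once this is done, we read off the degree in which the quotient by $\mathbb{F}_p$ appears and match it with the degree $d$ given in the statement.
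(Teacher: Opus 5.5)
\textbf{There is a genuine gap in Step 4.} Your Steps 1--3 are fine, and you isolate exactly what has to be proved: that $\pi_dS^1_k\to\pi_d\Omega^2S^3_k$ vanishes, where $d=2(p^{k+1}-1)-2k=c(1,k+1)+1$. But Step 4 does not prove it, because the naturality argument runs in the wrong direction.

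Under the natural Goodwillie boundary map $S^1_k\to\Omega^\infty\Sigma S^1(k+1)$ (equivalently $R_{p^k}I(S^1)\to\Omega^\infty S^1(k+1)$), the generator $x\in\pi_dS^1_k$ maps \emph{to} the bottom class of $\pi_{c(1,k+1)}S^1(k+1)$. It is not the image of that class. Comparing with $S^3$ via $E^2$ therefore only shows that $E^2x$ maps to zero in $\pi_{d+1}S^3(k+1)$, i.e.\ that $E^2x$ lifts to $\pi_{d+2}S^3_{k+1}$. In your remainder formulation it shows only that $\pi_{d+1}R_{p^k}I(S^3)=0$, so $E^2x$ comes from $\pi_{d+2}S^3$. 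Both conclusions are automatic because $d+2<c(3,k+1)$, and neither says anything about whether $E^2x=0$.

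Your description of $x$ as the \emph{image} of the bottom class of $S^1(k+1)$ holds only through the symmetric product boundary map $\Sigma^{-1}\mathbb{Sp}^{p^{k+1}}/\mathbb{Sp}^{p^k}\to\tau_{>0}\mathbb{Sp}^{p^k}$. That map rests on the Arone--Dwyer identification for $S^1$ and is not a natural transformation of Goodwillie towers, so there is nothing on the $S^3$ side to compare it with. The instability property of Whitehead elements works precisely because those classes come \emph{from} a layer. You could instead use that $x$ does come from the layer $S^1(k)$, which the splitting in the proof of theorem \ref{model} provides. But connectivity no longer helps there: the double suspension of the lift lands in $\pi_{d+2}S^3(k)$, and $d+2=2p^{k+1}-2k$ exceeds $c(3,k)=4p^k-2k-1$. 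Vanishing of this kind is what the paper only conjectures via $M(n,k)$.

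\textbf{The missing idea is Toda's lemma.} The required vanishing is a genuinely unstable statement. Since $\pi_{d+2}S^3_k\cong\pi_{d+2}S^3$ in this range, it is equivalent to injectivity of $\pi_{d-1}\Omega^3S^3_k\to\pi_{d-1}W^1_k$. That is early convergence of $W^1_k$ in degree $c(1,k+1)=d-1$, which is in turn equivalent to $\hat w_1^{k+1}=0$. The paper obtains this from proposition \ref{earlyconvW}, in three steps:
\begin{enumerate}
\item $\hat w_1^{k+1}$ is the image of $w^k_{2p-1}$ under the Gray connecting map $\Omega^2S^{2p-1}\to W^1$.
\item $w^k_{2p-1}$ double suspends to zero by the instability property. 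This is where your connectivity argument legitimately applies, since $w^k_{2p-1}$ comes from the layer $S^{2p-1}(k)$.
\item Toda's lemma puts every such class in the image of the Gray map $\Omega^4S^{2p+1}\to\Omega^2S^{2p-1}$, so it dies in $W^1$.
\end{enumerate}

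Once this is available, exactness gives $\pi_{3+(d-1)}S^3\cong\pi_{d-1}W^1_k$ and a short exact sequence $0\to\pi_{3+d}S^3\to\pi_dW^1_k\to\mathbb{F}_p\to0$. This settles the index bookkeeping you left open. To write the latter as $\pi_dW^1_k/\mathbb{F}_p$ you also need the sequence to split. The paper gets this from the exponent-$p$ property of $\pi_{3+*}S^3$ (Cohen--Moore--Neisendorfer) and of $W^1_k$.
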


\begin{rem}
	In the exceptional degree, the notation $\pi_*W^1_k \ /\  \mathbb{F}_p$ should be interpreted as saying that $\pi_{3+*}S^3$ is an $\mathbb{F}_p$-vector space of dimension one less than $\pi_*W^1_k$.
\end{rem}

\begin{proof}
	By the generalized Freudenthal theorem (thm. \ref{genfreudenthal}), the comparison map $S^3 \to S^3_k$ induces an isomorphism on $\pi_{3+*}$ for
	\[
	* < c(3,k+1)-3 = 4(p^{k+1}-1)-2(k+1)
	\]
	It therefore suffices to compute $\pi_{3+*}S^3_k$ in this range. Consider the fiber sequence
	\[
	\Omega S^1_k \longrightarrow \Omega^3S^3_k \longrightarrow W^1_k
	\]
	By the previous lemma, the only nontrivial homotopy group of $\Omega S^1_k$ in the range under consideration occurs in degree $2(p^{k+1}-1)-2k-1$ and is isomorphic to $\mathbb{F}_p$. Indeed, since $p$ is odd, the next nontrivial homotopy group of $S^1_k$ occurs in degree $4(p^{k+1}-1)-2k$, which lies beyond the range of convergence $*<4(p^{k+1}-1)-2(k+1)$. It follows that, away from the degrees
\[
* = 2(p^{k+1}-1)-2k-1,\quad 2(p^{k+1}-1)-2k
\]
the long exact sequence in homotopy identifies
\[
\pi_{3+*}S^3_k \cong \pi_*W^1_k
\]
in the stated range. In the exceptional degree $*=2(p^{k+1}-1)-2k$, the long exact sequence gives
\[
0 \longrightarrow \pi_{*}\Omega^3S^3_k
\longrightarrow \pi_{*}W^1_k
\longrightarrow \mathbb{F}_p
\longrightarrow \pi_{*-1}\Omega^3S^3_k
\longrightarrow \pi_{*-1}W^1_k
\longrightarrow 0
\]
We claim that the rightmost map is an isomorphism. Since we are in the range of convergence, this map may be identified with the comparison map
\[
\pi_{*-1}W^1 \longrightarrow \pi_{*-1}W^1_k
\]
Moreover, $*- 1=c(1,k+1)$, so this map is an isomorphism precisely when $W^1_k$ converges early, in the sense of subsection \ref{earlyconv}. This holds for $k>0$ by proposition \ref{earlyconvW}. Therefore the long exact sequence reduces to a short exact sequence
\[
    0
    \longrightarrow
    \pi_{3+*}S^3
    \longrightarrow
    \pi_{*}W^1_k
    \longrightarrow
    \mathbb{F}_p
    \longrightarrow
    0
\]
This is a short exact sequence of $\mathbb{F}_p$-vector spaces, hence it splits. Indeed, since $* > 0$, the group $\pi_{3+*}S^3$ is an $\mathbb{F}_p$-vector space by Cohen--Moore--Neisendorfer. The same holds for $\pi_*W^1_k$, because multiplication by $p$ on $W$ is null, and hence also on $W^1_k$ by functoriality.
\end{proof}

The proposition above reduces the unstable computation to the study of $W^1_k$. In the metastable range, corresponding to $k=1$, this description becomes especially simple, since $W^1_1$ is equivalent to a Moore spectrum. As a result, the homotopy groups of $S^3$ can be expressed directly in terms of stable homotopy.

\begin{cor}
    In the metastable range, for $p$ an odd prime, the homotopy groups of $S^3$ are given by
    \[
    \pi_{3+*}S^3 \cong 
    \begin{cases}
    \mathbb{Z} & * = 0\\
    \pi_{*-2p+3}(\mathbb{S}/p) \ /\  \mathbb{F}_p & * = 2(p^2-2)\\
    \pi_{*-2p+3}(\mathbb{S}/p) & * < 4(p^2-2), \text{otherwise}
    \end{cases}
    \]
\end{cor}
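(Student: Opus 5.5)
This is the case $k=1$ of proposition \ref{homotopyS3}, so the plan is to specialize that proposition and then identify $W^1_1$ explicitly. With $k=1$, the exceptional degree $2(p^{k+1}-1)-2k$ becomes $2(p^2-1)-2=2(p^2-2)$. The range bound $4(p^{k+1}-1)-2(k+1)$ becomes $4(p^2-1)-4=4(p^2-2)$. These are exactly the degrees appearing in the statement, and proposition \ref{homotopyS3} applies because $k=1>0$. It then remains to compute $\pi_*W^1_1$.

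For the identification, I would use example \ref{cohomW1} with $2n-1=1$, that is $n=1$, which gives $W^1_1\simeq \Omega^\infty\mathbb{S}^{2p-3}/p$. This follows because $W^1(0)\simeq *$, so the first nontrivial layer is $W^1(1)\simeq \mathbb{S}^{2p-3}/p$, whose cohomology is $\Sigma^{-1}\mathbb{F}_p\langle P^1,\beta P^1\rangle$. Alternatively, it can be read off from the calculus Gray sequence (thm. \ref{graysequence}), as in the example following the exponent discussion, where $W^{2n-1}(1)\simeq\Sigma^{-1}\mathbb{S}^{2np-1}\{p\}\simeq \mathbb{S}^{2np-3}/p$. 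Consequently
\[
\pi_*W^1_1\cong \pi_*\Sigma^{2p-3}\mathbb{S}/p\cong \pi_{*-2p+3}(\mathbb{S}/p).
\]
Substituting this into the three cases of proposition \ref{homotopyS3} gives the claim. The case $*=0$ is the usual $\pi_3S^3\cong\mathbb{Z}$, which is unaffected since $W^1_1$ is $(2p-4)$-connected and $\Omega S^1_1$ contributes its $\mathbb{Z}$ in degree $0$.

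I do not expect a real obstacle. The only points to check are bookkeeping. First, the exceptional degree $2(p^2-2)$ and its neighbour must lie strictly below $4(p^2-2)$; this holds for $p\geq 3$. Second, the quotient by $\mathbb{F}_p$ in the exceptional degree must be meaningful. This requires $\pi_{2(p^2-2)-2p+3}(\mathbb{S}/p)$ to be nonzero, which the surjection in proposition \ref{homotopyS3} guarantees. It is also an $\mathbb{F}_p$-vector space, since $\mathbb{S}/p$ has exponent $p$ for $p>2$.
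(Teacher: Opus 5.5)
Your proposal is correct and follows the paper's proof: specialize Proposition~\ref{homotopyS3} to $k=1$, and use Example~\ref{cohomW1} with $n=1$ to identify $W^1_1\simeq\Omega^\infty\mathbb{S}^{2p-3}/p$, so that $\pi_*W^1_1\cong\pi_{*-2p+3}(\mathbb{S}/p)$. Your degree bookkeeping ($2(p^2-2)$ and $4(p^2-2)$) and your remarks on the case $*=0$ and on the $\mathbb{F}_p$-vector space structure are accurate, but they are already covered by the proposition.
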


\begin{proof}
    By example \ref{cohomW1}, there is an equivalence
    \[
    W^1_1 \simeq \Omega^\infty(\mathbb{S}^{2p-3}/p)
    \]
    The claim therefore follows from the previous proposition applied with $k=1$.
\end{proof}

For the sake of completeness, let us now verify that this does indeed reduce the problem to something computable.

\begin{ex}
\label{metastableS3}
Consider the case $p=3$. The corollary expresses $\pi_{3+*}S^3$ for $*<28$ in terms of $\pi_*(\mathbb{S}/3)$ for $*<25$. Since $\mathbb{S}/p$ has exponent $p$ for $p>2$, we have
\[
\pi_*(\mathbb{S}/p) \cong (\pi_*\mathbb{S})/p \oplus (\pi_{*-1}\mathbb{S})[p]
\]
Thus it suffices to know $\pi_*\mathbb{S}$ for $*<25$, which can be read off from \cite[table A.3.4 + im J]{Complexcob}. Alternatively, one can work directly with the Adams spectral sequence for $\mathbb{S}/3$. In that range, the only nontrivial differentials have source in bidegrees $(18,2)$, $(19,2)$, $(22,3)$, and $(23,3)$, and they all have length $2$. We summarize the relevant data below.

\adjustbox{scale=0.78,center}{
	\begin{tabular}{|c|c|c|c|c|c|c|c|c|c|c|c|c|c|c|c|c|c|c|c|c|c|c|c|c|c|c|c|c} 
	\hline
 	$*$ & 0 & 1 & 2 & 3 & 4 & 5 & 6 & 7 & 8 & 9 & 10 & 11 & 12 & 13 & 14 & 15 & 16 & 17 & 18 & 19 & 20 & 21 & 22 & 23 & 24 \\ \hline
	$\pi_{*}\mathbb{S}$ & $\square$ & & & $\bullet$ & & & & $\bullet$ & & & $\bullet$ & $\bullet_2$ & & $\bullet$ & & $\bullet$ & & & & $\bullet$ & $\bullet$ & & & $\bullet \bullet_2$ &  \\ \hline
	$\pi_{*}\mathbb{S}/3$ & $\bullet$ & & & $\bullet$ & $\bullet$ & & & $\bullet$ & $\bullet$ & & $\bullet$ & $\bullet \bullet$ & $\bullet$ & $\bullet$ & $\bullet$ & $\bullet$ & $\bullet$ & & & $\bullet$ & $\bullet \bullet$ & $\bullet$ & & $\bullet \bullet$ & $\bullet \bullet$ \\ \hline
	\end{tabular}
	}

	The homotopy groups of $S^3$ can then be read off from the last line by removing one copy in degree $11$, shifting everything three degrees to the right, and adding a copy of $\mathbb{Z}$ in degree $0$, as summarized below.

	\adjustbox{scale=0.70,center}{
	\begin{tabular}{|c|c|c|c|c|c|c|c|c|c|c|c|c|c|c|c|c|c|c|c|c|c|c|c|c|c|c|c|c|} 
	\hline
 	$*$ & 0 & 1 & 2 & 3 & 4 & 5 & 6 & 7 & 8 & 9 & 10 & 11 & 12 & 13 & 14 & 15 & 16 & 17 & 18 & 19 & 20 & 21 & 22 & 23 & 24 & 25 & 26 & 27 \\ \hline
	$\pi_{3+*}S^3$ & $\square$ & & & $\bullet$ & & & $\bullet$ & $\bullet$ & & & $\bullet$ & $\bullet$ & & $\bullet$ & $\bullet$ & $\bullet$ & $\bullet$ & $\bullet$ & $\bullet$ & $\bullet$ & & & $\bullet$ & $\bullet \bullet$ & $\bullet$ & & $\bullet \bullet$ & $\bullet \bullet$ \\ \hline
	\end{tabular}
	}

	The same argument applies at every odd prime, provided that $\pi_*\mathbb{S}$ is known. For example, the case $p=5$ can likewise be read off from \cite[table A.3.5 + im J]{Complexcob}.
\end{ex}

\begin{ex}
	\label{todasn}
	The result above also yields the following computation of $\pi_{n+*}S^n$ for $n$ odd, $p$ an odd prime, and $*<4(p^2-2)$:
	\[
    \pi_{n+*}S^n \cong 
    \begin{cases}
    \mathbb{Z} & * = 0\\
    \pi_*\Sigma^\infty B\Sigma_p^{(p-1)(n-1)} \ /\ \mathbb{F}_p & * = 2(p^2-2)\\
    \pi_*\Sigma^\infty B\Sigma_p^{(p-1)(n-1)} & * < 4(p^2-2), \text{otherwise}
    \end{cases}
    \]
	While this does not provide a complete calculation in the full \textit{metastable range} of $S^n$, it does yield a complete calculation in \textit{Toda's original range}, namely $*<20$. In other words, Toda's classical computations at odd primes can be recovered here by purely stable methods. For the reader's convenience, we recall that
	\[
	\mathbb{F}_p^*B\Sigma_p^{(p-1)(n-1)} \cong \Sigma^{-1}\mathbb{F}_p\langle \beta^\epsilon P^i \mid i \leq (n-1)/2\rangle
	\]
\end{ex}

The next approximation, corresponding to $k=2$, provides a way to compute the homotopy groups of $S^3$ up to the $97^{\mathrm{th}}$ stem at $p=3$. This goes beyond Toda's final range \cite{Toda3}, which stops at the $79^{\mathrm{th}}$ stem. The approximation $W^1_2$ has two nontrivial layers. The first is again $W^1(1) \simeq \mathbb{S}^{2p-3}/p$, whose homotopy groups are known in this range. The second is $W^1(2)$, whose cohomology as an $\mathcal{A}$-module is also known; see ex. \ref{cohomW2}. In particular, one can run an Adams spectral sequence for it; see appendix \ref{ASSW2}. In our range, only a few differentials can occur, and there are no extension problems since $W^1(2)$ also has exponent $p$. The remaining problem is therefore to understand the differential between $W^1(1)$ and $W^1(2)$. The curious reader can find a tentative sketch of the Goodwillie spectral sequence at this \href{https://q.uiver.app/#q=WzAsMjg4LFsyLDAsIjAiXSxbMywwLCIxIl0sWzQsMCwiMiJdLFs1LDAsIjMiXSxbNiwwLCI0Il0sWzcsMCwiNSJdLFs4LDAsIjYiXSxbOSwwLCI3Il0sWzEwLDAsIjgiXSxbMTEsMCwiOSJdLFsxMiwwLCIxMCJdLFsxMywwLCIxMSJdLFsxNCwwLCIxMiJdLFsxNSwwLCIxMyJdLFsxNiwwLCIxNCJdLFsxNywwLCIxNSJdLFsxOCwwLCIxNiJdLFsxOSwwLCIxNyJdLFsyMCwwLCIxOCJdLFs1LDMsIlxcb3JhbmdlXFxidWxsZXQiXSxbOSwzLCJcXG9yYW5nZVxcYnVsbGV0Il0sWzgsMywiXFxvcmFuZ2UgXFxidWxsZXQiXSxbMTIsMywiXFxvcmFuZ2VcXGJ1bGxldCJdLFsxNSwzLCJcXGJ1bGxldCJdLFsxMywzLCJcXG9yYW5nZVxcYnVsbGV0Il0sWzE2LDMsIlxcb3JhbmdlXFxidWxsZXRcXGJ1bGxldCJdLFsxNywzLCJcXG9yYW5nZVxcYnVsbGV0Il0sWzE4LDMsIlxcYnVsbGV0Il0sWzE5LDMsIlxcYnVsbGV0Il0sWzIwLDMsIlxcb3JhbmdlXFxidWxsZXQiXSxbMjEsMCwiMTkiXSxbMjEsMywiXFxvcmFuZ2VcXGJ1bGxldCJdLFsyNCwzLCJcXG9yYW5nZVxcYnVsbGV0Il0sWzI2LDMsIlxcYnVsbGV0Il0sWzI1LDMsIlxcb3JhbmdlXFxidWxsZXRcXGJ1bGxldCJdLFsyOCwzLCJcXG9yYW5nZVxcYnVsbGV0XFxidWxsZXQiXSxbMjksMywiXFxvcmFuZ2VcXGJ1bGxldFxcYnVsbGV0Il0sWzMxLDMsIlxcYnVsbGV0Il0sWzMyLDMsIlxcb3JhbmdlXFxidWxsZXRcXGJ1bGxldCJdLFszMywzLCJcXG9yYW5nZVxcYnVsbGV0Il0sWzM0LDMsIlxcYnVsbGV0Il0sWzM1LDMsIlxcYnVsbGV0XFxidWxsZXQiXSxbMzYsMywiXFxvcmFuZ2VcXGJ1bGxldFxcYnVsbGV0Il0sWzIyLDAsIjIwIl0sWzIzLDAsIjIxIl0sWzI0LDAsIjIyIl0sWzI1LDAsIjIzIl0sWzI2LDAsIjI0Il0sWzI3LDAsIjI1Il0sWzI4LDAsIjI2Il0sWzI5LDAsIjI3Il0sWzMwLDAsIjI4Il0sWzMxLDAsIjI5Il0sWzMyLDAsIjMwIl0sWzMzLDAsIjMxIl0sWzM0LDAsIjMyIl0sWzM1LDAsIjMzIl0sWzM2LDAsIjM0Il0sWzM3LDMsIlxcb3JhbmdlXFxidWxsZXQiXSxbNDAsMywiXFxvcmFuZ2VcXGJ1bGxldCJdLFs0MSwzLCJcXG9yYW5nZVxcYnVsbGV0XFxidWxsZXQiXSxbNDIsMywiXFxidWxsZXRcXGJ1bGxldCJdLFs0MywzLCJcXGJ1bGxldFxcYnVsbGV0Il0sWzQ0LDMsIlxcb3JhbmdlXFxidWxsZXRcXGJ1bGxldFxcYnVsbGV0Il0sWzQ0LDAsIjQyIl0sWzQyLDAsIjQwIl0sWzM5LDAsIjM3Il0sWzQxLDAsIjM5Il0sWzQzLDAsIjQxIl0sWzQ1LDMsIlxcb3JhbmdlXFxidWxsZXRcXGJ1bGxldFxcYnVsbGV0Il0sWzQ2LDMsIlxcYnVsbGV0Il0sWzQ3LDMsIlxcYnVsbGV0Il0sWzQ4LDMsIlxcb3JhbmdlXFxidWxsZXRcXGJ1bGxldCJdLFs0OSwzLCJcXG9yYW5nZVxcYnVsbGV0Il0sWzUwLDMsIlxcYnVsbGV0Il0sWzUxLDMsIlxcYnVsbGV0XFxidWxsZXQiXSxbNTIsMywiXFxvcmFuZ2VcXGJ1bGxldFxcYnVsbGV0XFxidWxsZXQiXSxbNTIsMCwiNTAiXSxbNTMsMywiXFxvcmFuZ2VcXGJ1bGxldFxcYnVsbGV0Il0sWzU0LDMsIlxcYnVsbGV0Il0sWzU1LDMsIlxcYnVsbGV0XFxidWxsZXQiXSxbNTYsMywiXFxvcmFuZ2VcXGJ1bGxldFxcYnVsbGV0Il0sWzU3LDMsIlxcb3JhbmdlXFxidWxsZXRcXGJ1bGxldCJdLFs1OCwzLCJcXGJ1bGxldCJdLFs2MCwzLCJcXG9yYW5nZVxcYnVsbGV0XFxidWxsZXQiXSxbNjAsMCwiNTgiXSxbNjEsMywiXFxvcmFuZ2VcXGJ1bGxldFxcYnVsbGV0Il0sWzY0LDMsIlxcb3JhbmdlXFxidWxsZXQiXSxbNjUsMywiXFxvcmFuZ2VcXGJ1bGxldCJdLFs2NywzLCJcXGJ1bGxldCJdLFs2OCwzLCJcXG9yYW5nZVxcYnVsbGV0XFxidWxsZXQiXSxbNjgsMCwiNjYiXSxbNjksMywiXFxvcmFuZ2VcXGJ1bGxldCJdLFs3MCwzLCJcXGJ1bGxldCJdLFs3MSwzLCJcXGJ1bGxldCJdLFs3MiwzLCJcXG9yYW5nZVxcYnVsbGV0Il0sWzczLDMsIlxcb3JhbmdlXFxidWxsZXRcXGJ1bGxldCJdLFs3NCwzLCJcXGJ1bGxldCJdLFs3NiwzLCJcXG9yYW5nZVxcYnVsbGV0Il0sWzc2LDAsIjc0Il0sWzc3LDMsIlxcb3JhbmdlXFxidWxsZXRcXGJ1bGxldCJdLFs3OCwzLCJcXGJ1bGxldCJdLFs3OSwzLCJcXGJ1bGxldCJdLFs4MCwzLCJcXG9yYW5nZVxcYnVsbGV0XFxidWxsZXRcXGJ1bGxldCJdLFs4MSwzLCJcXG9yYW5nZVxcYnVsbGV0XFxidWxsZXQiXSxbODMsMywiXFxidWxsZXQiXSxbODQsMywiXFxvcmFuZ2VcXGJ1bGxldFxcYnVsbGV0Il0sWzg0LDAsIjgyIl0sWzM4LDAsIjM2Il0sWzM3LDAsIjM1Il0sWzQwLDAsIjM4Il0sWzQ2LDAsIjQ0Il0sWzQ3LDAsIjQ1Il0sWzQ1LDAsIjQzIl0sWzQ5LDAsIjQ3Il0sWzQ4LDAsIjQ2Il0sWzUwLDAsIjQ4Il0sWzUxLDAsIjQ5Il0sWzY1LDAsIjYzIl0sWzcyLDAsIjcwIl0sWzczLDAsIjcxIl0sWzc5LDAsIjc3Il0sWzUzLDAsIjUxIl0sWzU0LDAsIjUyIl0sWzU1LDAsIjUzIl0sWzU2LDAsIjU0Il0sWzU3LDAsIjU1Il0sWzU4LDAsIjU2Il0sWzU5LDAsIjU3Il0sWzYxLDAsIjU5Il0sWzYyLDAsIjYwIl0sWzYzLDAsIjYxIl0sWzY0LDAsIjYyIl0sWzY2LDAsIjY0Il0sWzY3LDAsIjY1Il0sWzY5LDAsIjY3Il0sWzcwLDAsIjY4Il0sWzcxLDAsIjY5Il0sWzc4LDAsIjc2Il0sWzc3LDAsIjc1Il0sWzc0LDAsIjcyIl0sWzc1LDAsIjczIl0sWzgwLDAsIjc4Il0sWzgxLDAsIjc5Il0sWzgyLDAsIjgwIl0sWzgzLDAsIjgxIl0sWzE1LDIsIlxcYnVsbGV0Il0sWzMwLDIsIlxcYnVsbGV0Il0sWzMxLDIsIlxcYnVsbGV0Il0sWzM4LDIsIlxcYnVsbGV0Il0sWzM5LDIsIlxcYnVsbGV0Il0sWzQxLDIsIlxcYnVsbGV0Il0sWzQyLDIsIlxcYnVsbGV0XFxidWxsZXQiXSxbNDMsMiwiXFxidWxsZXQiXSxbNDYsMiwiXFxidWxsZXQiXSxbNDcsMiwiXFxidWxsZXQiXSxbNDksMiwiXFxidWxsZXQiXSxbNTAsMiwiXFxidWxsZXQiXSxbNDksMSwiXFxidWxsZXQiXSxbMiwxLCJcXHNxdWFyZSJdLFs4NSwwLCI4MyJdLFs5MiwwLCI5MCJdLFsxMDEsMCwiOTkiXSxbMTAxLDEsIlxcYnVsbGV0Il0sWzEwMiwwLCIxMDAiXSxbODYsMCwiODQiXSxbODcsMCwiODUiXSxbODgsMCwiODYiXSxbODksMCwiODciXSxbOTAsMCwiODgiXSxbOTEsMCwiODkiXSxbOTMsMCwiOTEiXSxbOTQsMCwiOTIiXSxbOTUsMCwiOTMiXSxbOTYsMCwiOTQiXSxbOTcsMCwiOTUiXSxbOTgsMCwiOTYiXSxbOTksMCwiOTciXSxbMTAwLDAsIjk4Il0sWzYxLDIsIlxcYnVsbGV0Il0sWzYyLDIsIlxcYnVsbGV0XFxidWxsZXQiXSxbNjMsMiwiXFxidWxsZXQiXSxbNjQsMiwiXFxidWxsZXQiXSxbNjUsMiwiXFxidWxsZXQiXSxbNzIsMiwiXFxidWxsZXQiXSxbNzMsMiwiXFxidWxsZXQiXSxbODEsMiwiXFxidWxsZXQiXSxbODIsMiwiXFxidWxsZXRcXGJ1bGxldCJdLFs4MywyLCJcXGJ1bGxldCJdLFs4NSwyLCJcXGJ1bGxldFxccmVkXFxidWxsZXQiXSxbODYsMiwiXFxidWxsZXRcXGJ1bGxldFxccmVkXFxidWxsZXQiXSxbODcsMiwiXFxidWxsZXRcXGJ1bGxldCJdLFs4OCwyLCJcXGJ1bGxldFxcYnVsbGV0Il0sWzg5LDIsIlxcYnVsbGV0XFxidWxsZXQiXSxbOTAsMiwiXFxidWxsZXRcXGJ1bGxldCJdLFs5MSwyLCJcXGJ1bGxldCJdLFsxMDIsMiwiXFxidWxsZXQiXSxbMTAxLDIsIlxcYnVsbGV0Il0sWzAsNSwiXFxPbWVnYV4zU14zXzIiXSxbMiw1LCJcXHNxdWFyZSJdLFs1LDUsIlxcYnVsbGV0Il0sWzgsNSwiXFxidWxsZXQiXSxbOSw1LCJcXGJ1bGxldCJdLFsxMiw1LCJcXGJ1bGxldCJdLFsxMyw1LCJcXGJ1bGxldCJdLFsxNSw1LCJcXGJ1bGxldCJdLFsxNiw1LCJcXGJ1bGxldCJdLFsxNyw1LCJcXGJ1bGxldCJdLFsxOCw1LCJcXGJ1bGxldCJdLFsxOSw1LCJcXGJ1bGxldCJdLFsyMCw1LCJcXGJ1bGxldCJdLFsyMSw1LCJcXGJ1bGxldCJdLFsyNCw1LCJcXGJ1bGxldCJdLFsyNiw1LCJcXGJ1bGxldCJdLFsyNSw1LCJcXGJ1bGxldFxcYnVsbGV0Il0sWzI4LDUsIlxcYnVsbGV0XFxidWxsZXQiXSxbMjksNSwiXFxidWxsZXRcXGJ1bGxldCJdLFszMiw1LCJcXGJ1bGxldCJdLFszMyw1LCJcXGJ1bGxldCJdLFszNCw1LCJcXGJ1bGxldCJdLFszNSw1LCJcXGJ1bGxldFxcYnVsbGV0Il0sWzM2LDUsIlxcYnVsbGV0XFxidWxsZXQiXSxbMzcsNSwiXFxidWxsZXQiXSxbMzgsNSwiXFxidWxsZXQiXSxbMzksNSwiXFxidWxsZXQiXSxbNDAsNSwiXFxidWxsZXQiXSxbNDEsNSwiXFxidWxsZXRcXGJ1bGxldCJdLFs0Miw1LCJcXGJ1bGxldCJdLFs0NCw1LCJcXGJ1bGxldFxcYnVsbGV0Il0sWzQ1LDUsIlxcYnVsbGV0XFxidWxsZXRcXGJ1bGxldCJdLFs0Niw1LCJcXGJ1bGxldCJdLFs0OCw1LCJcXGJ1bGxldCJdLFs0OSw1LCJcXGJ1bGxldCJdLFs1MSw1LCJcXGJ1bGxldFxcYnVsbGV0Il0sWzUyLDUsIlxcYnVsbGV0XFxidWxsZXRcXGJ1bGxldCJdLFs1Myw1LCJcXGJ1bGxldFxcYnVsbGV0Il0sWzU0LDUsIlxcYnVsbGV0Il0sWzU1LDUsIlxcYnVsbGV0XFxidWxsZXQiXSxbNTYsNSwiXFxidWxsZXRcXGJ1bGxldCJdLFs1Nyw1LCJcXGJ1bGxldFxcYnVsbGV0Il0sWzU4LDUsIlxcYnVsbGV0Il0sWzYyLDUsIlxcYnVsbGV0XFxidWxsZXQiXSxbNjMsNSwiXFxidWxsZXQiXSxbNjQsNSwiXFxidWxsZXQiXSxbNjUsNSwiXFxidWxsZXQiXSxbNjcsNSwiXFxidWxsZXQiXSxbNzAsNSwiXFxidWxsZXQiXSxbNzEsNSwiXFxidWxsZXQiXSxbNzgsNSwiXFxidWxsZXQiXSxbODEsNSwiXFxidWxsZXRcXGJ1bGxldCJdLFs2OCw1LCJcXGJ1bGxldCJdLFs2MCw1LCJcXGJ1bGxldCJdLFs2MSw1LCJcXGJ1bGxldFxcYnVsbGV0Il0sWzc3LDUsIlxcYnVsbGV0Il0sWzgwLDUsIlxcYnVsbGV0Il0sWzg1LDMsIlxcb3JhbmdlXFxidWxsZXQiXSxbODYsMywiXFxidWxsZXRcXGJ1bGxldCJdLFs4NywzLCJcXGJ1bGxldFxcYnVsbGV0XFxidWxsZXQiXSxbODgsMywiXFxvcmFuZ2VcXGJ1bGxldFxcYnVsbGV0Il0sWzg5LDMsIlxcb3JhbmdlXFxidWxsZXRcXGJ1bGxldFxcYnVsbGV0Il0sWzkwLDMsIlxcYnVsbGV0XFxidWxsZXRcXGJ1bGxldCJdLFs5MSwzLCJcXGJ1bGxldFxcYnVsbGV0Il0sWzkyLDMsIlxcb3JhbmdlXFxidWxsZXRcXGJ1bGxldCJdLFs5MywzLCJcXG9yYW5nZVxcYnVsbGV0Il0sWzk1LDMsIlxcYnVsbGV0Il0sWzk2LDMsIlxcb3JhbmdlXFxidWxsZXRcXGJ1bGxldFxcYnVsbGV0XFxidWxsZXQiXSxbOTcsMywiXFxvcmFuZ2VcXGJ1bGxldFxcYnVsbGV0XFxidWxsZXRcXGJ1bGxldCJdLFs5OCwzLCJcXGJ1bGxldFxcYnVsbGV0XFxidWxsZXQiXSxbOTksMywiXFxvcmFuZ2VcXGJ1bGxldFxcYnVsbGV0XFxidWxsZXQiXSxbMTAwLDMsIlxcb3JhbmdlXFxidWxsZXRcXGJ1bGxldCJdLFsxMDEsMywiXFxidWxsZXQiXSxbMCwxLCJcXE9tZWdhIFNeMV8yIFxcc2ltZXEgXFxtYXRoYmJ7Wn0gXFx0aW1lcyBcXE9tZWdhXntcXGluZnR5ICs0fVNwXjkiXSxbMCwyLCJXXjEoMikiXSxbMCwzLCJXXjEoMSkgXFxzaW1lcSBcXG1hdGhiYntTfV4zLzMiXSxbNTgsNywiVG9kYSAgXFxcXCBcXG1vZCBcXG9yYW5nZVxcYWxwaGEiXSxbMiw3LCJUb2RhIFxcXFwgb3JpZ2luYWwiXSxbODEsNywiVG9kYSBcXFxcIGVuZCJdLFs5MiwyLCJcXGJ1bGxldFxccmVkXFxidWxsZXQiXSxbOTMsMiwiXFxidWxsZXRcXGJ1bGxldFxyZWRcXGJ1bGxldFxyZWRcXGJ1bGxldCJdLFs5NCwyLCJcXGJ1bGxldFxcYnVsbGV0XFxyZWRcXGJ1bGxldCJdLFs5NSwyLCJcXGJ1bGxldFxyZWRcXGJ1bGxldCJdLFs5NiwyLCJcXGJ1bGxldFxyZWRcXGJ1bGxldFxyZWRcXGJ1bGxldCJdLFs5OSwyLCJcXGJ1bGxldCJdLFs5OCwyLCJcXGJ1bGxldFxyZWRcXGJ1bGxldCJdLFs5NywyLCJcXGJ1bGxldFxyZWRcXGJ1bGxldFxyZWRcXGJ1bGxldCJdLFs3OCwyLCJcXGJ1bGxldCJdLFs3OSwyLCJcXGJ1bGxldCJdLFswLDksIkxlZ2VuZDogXFxcXCBcXG9yYW5nZSBcXGJ1bGxldCA9IHZfMS1wZXJpb2RpYyBcXFxcIFxccmVkXFxidWxsZXQgPSBwb3NzaWJseSBcXCBkZWFkIl0sWzI1LDE0Nl0sWzM3LDE0N10sWzM4LDE0OF0sWzYxLDE1MV0sWzYyLDE1Ml0sWzYzLDE1M10sWzcxLDE1NF0sWzcyLDE1NV0sWzc0LDE1Nl0sWzE1NywxNThdLFs2MiwxNTIsIiIsMix7Im9mZnNldCI6LTF9XSxbOTYsMTg0XSxbOTcsMTg1XSxbMjc1LDE5OSwiIiwwLHsic2hvcnRlbiI6eyJ0YXJnZXQiOjUwfX1dLFsyNzQsMjQwLCIiLDAseyJzaG9ydGVuIjp7InRhcmdldCI6NTB9fV0sWzI3NiwyNDksIiIsMCx7InNob3J0ZW4iOnsidGFyZ2V0Ijo1MH19XSxbMTAzLDI4Nl0sWzEwMiwyODVdXQ==}{link}.

\section{Future directions}

The model for the approximation circles turned out to be very effective for unstable computations. It allowed us to recover several classical results and to push some low-dimensional computations further, but many phenomena remain only partially understood.

In this section we sketch several directions in which the model may offer new insights. We begin by discussing the apparent vanishing of classes coming from $S^1_k$ and formulate a broader structural conjecture. We then turn to synthetic perspectives, where our space-level results suggest a possible analogue at the level of Adams spectral sequences or unstable synthetic categories. Another direction comes from chromatic homotopy theory: the symmetric product spectra have strong chromatic properties, suggesting that the model may admit a chromatic reinterpretation. Beyond this, the homotopy fixed points of the approximation spheres interpolate between the Segal and Sullivan conjectures, making the approximation circles a natural testing ground for understanding such interpolations. Finally, we suggest that an analogous Whitehead conjecture in unitary calculus could shed light on a problem posed by Neisendorfer.

\subsection{Survival of elements from $S^1_k$}

The metastable computations at $p = 2$ in subsection \ref{metastable} reveal a curious phenomenon: none of the torsion classes in $\pi_*S^1_1$ survives to $\pi_*S^n$ in the metastable range. This does not mean that no such class survives to $\pi_*S^n_1$; for example, $\sigma\eta[2] \in \pi_{11}S^2_1$ does survive \cite[6.5.5]{Behrens2010}. Rather, the classes that survive to $\pi_*S^n_1$ seem to be detected only \textit{beyond the radius of convergence}. Empirically, for $0 < * < c(n,2)-n$, one finds
\[
\pi_{n+*}S^n \cong \frac{\pi_*\mathbb{RP}^{n-1}}{\pi_{*+1}\mathbb{Sp}^2}
\]
The same phenomenon appears in the computations of the homotopy groups of $S^3$ (prop. \ref{homotopyS3}): the only class coming from $S^1_k$, for $k > 0$, dies. Thus, for $0 < * < c(3,k+1)-3$, one obtains
\[
\pi_{3+*}S^3 \cong \frac{\pi_*W^1_k}{\pi_{*+2k-1}\mathbb{Sp}^{p^k}}
\]
We wonder whether this behavior persists for all spheres and all approximations with $k > 0$. Denoting by $\hat S^n_k$ the fiber of the map $S^1_k \to \Omega^{n-1}S^n_k$, we formulate the following conjecture.

\begin{conj}
    For $k > 0$, in the range of convergence $0 < * < c(n,k+1)-n$, one has
    \[
    \pi_{n+*}S^n \cong \frac{\pi_*\hat S^n_k}{\pi_{*+2k-1}\mathbb{Sp}^{p^k}}
    \]
\end{conj}

This is roughly equivalent to asking that the map $\Omega S^1_k \to \Omega^nS^n_k$ be null on $\pi_*$ in that range. In positive degrees, since the map $\Omega^\infty S^1(k) \to S^1_k$ admits a section, this map factors as
\[
\Omega S^1_k \simeq \mathbb{Z} \times \Omega^{\infty+2k}\tau_{>0}\mathbb{Sp}^{p^k}
\longrightarrow
\Omega^{\infty+1}S^1(k) \simeq \Omega^{\infty+2k}\mathbb{Sp}^{p^k}/\mathbb{Sp}^{p^{k-1}}
\longrightarrow
\Omega^{\infty+n}S^n(k)
\longrightarrow
\Omega^nS^n_k
\]
Except for the rightmost map, all maps in this factorization are obtained by applying $\Omega^\infty$ to maps of spectra. These maps are induced by the composite
\[
\Sigma^{-2k}\tau_{>0}\mathbb{Sp}^{p^k}
\longrightarrow
\Sigma^{-2k}\mathbb{Sp}^{p^k}/\mathbb{Sp}^{p^{k-1}}
\simeq
\Sigma^{-1}S^1(k)
\longrightarrow
\Sigma^{-n}S^n(k)
\]
Denote by $M(n,k)$ the first degree in which this map is nontrivial on $\pi_*$. We can then formulate the following stronger version of the conjecture.

\begin{conj}
	$M(n, k) \geq c(n, k+1) - n$ for $k > 0$.
\end{conj}

It is immediate that $M(n,k) \geq c(n,k)-n$, since $c(n,k)-n$ is the connectivity of $\Sigma^{-n}S^n(k)$. At $p = 2$ and $k = 1$, the map above is equivalent to
\[
\Sigma^{-2}\tau_{>0}\mathbb{Sp}^2
\longrightarrow
\Sigma^{-1}\mathbb{RP}^\infty
\longrightarrow
\Sigma^{-1}\mathbb{RP}^\infty_n
\]
From the metastable computations we find that $M(1,1)=3$, $M(2,1)=M(3,1)=9$, $M(4,1)=17$, and $M(n,1)>20$ for $n>4$.

Finally, let us emphasize that there \textit{do} exist classes in $\pi_*S^1_k$ which detect elements in $\pi_*S^n$. For instance, the class $\nu\kappa[2] \in \pi_{19}S^1_1$ survives to $\pi_{20}S^2_1$, and then further to $\pi_{20}S^2_2 \cong \pi_{20}S^2$ \cite[6.5.6]{Behrens2010}. This does not contradict the conjecture: the point is that this class detects an element that converges in the \textit{next} approximation.

\subsection{Synthetic perspectives}

Many of the computations in this paper use the Adams spectral sequence as a tool, but the results themselves are purely space-level statements. It is therefore natural to ask whether they can be lifted to statements about the ASS, or, better, to some unstable synthetic category, such as those being developed by Mor \cite{Mor} and by Balderrama--Pstrągowski \cite{BP1, BP2}. At the moment, it is not even clear to us what the right notion of Adams spectral sequence for the approximation spheres $S^n_k$ should be. Such a spectral sequence should interpolate between the unstable Adams spectral sequence of $S^n$ and the ASS of $\mathbb{S}^n$. Nevertheless, our computations suggest what some of its features might be.

For example, the metastable computations, together with the discussion in the previous subsection, show that for $* < 19$ one has
\[
\pi_{7+*}S^7 \cong \pi_*\mathbb{Z} \oplus \frac{\pi_*\mathbb{RP}^6}{\pi_{*+1}\mathbb{Sp}^2}
\]
This appears to lift to a statement relating the UASS of $S^7$ to the ASS of the spectra involved. Namely, in the same range, one finds
\[
\mathrm{UASS}^{*,*}(S^7)
\cong
\mathrm{ASS}^{*,*}\mathbb{Z}
\oplus
\frac{\mathrm{ASS}^{*,*-1}\mathbb{RP}^6}{\mathrm{ASS}^{*+1,*-1}\mathbb{Sp}^2}
\]
Indeed, plotted below is the ASS for $\mathbb{Z} \oplus \mathbb{RP}^6$, with the ASS of $\mathbb{Sp}^2$ highlighted in red. The remaining black part coincides with the displayed portion of the UASS of $S^7$, as one can check in \cite[page 10]{UASScharts}.
\begin{center}
	{\footnotesize $E_2$-page of the ASS for $\mathbb{Z} \oplus \mathbb{RP}^6$, with the part corresponding to $\mathbb{Sp}^2$ highlighted in red.} \\[0.5em]
\adjustbox{scale=0.65,center}{
\begin{tikzcd}[sep=small]
	11 & \bullet &&&&&&&&&&&&&&&&&&& \bullet \\
	10 & \bullet &&&&&&&&&&&&&&&&&& \bullet & \bullet \\
	9 & \bullet &&&&&&&&&&&&&&&&& \bullet & \bullet & \bullet \\
	8 & \bullet &&&&&&&&&&&&&&& \bullet && {\bullet\bullet} & \bullet \\
	7 & \bullet &&&&&&&&&&& \bullet &&&& \bullet & {\bullet\bullet} & \bullet & \bullet \\
	6 & \bullet &&&&&&&&&& \bullet & \bullet &&& {\ \ \bullet} & \bullet & {\bullet\bullet} & \bullet &&& \bullet \\
	5 & \bullet &&&&&&&&& \bullet & \bullet & \bullet &&& {\ \ \bullet} & {\bullet\bullet} && \bullet & \bullet & {\textcolor{red}\bullet} & \bullet \\
	4 & \bullet &&&&&&& \bullet && {\bullet\bullet} & {\bullet\textcolor{red}\bullet} &&& \bullet & {\bullet\bullet} & \bullet & {\textcolor{red}\bullet} && {\textcolor{red}\bullet\bullet} && {\textcolor{red}\bullet\bullet} \\
	3 & \bullet &&& \bullet &&&& {\textcolor{red}\bullet\bullet} & {\bullet\bullet} & {\bullet\textcolor{red}\bullet} & {\bullet\textcolor{red}\bullet} & \bullet &&& {\bullet \ \ } & {\textcolor{red}\bullet} && {\textcolor{red}\bullet\textcolor{red}\bullet} & {\textcolor{red}\bullet} & {\textcolor{red}\bullet\bullet} \\
	2 & \bullet && \bullet & \bullet & {\textcolor{red}\bullet} && \bullet & \bullet & {\bullet\textcolor{red}\bullet} && {\textcolor{red}\bullet} &&&&&& {\textcolor{red}\bullet} && {\textcolor{red}\bullet} \\
	1 & \bullet & \bullet && \bullet \\
	0 & \bullet \\
	& 0 & 1 & 2 & 3 & 4 & 5 & 6 & 7 & 8 & 9 & 10 & 11 & 12 & 13 & 14 & 15 & 16 & 17 & 18 & 19 & 20
	\arrow[no head, from=1-2, to=2-2]
	\arrow[no head, from=1-21, to=2-21]
	\arrow[no head, from=2-2, to=3-2]
	\arrow[no head, from=2-20, to=1-21]
	\arrow[no head, from=2-21, to=3-21]
	\arrow[no head, from=3-2, to=4-2]
	\arrow[no head, from=3-19, to=2-20]
	\arrow[no head, from=3-20, to=4-20]
	\arrow[no head, from=4-2, to=5-2]
	\arrow[no head, from=4-17, to=5-17]
	\arrow[no head, from=4-19, to=3-20]
	\arrow[no head, from=4-20, to=5-20]
	\arrow[no head, from=5-2, to=6-2]
	\arrow[no head, from=5-13, to=6-13]
	\arrow[no head, from=5-17, to=6-17]
	\arrow[no head, from=5-18, to=4-19]
	\arrow[shift left, no head, from=5-18, to=4-19]
	\arrow[no head, from=5-19, to=6-19]
	\arrow[no head, from=6-2, to=7-2]
	\arrow[no head, from=6-12, to=5-13]
	\arrow[no head, from=6-13, to=7-13]
	\arrow[shift left, no head, from=6-16, to=7-16]
	\arrow[no head, from=6-17, to=5-18]
	\arrow[no head, from=6-18, to=5-19]
	\arrow[no head, from=6-19, to=7-19]
	\arrow[no head, from=7-2, to=8-2]
	\arrow[no head, from=7-11, to=6-12]
	\arrow[no head, from=7-12, to=8-12]
	\arrow[shift left, no head, from=7-16, to=8-16]
	\arrow[no head, from=7-17, to=6-18]
	\arrow[shift left, no head, from=7-17, to=6-18]
	\arrow[no head, from=7-22, to=6-22]
	\arrow[no head, from=8-2, to=9-2]
	\arrow[no head, from=8-11, to=7-12]
	\arrow[shift left, no head, from=8-11, to=7-12]
	\arrow[no head, from=8-12, to=9-12]
	\arrow[shift left, color={rgb,255:red,214;green,92;blue,92}, no head, from=8-12, to=9-12]
	\arrow[no head, from=8-16, to=7-17]
	\arrow[shift right, no head, from=8-16, to=9-16]
	\arrow[no head, from=8-20, to=7-20]
	\arrow[color={rgb,255:red,214;green,92;blue,92}, no head, from=8-20, to=9-20]
	\arrow[no head, from=8-22, to=7-22]
	\arrow[no head, from=9-2, to=10-2]
	\arrow[no head, from=9-5, to=10-5]
	\arrow[no head, from=9-9, to=8-9]
	\arrow[no head, from=9-10, to=8-11]
	\arrow[shift left, no head, from=9-10, to=8-11]
	\arrow[color={rgb,255:red,214;green,92;blue,92}, no head, from=9-11, to=8-12]
	\arrow[shift left, color={rgb,255:red,214;green,92;blue,92}, no head, from=9-12, to=10-12]
	\arrow[no head, from=9-16, to=8-17]
	\arrow[color={rgb,255:red,214;green,92;blue,92}, no head, from=9-19, to=8-20]
	\arrow[color={rgb,255:red,214;green,92;blue,92}, no head, from=9-20, to=10-20]
	\arrow[no head, from=10-2, to=11-2]
	\arrow[no head, from=10-4, to=9-5]
	\arrow[no head, from=10-5, to=11-5]
	\arrow[no head, from=10-9, to=9-9]
	\arrow[no head, from=10-9, to=9-10]
	\arrow[color={rgb,255:red,214;green,92;blue,92}, no head, from=10-10, to=9-11]
	\arrow[shift left, no head, from=10-10, to=9-11]
	\arrow[color={rgb,255:red,214;green,92;blue,92}, no head, from=10-18, to=9-19]
	\arrow[no head, from=11-2, to=12-2]
	\arrow[no head, from=11-3, to=10-4]
\end{tikzcd}
}
\end{center}

The same pattern appears in the other computations carried out in this part of the paper, even when not all homotopy groups of $\mathbb{Sp}^2$ die. Consider, for example, the metastable computations of $S^3$. In the figure below, we display the ASS of $\mathbb{Z} \oplus \mathbb{RP}^2$ in black and that of $\mathbb{Sp}^2$ in red. Every differential appearing in the metastable computations can be lifted consistently to a differential between these two spectral sequences, shown in green. We expect the resulting $E_2$-page to coincide with the $E_2$-page of any sensible notion of ASS for $S^3_1$. In the range of convergence, namely for $* < 8$, it indeed coincides with the UASS for $S^3$ \cite[page 8]{UASScharts}.

\begin{center}
	{\footnotesize $E_2$-page of the ASS for $\mathbb{Z} \oplus \mathbb{RP}^2$ in black and for $\mathbb{Sp}^2$ in red, with the induced differentials shown in green.} \\[0.5em]
\adjustbox{scale=0.72,center}{
\begin{tikzcd}[sep=small]
	10 & \bullet &&&&&&&&&&&&&&&&&& \bullet \\
	9 & \bullet &&&&&&&&&&&&&&&&& \bullet & \bullet \\
	8 & \bullet &&&&&&&&&&&&& \bullet &&&& \bullet & \bullet \\
	7 & \bullet &&&&&&&&&&& \bullet & \bullet &&&& \bullet & \bullet & \bullet \\
	6 & \bullet &&&&&&&&&& \bullet & \bullet & \bullet &&&& \bullet & \bullet \\
	5 & \bullet &&&&&&&&& \bullet & {\ \bullet} & \bullet &&&& \bullet &&& {\textcolor{red}\bullet\bullet} \\
	4 & \bullet &&&&& \bullet &&&& {\bullet\textcolor{red}\bullet} & {\bullet\bullet} & \bullet &&&& {\textcolor{red}\bullet} & \bullet & {\textcolor{red}\bullet} & \bullet \\
	3 & \bullet &&& \bullet & \bullet && {\textcolor{red}\bullet} & \bullet & {\bullet\textcolor{red}\bullet} & {\bullet\textcolor{red}\bullet} & {\bullet \ } &&&& {\textcolor{red}\bullet} & \bullet & {\textcolor{red}\bullet\textcolor{red}\bullet} & {\bullet\textcolor{red}\bullet} & {\bullet\textcolor{red}\bullet} \\
	2 & \bullet && \bullet & {\bullet\textcolor{red}\bullet} & \bullet &&& {\textcolor{red}\bullet} & \bullet & {\textcolor{red}\bullet} &&&&&& {\textcolor{red}\bullet} & \bullet & {\textcolor{red}\bullet} \\
	1 & \bullet & \bullet \\
	0 & \bullet \\
	& 0 & 1 & 2 & 3 & 4 & 5 & 6 & 7 & 8 & 9 & 10 & 11 & 12 & 13 & 14 & 15 & 16 & 17 & 18
	\arrow[no head, from=1-20, to=2-19]
	\arrow[no head, from=2-2, to=1-2]
	\arrow[no head, from=2-2, to=3-2]
	\arrow[no head, from=2-20, to=3-20]
	\arrow[no head, from=3-2, to=4-2]
	\arrow[no head, from=3-19, to=2-20]
	\arrow[no head, from=4-2, to=5-2]
	\arrow[no head, from=4-13, to=5-13]
	\arrow[no head, from=4-14, to=3-15]
	\arrow[no head, from=4-18, to=3-19]
	\arrow[no head, from=4-19, to=5-19]
	\arrow[no head, from=5-2, to=6-2]
	\arrow[no head, from=5-12, to=4-13]
	\arrow[no head, from=5-13, to=4-14]
	\arrow[no head, from=5-18, to=4-19]
	\arrow[no head, from=5-19, to=4-20]
	\arrow[no head, from=6-2, to=7-2]
	\arrow[no head, from=6-11, to=5-12]
	\arrow[no head, from=6-13, to=5-14]
	\arrow[no head, from=6-17, to=5-18]
	\arrow[no head, from=7-2, to=8-2]
	\arrow[no head, from=7-11, to=6-12]
	\arrow[draw={rgb,255:red,255;green,54;blue,51}, no head, from=7-11, to=8-11]
	\arrow[shift right, no head, from=7-12, to=6-12]
	\arrow[no head, from=7-12, to=6-13]
	\arrow[draw={rgb,255:red,92;green,214;blue,92}, from=7-12, to=7-11]
	\arrow[draw={rgb,255:red,92;green,214;blue,92}, from=7-18, to=7-17]
	\arrow[draw={rgb,255:red,255;green,54;blue,51}, no head, from=7-19, to=8-19]
	\arrow[draw={rgb,255:red,92;green,214;blue,92}, from=7-20, to=7-19]
	\arrow[no head, from=7-20, to=8-20]
	\arrow[no head, from=8-2, to=9-2]
	\arrow[no head, from=8-5, to=9-5]
	\arrow[no head, from=8-6, to=7-7]
	\arrow[draw={rgb,255:red,92;green,214;blue,92}, from=8-9, to=8-8]
	\arrow[draw={rgb,255:red,255;green,54;blue,51}, no head, from=8-10, to=7-11]
	\arrow[shift left, no head, from=8-10, to=7-11]
	\arrow[no head, from=8-11, to=7-12]
	\arrow[draw={rgb,255:red,92;green,214;blue,92}, from=8-11, to=8-10]
	\arrow[draw={rgb,255:red,255;green,54;blue,51}, no head, from=8-11, to=9-11]
	\arrow[shift left, no head, from=8-12, to=7-12]
	\arrow[no head, from=8-12, to=7-13]
	\arrow[draw={rgb,255:red,92;green,214;blue,92}, from=8-12, to=8-11]
	\arrow[draw={rgb,255:red,92;green,214;blue,92}, from=8-17, to=8-16]
	\arrow[draw={rgb,255:red,255;green,54;blue,51}, no head, from=8-18, to=7-19]
	\arrow[no head, from=8-19, to=7-20]
	\arrow[draw={rgb,255:red,92;green,214;blue,92}, from=8-19, to=8-18]
	\arrow[draw={rgb,255:red,255;green,54;blue,51}, no head, from=8-19, to=9-19]
	\arrow[draw={rgb,255:red,92;green,214;blue,92}, from=8-20, to=8-19]
	\arrow[no head, from=9-2, to=10-2]
	\arrow[no head, from=9-4, to=8-5]
	\arrow[no head, from=9-5, to=8-6]
	\arrow[draw={rgb,255:red,92;green,214;blue,92}, from=9-6, to=9-5]
	\arrow[draw={rgb,255:red,255;green,54;blue,51}, no head, from=9-9, to=8-10]
	\arrow[no head, from=9-10, to=8-11]
	\arrow[draw={rgb,255:red,92;green,214;blue,92}, from=9-10, to=9-9]
	\arrow[draw={rgb,255:red,255;green,54;blue,51}, no head, from=9-17, to=8-18]
	\arrow[no head, from=9-18, to=8-19]
	\arrow[draw={rgb,255:red,92;green,214;blue,92}, from=9-18, to=9-17]
	\arrow[no head, from=10-2, to=11-2]
	\arrow[no head, from=10-3, to=9-4]
\end{tikzcd}
}
\end{center}

The reader may wish to compare further examples, especially the UASS for $S^3$ at $p = 3$ \cite[page 21]{UASScharts} with the ASS for $\mathbb{S}/3$: they are identical except for a single dot in low filtration.

Finally, the work of Konovalov \cite{AlgGSS} may be relevant here. If we understand the story correctly, the Goodwillie maps between the layers $S^n(k)$ can be lifted to maps between their ASS, and the resulting \textit{algebraic Goodwillie spectral sequence} collapses at the second page. Moreover, the formula for the first differential should be explicit. The point is that the $E_2 = E_\infty$ page of this algebraic spectral sequence should coincide with the $E_2$-page of the UASS. A truncated version should then provide the $E_2$-page of a possible ASS for $S^n_k$.

\subsection{Simpler approximations}

Although the core of this paper is a model for the Goodwillie tower of $S^1$, the computations suggest that, in many cases, this tower is not the essential part of the story. In fact, the computations become simpler once we quotient out the contribution of $S^1_k$ inside $S^n_k$. This is consistent with Behrens' computations of the Goodwillie spectral sequence, where most differentials are deduced from those for $S^1$.

We define a simpler approximation, denoted $\hat S^n_k$, as the fiber of the map $S^1_k \to \Omega^{n-1}S^n_k$. These approximations assemble into a filtration
\[
\Omega^nS^n \longrightarrow \cdots \longrightarrow \hat S^n_k \longrightarrow \hat S^n_{k-1} \longrightarrow \cdots \longrightarrow \hat S^n_0 \simeq *
\]
which converges on positive homotopy groups. The layers of this filtration are tractable. The first nontrivial layer is
\[
\hat S^n(1) \simeq \Sigma^\infty B\Sigma_p^{(p-1)(n-1)}
\]
and, as we saw in ex. \ref{todasn}, it already provides a good approximation to $\Omega^nS^n$ on homotopy groups for $0 < * < 4(p^2-2)$.

More generally, by Arone--Mahowald's computations (prop. \ref{cohomS}), for $n$ odd one has
\[
\mathbb{F}_p^*\hat S^n(k)
\cong
\Sigma^{1-2k}(\mathcal{A}/\mathcal{A}\beta)_{=k}^{\leq (n-1)/2}
:=
\Sigma^{1-2k}
(\mathcal{A}/\mathcal{A}\beta)_{=k}
/(\mathcal{A}/\mathcal{A}\beta)_{=k}^{\geq (n+1)/2}
\]
In particular, the map $\Omega^nS^n \to \hat S^n_k$ is an equivalence on $\pi_*$ for $0 < * < 2p^{k+1}-2k-3$.

It is natural to ask whether this process can be iterated to produce even simpler approximations. The map $S^1_k \to \Omega^{n-1}S^n_k$ has several desirable features that are difficult to replicate:

\begin{enumerate}
    \item The limit of $S^1_k$ is known, so $\Omega^nS^n$ can be reconstructed from the approximations $\hat S^n_k$.
    \item It kills the first nontrivial layer, i.e. it is an equivalence for $k=0$.
    \item It kills the rational part, while retaining higher chromatic information.
    \item It kills the first nontrivial homotopy group.
\end{enumerate}

Let $F_n$ denote the fiber of the map $I \to \Omega^n\Sigma^n$. Then the tower $\hat S^n_k$ is obtained by evaluating the Goodwillie tower of $F_{n-1}$ on $S^1$. To simplify this approximation further, one would like to find a functor $G$ and a map $G \to \Omega^?F_n\Sigma^?$ satisfying some or all of the properties above.

Although we do not yet have a convincing construction, we can point to two partial examples in the case $F_2 = W$.

First, by the connectivity of $W$, there is a map of orthogonal functors $S^{(V+1)p-3}/p \to W$. This induces a map
\[
P_{m/p}I(S^{2np-3}/p) \longrightarrow P_mW(S^{2n-1})
\]
which satisfies properties (4) and (2), but likely no others.

Second, by \cite[thm.~1.2]{L2W} there is a map $W \to \Omega^{2p}W\Sigma^2$. This induces a map $W_k \to \Omega^{2p}W_k\Sigma^2$, which satisfies property (2), but probably no others. Relatedly, we expect the existence of a sequence of orthogonal functors $W^{(k)}$, with $W^{(0)} = S^{(-)}$, and fiber sequences
\[
W^{(k+1)} \longrightarrow W^{(k)} \longrightarrow \Omega^{2p^k}W^{(k)}\Sigma^2
\]
such that the right-hand map is an equivalence on $\mathbb{D}_{p^k}$. The reader may think of this as a $p$-local analogue of Arone's integral construction \cite{iterates}.  

\subsection{Chromatic directions}

We have seen that the combination of the calculus version of the Gray sequence (thm. \ref{graysequence}) and the model for $S^1_k$ (thm. \ref{model}) implies that (prop. \ref{ext}), after sufficiently many loops, $S^n_k$ is a finite extension of
\[
\mathbb{Z},\quad \Omega^\infty\tau_{>0}\mathbb{S},\quad \dots,\quad \Omega^\infty\tau_{>0}\mathbb{Sp}^{p^k}
\]
In this paper we only use the computational consequences of this result, but it also seems to suggest a chromatic reinterpretation. Indeed, the spectra $\tau_{>0}\mathbb{Sp}^{p^k}$ are $K(i)$-acyclic for $i\leq k$ (prop. \ref{symmprodkn}). At present, however, we do not know how to formulate a precise statement. The subcategory generated under finite extensions by the first $k$ symmetric products always contains the sphere spectrum, and hence all finite spectra, so it is too large to capture this phenomenon. It may be more interesting to consider the complementary subcategory generated by
\[
\tau_{>0}\mathbb{Sp}^{p^k},\quad \tau_{>0}\mathbb{Sp}^{p^{k+1}},\quad \dots
\]
which lies entirely in the localizing subcategory of spectra that are $K(i)$-acyclic for $i\leq k$.

\subsection{Segal and Sullivan conjectures}

Another natural question is to understand the $C_p$-homotopy fixed points of the approximation spheres, namely
\[
(S^n_k)^{hC_p} \simeq \mathrm{Map}_*(BC_{p+},S^n_k)
\]
This provides a natural bridge between the \textit{Segal conjecture}, corresponding to $k=0$, and the \textit{Sullivan conjecture}, corresponding to $k=\infty$.

For $k=0$, the Segal conjecture gives
\begin{align*}
	\mathrm{Map}_*(BC_{p+},S^n_0)
	&\simeq \Omega^\infty \mathrm{Map}_*(\Sigma^\infty BC_{p+},\mathbb{S}^n) \\
	&\simeq \Omega^\infty \Sigma^n(\Sigma^{\infty}BC_{p+})^\vee \\
	&\simeq \Omega^\infty \Sigma^n(\Sigma^\infty BC_{p+} \oplus \mathbb{S}^\wedge_p) \\
	&\simeq \Omega^\infty\mathbb{S}^n \times \Omega^\infty\Sigma^n(\Sigma^\infty BC_p \oplus \mathbb{S}^\wedge_p) \\
	&\simeq S^n_0 \times \Omega^\infty\Sigma^n(\Sigma^\infty BC_p \oplus \mathbb{S}^\wedge_p)
\end{align*}
At the limit $k=\infty$, the Sullivan conjecture says that
\[
\mathrm{Map}_*(BC_{p+},S^n)
\simeq
\mathrm{Map}(BC_p,S^n)
\simeq
S^n
\]
The model for the approximation circles gives some information about the intermediate stages. Indeed, it reduces the computation of the homotopy fixed points of $S^1_k$ to the corresponding computation for symmetric product spectra:
\[
\Omega (S^1_k)^{hC_p}
\simeq
\mathbb{Z} \times \Omega^{\infty+2k}(\tau_{>0}\mathbb{Sp}^{p^k})^{hC_p}
\]
Thus we are left with the question of understanding the $C_p$-homotopy fixed points of the symmetric product spectra. It is possible that this computation is already known somewhere in the literature.

\subsection{Unitary Whitehead conjecture}

We expect that the ideas developed in this paper may also be relevant to a problem posed by Neisendorfer. The standard filtration
% https://q.uiver.app/#q=WzAsNyxbMCwwLCJVKDEpIl0sWzEsMCwiXFxjZG90cyJdLFsyLDAsIlUobikiXSxbMywwLCJVKG4rMSkiXSxbMywxLCJTXnsybisxfSJdLFs0LDAsIlxcY2RvdHMiXSxbNSwwLCJVKFxcaW5mdHkpIl0sWzAsMV0sWzEsMl0sWzIsM10sWzMsNF0sWzMsNV0sWzUsNl1d
\[
\begin{tikzcd}
	{U(1)} & \cdots & {U(n)} & {U(n+1)} & \cdots & {U(\infty)} \\
	&&& {S^{2n+1}}
	\arrow[from=1-1, to=1-2]
	\arrow[from=1-2, to=1-3]
	\arrow[from=1-3, to=1-4]
	\arrow[from=1-4, to=1-5]
	\arrow[from=1-4, to=2-4]
	\arrow[from=1-5, to=1-6]
\end{tikzcd}
\]
gives rise to a spectral sequence
\[
\begin{tikzcd}
	\pi_*\begin{array}{c}
        \begin{Bmatrix}
         U(1) \\
         S^{2n+1}
     \end{Bmatrix}_{n \geq 1}
     \end{array}
     & \pi_*U(\infty)
	\arrow[Rightarrow, from=1-1, to=1-2]
\end{tikzcd}
\]
Since the homotopy groups of $U(\infty)$ are known, there must be substantial cancellation in this spectral sequence. Neisendorfer's question is to understand what this spectral sequence looks like.

One can try to approximate this spectral sequence using unitary calculus \cite{unitary}. Indeed, both the functor $U$ and the functor $V \mapsto \Sigma S^V$ can be viewed as functors $\mathrm{Vect}_{\mathbb{C}} \to \mathcal{S}_*$. Assuming a comparison theorem between Goodwillie calculus and unitary calculus for analytic functors, analogous to \cite[thm. 3.5]{comparing}, one is led to expect a spectral sequence of the form
\[
\begin{tikzcd}
	\pi_*\begin{array}{c}
        \begin{Bmatrix}
         P_{p^k}U(\mathbb{C}) \\
         S_k^{2n+1}
     \end{Bmatrix}_{n \geq 1}
     \end{array}
     & \pi_*U(\infty)
	\arrow[Rightarrow, from=1-1, to=1-2]
\end{tikzcd}
\]
We now have a reasonable understanding of the approximation spheres, so the remaining input would be the unitary tower of $U(V)$ at $V=\mathbb{C}$. This is the content of the \textit{$bu$-Whitehead conjecture} \cite{AroneLesh2009}, which predicts that the unitary tower of $U(V)$ at $V=\mathbb{C}$ fits into an exact complex analogous to the one arising from the Goodwillie tower of $S^1$. If this analogy behaves as in the classical Whitehead conjecture, one would expect an equivalence of the form
\[
\Omega P_{p^k}U(\mathbb{C}) \simeq \mathbb{Z} \times \Omega^{\infty+2(k+1)}\tau_{>0}A_{p^k}
\]
Here the spectra $A_m$ are those appearing in \cite{AroneLesh2005,AroneLesh2009} and play a role analogous to that of the symmetric product spectra. One has
\[
A_0 = ku \qquad A_1/A_0 \simeq \Sigma \mathbb{CP}^\infty := \Sigma^{\infty+1}CP^\infty
\]
This suggests that $A_1$ itself may be the cofiber of the reduced Segal--Snaith map
\[
\mathbb{CP}^\infty \longrightarrow \mathbb{CP}^\infty_+ \longrightarrow ku
\]
although we do not know a reference for this identification. If so, then the first approximation to Neisendorfer's spectral sequence, corresponding to $k=0$, would be described by the Atiyah--Hirzebruch spectral sequence for $\mathbb{CP}^\infty$ on the positive lines (see, for instance, \cite{AHSSCP}), together with the homotopy groups of this cofiber on the first line. The differentials landing on the first line should then be induced by the Segal--Snaith map. This is completely analogous to the metastable spectral sequence described in subsection \ref{metastable}.

\appendix

\section{Stripping techniques}

The goal of this section is to find the first nontrivial kernel of the map $\mathcal{A} \to \mathcal{A}$ given by right multiplication by $P^{p^{k-1}}\cdots P^1\beta$ - where we keep using the convention that, at $p=2$, we write $\beta \leftrightarrow Sq^1$ and $P^i \leftrightarrow Sq^{2i}$. This is the main ingredient to compute the third nontrivial homotopy group of the symmetric products in lemma~\ref{homotopysp}. The solution will be the following.

\begin{enumerate}
    \item[\boxed{$p = 2$}] The first nontrivial kernel is the 1-dimensional vector space in degree $2^k$ generated by the element $N_k$ defined recursively by
    \begin{align*}
        N_0 &:= Sq^1 \\
        N_k &:= Sq^{2^k} + Sq^{2^{k-1}}N_{k-1}
    \end{align*}
    An example is given by $N_2 = Sq^4 + Sq^3Sq^1$.
    \item[\boxed{$p > 2$}] The first nontrivial kernel is the 1-dimensional vector space in degree $2p^k-1$ generated by the element $N_k := N_{k, k}$ defined recursively by
    \begin{align*}
        N_{k, 0} &:= P^{p^{k-1}}\cdots P^1\beta \\
		N_{k, h+1} &:= N_{k, h} - \frac{1}{2}(\xi_{k-h}^{p^h} \cap N_{k, h})(\tau_h \cap N_{k, 0})
    \end{align*}
    An example is given by $N_2 = P^pP^1\beta - \frac{1}{2}\beta P^pP^1 - \frac{1}{2}P^{p+1}\beta + \frac{1}{4}\beta P^{p+1}$.
\end{enumerate}

We have not yet defined what the notation $\xi \cap -$ means. 
This is known as the \textit{stripping technique} and is the main tool to solve this problem.

Before introducing it, let us briefly recall the structure of the \emph{dual Steenrod algebra} $\mathcal{A}_*$. 
This is the Hopf algebra dual to $\mathcal{A}^*$ and, as an algebra, it is the free graded commutative algebra generated by the elements $\xi_k$ in degrees $2^k - 1$ for $p = 2$ or by the elements $\xi_k, \tau_k$ in degrees $2(p^k - 1), 2p^k - 1$ for $p > 2$. The comultiplication is given by
\[
\Delta \xi_k = \textstyle \sum_{i=0}^k \xi_{k-i}^{p^i} \otimes \xi_i
\qquad 
\Delta \tau_k = \tau_k \otimes 1 + \textstyle \sum_{i=0}^k \xi_{k-i}^{p^i} \otimes \tau_i
\]
The comultiplication $\Delta$ on $\mathcal{A}^*$, together with the dual pairing $\langle, \rangle$, induces a natural left action of $\mathcal{A}_*$ on $\mathcal{A}^*$, defined as follows.

\begin{definition}
	The \textit{stripping pairing} is defined as
	\begin{align*}
    - \cap -: &\mathcal{A}_* \otimes \mathcal{A}^* \xrightarrow{1 \otimes \Delta} \mathcal{A}_* \otimes \mathcal{A}^* \otimes \mathcal{A}^* \xrightarrow{\langle,\rangle\otimes 1} \mathbb{F}_p \otimes \mathcal{A}^* \cong \mathcal{A}^* \\
    &\xi \otimes \theta \mapsto\xi \cap \theta := \textstyle \sum \xi(\theta_1)\theta_2
\end{align*}
\end{definition}

Here the notation $\theta_1, \theta_2$ follows the usual Sweedler convention. Notice that this definition applies to any bialgebra.

The definition may seem complicated, but for $\xi = \xi_k^{p^j}$ it is easy to describe what stripping does. Indeed, we have $$\xi_k^{p^j} \cap \theta^I = \textstyle \sum \theta^{I - (p^{j+k-1}, \dots, p^j)}$$

where the sum is taken over all possible ways of matching the vector $(p^{j+k-1}, \dots, p^j)$ to the length of $I$ by inserting zeros. For example, at $p = 2$, if $I$ has length $3$ and we want to strip by $(4, 2)$, the sum (and the differences) is taken over $(4, 2, 0)$, $(4, 0, 2)$, $(0, 4, 2)$. Note also that $I$ does not need to be an admissible sequence.

As an example of how this works, consider the relation $Sq^3Sq^2 = 0$.
\begin{enumerate}
    \item Stripping it by $(2, 1)$ gives $Sq^1Sq^1 = 0$.
    \item Stripping it by $(2)$ gives $Sq^1Sq^2 + Sq^3 = 0$, i.e. $Sq^1Sq^2 = Sq^3$.
    \item Stripping it by $(1)$ gives $Sq^2Sq^2 + Sq^3Sq^1 = 0$, i.e. $Sq^2Sq^2 = Sq^3Sq^1$.
\end{enumerate}

Note that $\xi_1 \cap -$, i.e. stripping by $(1)$, corresponds to the classical statement that the assignment $Sq^n \mapsto Sq^{n-1}$ acts as a derivation in the Steenrod algebra. Stripping techniques are just a generalization - or a more efficient version - of this idea. We learnt about stripping techniques in \cite[Sec.~5]{Woodproblems}.

Even if this is the intuition behind it, and the reader should keep it in mind while following the proofs, we will try to work as formally as possible. We summarize in the following lemma all the properties of stripping that we will need, providing complete proofs since the literature at odd primes often contains sign errors.

\begin{lem}
	The stripping pairing satisfies the following properties:
	\begin{enumerate}
\item $|\xi \cap \theta| = |\theta| - |\xi|$. In particular, $\xi \cap \theta = 0$ if $|\theta| < |\xi|$.
\item $\xi \cap \theta = \xi(\theta)$ if $|\xi| = |\theta|$.
\item $\xi \cap (\theta \psi) = \textstyle \sum (-1)^{|\xi_2||\theta|} (\xi_1 \cap \theta)(\xi_2 \cap \psi)$.
\item $\eta \cap (\xi \cap \theta) = \eta \xi \cap \theta$.
\item $\xi_0 \cap - = \id$.
\item $\xi_1^{p^k} \cap -$ is a derivation.
\item $\xi_h^j \cap Sq^i = Sq^{i-j}$ if $h = 1$, and $0$ if $h > 1$.
\item $\xi_h^j \cap P^i = P^{i-j}$ if $h = 1$, and $0$ if $h > 1$.
\item $\xi_h \cap \theta = 0$ for $h > \mathrm{len}\,\theta$.
\item $\xi_{h}^{p^{k-h}} \cap (\theta \psi) = \theta \cdot (\xi_{h}^{p^{k-h}} \cap \psi)$ if $|\theta| < p^{k - 1}|\xi_1|$.
\end{enumerate}
\end{lem}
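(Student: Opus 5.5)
Most of these properties follow directly from the definition $\xi\cap\theta=\sum\xi(\theta_1)\theta_2$ together with standard facts about the Hopf algebra structures of $\mathcal{A}^*$ and $\mathcal{A}_*$. The plan is to treat them in three groups: formal properties (1)--(5), the derivation property (6), and the explicit computations (7)--(10).

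For the formal properties, (1) and (2) are degree bookkeeping. The pairing $\xi(\theta_1)$ is nonzero only if $|\theta_1|=|\xi|$, so $|\theta_2|=|\theta|-|\xi|$. When $|\xi|=|\theta|$, only the term $\theta_1=\theta$, $\theta_2=1$ of $\Delta\theta$ contributes. Property (3) follows because $\Delta$ is an algebra map, $\Delta(\theta\psi)=\sum(-1)^{|\theta_2||\psi_1|}\theta_1\psi_1\otimes\theta_2\psi_2$, and because the product on $\mathcal{A}_*$ is dual to $\Delta$, so $\xi(\theta_1\psi_1)=\sum\pm\xi_1(\theta_1)\xi_2(\psi_1)$. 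Regrouping gives the formula, and the sign should be checked carefully: $\xi_2$ is paired with $\psi_1$, and $\xi_2\cap\psi$ is moved past $\theta$, so the surviving Koszul sign is $(-1)^{|\xi_2||\theta|}$. Property (4) is coassociativity of $\Delta$ on $\mathcal{A}^*$ combined with the fact that the product $\eta\xi$ on $\mathcal{A}_*$ is dual to $\Delta$. Property (5) holds because $\xi_0=1$ is the counit.

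For (6), apply (3) with $\xi=\xi_1^{p^k}$. The coproduct of $\xi_1$ in $\mathcal{A}_*$ is $\xi_1\otimes1+1\otimes\xi_1$, since $\mathcal{A}_*$ is dual to the multiplication of $\mathcal{A}^*$, so the coproduct of $\xi_1^{p^k}$ is $\xi_1^{p^k}\otimes1+1\otimes\xi_1^{p^k}$ in characteristic $p$. The sign is trivial because $|\xi_1^{p^k}|$ is even at odd primes, and signs are irrelevant at $p=2$.

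For (7) and (8), use the Cartan formula $\Delta P^i=\sum P^{i-j}\otimes P^j$, where the Milnor dual of $P^j$ is $\xi_1^j$. This gives $\xi_1^j\cap P^i=P^{i-j}$, while $\xi_h^j$ with $h>1$ pairs to zero with every $P^a$. For (9), the monomial $\theta$ is a product of $\mathrm{len}\,\theta$ generators, so iterating (3) distributes the coproduct of $\xi_h$, namely $\sum\xi_{h-i}^{p^i}\otimes\xi_i$, into at most $\mathrm{len}\,\theta$ factors. Each factor only sees powers of $\xi_1$ and $\xi_0$ by (8). A nonzero term therefore requires $h$ successive nontrivial lowering steps, which is impossible when $h>\mathrm{len}\,\theta$. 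I would make this precise by induction on length, using that a single $P^i$ (or $\beta$) admits only $\xi_0$, $\xi_1^j$ (or $\tau_0$) as nonzero strippers.

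The main obstacle is (10). Expand the coproduct of $\xi_h^{p^{k-h}}$ via $\Delta\xi_h=\sum_i\xi_{h-i}^{p^i}\otimes\xi_i$ raised to the $p^{k-h}$, which is again additive in characteristic $p$. This gives terms $\xi_{h-i}^{p^{k-h+i}}\otimes\xi_i^{p^{k-h}}$. Any term with $i<h$ strips $\theta$ by $\xi_{h-i}^{p^{k-h+i}}$, whose degree is at least $p^{k-1}|\xi_1|$. By (1) this vanishes under the hypothesis $|\theta|<p^{k-1}|\xi_1|$. Only $i=h$ survives, giving $\theta\cdot(\xi_h^{p^{k-h}}\cap\psi)$ with trivial sign since $|\xi_0|=0$. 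The degree inequality $|\xi_{h-i}^{p^{k-h+i}}|\ge p^{k-1}|\xi_1|$ for $h-i\ge1$ needs a short check: $p^{k-h+i}\cdot2(p^{h-i}-1)\ge p^{k-1}\cdot2(p-1)$ at odd primes, and analogously at $p=2$. The odd-prime signs involving $\tau$'s in (3) are the other delicate point.
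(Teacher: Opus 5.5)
Your proposal is correct and follows the paper's proof item by item: degree bookkeeping for (1), (2) and (5), bialgebra compatibility plus duality of $\mathcal{A}_*$ and $\mathcal{A}^*$ for (3) and (4), primitivity of $\xi_1^{p^k}$ for (6), the Cartan formula and Milnor duality for (7) and (8), induction on length through (3) for (9), and the degree estimate on the terms of $\Delta\xi_h^{p^{k-h}}$ for (10). One small correction: in (10) the surviving term carries the sign $(-1)^{|\xi_h^{p^{k-h}}||\theta|}$ from (3), and this sign is trivial because every $\xi_h$ has even degree at odd primes (signs being irrelevant at $p=2$), not because $|\xi_0|=0$.
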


\begin{proof}
	We prove each statement in turn.
	\begin{enumerate}
		\item Only the term $\xi(\theta_1)\theta_2$ with $|\theta_1| = |\xi|$ contributes to the sum, and this term has degree $|\theta_2| = |\theta| - |\xi|$.
		
		\item We have $\Delta \theta = 1 \otimes \theta + \theta \otimes 1 + \textstyle \sum \theta_i \otimes \psi_i$, where each $|\theta_i| < |\theta|$. Since $|\xi| = |\theta|$, only the middle term survives, hence $\xi \cap \theta = \xi(\theta)1 = \xi(\theta)$.

		\item This follows from two diagrams. First, since $\mathcal{A}^*$ is a bialgebra,
	% https://q.uiver.app/#q=WzAsNSxbMCwwLCJcXG1hdGhjYWx7QX1eKiBcXG90aW1lcyBcXG1hdGhjYWx7QX1eKiJdLFsxLDAsIlxcbWF0aGNhbHtBfV4qIl0sWzIsMCwiXFxtYXRoY2Fse0F9XiogXFxvdGltZXMgXFxtYXRoY2Fse0F9XioiXSxbMCwxLCJcXG1hdGhjYWx7QX1eKiBcXG90aW1lcyBcXG1hdGhjYWx7QX1eKiBcXG90aW1lcyBcXG1hdGhjYWx7QX1eKiJdLFsyLDEsIlxcbWF0aGNhbHtBfV4qIFxcb3RpbWVzIFxcbWF0aGNhbHtBfV4qIFxcb3RpbWVzIFxcbWF0aGNhbHtBfV4qIFxcb3RpbWVzIFxcbWF0aGNhbHtBfV4qIl0sWzAsMSwiXFxuYWJsYSJdLFsxLDIsIlxcRGVsdGEiXSxbMCwzLCJcXERlbHRhIFxcb3RpbWVzIFxcRGVsdGEiLDJdLFszLDQsIjEgXFxvdGltZXMgXFx0YXUgXFxvdGltZXMgMSIsMl0sWzQsMiwiIFxcbmFibGEgXFxvdGltZXMgXFxuYWJsYSIsMl1d
	\[\begin{tikzcd}
	{\mathcal{A}^* \otimes \mathcal{A}^*} & {\mathcal{A}^*} & {\mathcal{A}^* \otimes \mathcal{A}^*} \\
	{\mathcal{A}^* \otimes \mathcal{A}^* \otimes \mathcal{A}^*} && {\mathcal{A}^* \otimes \mathcal{A}^* \otimes \mathcal{A}^* \otimes \mathcal{A}^*}
	\arrow["\nabla", from=1-1, to=1-2]
	\arrow["{\Delta \otimes \Delta}"', from=1-1, to=2-1]
	\arrow["\Delta", from=1-2, to=1-3]
	\arrow["{1 \otimes \tau \otimes 1}"', from=2-1, to=2-3]
	\arrow["{ \nabla \otimes \nabla}"', from=2-3, to=1-3]
	\end{tikzcd}\]
	which translates to $$\Delta(\theta\psi) = \textstyle \sum (-1)^{|\theta_2||\psi_1|}\theta_1\psi_1 \otimes \theta_2\psi_2$$
	Second, since $\mathcal{A}_*$ and $\mathcal{A}^*$ are dual bialgebras,
	% https://q.uiver.app/#q=WzAsNSxbMSwwLCJcXG1hdGhjYWx7QX1fKiBcXG90aW1lcyBcXG1hdGhjYWx7QX1eKiJdLFsyLDAsIlxcbWF0aGJie0Z9X3AiXSxbMCwwLCJcXG1hdGhjYWx7QX1fKiBcXG90aW1lcyBcXG1hdGhjYWx7QX1eKiBcXG90aW1lcyBcXG1hdGhjYWx7QX1eKiJdLFswLDEsIlxcbWF0aGNhbHtBfV8qIFxcb3RpbWVzIFxcbWF0aGNhbHtBfV8qIFxcb3RpbWVzXFxtYXRoY2Fse0F9XiogXFxvdGltZXMgXFxtYXRoY2Fse0F9XioiXSxbMiwxLCJcXG1hdGhjYWx7QX1fKiBcXG90aW1lc1xcbWF0aGNhbHtBfV4qIFxcb3RpbWVzIFxcbWF0aGNhbHtBfV8qIFxcb3RpbWVzIFxcbWF0aGNhbHtBfV4qIl0sWzAsMSwiXFxsYW5nbGUsXFxyYW5nbGUiXSxbMiwwLCIxIFxcb3RpbWVzIFxcbmFibGEiXSxbMiwzLCJcXERlbHRhIFxcb3RpbWVzIDEgXFxvdGltZXMgMSIsMl0sWzMsNCwiMSBcXG90aW1lcyBcXHRhdSBcXG90aW1lcyAxIiwyXSxbNCwxLCJcXGxhbmdsZSxcXHJhbmdsZSBcXG90aW1lcyBcXGxhbmdsZSxcXHJhbmdsZSIsMl1d
\[\begin{tikzcd}
	{\mathcal{A}_* \otimes \mathcal{A}^* \otimes \mathcal{A}^*} & {\mathcal{A}_* \otimes \mathcal{A}^*} & {\mathbb{F}_p} \\
	{\mathcal{A}_* \otimes \mathcal{A}_* \otimes\mathcal{A}^* \otimes \mathcal{A}^*} && {\mathcal{A}_* \otimes\mathcal{A}^* \otimes \mathcal{A}_* \otimes \mathcal{A}^*}
	\arrow["{1 \otimes \nabla}", from=1-1, to=1-2]
	\arrow["{\Delta \otimes 1 \otimes 1}"', from=1-1, to=2-1]
	\arrow["{\langle,\rangle}", from=1-2, to=1-3]
	\arrow["{1 \otimes \tau \otimes 1}"', from=2-1, to=2-3]
	\arrow["{\langle,\rangle \otimes \langle,\rangle}"', from=2-3, to=1-3]
\end{tikzcd}\]
	which yields $$\xi(\theta\psi) = \textstyle \sum (-1)^{|\xi_2||\theta|}\xi_1(\theta)\xi_2(\psi)$$
	Combining the two gives:
	\begin{align*}
		\xi \cap (\theta\psi) &= \textstyle \sum \xi((\theta\psi)_1)(\theta\psi)_2 \\
		&= \textstyle \sum (-1)^{|\theta_2||\psi_1|}\xi(\theta_1\psi_1) \theta_2\psi_2 \\
		&= \textstyle \sum (-1)^{|\theta_2||\psi_1| + |\xi_2||\theta_1|}\xi_1(\theta_1)\xi_2(\psi_1) \theta_2\psi_2 \\
		&= \textstyle \sum (-1)^{|\theta_2||\xi_2| + |\xi_2||\theta_1|}\xi_1(\theta_1)\xi_2(\psi_1) \theta_2\psi_2 \\
		&= \textstyle \sum (-1)^{|\xi_2||\theta|}\xi_1(\theta_1)\xi_2(\psi_1) \theta_2\psi_2 \\
		&= \textstyle \sum (-1)^{|\xi_2||\theta|}(\xi_1 \cap \theta)(\xi_2 \cap \psi)
	\end{align*}
	where we used that $\xi_2(\psi_1) \neq 0$ only if $|\xi_2| = |\psi_1|$.

	\item Again, this follows from the dual bialgebra structure. We compute: $$\eta\xi \cap \theta = \textstyle \sum (\eta\xi)(\theta_1)\theta_2 = \textstyle \sum (-1)^{|\xi||(\theta_1)_1|}\eta((\theta_1)_1)\xi((\theta_1)_2)\theta_2$$
	By coassociativity, $(\theta_1)_1 = \theta_1$, $(\theta_1)_2 = (\theta_2)_1$ and $(\theta_2)_2 = \theta_2$, hence $$\eta\xi \cap \theta = \textstyle \sum (-1)^{|\xi||\theta_1|}\eta(\theta_1)\xi((\theta_2)_1)(\theta_2)_2 = \textstyle \sum (-1)^{|\xi||\theta_1|}\eta(\theta_1)(\xi \cap \theta_2)$$
	Nontrivial terms require $|\theta_1| = |\eta|$, so $$\eta\xi \cap \theta = \textstyle \sum (-1)^{|\xi||\eta|}(\xi \cap (\eta \cap \theta))$$
	and the result follows by graded commutativity.

	\item Same argument as in (2).
	
	\item Follows from (3) and the fact that the elements $\xi_1^{p^k}$ are primitive.
	
	\item By (4) it suffices to consider $j = 1$. By definition,
\[
\xi_h \cap Sq^i = \textstyle \sum \xi_h(Sq^\alpha) Sq^{i-\alpha}
\]
Now $\xi_h(Sq^\alpha) \neq 0$ if and only if $h = 1$ and $\alpha = 1$, which gives the claim.

	\item Same argument as in (7).
	\item Proceed by induction. Suppose $\theta = P^i \psi$. By (3),
\[
\xi_h \cap \theta = \textstyle \sum (\xi_{h - \alpha}^{p^\alpha} \cap P^i)(\xi_\alpha \cap \psi)
\]
By (8), the left-hand side vanishes for $h - \alpha > 1$, while by the induction hypothesis the second factor vanishes for $\alpha>\mathrm{len}\,\psi$. Since $\mathrm{len}\,\psi<h-1$, every index $\alpha$ satisfies either $\alpha<h-1$ or $\alpha>\mathrm{len}\,\psi$, and hence every summand vanishes.

	\item By (3),
\[
\xi_h^{p^{k-h}} \cap (\theta\psi) = \textstyle \sum (\xi_{h-i}^{p^{k-h+i}} \cap \theta)(\xi_i^{p^{k-h}} \cap \psi)
\]
By (1),
\[
|\xi_{h-i}^{p^{k-h+i}} \cap \theta| = |\theta| - |\xi_{h-i}^{p^{k-h+i}}| = |\theta| - p^{k-h+i}|\xi_{h-i}|
\]
Since $|\xi_j| \ge p^{j-1}|\xi_1|$ for all $j \ge 1$, we obtain $|\xi_{h-i}^{p^{k-h+i}} \cap \theta| < 0$ for $i < h$. Thus only the term $i = h$ survives, giving the desired identity. \qedhere
	\end{enumerate}
\end{proof}

With all the stripping techniques settled, we can now move on to solving the problem. First of all, note that right multiplication by $P^{p^{k-1}}\cdots P^1\beta$ factors through the map $\mathcal{A}^* \to (\mathcal{A}_{\geq k}\beta)^{* + 2p^k - 1}$. We begin by comparing the dimensions of the domain and codomain as $\mathbb{F}_p$-vector spaces.

\begin{lem}
If $* < 2p^k(p-1)$ then
\[
\dim_{\mathbb{F}_p} (\mathcal{A}_{\geq k}\beta)^{* + 2p^k - 1}
= \dim_{\mathbb{F}_p}\mathcal{A}^* - \dim_{\mathbb{F}_p}(\mathcal{A}_{=k}\beta)^*.
\]
In particular, for $* < 2p^k - 1$ we have
\[
\dim_{\mathbb{F}_p} (\mathcal{A}_{\geq k}\beta)^{* + 2p^k - 1}
= \dim_{\mathbb{F}_p}\mathcal{A}^*
\]
while for $* = 2p^k - 1$ the inequality
\[
\dim_{\mathbb{F}_p} (\mathcal{A}_{\geq k}\beta)^{* + 2p^k - 1}
< \dim_{\mathbb{F}_p}\mathcal{A}^*
\]
holds.
\end{lem}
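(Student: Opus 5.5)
We compare both sides through the admissible basis of $\mathcal{A}$. Right multiplication by $\beta$ embeds the span of admissible monomials not ending in $\beta$ isomorphically onto $\mathcal{A}\beta$, shifting degree by one. Hence $(\mathcal{A}_{\geq k}\beta)^{m+1}$ has basis the admissible monomials $\theta^I\beta$ with $\epsilon=0$ and length $\geq k$ in degree $m$. These are in bijection with admissible $I=(\epsilon_1,i_1,\dots,\epsilon_\ell,i_\ell)$ with $\ell\geq k$ and $|\theta^I|=m$, i.e.\ with the basis of $(\mathcal{A}/\mathcal{A}\beta)_{\geq k}$ in degree $m$. So we must count admissible sequences of length $\geq k$ (no trailing Bockstein) in degree $*+2p^k-2$, and compare with $\dim\mathcal{A}^*$.

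The key step is a ``stripping'' bijection on the combinatorial side. Given an admissible $I$ of length $\ell\geq k$ with no trailing $\beta$, subtract the minimal tail: the last $k$ entries satisfy $i_{\ell-j}\geq p^{j}$ (since $i_\ell\geq 1$ and admissibility forces $i_{j}\geq p\,i_{j+1}$). Subtracting the vector $(p^{k-1},\dots,p,1)$ from the last $k$ reduced-power entries yields a sequence $J$ of degree $*+2p^k-2-2(p^k-1)=*$. I would check that $J$ remains admissible: for the interior entries, $i_j-p^{k-1-(\ell-j)}\geq p(i_{j+1}-p^{k-2-(\ell-j)})+\epsilon_{j+1}$ follows directly. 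At the junction with the untouched part, admissibility is unaffected except that the first subtracted entry loses $p^{k-1}$ while the previous one loses nothing, which only helps. The last entry $i_\ell-1$ may become $0$. Conversely, from an arbitrary admissible $J$ of degree $*$ (now allowing a trailing $\beta$ and possibly shorter length) we pad with zeros to length $k$ and add back $(p^{k-1},\dots,1)$, absorbing a trailing $\beta$ of $J$ as $\epsilon_\ell$ of the new last $P$-entry. This gives a candidate inverse map $\mathcal{A}^*\to(\mathcal{A}/\mathcal{A}\beta)_{\geq k}^{*+2p^k-2}$.

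The main obstacle is pinning down exactly where the inverse fails to be admissible or well defined. When $J$ ends with a $P^0$, i.e.\ when $J$ genuinely has length $<k$ after padding, the padding is harmless; the problems come from the trailing-$\beta$ absorption and from the constraint $i_{\ell}\geq p\,i_{\ell+1}+\epsilon$. I expect the failures to be exactly the admissible monomials of $\mathcal{A}$ of length $k$ ending in $\beta$, that is, the basis of $(\mathcal{A}_{=k}\beta)^*$. The hypothesis $*<2p^k(p-1)$ should guarantee that no length-$(k+1)$ phenomena interfere: a monomial of length $\geq k+1$ in degree $<2p^k(p-1)$ cannot occur, because the minimal length-$(k+1)$ degree is $2(p^{k+1}-1)$. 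This yields the displayed dimension formula. I would verify the case analysis on $\epsilon$ at the junction carefully, using the $p=2$ convention $\beta\leftrightarrow Sq^1$, $P^i\leftrightarrow Sq^{2i}$ uniformly.

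Finally, the two special cases follow from the formula. The smallest admissible length-$k$ monomial ending in $\beta$ is $P^{p^{k-1}}\cdots P^1\beta$, in degree $2(p^k-1)+1=2p^k-1$. Therefore $(\mathcal{A}_{=k}\beta)^*=0$ for $*<2p^k-1$, which gives equality of dimensions. In degree $*=2p^k-1$ this class is nonzero, which gives the strict inequality. Both degrees lie within the range $*<2p^k(p-1)$ since $p\geq 2$.
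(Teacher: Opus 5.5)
Your route is the paper's: identify $(\mathcal{A}_{\geq k}\beta)^{*+2p^k-1}$ with $(\mathcal{A}/\mathcal{A}\beta)_{\geq k}^{*+2p^k-2}$, use the degree bound to cut down to length exactly $k$, shift $i_j\mapsto i_j-p^{k-j}$, and recognize the complement as $(\mathcal{A}_{=k}\beta)^*$. The gap is that the only nonformal step, namely the exact image of the shift, is left as ``I expect''. Moreover, the inverse you sketch is wrong as stated. If, as you write, the trailing $\beta$ of $J$ is absorbed in front of the last $P$-entry, admissibility fails whenever $J$ has length $r\leq k-2$. For $k=2$ and $J=\beta$ you get $P^p\beta P^1$, which violates $i_1\geq p\,i_2+\epsilon_2=p+1$, even though $J$ does have the preimage $\beta P^pP^1$. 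Carried out literally, your inverse would therefore overcount the failures.

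There are two smaller slips. First, in your interior inequality $i_j$ should lose $p^{\ell-j}$, not $p^{k-1-(\ell-j)}$. Second, the bound excluding length $\geq k+1$ is needed on the source side, in degree $*+2p^k-2$. There the hypothesis $*<2p^k(p-1)$ is exactly $*+2p^k-2<2(p^{k+1}-1)$, and this matters because subtracting from the last $k$ entries is not injective on longer sequences. Your bound in degree $*$ only gives $\mathcal{A}^*=\mathcal{A}^*_{\leq k}$.

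The missing verification is short.

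The shift is a bijection onto tuples $(\epsilon_1,i'_1,\dots,\epsilon_k,i'_k)$ of degree $*$ with $i'_k\geq 0$ and $i'_j\geq p\,i'_{j+1}+\epsilon_{j+1}$. Since $i'_j=0$ forces $i'_{j+1}=\epsilon_{j+1}=0$, the zeros form a tail $i'_{\ell+1}=\dots=i'_k=0$ with $i'_1,\dots,i'_\ell\geq 1$. Admissibility then forces $\epsilon_r=0$ for $r\geq\ell+2$, while $\epsilon_{\ell+1}$ is unconstrained.

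So for $\ell<k$ these tuples are exactly the admissible monomials $\beta^{\epsilon_1}P^{i'_1}\cdots\beta^{\epsilon_\ell}P^{i'_\ell}\beta^{\epsilon_{\ell+1}}$ of length $\ell$ with arbitrary trailing Bockstein. That Bockstein therefore sits in slot $\ell+1$, not in front of the last entry. For $\ell=k$ the tuples are the admissible monomials of length $k$ without trailing $\beta$.

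The admissible monomials of degree $*$ that are missed are exactly those of length $k$ ending in $\beta$, for which no slot is left. These form the basis of $(\mathcal{A}_{=k}\beta)^*$, and together with $\mathcal{A}^*=\mathcal{A}^*_{\leq k}$ this gives the dimension formula. Your final paragraph deducing the two special cases is then correct.
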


\begin{proof}
We first note that
\[
(\mathcal{A}_{\geq k}\beta)^{* + 2p^k - 1}
\cong (\mathcal{A}/\mathcal{A}\beta)_{\geq k}^{* + 2(p^k - 1)}
\]
Under the assumption $* < 2p^k(p-1)$, the degree bound $* + 2p^k - 2 < 2(p^{k+1} - 1)$ implies that only monomials of \emph{length $k$} contribute in that degree, hence
\[
(\mathcal{A}/\mathcal{A}\beta)_{\geq k}^{* + 2(p^k - 1)}
= (\mathcal{A}/\mathcal{A}\beta)_{= k}^{* + 2(p^k - 1)}
\]
A counting argument using the Serre--Cartan basis then gives
\[
\dim_{\mathbb{F}_p} (\mathcal{A}/\mathcal{A}\beta)_{= k}^{* + 2(p^k - 1)}
= \dim_{\mathbb{F}_p}\mathcal{A}_{\leq k}^*
- \dim_{\mathbb{F}_p}(\mathcal{A}_{=k}\beta)^*
\]
The bijection comes from the index shift $i_j \mapsto i_j - p^{k-j}$, which preserves admissibility and decreases total degree by $2(p^k - 1)$. Since $\mathcal{A}_{\leq k}^* = \mathcal{A}^*$ in our range, the desired equality follows.

Finally, note that $(\mathcal{A}_{=k}\beta)^*$ first becomes nonzero in degree $2p^k - 1$ generated by the element $P^{p^{k-1}}\cdots P^1\beta$, giving the last statement.
\end{proof}

Thus, we already have an upper bound for injectivity. For $p > 2$, we will see that this is in fact the lower bound as well, whereas for $p = 2$ the situation is different.

\begin{ex}
	To help the reader become familiar with the proof below, we illustrate the case $k = 1$. Take $\theta$ such that $\theta Sq^2 Sq^1 = 0$. Stripping by $\xi_2 \leftrightarrow (2, 1)$ gives
	\[
	\theta + \theta_{(2)} (Sq^2 + Sq^1 Sq^1) + \theta_{(2,1)} Sq^2 Sq^1 = 0
	\] 
	If $|\theta| \leq 2$, then $\theta_{(2,1)} = 0$, so $\theta = \theta_{(2)} Sq^2$, which, as one can verify, also lies in the kernel of $Sq^2 Sq^1$.
\end{ex}

\begin{lem}
	\label{mult2}
	The map $\mathcal{A}^* \to (\mathcal{A}_{\geq k} Sq^1)^{* + 2^{k+1} - 1}$ given by right multiplication by $M_k := Sq^{2^k} \cdots Sq^1$ is an isomorphism in degrees $< 2^k$, and it has a one-dimensional kernel in degree $2^k$ generated by the element $N_k$, defined recursively as
	\begin{align*}
        N_0 &:= Sq^1 \\
        N_k &:= Sq^{2^k} + Sq^{2^{k-1}}N_{k-1}
    \end{align*}
\end{lem}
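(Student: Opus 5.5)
Since the preceding dimension lemma gives $\dim(\mathcal{A}_{\geq k}Sq^1)^{*+2^{k+1}-1}=\dim\mathcal{A}^*$ for $*<2^k$, in that range it is enough to prove injectivity. In degree $2^k$ we then have to show that the kernel has dimension exactly one and is spanned by $N_k$.

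The plan is induction on $k$, using stripping by the top Milnor primitive $\xi_{k+1}$, which corresponds to the index vector $(2^k,\dots,2,1)$. The base case $k=0$ is $\theta\mapsto\theta Sq^1$; its kernel is $\mathcal{A}Sq^1$, and the lowest class there is $Sq^1=N_0$ in degree $1$, so the claim holds. For the inductive step, suppose $\theta M_k=0$ with $|\theta|\le 2^k$. Write $M_k=Sq^{2^k}M_{k-1}$ and apply $\xi_{k+1}\cap-$ to the relation, using property (3) of the stripping lemma:
\[
0=\xi_{k+1}\cap(\theta M_k)=\textstyle\sum_{i}(\xi_{k+1-i}^{2^i}\cap\theta)(\xi_i\cap M_k).
\]
Because $|\theta|\le 2^k<|\xi_{k+1-i}^{2^i}|$ for $i<k$ (here we use property (1)), only the terms $i=k$ and $i=k+1$ survive. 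The $i=k+1$ term is $\theta\,(\xi_{k+1}\cap M_k)=\theta$, since $M_k$ pairs to $1$ with $\xi_{k+1}$ (property (2)). The $i=k$ term is $(\xi_1^{2^k}\cap\theta)(\xi_k\cap M_k)$. In our degree range, $\xi_1^{2^k}\cap\theta$ is a scalar $\lambda=\langle\xi_1^{2^k},\theta\rangle$, and it is nonzero only when $|\theta|=2^k$. The identity therefore becomes
\[
\theta=\lambda\,(\xi_k\cap M_k).
\]
For $|\theta|<2^k$ this forces $\theta=0$, which gives injectivity and hence the isomorphism by the dimension count. For $|\theta|=2^k$ it shows the kernel is at most one-dimensional, contained in the span of $\xi_k\cap M_k$.

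Next we identify $\xi_k\cap M_k$ with $N_k$. Apply property (3) to $Sq^{2^k}\cdot M_{k-1}$ together with properties (7), (9) and (10). The terms that survive are the strip of the factor $Sq^{2^k}$ down to $Sq^{2^{k-1}}$ times $\xi_{k-1}\cap M_{k-1}$, and $Sq^{2^k}$ times $\xi_k\cap M_{k-1}=1$ (by property (2), since degrees match). This yields the recursion
\[
\xi_k\cap M_k=Sq^{2^k}+Sq^{2^{k-1}}(\xi_{k-1}\cap M_{k-1}),
\]
which is exactly the recursion defining $N_k$. We must be careful with the coproduct of $\xi_k$ at this step, because the term $\xi_{k-1}^2\otimes\xi_1$ also appears. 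I expect it to either vanish or reorganize correctly, by the degree arguments of properties (9) and (10).

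Finally, we show $N_kM_k=0$, so that the kernel is exactly the span of $N_k$ rather than zero. This is the step I expect to be the main obstacle. The approach I would try first is to check that $N_k\ne0$ and compare dimensions. By the preceding lemma, in degree $2^k$ the target has dimension equal to $\dim\mathcal{A}^{2^k}-1$, so the kernel is nonzero. Combined with the upper bound above, the kernel is one-dimensional and spanned by $\xi_k\cap M_k=N_k$. This works provided $N_k\ne0$, which is clear because its Serre--Cartan expansion contains $Sq^{2^k}$. The delicate point is keeping the degree bookkeeping in the stripping identity straight, especially the range where $\xi_{k+1-i}^{2^i}\cap\theta$ is a scalar.
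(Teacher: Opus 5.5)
Your upper bound (stripping $\theta M_k=0$ by $\xi_{k+1}$ to get $\theta=\lambda\,(\xi_k\cap M_k)$) is the same as the paper's Step 1, and your derivation of the recursion for $\xi_k\cap M_k$ is the same as its Step 3; both are correct. The term you hedge about is harmless. For $k\geq 3$ it contributes $(\xi_{k-1}^2\cap Sq^{2^k})(\xi_1\cap M_{k-1})=0$ by property (7). For $k=2$ it is $\xi_1^2\otimes\xi_1$, which is exactly the surviving term.

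The genuine gap is the final step, and it sits exactly where $p=2$ differs from $p>2$. You claim the dimension lemma gives $\dim(\mathcal{A}_{\geq k}Sq^1)^{2^k+2^{k+1}-1}=\dim\mathcal{A}^{2^k}-1$. It does not. At $p=2$ the lemma gives equal dimensions for all $*<2p^k-1=2^{k+1}-1$, and $2^k<2^{k+1}-1$ for $k\geq1$. So in degree $2^k$ the domain and target have the \emph{same} dimension; your own first paragraph already implies this. The first drop happens only in degree $2^{k+1}-1$, where $M_k$ enters $\mathcal{A}_{=k}Sq^1$. For $k=1$ you can check by hand that $\mathcal{A}^2=\langle Sq^2\rangle$ and the target $\langle Sq^4Sq^1\rangle$ are both one-dimensional.

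So counting gives no lower bound on the kernel, and the fact that $N_k\neq0$ does not help. You only have $\ker\subseteq\langle N_k\rangle$, which is still consistent with injectivity. You need a real argument that the kernel is nonzero. The paper proves non-surjectivity in degree $2^k$ as follows:
\begin{itemize}
\item Suppose $\theta M_k=Sq^{2^{k+1}}M_{k-1}$ with $|\theta|=2^k$.
\item Strip by the primitive $\xi_1^{2^{k+1}}$, which acts as a derivation by property (6).
\item For degree reasons it kills $\theta$ and every $Sq^{2^j}$ with $j\leq k$, so stripping yields $0=M_{k-1}$, a contradiction.
\end{itemize}
Since the dimensions agree, non-surjectivity forces a nonzero kernel, which must then equal $\langle N_k\rangle$. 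Alternatively, following the paper's closing remark, you can check $N_kM_k=0$ directly by conjugation. There $N_k=\chi(Sq^{2^k})$ and $\chi(M_k)=Sq^{2^{k+1}-1}$, so $\chi(N_kM_k)=Sq^{2^{k+1}-1}Sq^{2^k}$, which vanishes by the Adem relations.
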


\begin{proof}  The strategy of the proof is as follows. The lemma above tells us that for $k > 0$ and $* \leq 2^k < 2^{k+1} - 1$ this is a map between $\mathbb{F}_2$-vector spaces of the same dimension. Hence, for $* < 2^k$, it suffices to prove that the map is injective. For $* = 2^k$, we will prove that the kernel is contained in the one-dimensional subspace generated by the element $N_k := \xi_k \cap M_k$, giving an upper bound. We cannot prove directly that $N_k$ lies in the kernel, so instead we show that the map is not surjective in that degree, giving a lower bound for the dimension of the kernel. Lastly, we prove the recursive formula for $N_k$. 
	\begin{enumerate}
    \item[Step 1.] Take $\theta$ such that $\theta M_k = 0$. Stripping this equation by $\xi_{k+1}$ gives  $$\textstyle \sum (\xi_{k+1-i}^{2^i} \cap \theta)(\xi_i \cap M_k) = 0$$
    If $|\theta| \leq 2^k$ for degree reason the sum reduces to 
	$$0 = (\xi_0 \cap \theta)(\xi_{k+1} \cap M_{k}) + (\xi_1^{2^k} \cap \theta)(\xi_k \cap M_k) = \theta + (\xi_1^{2^k} \cap \theta)(\xi_k \cap M_k)$$
	Here we used that $\xi_0 \cap -$ is the identity and that $\xi_{k+1}$ is the dual of $M_k$. Now note that $\xi_1^{2^k} \cap \theta$ is a scalar, and it is zero if $|\theta| < 2^k$, implying $\theta = 0$ and hence the map is injective. For $|\theta| = 2^k$, we conclude that $\ker(\cdot M_k) \leq \langle N_k\rangle$.
    \item[Step 2.] It is not immediately clear that $N_k$ lies in the kernel.  For $p>2$, the analogous statement fails, since $N_1 = P^1$ but $P^1 P^1 \beta = 2P^2 \beta \neq 0$. However, for $p=2$ we do have $P^1 P^1 \beta = Sq^2 Sq^2 Sq^1 = Sq^3 Sq^1 Sq^1 = 0$. Dealing directly with Adem relations is complicated, so we use an indirect argument.
	Since the map is between vector spaces of the same finite dimension, it suffices to show that it is not surjective in degree $2^k$.  Indeed, this will imply that the kernel is at least one-dimensional, and since we already know it cannot be larger, it must be exactly $\langle N_k \rangle$. Suppose for contradiction that there exists $\theta$ of degree $2^k$ such that 
	\[
	\theta M_k = Sq^{2^{k+1}} M_{k-1}
	\]
	Stripping by $\xi_1^{2^{k+1}}$ gives $0 = M_{k-1}$ - a contradiction. This follows because $\xi_1^{2^{k+1}} \cap -$ is a derivation and annihilates $\theta$ and every $Sq^i$ with $i < 2^{k+1}$ for degree reason. Hence, the map is not surjective in degree $2^k$, and the kernel is precisely $\langle N_k \rangle$.
    \item[Step 3.] It remains to determine the recursive formula for $N_k$.  By definition,
	\[
	N_k = \xi_k \cap M_k = \xi_k \cap (Sq^{2^k} M_{k-1})
	= \textstyle \sum (\xi_{k-i}^{2^i} \cap Sq^{2^k})(\xi_i \cap M_{k-1})
	\]
    Since monomials of length one are killed by $\xi_i$ for $i>1$, and $\xi_1^j \cap Sq^i = Sq^{i-j}$, the sum reduces to
    \[
    N_k = (\xi_0 \cap Sq^{2^k})(\xi_k \cap M_{k-1})
      + (\xi_1^{2^{k-1}} \cap Sq^{2^k})(\xi_{k-1} \cap M_{k-1})
      = Sq^{2^k} + Sq^{2^{k-1}} N_{k-1}
    \]
	The base case $N_0 = \xi_0 \cap Sq^1 = Sq^1$ is immediate, completing the induction. \qedhere

\end{enumerate}
\end{proof}

\begin{ex}
	The first few elements $N_k$ are given by
	\begin{align*}
		N_0 &= Sq^1 \\
		N_1 &= Sq^2 + Sq^1Sq^1 = Sq^2 \\
		N_2 &= Sq^4 + Sq^2Sq^2 = Sq^4 + Sq^3Sq^1 \\
		N_3 &= Sq^8 + Sq^4(Sq^4 + Sq^3Sq^1) = Sq^8 + Sq^6Sq^2 + Sq^7Sq^1 + Sq^5Sq^2Sq^1
	\end{align*}
\end{ex}

\begin{rem}
We can also give an alternative proof using the conjugation $\chi$ in the Steenrod algebra. Indeed, our element $N_k$ coincides with the conjugate $\chi(Sq^{2^k})$, since both satisfy the same recursive relation \cite{conjugate}. It follows that the conjugate of $M_k$ is $Sq^{2^{k+1}-1}$. To see this, note that from $M_k = Sq^{2^k} M_{k-1}$ we obtain
\[
\chi(M_k) = \chi(M_{k-1}) \chi(Sq^{2^k}) = Sq^{2^k-1} N_k = Sq^{2^k-1} Sq^{2^k} + Sq^{2^k-1} Sq^{2^{k-1}} N_{k-1} = Sq^{2^{k+1}-1}
\]
where the last equality follows from the Adem relations. With these facts in hand, the proof proceeds as follows. Suppose $\theta M_k = 0$. Passing to the conjugate yields $Sq^{2^{k+1}-1} \chi(\theta) = 0$. This cannot occur if $|\chi(\theta)| < 2^k$, by the properties of the Cartan--Serre basis. The first nontrivial case is $\chi(\theta) = Sq^{2^k}$, which gives $\theta = \chi(Sq^{2^k}) = N_k$. 

We do not currently know how to generalize this argument to the case $p>2$, so for that situation we continue to rely on the stripping technique.
\end{rem}

\begin{ex}
	To help the reader become familiar with the proof below, we illustrate the case $k = 1$. Take $\theta$ such that $\theta P^1\beta = 0$. Stripping by $\tau_1 \leftrightarrow (1, \beta)$ gives
	\[
	\theta + \theta_{(1)} P^1 \pm \theta_{(1, \beta)} P^1 \beta = 0
	\]
	If $|\theta| \leq 2p - 1$, stripping further by $\xi_1 \leftrightarrow (1)$ gives
	\[
	\theta_{(1)} + \theta_{(1)} \pm \theta_{(1, \beta)} \beta = 0
	\]
	Hence we conclude
	\[
	\theta = \pm \theta_{(1, \beta)} \Big(\frac{1}{2} \beta P^1 - P^1 \beta \Big)
	\]
	which, as one can verify, lies in the kernel.
\end{ex}

\begin{lem}
	\label{multodd}
	The map $\mathcal{A}^* \to (\mathcal{A}_{\geq k} \beta)^{* + 2p^k - 1}$ given by right multiplication by $M_k := P^{p^{k-1}} \cdots P^1 \beta$ is an isomorphism in degrees $< 2p^k - 1$, and it has a one-dimensional kernel in degree $2p^k - 1$ generated by the element $N_k := N_{k,k}$, defined recursively as
	\begin{align*}
		N_{k,0} &:= P^{p^{k-1}} \cdots P^1 \beta \\
		N_{k,h+1} &:= N_{k,h} - \frac{1}{2} (\xi_{k-h}^{p^h} \cap N_{k,h}) (\tau_h \cap M_k)
	\end{align*}
\end{lem}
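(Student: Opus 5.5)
Following the same three-step structure as in lemma \ref{mult2}, I will split the argument into an injectivity/upper bound step, a lower bound step, and an identification of the generator. By the dimension-counting lemma preceding the examples, the source $\mathcal{A}^*$ and the target $(\mathcal{A}_{\geq k}\beta)^{*+2p^k-1}$ have the same dimension for $*<2p^k-1$. In degree $2p^k-1$ the target is strictly smaller, the defect being $\dim(\mathcal{A}_{=k}\beta)^{2p^k-1}=1$. So for $*<2p^k-1$ it suffices to prove injectivity, and in degree $2p^k-1$ it suffices to show the kernel has dimension at most one and to exhibit a nonzero element of it. The lower bound on the kernel is then automatic from the dimension count, since the target is one dimension smaller. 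This is easier than at $p=2$, where Step 2 needed a non-surjectivity argument.

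For the upper bound, take $\theta$ with $\theta M_k=0$ and strip by $\tau_k$, which is dual to $M_k$. By property (3) and the coproduct formula $\Delta\tau_k=\tau_k\otimes 1+\sum_i \xi_{k-i}^{p^i}\otimes\tau_i$, this gives
\[
0=\pm\theta+\textstyle\sum_{i<k}\pm(\xi_{k-i}^{p^i}\cap\theta)(\tau_i\cap M_k)+\pm(\tau_k\cap\theta)M_k.
\]
I will use the degree bounds $|\xi_{k-i}^{p^i}|=2(p^k-p^i)$ and $|\theta|\le 2p^k-1$. When $*<2p^k-1$, I expect the surviving terms to be of the form $(\xi_{k-i}^{p^i}\cap\theta)\cdot(\text{lower word})$ with $\xi_{k-i}^{p^i}\cap\theta$ of small degree. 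I would then induct, stripping further by $\xi_1^{p^{j}}$ as in the $k=1$ example, to express $\theta$ in terms of lower strippings of itself. The goal is to show that all the scalars $\xi_{k-i}^{p^i}\cap\theta$ vanish, forcing $\theta=0$. In degree exactly $2p^k-1$ the same manipulation should express $\theta$ as a scalar multiple $\tau_k\cap\theta$ times a fixed element determined only by $M_k$. This shows the kernel is at most one-dimensional.

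To identify that fixed element with $N_{k,k}$, I will solve the stripped equation iteratively in $h$. At stage $h$ the correction term $-\tfrac12(\xi_{k-h}^{p^h}\cap N_{k,h})(\tau_h\cap M_k)$ cancels the contribution of the $i=h$ summand. The factor $\tfrac12$ arises, as in the $k=1$ example, because stripping by $\xi_1$ produces the relevant term twice. Then I verify $N_{k,k}M_k=0$. Dimension counting does not give this directly: since the target is smaller, the kernel is nonzero, and combined with the upper bound the kernel is a one-dimensional space contained in the line of the solved element. So $N_{k,k}$ spans it provided it is nonzero, and its leading term $P^{p^{k-1}}\cdots P^1\beta$ shows that it is.

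The main obstacle is the bookkeeping in the upper-bound step at odd primes. Stripping by $\tau_k$ produces mixed terms $\xi_{k-i}^{p^i}\cap\theta$ which are not scalars once $|\theta|$ approaches $2p^k-1$. One must show that these are themselves determined recursively by the single scalar $\tau_k\cap\theta$, tracking the signs from property (3). I would handle this by downward induction on $h$, applying properties (9) and (10) to kill cross terms and using that the $\xi_{k-h}^{p^h}\cap M_k$ vanish for $h<k$ by the length bound (9).
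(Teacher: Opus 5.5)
Your outer frame matches the paper: the dimension count, stripping $\theta M_k=0$ by $\tau_k$, writing $\theta$ as the scalar $\lambda=-(-1)^{|\theta|}(\tau_k\cap\theta)$ times a fixed element, and concluding from the one-dimensional defect. But the step that actually carries the proof is misidentified, and the mechanism you sketch would fail. That step is determining the non-scalar terms $\xi_{k-i}^{p^i}\cap\theta$ from $\lambda$.

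Your claim that $\xi_{k-h}^{p^h}\cap M_k$ vanishes for $h<k$ by (9) is false. $M_k$ has length $k\geq k-h$, and in fact $\xi_{k-h}^{p^h}\cap M_k=M_h\neq 0$. Already $\xi_k\cap M_k=\beta$, which is why the $h=0$ correction $-\tfrac12\beta(\tau_0\cap M_k)$ in the recursion is nonzero.

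This nonvanishing is exactly what the proof needs. Write $\theta=\lambda N_{k,h}-\sum_{i=h}^{k-1}(\xi_{k-i}^{p^i}\cap\theta)(\tau_i\cap M_k)$ and strip by $\xi_{k-h}^{p^h}$. Properties (10) and (4) turn each coefficient into $\xi_{k-h}^{p^h}\tau_i\cap M_k=\tau_i\cap M_h=\delta_{i,h}$ for $i\geq h$. So the $i=h$ summand returns a second copy of $\xi_{k-h}^{p^h}\cap\theta$, and one gets $2(\xi_{k-h}^{p^h}\cap\theta)=\lambda(\xi_{k-h}^{p^h}\cap N_{k,h})$. Substituting back gives the step $h\to h+1$. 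This is the real source of the $\tfrac12$, and of the need for $p$ odd.

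The choice and order of the stripping elements are also off. You propose powers $\xi_1^{p^j}$, but $\xi_1^{p^j}\cap\theta$ coincides with one of the unknowns $\xi_{k-i}^{p^i}\cap\theta$ only when $j=i=k-1$. So for $k\geq 2$ the unknown $\xi_k\cap\theta$ coming from the $i=0$ summand is never isolated.

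The correct elements are $\xi_k,\xi_{k-1}^{p},\dots,\xi_1^{p^{k-1}}$, the left factors paired with $\tau_0,\dots,\tau_{k-1}$ in $\Delta\tau_k$. They must be used in that order, upward in $h$, not by the downward induction you propose. The reason is that $\tau_i\cap M_h$ vanishes by degree only for $i>h$, while for $i<h$ it is nonzero (e.g.\ $\tau_0\cap M_h=P^{p^{h-1}}\cdots P^1$). So the summands with $i<h$ must already have been eliminated. Starting from $\xi_1^{p^{k-1}}$ leaves the cross terms $(\xi_{k-i}^{p^i}\cap\theta)(\tau_i\cap M_{k-1})$ for $i<k-1$, and the system does not close.

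Relatedly, the $\xi_{k-i}^{p^i}\cap\theta$ are not scalars below degree $2p^k-1$ in general. Their vanishing, and hence injectivity, follows from the identity above with $\lambda=0$.

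A minor point: your leading-term check that $N_{k,k}\neq 0$ is unnecessary. It would also need care, since the correction terms are non-admissible products. A nonzero kernel contained in $\langle N_{k,k}\rangle$ already forces $N_{k,k}\neq 0$.
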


\begin{proof}
	The strategy of the proof is as follows. By the dimension lemma, for $* < 2p^k - 1$ the domain and codomain have the same dimension, so it suffices to show that the map is injective. For $* = 2p^k - 1$, the domain has strictly larger dimension than the codomain, so it suffices to show that the kernel is contained in the one-dimensional subspace generated by $N_k$.

	Take $\theta$ such that $\theta M_k = 0$. Stripping this equation by $\tau_k$ yields $$(\tau_k \cap \theta)M_k + (-1)^{|\theta|}\textstyle \sum_{i = 0}^{k}(\xi_{k-i}^{p^i} \cap \theta)(\tau_i \cap M_k) = 0$$
	If $|\theta| \le 2p^k - 1$, then $\tau_k \cap \theta$ is a scalar. Let $\lambda := -(-1)^{|\theta|} (\tau_k \cap \theta)$. Then $$\theta = \lambda M_k - \textstyle \sum_{i = 0}^{k-1}(\xi_{k-i}^{p^i} \cap \theta)(\tau_i \cap M_k) =  \lambda N_{k, 0} - \textstyle \sum_{i = 0}^{k-1}(\xi_{k-i}^{p^i} \cap \theta)(\tau_i \cap M_k)$$
	We claim that for every $h \le k$, we have
	$$\theta = \lambda N_{k, h} - \textstyle \sum_{i = h}^{k-1}(\xi_{k-i}^{p^i} \cap \theta)(\tau_i \cap M_k)$$
	This is true for $h=0$, so it remains to prove the inductive step. For every $h \le i \le k-1$, we have $|\xi_{k-i}^{p^i} \cap \theta| < 2p^i \leq 2p^{k-1} \leq 2p^{k-1}(p-1)$, hence $\xi_{k-i}^{p^i} \cap \theta$ is linear for $\xi_{k-h}^{p^h}$ (property (10)). It follows that stripping by $\xi_{k-h}^{p^h}$ yields
	$$\xi_{k-h}^{p^h} \cap \theta = \lambda(\xi_{k-h}^{p^h} \cap N_{k, h}) - \textstyle \sum_{i = h}^{k-1}(\xi_{k-i}^{p^i} \cap \theta)(\xi_{k-h}^{p^h}\tau_i \cap M_k)$$
	Now note that
	$$\xi_{k-h}^{p^h}\tau_i \cap M_k = \tau_i\xi_{k-h}^{p^h} \cap M_k = \tau_i \cap M_h = \delta_{i,h}$$
	where we used that $\xi_{k-h}^{p^h} \cap M_k = M_h$ (left to the reader) and that $\tau_i \cap M_h = 0$ for $i>h$ by length considerations. We conclude that $2\xi_{k-h}^{p^h} \cap \theta = \lambda(\xi_{k-h}^{p^h} \cap N_{k, h})$ and hence, since $p$ is odd,
	\begin{align*}
		\theta 
		&= \lambda \bigg(N_{k, h} -  \frac{1}{2}(\xi_{k-h}^{p^h} \cap N_{k, h})(\tau_h \cap M_k)\bigg) - \textstyle \sum_{i = {h+1}}^{k-1}(\xi_{k-i}^{p^i} \cap \theta)(\tau_i \cap M_k) \\
		&= \lambda N_{k, h+1} - \textstyle \sum_{i = {h+1}}^{k-1}(\xi_{k-i}^{p^i} \cap \theta)(\tau_i \cap M_k) 
	\end{align*}
	Iterating this, we obtain $\theta = \lambda N_{k,k}$. If $|\theta| < 2p^k - 1$, then $\lambda = 0$ and the map is injective. If $|\theta| = 2p^k - 1$, the kernel is contained in the one-dimensional $\mathbb{F}_p$-vector space generated by $N_{k,k}$. Since the map cannot be injective for dimension reasons, we conclude that $\ker(\cdot M_k) = \langle N_{k,k} \rangle$.
\end{proof}

\begin{ex}
	The first few elements $N_k$ are given by
	\begin{align*}
		N_0 &= \beta \\
		N_1 &= P^1\beta - \frac{1}{2}\beta P^1 \\
		N_2 &= P^pP^1\beta - \frac{1}{2}\beta P^pP^1 - \frac{1}{2}P^{p+1}\beta + \frac{1}{4}\beta P^{p+1}
	\end{align*}
\end{ex}

We expect that a simpler proof and a more transparent recursive formula exist. In particular, computations in low degrees suggest that $N_k$ may satisfy the simpler relation

\begin{ques}
	Does $N_k = P^{p^{k-1}} N_{k-1} - \frac{1}{2} N_{k-1} P^{p^{k-1}}$?
\end{ques}

\section{ASS generator for $W^{2n-1}(2)$}
\label{ASSW2}

The following Python code uses the calculation of the cohomology of $W^{2n-1}(2)$ from example \ref{cohomW2} to produce the content of a \texttt{.json} file that can be uploaded to Chatham's \href{https://spectralsequences.github.io/sseq/}{Adams spectral sequence calculator}. 

\begin{lstlisting}[language=Python]
from math import *
            
def W2(p, n, N):

    # return the .json file for the ASS of W^{2n-1}(2) at the prime p in a range increasing with N

    # make a list of generators
    gens = []
    for i in range(n*p, (n+N)*p):
        d = 2*n*p - 5 + 2*(p-1)*i
        gens.append(f"\"p{i}\":{d}")
        gens.append(f"\"q{i}\":{d + 1}")
        if i > n*p:
            gens.append(f"\"r{i}\":{d + 1}")
            gens.append(f"\"s{i}\":{d + 2}")
    g = ','.join(gens)

    # make a list of the Steenrod actions
    actions = []

    # Bocksteins
    for i in range(n*p, (n+N)*p):
        actions.append(f"b p{i} = q{i}")
        if i > n*p:
            actions.append(f"b r{i} = s{i}")

    # Reduced powers
    for e in range(int(log(N-1, p))+2):
        a = p**e
        for i in range(n*p, (n + N)*p - a):
            c1 = ((-1)**a*comb((p-1)*i - 1, a)) % p
            c2 = ((-1)**(a-1)*comb((p-1)*i - 1, a-1)) % p
            d = (c1 - c2) % p
            if c1:
                actions.append(f"P{a} p{i} = {c1} p{i + a}")
            if c2 and not d:
                actions.append(f"P{a} q{i} = {c2} r{i + a}")
            if not c2 and d:
                actions.append(f"P{a} q{i} = {d} q{i + a}")
            if c2 and d:
                actions.append(f"P{a} q{i} = {d} q{i + a} + {c2} r{i + a}")
            if i > n*p:
                if c1:
                    actions.append(f"P{a} r{i} = {c1} r{i + a}")
                if d:
                    actions.append(f"P{a} s{i} = {d} s{i + a}")

    a = ('\"' + '\",\"'.join(actions) + '\"').replace(' 1 ', ' ')

    return f"""{{"p": {p}, "algebra": ["adem"], "type": "finite dimensional module","gens": {{{g}}},"actions": [{a}]}}"""
\end{lstlisting}

\sloppy
\bibliographystyle{alpha}
\bibliography{bib}

\end{document}